\theoremstyle{plain}
\newtheorem{theorem}{Theorem}[section]
\newtheorem{lemma}[theorem]{Lemma}
\newtheorem{coro}[theorem]{Corollary}
\newtheorem{prop}{Proposition}[section]
\newtheorem{remark}{Remark}[section]
\newtheorem{defn}{Definition}[section]
\numberwithin{equation}{section}
\definecolor{red}{rgb}{1.00,0.00,0.00}
\definecolor{blue}{rgb}{0.00,0.00,0.63}
\definecolor{black}{rgb}{0.00,0.00,0.00}
\definecolor{purple}{rgb}{0.00,1.00,0.00}
\definecolor{pink}{rgb}{0.95,0.01,0.08}
\newcommand{\blue}{\color{blue}}
\theoremstyle{remark}
\numberwithin{equation}{section}
\DeclareMathOperator{\dive}{div}
\def\blue#1{\textcolor{blue}{#1}}
\begin{document}

%\title{{\LARGE \textbf{Decay character theory for partially dissipative hyperbolic systems]{Decay character theory for partially dissipative}}}}

\title[Partially dissipative  hyperbolic systems]{Sharp decay characterization for
partially dissipative hyperbolic systems of balance laws}

\author{Ling-Yun Shou}
\author{Jiang Xu}
\author{Ping Zhang}

\date{}

%\subjclass[2020]{35L60; 34A55; 35Q31; 45M05; 93D20}
%\keywords{Partially dissipative hyperbolic systems; decay characterization; asymptotic stability; critical regularity; $L^p$ framework}

\begin{abstract}\setlength{\baselineskip}{1.1\baselineskip}
The partially dissipative systems that characterize many physical phenomena were first pointed out by Godunov (1961), then investigated by Friedrichs-Lax (1971) who introduced the convex entropy, and later by Shizuta-Kawashima (1984,1985) who initiated a simple sufficient criterion ensuring the global existence of smooth solutions and their large-time asymptotics. There has been remarkable progress in the past several decades, through various different attempts. However, the decay character theory for partially dissipative hyperbolic systems remains largely open, as the Fourier transform of Green's function is generally not explicit in multi-dimensions. In this paper, we provide a positive answer to the open question by means of the general $L^p$ energy method.
%deduce the leading diffusion profiles in the spirit of the Chapman-Enskog expansion and provide a positive answer to the unsolved question left in \cite{BHN} via a general $L^p$ energy method.
Precisely, a new {\emph{effective quantity}} $\Psi(t,x)$ motivated by the compressible Euler system with damping is introduced, which enables us to capture %the interaction of conservative and dissipative components and
 leading diffusion profiles of the large-time behavior in the spirit of the Chapman-Enskog
expansion. %and establish the sharp characterization of large-time asymptotic behavior.
Consequently, we prove that the solutions approach the constant equilibrium state in the $\dot{\!B}^{\sigma}_{p,1}$-norm at the rate $t^{-(\sigma-\sigma_1)/2}$ as $t\rightarrow\infty$, and the corresponding norm of dissipative components decays at the enhanced rate $t^{-(\sigma-\sigma_1+1)/2}$, where the boundedness assumption in the  $\dot{B}^{\sigma_1}_{p,\infty} (-d/p\leq \sigma_1<d/p-1$)-norm of the low frequencies of conservative components is not only sufficient, but also necessary to achieve those upper bounds of decay estimates. Furthermore, both upper and lower bounds for time-decay estimates are obtained if and only if the low-frequency part of $\Psi_0(x)$ (the initial effective quantity)  is bounded in a non-trivial subset of $\dot{B}^{\sigma_1}_{p,\infty}$.

%Under the convex entropy and Shizuta-Kawashima conditions, we pinpoint the critical regularity for the global solutions to the partially dissipative systems in the hybrid Besov spaces with different exponents on conservative and dissipative components in low and high frequencies, from the viewpoint of spectral analysis and the $L^1$ time integrability of Lipschitz bound.

%Finally, we also show that, with faster orders of convergence, the conservative and dissipative components of solutions can be asymptotically approximated by new profiles generated from parabolic systems subjected to $\Psi_0$, in the spirit of Chapman-Enskog expansion.

\end{abstract}
\keywords{Partially dissipative hyperbolic systems; decay characterization; asymptotic stability; critical regularity; $L^p$ framework}

\subjclass[2020]{35L60; 34A55; 35Q31; 45M05; 93D20.}

 \date{}

\maketitle

%\begingroup
%\renewcommand{\thefootnote}{\fnsymbol{footnote}}
%\footnotetext[1]{Corresponding author.}
%\endgroup

\tableofcontents

\section{Introduction}

\subsection{Problem formulation}
In this paper, we consider general $n$-component dissipative hyperbolic systems of balance laws, which take the form
\begin{equation}
\partial_{t} U + \sum_{i=1}^d \partial_{x_{i}} F^{i}(U)=G(U), \label{GEQSYM}
\end{equation}
where $U=U(t,x)$ is the unknown vector-valued function of $(t,x)\in \mathbb{R}_+\times\mathbb{R}^d$ ($d\geq1$) taking values in an open convex set $\mathcal{O}_{U}\subset\mathbb{R}^n$ ($n\geq2$), and $F^i$ and $G$ are given vector-valued smooth functions on $\mathcal{O}_{U}$. System \eqref{GEQSYM} is supplemented
with the initial data
\begin{equation}\label{data}
U(0,x)=U_{0}(x), \quad x\in \mathbb{R}^d.
\end{equation}

Note that in the absence of the source term $G(U)$, the system \eqref{GEQSYM} is reduced to the hyperbolic conservation laws in continuum physics, we refer to \cite{dafermos1,kato1,majda1,serre1,Whitham}. Under rather general conditions, for example, whenever  \eqref{GEQSYM} is Friedrichs' symmetrizable, \eqref{GEQSYM}-\eqref{data} admits a unique classical solution $U\in\mathcal{C}([0,T];H^{s}(\mathbb{R}^d))\cap
\mathcal{C}^{1}([0,T];H^{s-1}(\mathbb{R}^d))$ with $s>d/2+1$ on some time interval $[0,T]$. At the same time, it is well known that even for arbitrarily small and smooth initial data \eqref{data}, there is no global
continuation for these smooth solutions, which can develop singularities, shocks, or blow up in finite time. On the other hand, the system \eqref{GEQSYM} with $G(U)$ typically
governs non-equilibrium processes in physics for media with
hyperbolic response, as, for example, in gas dynamics (see \cite{wangyang1}). The system also
arises in the numerical simulation of conservation laws by relaxation
schemes (see \cite{ADN,JinXin1} and references cited therein). In those
applications, $G(U)$ has, or can be transformed by a
linear transformation into, the form
\begin{equation}\label{GU}
\begin{aligned}
   G(U)=\begin{pmatrix}
  0 \\
   g(U)
   \end{pmatrix},
\end{aligned}
\end{equation}
where $0\in \mathbb{R}^{n_{1}}$ and $g(U)\in \mathbb{R}^{n_{2}}$ with $n_{1}+n_{2}=n\,(n_{1}\neq 0)$. As observed, the source term  $G(U)$ does not appear in all components of \eqref{GEQSYM}. In fact, it does not matter for the local well-posedness; however, it may induce a dissipative effect, which completes with the hyperbolicity. A perfect example is
the isentropic Euler system with damping.
It was shown that
the system admits  global-in-time  classical solutions, at least for some restricted classes of initial data (see, for example, \cite{s1}).

\iffalse
A classical example is the compressible Euler equations with damping
\begin{equation}\label{euler}
\left\{
\begin{aligned}
& \partial_{t}\rho+\dive (\rho u)=0,\\
& \partial_{t}(\rho u)+\dive (\rho u\otimes u)+\nabla P(\rho)=- \lambda\rho u,
\end{aligned}
\right.
\end{equation}
which are used to describe the compressible gas flow as it passes through a porous medium as a friction force, proportional to the momentum in the opposite direction.
\fi
\vspace{1mm}

In the following, we recall analytical results in the literature on the Cauchy problem to \eqref{GEQSYM}-\eqref{data}, which can be divided into two main subsets, the two directions being related but complementary.
We refer to a broad overview of hyperbolic conservation law theory by Dafermos \cite{dafermos1} or the recent survey by Danchin \cite{danchinnote2}.
\begin{itemize}
\item \textbf{Sobolev framework.} Let us point out that the coupling condition that is now referred to as the [SK] condition, was discovered for the first time by Kawashima and Shizuta \cite{SK} such that the dissipation present in the second block of \eqref{GU} has effect also to the first block of equation, which
plays a key role for the global existence of smooth solutions.
Up to now, it is well known that if the problem \eqref{GEQSYM}-\eqref{data} is endowed with a convex entropy and all time the [SK] condition, then the global existence of classical solutions can be achieved for small initial data (close to equilibrium). A brief history is stated as follows. Chen, Levermore and Liu \cite{CLL} formulated a notion of the mathematical entropy for \eqref{GEQSYM}, which was a natural extension of the classical entropy due to Godunov \cite{Godunov}, Friedrichs and Lax \cite{FL} for conservation laws.
However, their dissipative entropy was not strong enough to
develop the global existence theory for (\ref{GEQSYM}). Under
some technical requirements on
the entropy dissipation, Hanouzet and Natalini \cite{HN} in one dimensional space and Yong \cite{Yong041} in the multidimensional space, have proved the global existence of classical solutions to \eqref{GEQSYM}-\eqref{data} for initial data close to equilibrium in $H^{s}(\mathbb{R}^d)$ with $s>d/2+1$. Moreover, Ruggeri and Serre \cite{RS1} showed that constant equilibrium states are time-asymptotically
$L^2$-stable. By using the Duhamel principle and a
detailed analysis of Green's kernel estimates for the linearized
problem, Bianchini, Hanouzet and Natalini \cite{BHN}
have proved that smooth solutions approached the constant equilibrium state
$\bar{U}$ in the $L^{p}$-norm at the rate
$O(t^{-\frac{d}{2}\left(1-\frac{1}{p}\right)})$, as $t\rightarrow\infty$, for
$p\in[\min\{d,2\},\infty]$,
%Bianchini et al. \cite{BHN} carried out a detailed analysis of Green's function for the linearized problem in 1D and established the $L^p$-decay rates for the nonlinear problems for small perturbations
in $H^{s}(\mathbb{R}^d)\cap L^1(\mathbb{R}^d)$. Furthermore, in the spirit of Chapman-Enskog expansion, they also justified the validity of the approximation to a parabolic system  with a faster time convergence rate. Subsequently, Kawashima and Yong \cite{KY1} gave a perfect definition
of entropy for \eqref{GEQSYM} and
removed
the technical requirements on the entropy dissipation assumed in
\cite{BHN,HN,RS1,Yong041}, see Definition below. Kawashima and Yong \cite{KY2} employed the time-weighted energy
method and established the asymptotic decay estimate as in \cite{BHN}. Beauchard and Zuazua \cite{BZ} framed those results of \cite{Kawashimadoctoral,SK} in the spirit of Villani's hypocoercivity theory \cite{Villani} and showed the equivalence of the [SK] condition and the Kalman rank condition in control theory. A recent work \cite{CSZ}  developed  the ``physical space version" of the hyperbolic
hypocoercivity approach  introduced by
\cite{BZ} and obtained
 time-decay estimates for partially dissipative hyperbolic systems.

 \vspace{2mm}

\item \textbf{Critical regularity framework.} It is so interesting to investigate the limit case that $s=d/2+1$, where the classical local-in-time existence theory for symmetric hyperbolic systems fails. Iftimie \cite{I} first established the local existence for symmetric conservation laws pertaining to data in the Besov space $B^{d/2+1}_{2,1}(\mathbb{R}^d)$, which is a subalgebra embedded in $C^{1}(\mathbb{R}^d)$. In addition, a lower bound of the maximal
time of existence was also obtained. Chae \cite{C1} subsequently
assumed the structural condition
$C^{-1}I_{n}\leq\tilde{A}^{0}(V)\leq CI_{n} (\forall \,V\in
\mathbb{R}^{d})$ and established a similar local existence. In \cite{XK1}, the second author and Kawashima removed the crucial assumption, although it is satisfied by many concrete examples, where the iteration strategy and Friedrichs' regularization method were mainly employed.
The blowup criterion in terms of the $L^1$-time integrability of Lipschitz bound was also obtained. By exploring the theory of Besov spaces, furthermore, the global existence result in the spatially critical Besov space $B^{d/2+1}_{2,1}$ was established under that assumption of the entropy notion as in \cite{KY1} and the [SK] condition. A natural question then arises: how can the large-time behavior of solutions
be described in spatially critical Besov spaces? The second author and Kawashima \cite{XK2,XK3} gave a new decay framework for linearized partially dissipative systems in $L^2(\mathbb{R}^{d})\cap\dot{B}^{-s}_{2,\infty}\,(\mathbb{R}^{d})(s>0)$, which
improves the integral framework $L^2(\mathbb{R}^{d})\cap L^{p}(\mathbb{R}^{d})(1\leq p<2)$ in \cite{UKS}, since $L^{1}(\mathbb{R}^{d})\hookrightarrow\dot{B}^{0}_{1,\infty}(\mathbb{R}^{d})\hookrightarrow\dot{B}^{-d/2}_{2,\infty}(\mathbb{R}^{d})$. Furthermore, the time-decay rates of solutions to the nonlinear problem
\eqref{GEQSYM}-\eqref{data} are also obtained in \cite{XK2,XK3} when the
initial perturbation is suitably small in
$B^{d/2+1}_{2,1}(\mathbb{R}^{d})\cap \dot{B}^{-s}_{2,\infty}(\mathbb{R}^{d})$ with $0<s\leq d/2$. The smallness of the $\dot{B}^{-s}_{2,\infty}$-norm was subsequently removed by Yu \cite{Yu1} in Sobolev spaces with higher regularity. Very recently, Crin-Barat and Danchin \cite{c2,c3} constructed global solutions in hybrid Besov space $\dot{B}^{d/p,d/2+1}_{p,2}(\mathbb{R}^{d})$, that is with regularity $B^{d/p}_{p,1}(\mathbb{R}^{d})$  in low frequencies and $B^{d/2+1}_{2,1}(\mathbb{R}^{d})$ in high frequencies. To the best of our knowledge, their results are in sharp contrast with
the pure hyperbolic case (even linear), where well-posedness in $L^p$ spaces for $p\neq 2$ fails (see \cite{brenner1}). Moreover, the time-decay estimates of solutions can be deduced if initial data additionally belong to the Besov space $\dot{B}^{\sigma_1}_{2,\infty}(\mathbb{R}^{d})$ for some $\sigma_1\in [-d/2,d/2-1]$.
\end{itemize}

It is natural to investigate whether those decay rates obtained in the literature are optimal for generally hyperbolic systems, specifically relying on the lower bounds of time-decay estimates. More importantly, the theory of decay character for the
Cauchy problem \eqref{GEQSYM}-\eqref{data} remains open so far, and there are few results regarding the sharp regularity assumption of the initial data for the large-time behavior of solutions.

%\vspace{2mm}

%In this paper, we aim to address these two issues and qualitatively characterize large-time asymptotics for the Cauchy problem of partially dissipative hyperbolic systems. To be more precise, introducing a new class of regularity conditions for initial data, we establish time-decay estimates of global solutions to the Cauchy problem in $L^p$ type critical Besov spaces. We then prove that partially dissipative hyperbolic systems are asymptotically equivalent to a new diffusive system at large times. This enables us to not only obtain the lower bounds of time-decay estimates but also provides the first study of the {\emph{inverse problem}}: the regularity conditions are shown to be not only sufficient but also necessary for the large-time behavior of solutions.

\subsection{Presentation of main results}
Let us now state the contents of this paper in more details. To make the paper as self-contained as possible, in Section \ref{section2}, we
recall the entropy and symmetrization on the hyperbolic systems of balance laws as well  as the [SK] stability condition. Section \ref{section3} is devoted to the decay characterization theory for linear systems. In Section \ref{decay-d}, we develop the decay characterization
for a large class of
diffusive systems  with Fourier multiplier
\begin{equation}\label{lineardis0}
\left\{
\begin{aligned}
    &\partial_t U=\mathcal{L}U,\\
    &U|_{t=0}=U_0.
\end{aligned}
\right.
\end{equation}
Here $U: \mathbb{R}_+\times\mathbb{R}^d\rightarrow \mathbb{R}^d$ is the unknown vector-valued function, and $\mathcal{L}$ is a Fourier multiplier with the symbol
\begin{align}
&S(\xi)\coloneqq P(\xi)^{-1}D(\xi) P(\xi),\quad \text{a.e.}\quad \xi\in\mathbb{R}^{d},\label{symbol0}
\end{align}
where $D(\xi)$ and $P(\xi)$ are, respectively, diagonal and orthogonal matrices of size $n$, with $D(\xi)_{ij}=-c_{i}|\xi|^{2\alpha}\delta_{ij}$ and $0<c_*\leq c_{i}\leq c^*$ for all $i=1,\dots,n$ and $\alpha>0$. %The entries $P(\xi)_{i,j}$ are homogeneous smooth functions for $\xi \neq 0$.
Clearly, the constant heat equation is a basic example.   %and the parabolic system \eqref{diffusion} ($\mathcal{L}=\mathcal{A}$).
Bjorland-Schonbek \cite{bjorland1} and Niche-Schonbek \cite{niche1} proved that the solution has a two-sided time decay estimate
$(1+t)^{-s/2\alpha}\lesssim \|e^{t\mathcal{L}}U_0\|_{L^{2}}\lesssim (1+t)^{-s/2\alpha}$, if the initial datum $U_0$ satisfies
\begin{align}\label{ch1}
    &0<\lim_{r\rightarrow 0+} r^{-2s}\int_{|\xi|\leq r} |\widehat{U}_{0}(\xi)|^2 d\xi <\infty,\quad s>0,
\end{align}
which is closely linked with the decay character; however, it
is somehow too stringent as such a limit might not exist. Brandolese \cite{brandolese1}
improved a slight modification of  \eqref{ch1} and proved two-sided bounds of the solution to \eqref{lineardis0}-\eqref{symbol0}:
 \begin{equation}\label{ch2}
    \left\{
        \begin{aligned}
       &\liminf_{r\rightarrow 0+} r^{-2s}\int_{|\xi|\leq r} |\widehat{U}_{0}(\xi)|^2 d\xi>0,\\
&\limsup_{r\rightarrow 0+} r^{-2s}\int_{|\xi|\leq r} |\widehat{U}_{0}(\xi)|^2 d\xi <\infty
\end{aligned}
\right.
\Longleftrightarrow U_{0}\in \dot{\mathcal{B}}^{-s}_{2,\infty}\Longleftrightarrow (1+t)^{-\frac{s}{2\alpha}}\lesssim \|e^{t\mathcal{L}}U_0\|_{L^{2}}\lesssim (1+t)^{-\frac{s}{2\alpha}},
\end{equation}
 where
%the Fourier splitting method \cite{schonbekFSM} and the Littlewood-Paley technique are mainly employed.
the subset of the Besov space $\dot{B}^{\sigma}_{p,\infty}(\sigma \in \mathbb{R})$ is defined by
\begin{equation}\label{Bsubset}
\begin{aligned}
\dot{\mathcal{B}}^{\sigma}_{p,\infty}\coloneqq  \Big\{ f\in \dot{B}^{\sigma}_{p,\infty} ~|~& \exists~ \text{two constants}~c, M>0~\text{and a sequence of integers $\{j_{k}\}_{k=1,2,\dots}$} \\
&\text{s.t.~$\lim_{k\rightarrow\infty}j_{k}= -\infty$,~$|j_{k}-j_{k+1}|\leq M$~and  $2^{\sigma j_{k}}\|\dot{\Delta}_{j_{k}}f\|_{L^p}\geq c$}\Big\}.
\end{aligned}
\end{equation}
%From the definition \eqref{Bsubset},  $\dot{\mathcal{B}}^{\sigma_{1}}_{p,\infty}$ is a non-trivial subset of the usual Besov space $\dot{B}^{\sigma_{1}}_{p,\infty}$.
%which is defined through suitable size estimates on the dyadic blocks of frequency decompositions.
As a continuous step, we are interested in the general $L^p$ extension in the present paper, since Plancherel's theorem  no longer holds. For any $1\leq p,r \leq \infty$, we prove
(see Proposition \ref{propgeneral2}):
\begin{equation}\label{chp}
        \begin{aligned}
        U_{0}\in \dot{\mathcal{B}}^{\sigma_1}_{p,\infty}\Longleftrightarrow (1+t)^{-\frac{\sigma-\sigma_1}{2\alpha}}\lesssim \|e^{t\mathcal{L}}U_0\|_{\dot{B}^{\sigma}_{p,r}}\lesssim (1+t)^{-\frac{\sigma-\sigma_1}{2\alpha}},\quad \sigma>\sigma_1.
        \end{aligned}
\end{equation}
%To this end, we develop a new $L^p$ bound of \eqref{lineardis0} in the sense of spectral localization (see Lemma \ref{lemma22}) and take advantage of the Fourier multiplier theorem. We mention that the $L^p$ decay characterization \eqref{chp} is robust in investigating the $L^p$ decay for related models in fluid dynamics.

In general, the above parabolic theory cannot be directly applied to the following partially dissipative hyperbolic systems:\vspace{-2mm}
\begin{equation}\label{hp}
\left\{
\begin{aligned}
&\partial_{t}V+\sum_{i=1}^{d}A^{i}\partial_{x_{i}}V+LV=0,\\
&V|_{t=0}=V_{0},
\end{aligned}
\right.
\end{equation}
where $V=V(t,x)$ with $(t,x)\in\mathbb{R}_+\times\mathbb{R}^d$ ($d\geq1$) is the unknown $N$-vector-valued function taking values in an open convex set $\mathcal{O}_{V}\subset \mathbb{R}^{N}$, and $A^{i}$ and $L$ are constant symmetric matrices that satisfy \eqref{block1}, \eqref{dissipationU211}, \eqref{blockL}, and the [SK] condition. To show the decay estimates, Bianchini, Hanouzet and Natalini \cite{BHN}
performed a very detailed analysis of the asymptotic behavior of Green's kernel to \eqref{hp} in one dimension, and an analogous result for multidimensional systems can be presented by assuming the [SK] condition. In fact,  the major difficulty lies in the fact that the form of Green's function to \eqref{hp} is not explicit in the multidimensional case.
The decay characterization has been a challenging problem so far. In Section \ref{section:linear}, we aim to address the open question.
%We develop a new hypocoercive energy method that yields optimal spectrally localized $L^p$ decay estimates for \eqref{hp}.
Our key idea lies in deducing the leading diffusion profile
induced by the partial relaxation structure, and therefore deriving the two-sided bounds of decay
for \eqref{hp} via the energy method, without relying on any
explicit expression of the Green's function that is typically required in the existing literature.
% to derive lower decay bounds for {\emph{specific}} systems.
To the best of our knowledge, the new finding is of independent interest,
with potential application beyond the present setting.
Precisely, inspired by the spectral analysis of the damped Euler system
\eqref{Eulerlinear} (see Appendix~\ref{appendixB}), we introduce the following initial
effective quantity \vspace{-1mm}
\begin{align}
\Psi_0\coloneqq   V_{1,0}-\sum_{i=1}^{d} A^{i}_{1,2}D^{-1}\partial_{x_{i}} V_{2,0},\label{Psi0}
\end{align}
describing the interaction of conservative and dissipative components, which originates from the quantity
$\tilde{\Psi}_0=a_0-\dive m_0$ in \eqref{Eulerlinear}. Furthermore, by the effective quantity $\Psi=V_{1}-\sum_{i=1}^{d} A^{i}_{1,2}D^{-1} \partial_{x_{i}} V_{2}$ associated with \eqref{Psi0}, we can employ the $L^p$ energy estimates and capture the corresponding leading diffusion part to obtain two-sided bounds for decay estimates \textit{if and only if} $\Psi^\ell_0\in\dot{\mathcal{B}}^{\sigma_1}_{p,\infty}$.

In Section \ref{global existence}, we focus on the  global well-posedness result for the Cauchy problem \eqref{GEQSYM}-\eqref{data}. As pointed out by Crin-Barat and Danchin \cite{c3}, it is possible to consider data in the hybrid Besov space $\dot{B}^{d/p,d/2+1}_{p,2}$, since the low frequencies'
eigenvalues are real and can thus be handled in the $L^p$-type framework, whereas the high frequencies'
eigenvalues are complex-valued, and hence are in the space related to $L^2$. We shall improve the low-frequency regularities of initial data
from the viewpoint
of spectral analysis and the blowup criterion with respect to the $L^1$-time integrability of Lipschitz bound (see \cite{XK1}).
Precisely, we assume that
\begin{align*}
(\mathbf{P}V_0)^{\ell}\in \dot{B}^{\frac{d}{p}-1}_{p,1} \quad\text{and}\quad (\{\mathbf{I} - \mathbf{P}\}V_0)^\ell\in \dot{B}^{\frac{d}{p}}_{p,1}
\end{align*}
for some $2\leq p\leq 4$, where $\mathbf{P}$ and $\mathbf{\{I-P}\}$ represent the orthogonal projectors of the conservative and dissipative parts, respectively. The low-frequency assumption may be more suitable, as it coincides with the quantitative estimates of spectral localization in Proposition \ref{LemmaspectrallocalHp}. Then, it is essentially a matter of establishing global-in-time \emph{a priori} estimates, which %, combined with rather standard arguments,
enables us to construct global solutions.

In Section \ref{section:decay},
our main task is to justify the sharp characterization of large-time asymptotics of solutions to the nonlinear problem \eqref{GEQSYM}-\eqref{data} (see Theorem \ref{theorem0}). We introduce the following low-frequency assumption,
according to the different regularity indices of the conservative component
$\mathbf{P}V_0$ and the dissipative component
$\{\mathbf{I}-\mathbf{P}\}V_0$:
\begin{align*}
(\mathbf{P}V_0)^{\ell}\in \dot{B}^{\sigma_1}_{p,\infty}\quad\text{and}\quad (\{\mathbf{I} - \mathbf{P}\}V_0)^{\ell}\in \dot{B}^{\sigma_1+1}_{p,\infty}\quad\text{for}\quad -\frac{d}{p}\leq \sigma_1<\frac{d}{p}-1.
\end{align*}
To the best of our knowledge, the $L^p$-type low-frequency regularity condition was first imposed to investigate the large-time behavior of global solutions so far, revealing the new parabolic profiles that enable us to achieve the upper bounds for decay rates.
%{\red{We introduce a new $L^p$ type low-frequency condition
%\begin{align*}
%(\mathbf{P}V_0)^{\ell}\in \dot{B}^{\sigma_1}_{p,\infty},\quad (\{\mathbf{I} - \mathbf{P}\}V_0)^\ell\in \dot{B}^{\sigma_1+1}_{p,\infty}
%\end{align*}
%with $-\frac{d}{p}\leq \sigma_1<\frac{d}{p}-1$. Such condition is more general than the classical assumptions on the whole initial data $V_2$ due to the low-frequency control relation $\|(\mathbf{P}V_0)^{\ell}\|_{\dot{B}^{\sigma_1}_{p,\infty}}+\|(\{\mathbf{I} - \mathbf{P}\}V_0)^\ell\|_{\dot{B}^{\sigma_1+1}_{p,\infty}}\lesssim\|V_0^\ell\|_{\dot{B}^{-\frac{d}{2}}_{2,\infty}}\lesssim \|V_0^\ell\|_{L^1} $.}}
Precisely, it is shown that the boundedness assumption that $(\mathbf{P}V_0)^{\ell}\in \dot{B}^{\sigma_1}_{p,\infty}(-d/p\leq \sigma_1<d/p-1)$
is
not only sufficient, but also necessary to achieve upper bounds of decay estimates. Furthermore,
both two-sided bounds for time-decay estimates  hold if and
only if $\Psi^\ell_0\in\dot{\mathcal{B}}^{\sigma_1}_{p,\infty}$. The key error estimate for $\delta V \coloneqq   V-V_L$ between solutions to the nonlinear problem \eqref{m1} and the linear problem \eqref{hp}  originates from the Wiegner's idea for viscous incompressible flows in \cite{wiegner1}, which can be
adapted to suit the dissipative hyperbolic systems: namely, we compute faster time-decay rates of $\mathbf{P}\delta V$ and $\{\mathbf{I} - \mathbf{P}\}\delta V$  compared with that of $\mathbf{P}V_{L}$ and $\{\mathbf{I} - \mathbf{P}\}V_{L}$, respectively, to the linear problem \eqref{hp} in $L^p$-type Besov spaces (see Proposition \ref{properror1}).
With faster rates of $\delta V$, the sharp decay characterization (Theorems \ref{thm1}-\ref{thm3}) is equivalent to establishing the large-time asymptotic relation as $t\rightarrow\infty$:
\begin{equation*}
\setlength\abovedisplayskip{2pt} {\textit{Decay of the linear problem \eqref{hp}}} \xLongleftrightarrow{\raisebox{0.3ex}{}}   {\textit{Decay of the nonlinear problem \eqref{m1}.}}
\setlength\belowdisplayskip{2pt} \end{equation*}
The approach developed here is closely linked with Wiegner's argument \cite{wiegner1} and the inverse Wiegner's argument \cite{skalak1}, which can be regarded as a new attempt
to study partially dissipative hyperbolic systems. In addition, as a side effect,
the smallness of low frequencies of initial data can be removed
in the Fourier semigroup framework, in contrast to
those previous works, for instance, \cite{BHN,XK2}.
% whereas (S2) has been investigated by when the $L^1$ norm of $V_0$ is suitably small.

%In Appendix \ref{appendix0}, we give the derivation of normal form for partially hyperbolic systems, which will be used to obtain the effective a priori estimates in high frequencies.

In Appendix \ref{appendixA}, we briefly recall the Littlewood–Paley theory, including Besov spaces and Chemin–Lerner spaces. Moreover, we develop several non-standard product laws, composition estimates, and commutator estimates to handle nonlinear terms in hybrid $L^p$–$L^2$ Besov spaces. Note that the damped compressible Euler system \eqref{Eulerlinear} for perfect gas flows belongs to the class of partially dissipative hyperbolic systems, for which the physical energy can be viewed as a convex entropy and the [SK] stability condition is satisfied (see \cite{XK1}). Consequently, in Appendix \ref{appendixB}, we take \eqref{Eulerlinear} as a toy model to present the underlying idea of sharp decay characterization for partially dissipative hyperbolic systems, where the initial effective quantity $\Psi_0=a_0-\dive m_0$ is first identified. For the reader’s convenience, we provide a detailed analysis; this appendix can also be read independently of the main results.

Last but not least, we would like to point out again that our main results were obtained by assuming the dissipative entropy in \cite{KY1,SK} and, all the time, the [SK] condition, where the dissipation matrix $L$ is symmetric in \eqref{hp}. For the non-symmetric dissipation, the situation becomes more complicated. Several years ago,
Ueda, Duan and Kawashima \cite{UDK} formulated a new stability condition and analyzed the weaker dissipative structure for hyperbolic systems (see also \cite{CLSZ}).
The suitable modification of our approach is likely to be effective for the non-symmetric case, which is the next consideration.
%Also, we would like to mention related works where the [SK] contidion fails.

\vspace{2mm}

\noindent
\textbf{Notations.} Throughout the paper, $C>0$ denotes a generic constant that may vary from line to line.
We write $A\lesssim B$ (resp. $A\gtrsim B$) if $A\le CB$ (resp. $A\ge CB$) for some $C>0$,
and $A\sim B$ if both $A\lesssim B$ and $A\gtrsim B$ hold.
For $1\le p\le\infty$, the space $L^p(0,T;X)$ (or $L^p_T(X)$) consists of measurable functions
$f:[0,T]\to X$ such that $t\mapsto\|f(t)\|_X$ belongs to $L^p(0,T)$, endowed with the norm
$\|f\|_{L^p_T(X)}:=\|\|f(\cdot)\|_X\|_{L^p(0,T)}$.
We denote by $\mathcal{F}f=\widehat f$ and $\mathcal{F}^{-1}f=\breve f$ the Fourier transform
of $f$ and its inverse, respectively, and define
$\Lambda^\sigma f:=\mathcal{F}^{-1}\big(|\xi|^\sigma \widehat f\big)$ for $\sigma\in\mathbb{R}$, which corresponds to a fractional derivative of order $\sigma$.
Finally, we set $\langle t\rangle=(1+t^2)^{1/2}$.

 \vspace{2mm}

\section{Entropy and [SK] stability condition}\label{section2}
As shown by \cite{HN, KY1,SK,Yong041,XK1}, a rigorous framework for a general class
of partially dissipative hyperbolic systems has been well
developed based on the dissipative entropy and the [SK] stability condition. In this section, we present it for convenience of readers. %We assume that there exists a strictly convex function $E= E(u)$, the entropy density, and some related entropy-flux functions .
To begin with, we set
\begin{equation}\label{Mset}
\begin{aligned}
&\mathcal{M}:=\Big\{\psi\in\mathbb{R}^{n}~:~\langle\psi,G(U)\rangle=0\quad~\text{for any}~U\in \mathcal{O}_{U}\Big\},
\end{aligned}
\end{equation}
where $\mathcal{M}$ is a subset of $\mathbb{R}^{n}$ with ${\rm{dim}}~\mathcal{M}=n_{1}$. It follows from the definition of $\mathcal{M}$ that $G(U)\in \mathcal{M}^{\perp}$ (the orthogonal complement of $\mathcal{M}$) for any $U\in \mathcal{O}_{U}$. Corresponding to the orthogonal decomposition $\mathbb{R}^{n}=\mathcal{M}\oplus\mathcal{M}^{\perp}$,
we write $U\in \mathbb{R}^{n}$
as
\begin{eqnarray*}
U=
\begin{pmatrix}
      U_{1} \\
      U_{2} \\
   \end{pmatrix}
\end{eqnarray*}
such that $U\in\mathcal{M}$ holds if and only if $U_{2}=0$. Moreover,
we denote the set of equilibria for the balance laws \eqref{GEQSYM}:
\begin{align}
&\mathcal{E}=\{U\in \mathcal{O}_{U}~:~G(U)=0\}.
\end{align}

To symmetrize the system \eqref{GEQSYM}, we recall the  notion of dissipative entropy introduced in \cite{KY1,SK}.

%According to the general theory of hyperbolic systems of conservation laws \cite{dafermos1}, we assume that the system satisfies an entropy principle, which is used to symmetrize the system \eqref{GEQSYM} and avoid the loss of one derivative. The following entropy dissipation condition is formulated in \cite{KY1}.

\begin{defn}\label{defnentropy}
    Let $\eta=\eta(U)$ be a smooth function defined in a convex open set $\mathcal{O}_{U}\subset \mathbb{R}^{n}$. Then $\eta=\eta(U)$ is called an entropy for \eqref{GEQSYM} if the following statements hold{\rm:}
    \begin{itemize}
    \item [$(\bullet)$]$\eta=\eta(U)$ is strictly convex in $\mathcal{O}_{U}$ in the sense that the Hessian $D^2_{U}\eta(U)$ is positive definite for $U\in \mathcal{O}_{U} ${\rm;}
    \item [$(\bullet)$]$D_{U}F^{i}(U)\big( D^2_{U}\eta(U)\big)^{-1}$ is  symmetric for $i=1,2...,d$ and $U\in \mathcal{O}_{U} $;
    \item [$(\bullet)$] $U\in \mathcal{E}$ if and only if $\big( D_{U}\eta(U) \big)^{\top}\in \mathcal{M}${\rm;}
    \item [$(\bullet)$] For $U\in \mathcal{E}$, the matrix $D_{U}G(U)\big( D^2_{U}\eta(U)\big)^{-1}$ is symmetric and non-positive definite, and its null space coincides with $\mathcal{M}$.
    \end{itemize}
\end{defn}

Here and below, $D_{U}$ stands for the (row) gradient operator with respect to $U$ and the
superscript $\top$ denotes the transpose. Let $\eta=\eta(U)$ be an entropy in Definition \ref{defnentropy}. Set
\begin{equation}\label{w}
   W=W(U)\coloneqq  (D_{U}\eta(U))^{\top}.
\end{equation}
It was shown in \cite{KY1} that the mapping $W=W(U)$ is  a diffeomorphism from $\mathcal{O}_{U}$ onto its range $\mathcal{O}_{W}$. Let $U=U(W)$ be the inverse mapping, which is also a diffeomorphism from $\mathcal{O}_{W}$ onto its range $\mathcal{O}_{U}$. Then \eqref{GEQSYM} can be rewritten as
\begin{equation}
A^{0}(W)\partial_{t}W+\sum_{i=1}^{d}A^{i}(W)\partial_{x_{i}}W=H(W)
\label{entropeq}
\end{equation}
with
\begin{equation}\nonumber
\begin{aligned}
&A^{0}(W)=D_{W}U(W),\\
&A^{i}(W)=D_{W}F^{i}(U(W))=D_{U}F^{i}(U(W))D_{W}U(W),\\
&H(W)=G(U(W)).
\end{aligned}
\end{equation}
Moreover, we define
\begin{equation}\label{tildeL}
\begin{aligned}
&L(W)=-D_{W}H(W)=-(D_{U}G)(U(W))D_{W}U(W).
\end{aligned}
\end{equation}
We see that $H(W)\in \mathcal{M}^{\perp}$ for any $W\in \mathcal{O}_{W}$. By virtue of \eqref{w}, we have $D_{W}U(W)=\big(D^2_{U}\eta(U(W))\big)^{-1}$.
%we know that
%\begin{equation}\nonumber
%\begin{aligned}
%&A^{0}(W)=\big(D^2\eta(U)\big)^{-1},\\
%&A^{i}(W)=D_{U}f^{i}(U)\big(D^2\eta(U)\big)^{-1},\\
%&L(W)=-(D_{U}G)(U)\big(D^2\eta(U)\big)^{-1}.
%\end{aligned}
%\end{equation}
%By Definition \ref{defnentropy}, we know that $A^{0}(\bar{W})$ is a symmetric and positive definite matrix, $A^{i}$, $i=1,2,\dots,d$, are symmetric matrices. Due to the entropy dissipation condition, the system \eqref{entropeq} is a so-called symmetric dissipative system in the following sense (cf. \cite{KY1}).
It follows from \eqref{entropeq} that the system \eqref{GEQSYM} is symmetric dissipative (see \cite{KY1}).

%\begin{defn}
%The system \eqref{entropeq} is called symmetric dissipative if the following statements hold{\rm:}

%\begin{itemize}
%\item [$(\bullet)$] $A^0(W)$ is symmetric and positive definite for $W\in \mathcal{O}_{W}${\rm;}
%\item [$(\bullet)$] $A^i(W)$ {\rm(}$i=1,2,\dots,d${\rm)} is symmetric for $W\in \mathcal{O}_{W}${\rm;}
%\item [$(\bullet)$] $H(W)=0$ if and only if $W\in \mathcal{M}${\rm;}
%\item [$(\bullet)$] For $W\in \mathcal{M}$, the matrix $L(W)$ is symmetric and non-negative definite, and its null space coincides with $\mathcal{M}$.
%\end{itemize}
%\end{defn}

%As shown by \cite{KY1}, the symmetrization of balance laws \eqref{GEQSYM} can be characterized by the existence of the entropy function.

%\begin{prop}
%The following two statements are equivalent{\rm:}
%\begin{itemize}
%\item [$(\bullet)$]  The system \eqref{GEQSYM} has an entropy.
%\item [$(\bullet)$]  There is a diffeomorphism by which \eqref{GEQSYM} can be transformed into the symmetric dissipative system \eqref{entropeq}.
%\end{itemize}
%\end{prop}

Let $\bar{U}\in \mathcal{E}$ be an equilibrium state of $U$ such that $G(\bar{U})=0$. Then there exists an equilibrium state $\bar{W}$ of $W$. From the definition before, we know that $H(\bar{W})=G(\bar{U})=0$, and the source term $H(W)$ in \eqref{entropeq} can be expressed by
\begin{equation}\label{tildeH}
\begin{aligned}
H(W)=-L(\bar{W})(W-\bar{W})+\mathbf{r}(W),
\end{aligned}
\end{equation}
where $\mathbf{r}(W)$ is given by
\begin{align}
\mathbf{r}(W)\coloneqq   H(W)-H(\bar{W})-D_{W}H(\bar{W})(W-\bar{W}),\label{rtildeU}
\end{align}
which satisfies $\mathbf{r}(W)\in \mathcal{M}^{\perp}$ for all $W\in \mathcal{O}_{W}$.

%Let $\bar{U}\in \mathcal{E}$ be the equilibrium state of $U$ such that $G(\bar{U})=0$. Then there exist equilibrium states $\bar{W}$ and $U_e$ of $W$ and $\tilde{U}$, respectively. From the definitions before, we know that $H(U_e)=G(\bar{U})=0$, and the source term $H(W)$ in \eqref{normal} can be expressed by
%\begin{equation}\label{tildeH}
%\begin{aligned}
%\tilde{H}(\tilde{U})=-L(\bar{W})(\tilde{U}-U_e)+\mathbf{r}(\tilde{U}),
%\end{aligned}
%\end{equation}
%where $\mathbf{r}(\tilde{U})$ is given by
%\begin{align}
%\mathbf{r}(\tilde{U})=\tilde{H}(\tilde{U})-\tilde{H}(U_e)-D_{\tilde{U}}\tilde{H}(U_e)(\tilde{U}-U_e),\label{rtildeU}
%\end{align}
%which satisfies $\mathbf{r}(\tilde{U})\in{\red{ \mathcal{M}^{\perp}}}$ for all $\tilde{U}\in \mathcal{O}_{\tilde{U}}$.

Unfortunately, even for very simple cases, the dissipative entropy alone is not enough to prevent the appearance of singularities in finite time. Therefore,
an additional assumption is needed, that is, the [SK] condition, which was initiated by Shizuta and Kawashima \cite{SK}. The condition guarantees
the necessary coupling between conservative and dissipative components in order to have the enough
dissipation in each state variable.
%Note that the property \eqref{dissipationU211} only provides the dissipation of the damped component. In order to capture the dissipation structure of the non-damped component and study the large-time decay of partially dissipative systems, Shizuta-Kawashima \cite{SK}  formulated a celebrated stability condition.
Let $\bar{W}\in \mathcal{M}$ be the constant state and consider the linearized symmetric form of \eqref{entropeq} at $W=\bar{W}$:
\begin{align}
A^{0}(\bar{W})\partial_t W+\sum_{i=1}^d A^{i}(\bar{W}) \partial_{x_i} W+L(\bar{W})W=0,\label{linear*}
\end{align}
where $A^{i}(\bar{W})$ ($i=0,1,2,\dots,d$) and $L(\bar{W})$ are constant symmetric matrices given by \eqref{entropeq} and \eqref{tildeL}. Then the stability condition for
\eqref{linear*} is formulated as follows.

\begin{defn}\label{defnSK}
The symmetric form \eqref{linear*} satisfies the [SK] stability condition if the following property holds true:
%\begin{align}\label{eq:SK-eig}
%\{\mathrm{eigenvectors\, of \,} A_*(\omega) \} \cap \mathrm{Ker}(L_*) = \{0\}, \quad \xi\in\mathbb{R}^d,
%\end{align}
%or equivalently,
Let $\phi\in \mathbb{R}^n$ satisfy $\phi\in \mathcal{M}$ {\rm(}that is, $L(\bar{W})\phi=0${\rm)} and $\lambda A^{0}(\bar{W})\phi +A(\omega)\phi=0$ for some $(\lambda,\omega)\in \mathbb{R}\times \mathbb{S}^{d-1}$, then $\phi=0$.  Here $A(\omega)$ with $\omega |\xi|=\xi$ is given by
$A(\omega)=\sum\limits_{i=1}^{d} A^{i}(\bar{W}) \omega_{i}$.
\end{defn}

%\begin{align}
%A^{0}\partial_{t}W+\sum_{j=1}^{d} A^{j} W_{x_{j}}+\bar{L} W=(A^{0}-\tilde{A}^{0}(W))\partial_{t}W+\sum_{j=1}^{d} (A^{j}-\tilde{A}^{j}(W))W_{x_{j}}+

%+\sum_{j=1}^{d}\tilde{A}^{j}(W)W_{x_{j}}=H(W)-(\tilde{A}^{0}(W)-A^{0})\partial_{t}W
%\end{align}

%\begin{itemize}

%\item (H1). System \eqref{GEQSYM} has an entropy in the sense of Definition \ref{defnentropy};

%\item (H2) $(\tilde{A}^{1},\tilde{A}^{2},\dots,\tilde{A}^{d},L)$ satisfies {\rm[SK]} condition in the sense of Definition \ref{defnSK};

%\item (H3)

%\item (H4) There exists a symmetric positive definite matrix

%\begin{equation}\nonumber
%\begin{aligned}
%H(W)=\begin{pmatrix}
%  0 \\
%    g(W_{2})W_{2}.
%    \end{pmatrix}
%\end{aligned}
%\end{equation}

According to \cite{SK}, the [SK] condition
implies the equivalent characterization that the eigenvalues of the Fourier transform of \eqref{linear*} satisfy
\begin{align}
{\rm Re} \,\lambda(i\xi)\leq -\frac{c|\xi|^2}{1+|\xi|^2} \label{eigh:SK}
\end{align}
for $\xi\in\mathbb{R}^{d}$ and $c>0$, which reflects the intrinsic nature of partial dissipation: diffusion dominates at low frequencies due to the indirect dissipation mechanism, whereas high frequencies are directly damped. In other words, the [SK] condition precisely characterizes
the dissipation transfer via the hyperbolic coupling.

\iffalse
The [SK] condition can be read in several different ways (see \cite{SK}).
\begin{prop}\label{propSK} Let $\lambda(i\xi)$ denote the eigenvalues of the Fourier transform of the symmetric form \eqref{linear*}. The following statements are equivalent to each other:

\begin{itemize}

\item [$(\bullet)$] The system \eqref{linear*} satisfies the {\rm[SK]} condition{\rm;}

\item [$(\bullet)$] {\rm Re} $\lambda(i\xi)<0$ for $\xi\neq 0${\rm;}

\item [$(\bullet)$] There is a constant $c>0$ such that
{\rm Re} $\lambda(i\xi)\leq -\frac{c|\xi|^2}{1+|\xi|^2}$ for $\xi\in\mathbb{R}^{d}$ {\rm;}

\item [$(\bullet)$] There is an $n\times n$ matrix $K(\omega)$ depending smoothly on $\omega\in \mathbb{S}^{d-1}$ satisfying the following properties{\rm:}
\begin{itemize}
\item [$(\rm i)$] $K(-\omega)=-K(\omega)$ for $\omega\in \mathbb{S}^{d-1}${\rm;}
\item [$(\rm ii)$] $K(\omega)$ is skew-symmetric for $\omega\in \mathbb{S}^{d-1}${\rm;}
\item [$(\rm iii)$] $[K(\omega)A(\omega)]^{{\rm{sym}}}+L$ is positive definite for $\omega\in \mathbb{S}^{d-1}$, where $[X]^{{\rm{sym}}}$ denotes the symmetric part of the matrix $X$.
\end{itemize}
\end{itemize}

\end{prop}
\fi

%We also mention that Beauchard-Zuazua \cite{BZ} proved that {\rm[SK]} condition is equivalent to the Kalman rank condition in control theory.

If \eqref{GEQSYM} admits an entropy in the sense of Definition \ref{defnentropy}, then  one can  also transform \eqref{linear*} to a symmetric dissipative system of the normal form proposed by Yong and Kawashima (see \cite{KY1}). Furthermore, we have after a suitable transform
\begin{equation}\label{normal:linear}
    \begin{aligned}
&\partial_{t}V+\sum_{i=1}^{d}A^{i}\partial_{x_{i}}V+LV=0
\end{aligned}
\end{equation}
with
\begin{equation}\nonumber
\begin{aligned}
    A^{i}=\begin{pmatrix} A^{i}_{1,1} & A^{i}_{1,2} \\ A^{i}_{2,1} & A^{i}_{2,2} \end{pmatrix}
\end{aligned} \quad\text{and}\quad L=\begin{pmatrix} 0 & 0 \\ 0 & D
    \end{pmatrix},
\end{equation}
where  $A^i$ and $L$ are constant symmetric matrices of size $n\times n$, and $D$ is the symmetric positive definite matrix of size $n_2 \times n_2$. %We may refer to Appendix \ref{appendix0} for more details. %the definitions of $A^i$ and $L$.

%We recall the following fact that

In addition, the [SK] condition may be linked to the diffusion structure: If $A^i_{1,1}=0$, then
\begin{equation}\label{eq:SK-elliptic}
-\sum_{i,l=1}^{d}A^{i}_{1,2}D^{-1}A^{l}_{2,1}\partial_{x_{i}x_{l}}^{2}
\ \text{is strongly elliptic}
\quad \Longleftrightarrow \quad
\text{the {\rm[SK]} condition holds}.
\end{equation}
The property \eqref{eq:SK-elliptic} is widely used in the study of relaxation limit problems; see \cite{peng1,Yong041,c2}.
%Moreover, the assumption $A^i_{1,1}=0$ can be relaxed to the case where$A^i_{1,1}$
%are diagonal, by a Galilean change of variables.
In the present paper, we shall take advantage of the implication of the
{\rm[SK]} condition, which induces the underlying diffusion profile on the large-time behavior of solutions.
%Conversely, the corresponding strong ellipticity also implies the
%[SK] condition (see \cite{c2}):

%For the necessity of the [SK] condition, one can refer to

\iffalse
\begin{prop}\label{lemmaSKpositive}
Consider the normal form \eqref{normal:linear} with $A^i_{1,1}=0$. Then the following two statements are equivalent:
\begin{itemize}
\item The operator $-\sum\limits_{i,l=1}^{d}A^{i}_{1,2}D^{-1}A^{l}_{2,1}\partial_{x_{i}x_{l}}^2$ is strongly elliptic{\rm;}
\item The {\rm[SK]} condition holds.
\end{itemize}
\end{prop}
\fi
\section{Decay character theory of linear systems} \label{section3}
In this section, the main purpose is to develop the characterization of optimal time-decay estimates
for partially dissipative hyperbolic
systems, since we consider the difference between the solutions of the nonlinear equation \eqref{GEQSYM} and the corresponding linearized system at equilibrium $\bar{U}$. For that purpose, we assume that the system \eqref{GEQSYM} admits an entropy. Then there is a function $V=V(U)$ depending smoothly on $U$ and satisfying $V(\bar{U})=0$ such that the system \eqref{entropeq} can be rewritten as the  normal symmetric dissipative system
\begin{equation}\label{m1}
    \begin{aligned}
&\partial_{t}V+\sum_{i=1}^{d}A^{i}\partial_{x_{i}}V+LV=R^{0}+\sum_{i=1}^{d}\partial_{x_{i}}R^{i}
\end{aligned}
\end{equation}
with initial data
\begin{align}
V(0,x)=V_0(x),\quad x\in\mathbb{R}^d.\label{m1d}
\end{align}
Here, $A^{i}$ ($i=1,\dots,d$) are symmetric constant matrices, $L$ is the constant symmetric matrix given by
\begin{equation}\label{block1}
\begin{aligned}
L=\begin{pmatrix}
    0 & 0 \\ 0 & D
    \end{pmatrix},
\end{aligned}
\end{equation}
where $D$ is a $n_2\times n_2$ positive definite matrix satisfying
\begin{align}
&\langle D V_2, V_2\rangle\geq \kappa |V_2|^2
\label{dissipationU211}
\end{align}
for some constant $\kappa>0$. In addition, the nonlinear terms $R^i$ ($i=0,\dots,d$) are given by
\begin{equation}\label{R0Ri}
\left\{
    \begin{aligned}
    &R^{0}\coloneqq  \mathbb{A}^0\Big(G(U)-G(\bar{U})-D_{U}G(\bar{U})(U-\bar{U})\Big),\\
    &R^{i}\coloneqq  \mathbb{A}^i \Big( F^i(U)-F^i(\bar{U})-D_{U}F^i(\bar{U})(U-\bar{U})\Big),\quad i=1,\dots,d,
\end{aligned}
\right.
\end{equation}
where $\mathbb{A}^i$ ($i=0,\dots,d$) are some constant matrices.  %The derivation of \eqref{m1}-\eqref{R0Ri} can be found in Appendix \ref{appendix0}.
%Note that the quadratic nonlinear term $\mathbb{R}^0$ satisfies $R^{0}\in{\red{ \mathcal{M}^{\perp}}}$, and there

%We rewrite \eqref{tildeV} in the conservative-dissipative form. Letting $V^*=A^0(\bar{W})^{-\frac{1}{2}}\tilde{V}$, the system \eqref{tildeV}
%becomes
%\begin{equation}\label{tildeV}
%\begin{aligned}
%&  \partial_{t}\tilde{V}+\sum_{i=1}^{d}A^0(\bar{W})^{-\frac{1}{2}}A^{i}(\bar{W})\partial_{x_i}\tilde{V}+L(\bar{W})\tilde{V}\\
%&\quad =G(U)-G(\bar{U})-D_{W}G(\bar{U})(U-\bar{U})-\sum_{i=1}^{d}\partial_{x_i}\Big(F^i(U)-F^i(\bar{U})-D_{U}F^i(\bar{U})(U-\bar{U}) \Big),
%\end{aligned}
%\end{equation}
%According to the definition of $A^0(\bar{W})^{-\frac{1}{2}}L(\bar{W})A^0(\bar{W})^{\frac{1}{2}}$, there exists an orthogonal matrix $\mathcal{P}$ such that
%\begin{equation}\label{Lblock}
%\begin{aligned}
%\mathcal{P} A^0(\bar{W})^{-\frac{1}{2}} L(\bar{W})A^0(\bar{W})^{\frac{1}{2}} \mathcal{P}^{-1}=\begin{pmatrix}
 %   0 & 0 \\ 0 & D
%    \end{pmatrix},
%\end{aligned}
%\end{equation}
%where $D$ is an $n_2\times n_2$ positive definite matrix.

%Due to the last property in Definition \ref{defnentropy} and the
To highlight the property of partial dissipativity, we need to reformulate \eqref{m1} into the conservative-dissipative form. For that end,
corresponding to the orthogonal decomposition $\mathbb{R}^{n}=\mathcal{M}\oplus\mathcal{M}^{\top}$, we write
\begin{equation}\label{block}
\begin{aligned}
V=\begin{pmatrix}
V_{1}\\
V_{2}
    \end{pmatrix},\quad\quad V_0=\begin{pmatrix}
V_{1,0}\\
V_{2,0}
    \end{pmatrix}\quad\text{and}\quad
    A^{i}= A^{i}(\bar{U})=\begin{pmatrix} A^{i}_{1,1} & A^{i}_{1,2} \\ A^{i}_{2,1} & A^{i}_{2,2} \end{pmatrix}.
\end{aligned}
\end{equation}
 Then the problem \eqref{m1}-\eqref{m1d} reads
\begin{equation}\label{hp121}
\left\{
\begin{aligned}
& \partial_{t}V_{1}+\sum_{i=1}^{d}\Big(A_{1,1}^{i}\partial_{x_{i}}V_{1}+A_{1,2}^{i}\partial_{x_{i}}V_{2}\Big)=\sum_{i=1}^{d}\partial_{x_{i}}(R^{i})^1,\\
&\partial_{t}V_{2}+\sum_{i=1}^{d}\Big(A_{2,1}^{i}\partial_{x_{i}}V_{1}+A_{2,2}^{i}\partial_{x_{i}}V_{2}\Big)+DV_{2}=(R^{0})^2+\sum_{i=1}^{d}\partial_{x_{i}}(R^{i})^2,\\
&(V_{1},V_2)(0,x)=(V_{1,0},V_{2,0}).
\end{aligned}
\right.
\end{equation}
%Since Ker($L$) coincides with $\mathcal{M}$, we know that $L$ satisfies the following partial dissipativity condition
%\begin{align}
%&<L V, V>\geq \kappa |\{\mathbf{I} - \mathbf{P}\} V|^2,\quad\quad V\in \mathcal{O}_{V}, \label{dissipationU21}
%\end{align}
Let $\mathbf{P}$ and $\{\mathbf{I} - \mathbf{P}\}$ denote the orthogonal projections onto $\mathcal{M}$ and $\mathcal{M}^{\top}$, respectively. That is,
\begin{equation}\label{3}
\begin{aligned}
\mathbf{P}V=\begin{pmatrix}
V_{1}\\
0
    \end{pmatrix}\quad\text{and}\quad
    \{\mathbf{I} - \mathbf{P}\}V=\begin{pmatrix}
0\\
V_{2}
    \end{pmatrix}.
\end{aligned}
\end{equation}
It follows from
\eqref{Mset} that $\mathbf{P}R^0=0$. This property will be used in the analysis of large-time asymptotics. Moreover, it will also be shown that, with the faster order of convergence, the conservative
component $\mathbf{P}V$ and
dissipative component $\{\mathbf{I} - \mathbf{P}\}V$ are asymptotically approximated
by profiles associated with  the diffusive systems in
the spirit of Chapman-Enskog expansion. As a first step,
we try to give the $L^p$ decay characterization for a class of diffusive systems with Fourier multiplier.
%This section is devotion to the characterization of optimal time-decay estimates for some linear systems. First, we consider fully dissipative systems and extend the classical $L^2$ theorem for decay characters to the $L^p$ setting. Then, we characterize the optimal $L^p$ time-decay estimates of solutions to partially dissipative hyperbolic systems. Based on the detailed analysis of dissipative structures, we study the asymptotic equivalence properties of solutions between partially dissipative hyperbolic systems and some profiles associated with fully dissipative systems.

\subsection{Diffusive systems with Fourier multiplier}
\label{decay-d}
Consider a large class of diffusive systems  with Fourier multiplier
\begin{equation}\label{lineardis}
\left\{
\begin{aligned}
    &\partial_t U=\mathcal{L}U,\\
    &U|_{t=0}=U_0.
\end{aligned}
\right.
\end{equation}
Here $U=(U^1,U^2,\dots, U^{n})$ $(n\geq1)$ is the unknown at $(t,x)\in \mathbb{R}_{+}\times\mathbb{R}^{d}$ ($d\geq1$), and $\mathcal{L}$ is a  Fourier multiplier defined via its symbol
\begin{align}
&S(\xi)\coloneqq   P(\xi)^{-1}D(\xi) P(\xi),\quad \text{a.e. } \xi \in \mathbb{R}^d\setminus\{0\},\label{symbol}
\end{align}
where $D(\xi)$ is a diagonal matrix of size $n$ with $D(\xi)_{ij}=-c_{i}|\xi|^{2\alpha}\delta_{ij}$ for some $\alpha>0$ and $0<c_*\leq c_{i}\leq c^*$ ($i=1,\dots,n$), and
$P(\xi)$ is an orthogonal matrix of size $n$ whose entries $P(\xi)_{i,j}$ are homogeneous functions for a.e. $\xi \in \mathbb{R}^d\setminus\{0\}$. Basic examples include the heat equation ($\mathcal{L}=\Delta$ with $P(\xi)=\mathbf{I}_{n}$ and $D(\xi)=-|\xi|^{2} \mathbf{I}_{n}$) and the fractional diffusion equation ($\mathcal{L}=-(-\Delta)^{\alpha}$ with $P(\xi)=\mathbf{I}_{n}$ and $D(\xi)=-|\xi|^{2\alpha} \mathbf{I}_{n}$). For any $U_0\in \mathcal{S}'_h(\mathbb{R}^d)$, due to the similarity invariance $e^{tS(\xi)}=P(\xi)^{-1}e^{tD(\xi)}P(\xi)$, the Fourier transform of the solution $U=e^{t\mathcal{L}}U_0$ for \eqref{lineardis} can be represented by
\begin{equation}\nonumber
\begin{aligned}
\mathcal{F}(U)(t,\xi)=P(\xi)^{-1}e^{tD(\xi)} P(\xi) \mathcal{F}(U_0)(\xi),\quad \text{a.e.}~~ \xi \in \mathbb{R}^d\setminus\{0\}~~\text{and}~~\forall t>0.
\end{aligned}
\end{equation}

\vspace{2mm}
The theory of decay characters was first introduced by
Bjorland and Schonbek in \cite{bjorland1}, and further
developed by Niche and Schonbek in \cite{niche1}, with the motivation of obtaining sharp upper and lower decay bounds for the $L^2$-norm of solutions to a large class of linear diffusive systems with Fourier multiplier. This theory, in fact, has been successfully applied to the heat equation, the Navier–Stokes equations \cite{bjorland1}, the quasi-geostrophic equations \cite{niche1} and so on. Brandolese \cite{brandolese1}
slightly modified the original notion of decay character, which allows, in particular, for obtaining the two-sided decay bounds of $L^2$ estimates
\eqref{ch2} if and only if the initial data belong to the suitable subset of Besov spaces. Recently, Brandolese et al. \cite{BSXZ} have developed the theory of decay characters for compressible Navier-Stokes equations.
%See also the review \cite{brandolesehand}. Among them, Brandolese \cite{brandolese1} established the sufficient and necessary conditions of time-decay estimates for \eqref{lineardis} in Sobolev spaces.
Here, we are interested in establishing $L^{p}$ estimates  from above and below for solutions to the diffusive system \eqref{lineardis}.
%{\red{We believe that, in addition to partially dissipative hyperbolic systems, the $L^p$ version of decay characterization theorem can be of much wider use for nonlinear systems, such as the incompressible Navier–Stokes equations.
We need the following  estimates for the semigroup $e^{t\mathcal{L} }$ of \eqref{lineardis} restricted to functions with compact support away from the origin in Fourier variables.

\begin{lemma}\label{lemma22}
Let $1\leq p\leq \infty$, and $f$
be a tempered distribution
such that $\operatorname{Supp}\mathcal{F}(f)$ is in the annulus $\lambda \mathcal{C}$, where $\mathcal{C}\coloneqq\{\xi\in\mathbb{R}^{d}~:~0<a_{1}<|\xi|<a_{2}<\infty\}$ and $ \lambda>0$. Then, for all $t>0$ it holds that
\begin{align}
&c_{0}e^{-r_{0}\lambda^{2\alpha}t}\|f\|_{L^p}\leq \|e^{t\mathcal{L} }f\|_{L^p}\leq C_{0}e^{-R_{0}\lambda^{2\alpha}t}\|f\|_{L^p},\label{lowupper}
\end{align}
where $c_{0}, C_{0}$, $r_{0}$, $R_{0}>0$ are positive constants independent of $\lambda$ and $t$.
\end{lemma}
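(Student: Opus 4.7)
The plan is to derive both bounds from a single kernel estimate: scale to reduce to a unit-frequency problem, then apply Young's convolution inequality to pass from an $L^1$ bound on the kernel to an $L^p$ bound on the solution.

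\smallskip
\noindent\textbf{Scaling and setup.} Since $D$ is $2\alpha$-homogeneous and the entries of $P$ are $0$-homogeneous, $S(\lambda\zeta)=\lambda^{2\alpha}S(\zeta)$. Fix a smooth cutoff $\tilde{\phi}\in C_c^\infty(\mathbb{R}^d)$ equal to $1$ on $\mathcal{C}$ and supported in a slightly larger annulus. Then $e^{t\mathcal{L}}f = K_{t,\lambda}\ast f$ with $K_{t,\lambda}\coloneqq \mathcal{F}^{-1}\bigl(e^{tS(\xi)}\tilde{\phi}(\xi/\lambda)\bigr)$. The rescaling $\xi=\lambda\zeta$ yields
\begin{equation*}
K_{t,\lambda}(x)=\lambda^{d}\widetilde{K}(\lambda^{2\alpha}t,\lambda x),\qquad \widetilde{K}(s,\cdot)\coloneqq \mathcal{F}^{-1}\bigl(e^{sS(\zeta)}\tilde{\phi}(\zeta)\bigr),
\end{equation*}
so $\|K_{t,\lambda}\|_{L^{1}}=\|\widetilde{K}(\lambda^{2\alpha}t,\cdot)\|_{L^{1}}$. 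The lemma thus reduces to a two-sided exponential-in-$s$ $L^{1}$-bound on $\widetilde{K}$ and on its inverse counterpart.

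\smallskip
\noindent\textbf{Upper bound.} Young's inequality gives $\|e^{t\mathcal{L}}f\|_{L^{p}}\leq \|\widetilde{K}(s,\cdot)\|_{L^{1}}\|f\|_{L^{p}}$ with $s=\lambda^{2\alpha}t$. Writing $e^{sS(\zeta)}=P(\zeta)^{-1}e^{sD(\zeta)}P(\zeta)$ entry-wise, $\widetilde{K}(s,\cdot)$ is a finite sum of terms $\mathcal{F}^{-1}g_{s}$ with
\begin{equation*}
g_{s}(\zeta)\coloneqq [P(\zeta)^{-1}]_{ki}P_{kj}(\zeta)\,\tilde{\phi}(\zeta)\,e^{-sc_{k}|\zeta|^{2\alpha}}.
\end{equation*}
On $\operatorname{Supp}\tilde{\phi}$ we have $c_{k}|\zeta|^{2\alpha}\geq c_{\ast}(a_{1}/2)^{2\alpha}$, and Fa\`a di Bruno yields $|\partial_{\zeta}^{N}g_{s}(\zeta)|\lesssim (1+s)^{N}e^{-sc_{k}|\zeta|^{2\alpha}}$ uniformly on the annulus. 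Combining this with the classical bound $\|\mathcal{F}^{-1}g_{s}\|_{L^{1}_{x}}\lesssim \|(I-\Delta_{\zeta})^{N}g_{s}\|_{L^{1}_{\zeta}}$ for any integer $N>d/2$, and absorbing the polynomial factor $(1+s)^{2N}$ into a slight reduction of the exponential rate, gives $\|\mathcal{F}^{-1}g_{s}\|_{L^{1}}\leq C_{0}e^{-R_{0}s}$ for some $R_{0}\in(0,c_{\ast}(a_{1}/2)^{2\alpha})$, which is the desired upper bound.

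\smallskip
\noindent\textbf{Lower bound.} Since $S(\xi)$ is invertible for $\xi\neq 0$, the inverse-time symbol $e^{-tS(\xi)}\tilde{\phi}(\xi/\lambda)$ defines a tempered kernel $K_{t,\lambda}^{-}\coloneqq \mathcal{F}^{-1}\bigl(e^{-tS(\xi)}\tilde{\phi}(\xi/\lambda)\bigr)$. Because $\tilde{\phi}(\xi/\lambda)\equiv 1$ on $\operatorname{Supp}\mathcal{F}(f)$, one has the reproducing identity $f=K_{t,\lambda}^{-}\ast(e^{t\mathcal{L}}f)$, and Young's inequality gives $\|f\|_{L^{p}}\leq \|K_{t,\lambda}^{-}\|_{L^{1}}\|e^{t\mathcal{L}}f\|_{L^{p}}$. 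Repeating the scaling plus derivative scheme above, now with the exponential factor $e^{+sc_{k}|\zeta|^{2\alpha}}\leq e^{c^{\ast}(2a_{2})^{2\alpha}s}$ bounded from above on the annulus, produces $\|K_{t,\lambda}^{-}\|_{L^{1}}\leq c_{0}^{-1}e^{r_{0}\lambda^{2\alpha}t}$ for some $r_{0}>c^{\ast}(2a_{2})^{2\alpha}$. Rearranging yields $\|e^{t\mathcal{L}}f\|_{L^{p}}\geq c_{0}e^{-r_{0}\lambda^{2\alpha}t}\|f\|_{L^{p}}$.

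\smallskip
\noindent\textbf{Main obstacle.} The technical heart is the exponential $L^{1}$-kernel estimate with sharp rate: differentiating $e^{\pm sc_{k}|\zeta|^{2\alpha}}$ on the annulus generates polynomial powers of $s$ that must be absorbed by an arbitrarily small degradation of the exponential rate. Making this clean requires exploiting both the diagonalization $S=P^{-1}DP$ and the $0$-homogeneity of $P$, so that all $\zeta$-derivatives of $P$ and $P^{-1}$ remain bounded on the unit annulus independently of $\lambda$ before the rescaling step is undone.
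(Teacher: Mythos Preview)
Your proof is correct and follows essentially the same route as the paper: both obtain the lower bound from the reproducing identity $f=K_{t}^{-}\ast(e^{t\mathcal{L}}f)$ together with an $L^1$ bound on the inverse kernel via integration by parts/Fa\`a di Bruno, and both treat the upper bound by the analogous forward-kernel estimate (the paper simply cites the classical result here). Your explicit use of the homogeneity $S(\lambda\zeta)=\lambda^{2\alpha}S(\zeta)$ to rescale to a unit annulus is exactly what justifies the paper's ``without loss of generality $\lambda=1$'' step.
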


\begin{proof}
The upper bound in \eqref{lowupper} is now classical (see Theorem 2.34 in \cite{bahouri1} or Lemma 3.3 in \cite{brandolese1}), so it suffices to show the lower bound in \eqref{lowupper}. Without loss of generality, we focus on the case that $\lambda=1$.
%In fact, for general $\lambda>0$, by using the rescaling $F(t,x)\coloneqq  (e^{\mathcal{L} t}f)(\lambda^{2\alpha}t, \lambda x)$ which is also a solution to $\eqref{lineardis}_{1}$ subject to the initial datum $F_{0}(x)\coloneqq   f(\lambda x)$, from the case $\lambda=1$ we are able to recover the desired estimate in \eqref{lowupper}.

Let $\psi(\xi)$ be a smooth cutoff function identically equal to one on $ \mathcal{C}$ with compact support in $\{\xi\in\mathbb{R}^{d}:~\frac{1}{2}a_{1}<|\xi|<2a_{2}\}$. As $\mathcal{F}f=\psi(\xi)\mathcal{F}f$, we have
\begin{equation}\nonumber
\begin{aligned}
&f=e^{-t\mathcal{L} }e^{t\mathcal{L}}f=\mathcal{F}^{-1}\Big(\psi(\xi)e^{-tS(\xi)} \mathcal{F}(e^{t\mathcal{L}}f)(\xi)\Big)=G_{t,\psi}\ast (e^{t\mathcal{L}} f)
\end{aligned}
\end{equation}
with  $G_{t,\psi}\coloneqq  \mathcal{F}^{-1}(\psi(\xi) e^{-tS(\xi)})$. Remark that $G_{t,\psi}$ is a well-defined tempered distribution (in fact smooth with rapid decay) due to the compact support of $\psi(\xi)$. With the aid of Young's inequality for convolution, it only remains to deduce that %we derive the lower bound in \eqref{lowupper} in the case $\lambda=1$
\begin{equation}
\begin{aligned}
&\|G_{t,\psi}\|_{L^1}\leq Ce^{Rt} \label{GL1}
\end{aligned}
\end{equation}
for uniform positive constants $C$ and $R$ independent of $t$. Indeed, it follows from  the inverse Fourier transform and integration by parts that
\begin{equation}\nonumber
\begin{aligned}
G_{t,\psi}&=\int_{\mathbb{R}^{d}}e^{i\xi\cdot x}\psi(\xi) P(\xi)^{-1} e^{-t D(\xi)} P(\xi) d\xi\\
&=(1+|x|^2)^{-d}\int_{\mathbb{R}^{d}}\big( ({\rm Id}-\Delta_{\xi})^{d} e^{i\xi\cdot x}\big) \psi(\xi) P(\xi)^{-1} e^{-t D(\xi)} P(\xi) d\xi\\
&=(1+|x|^2)^{-d}\int_{\mathbb{R}^{d}} e^{i\xi\cdot x} \Big( ({\rm Id}-\Delta_{\xi})^{d} \big( \psi(\xi) P(\xi)^{-1} e^{-t D(\xi)} P(\xi)\big) \Big) d\xi.
\end{aligned}
\end{equation}
By Leibniz’s formula and Fa\'a di Bruno’s formula, through a direct computation, we obtain
\begin{equation}\nonumber
\begin{aligned}
\Big|({\rm Id}-\Delta_{\xi})^{d} \big( \psi(\xi) P(\xi)^{-1} e^{-t D(\xi)} P(\xi)\big)\Big|\leq C \langle t\rangle^{2d} e^{c^*(2a_{2})^{2\alpha} t}\leq Ce^{2c^*(2a_{2})^{2\alpha} t}.
\end{aligned}
\end{equation}
Hence, the assertion of \eqref{GL1} follows, and  the proof of Lemma \ref{lemma22} is finished.
\end{proof}

In what follows,  we present the sharp decay characterization for \eqref{lineardis} in the $L^{p}$ framework
in terms of Besov regularity. %We believe that our result is of independent interest;
In particular, it allows us to recover the classical decay character theorem as in \cite{bjorland1} and \cite{niche1} by choosing $p=r=2$.

\begin{prop} \label{propgeneral2}
Let $1\leq p, r\leq \infty$, $\sigma_{1}\in\mathbb{R}$, and $\sigma>\sigma_1$. Let $U=e^{t\mathcal{L}}U_0$ be the solution of \eqref{lineardis} with  $U_0\in \dot{B}^{\sigma}_{p,r}$.
Then
\begin{itemize}
\item  {\rm (}Upper bound{\rm )} For all $t>0$, $U$
satisfies
 \begin{align}
        & \|U(t)\|_{\dot{B}^{\sigma}_{p,r}}\leq C (1+t)^{-\frac{1}{2\alpha}(\sigma-\sigma_{1})},\label{Ul1}
    \end{align}
if and only if $U_{0}\in \dot{B}^{\sigma_{1}}_{p,\infty}${\rm;}

 \item  {\rm (}Upper and lower bounds{\rm )} For all $t>0$, $U$ satisfies %For any $t_{L}> 0$,
  \begin{align}
        &c (1+t)^{-\frac{1}{2\alpha}(\sigma-\sigma_{1})}\leq \|U(t)\|_{\dot{B}^{\sigma}_{p,r}}\leq C (1+t)^{-\frac{1}{2\alpha}(\sigma-\sigma_{1})},\quad  t> 0,\label{Ul2}
    \end{align}
if and only if $U_{0}\in \dot{\mathcal{B}}^{\sigma_{1}}_{p,\infty}$.
\end{itemize}
Here $c,C>0$ are two constants independent of $t$, and the subset $\dot{\mathcal{B}}^{\sigma_{1}}_{p,\infty}$ %of $\dot{B}^{\sigma_1}_{p,\infty}$
is  defined by \eqref{Bsubset}. %{\red{Moreover, if we further assume that $U_0\in \dot{B}^{\sigma}_{p,r}$, then the rate $t^{-\frac{1}{2\alpha}(\sigma-\sigma_{1})}$ in \eqref{Ul1} and \eqref{Ul2} can be replaced by $(1+t)^{-\frac{1}{2\alpha}(\sigma-\sigma_{1})}$.}}
\end{prop}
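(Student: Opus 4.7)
The plan is to reduce each of the four implications to Lemma~\ref{lemma22} applied to individual dyadic blocks $\dot{\Delta}_{j}U_{0}$, followed by a single shift of the frequency index around the characteristic dissipative scale. For each $t>0$, let $j_{t}$ be the integer satisfying $2^{2\alpha j_{t}}t\in[1,2^{2\alpha})$ and set $\mu_{j}\coloneqq 2^{\sigma_{1}j}\|\dot{\Delta}_{j}U_{0}\|_{L^{p}}$. With the shift $k\coloneqq j-j_{t}$, Lemma~\ref{lemma22} factorizes $2^{\sigma j}\|\dot{\Delta}_{j}U(t)\|_{L^{p}}$ as $t^{-(\sigma-\sigma_{1})/(2\alpha)}$ times a weighted version of the shifted sequence $(\mu_{k+j_{t}})_{k}$, with upper weight $\omega_{k}\coloneqq 2^{(\sigma-\sigma_{1})k}e^{-R_{0}2^{2\alpha k}}$ (and a structurally identical lower analogue). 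The key point is that $(\omega_{k})_{k\in\mathbb{Z}}\in\ell^{r}(\mathbb{Z})$: super-exponential decay handles $k\to+\infty$, while the strict inequality $\sigma>\sigma_{1}$ makes the geometric factor handle $k\to-\infty$. This summability is the analytic backbone of the whole proposition.

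\textbf{Upper-bound equivalence.} For sufficiency in~(i), assuming $U_{0}\in\dot{B}^{\sigma_{1}}_{p,\infty}$ so that $\mu^{*}\coloneqq\sup_{j}\mu_{j}<\infty$, the upper half of Lemma~\ref{lemma22} and the index shift immediately yield $\|U(t)\|_{\dot{B}^{\sigma}_{p,r}}\lesssim\mu^{*}\|(\omega_{k})\|_{\ell^{r}}\,t^{-(\sigma-\sigma_{1})/(2\alpha)}$ for $t\geq 1$; the small-time regime $t\in[0,1]$ is absorbed by the uniform $\dot{B}^{\sigma}_{p,r}$-bound that the same lemma gives at any fixed time. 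For the converse, I would isolate one dyadic block in the $\dot{B}^{\sigma}_{p,r}$-norm and apply the \emph{lower} half of Lemma~\ref{lemma22} at the calibrated time $t_{j}\coloneqq 2^{-2\alpha j}$, so that $2^{2\alpha j}t_{j}=1$; distinguishing $j\geq 0$ (where $(1+t_{j})^{-(\sigma-\sigma_{1})/(2\alpha)}\lesssim 1$) from $j<0$ (where it is $\sim 2^{(\sigma-\sigma_{1})j}$) and rearranging both cases gives $\sup_{j}\mu_{j}\lesssim 1$, i.e.\ $U_{0}\in\dot{B}^{\sigma_{1}}_{p,\infty}$.

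\textbf{Two-sided equivalence and main obstacle.} The sufficiency direction in~(ii) is the easier half: given the gap-controlled sequence $\{j_{k}\}$ realizing $U_{0}\in\dot{\mathcal{B}}^{\sigma_{1}}_{p,\infty}$, for each large $t$ the condition $|j_{k+1}-j_{k}|\leq M$ guarantees an index $j_{k^{*}}$ within $M$ of $j_{t}$, so $2^{2\alpha j_{k^{*}}}t$ lies in a fixed compact set and Lemma~\ref{lemma22} transfers the pointwise lower bound $\mu_{j_{k^{*}}}\geq c$ into $\|U(t)\|_{\dot{B}^{\sigma}_{p,r}}\geq 2^{\sigma j_{k^{*}}}\|\dot{\Delta}_{j_{k^{*}}}U(t)\|_{L^{p}}\gtrsim 2^{(\sigma-\sigma_{1})j_{k^{*}}}\sim t^{-(\sigma-\sigma_{1})/(2\alpha)}$. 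The necessity is where the main difficulty of the proposition sits: one is only handed a \emph{global} Besov lower bound, but must produce a frequency sequence with \emph{controlled gaps}. My plan is to combine the assumed lower bound with the upper half of Lemma~\ref{lemma22} and the index shift to obtain, for every $t$ large,
\begin{equation*}
\sum_{k\in\mathbb{Z}}\omega_{k}^{r}\,\mu_{k+j_{t}}^{r}\gtrsim 1.
\end{equation*}
Using the uniform bound $\mu_{j}\leq\mu^{*}$ already furnished by~(i), I truncate the tail $|k|>K$ via the $\ell^{r}$-summability of $(\omega_{k})$, leaving $\sum_{|k|\leq K}\mu_{k+j_{t}}^{r}\gtrsim 1$. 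Since $j_{t}$ sweeps through every sufficiently negative integer as $t$ varies over $[1,\infty)$, this produces, for every such integer $n$, some $j\in[n-K,n+K]$ with $\mu_{j}\geq c_{\flat}>0$; assembling these as $n\to-\infty$ yields a sequence with gap at most $2K+1$, proving $U_{0}\in\dot{\mathcal{B}}^{\sigma_{1}}_{p,\infty}$. The main technical obstacle, then, is precisely this tail-truncation argument: it is what turns a diffuse $\ell^{r}$-type global bound into a localized, gap-controlled frequency statement.
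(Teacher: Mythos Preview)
Your proposal is correct, and three of the four implications---sufficiency in~(i), and both directions in~(ii)---follow essentially the same path as the paper: the index-shift/summable-weight computation for the upper bound, the gap-controlled choice of $j_{k^*}$ near the dissipative scale for the lower bound, and the tail-truncation argument to extract a gap-controlled sequence from the global lower bound.

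The one genuine difference is your treatment of the \emph{necessity} in~(i). The paper recovers $U_0\in\dot{B}^{\sigma_1}_{p,\infty}$ by an integral representation of $\dot{\Delta}_jU_0$ based on the Gamma-function identity
\[
\Gamma\Bigl(\tfrac{\sigma-\sigma_1}{2\alpha}+1\Bigr)\,\widehat{\dot{\Delta}_jU_0}
=\int_0^\infty t^{\frac{\sigma-\sigma_1}{2\alpha}}\,P(\xi)^{-1}(-D(\xi))^{1-\frac{\sigma_1}{2\alpha}}e^{2tD(\xi)}P(\xi)\,|\xi|^{\sigma}\widehat{\dot{\Delta}_jU_0}\,dt,
\]
then feeds the assumed decay of $\|e^{t\mathcal{L}}U_0\|_{\dot{B}^\sigma_{p,r}}$ into the integrand via Fourier multiplier bounds. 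Your route is more elementary: you simply evaluate the assumed decay at the calibrated time $t_j=2^{-2\alpha j}$ and invoke the \emph{lower} inequality in Lemma~\ref{lemma22} on a single block,
\[
2^{\sigma j}\|\dot{\Delta}_jU_0\|_{L^p}\lesssim 2^{\sigma j}\|\dot{\Delta}_jU(t_j)\|_{L^p}\le\|U(t_j)\|_{\dot{B}^\sigma_{p,r}}\lesssim(1+t_j)^{-\frac{\sigma-\sigma_1}{2\alpha}},
\]
which immediately yields $\mu_j\lesssim 1$ after splitting $j\ge 0$ (handled already by $U_0\in\dot{B}^\sigma_{p,r}$) from $j<0$. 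This avoids the integral identity entirely; what the paper's argument buys in exchange is that it never invokes the lower half of Lemma~\ref{lemma22}, so it would survive in settings where only the upper spectral bound is available.
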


\begin{proof}
%Since $U_{0}$ belongs to $\dot{B}^{\sigma}_{p,1}$ and $\mathcal{L}$ is dissipative, one has by
%\eqref{lowupper} that
%\begin{equation}\label{shorttimeU}
%\begin{aligned}
%\|U(t)\|_{\dot{B}^{\sigma}_{p,1}}&\lesssim \|U_{0}\|_{\dot{B}^{\sigma}_{p,1}}.
%\end{aligned}
%\end{equation}

%Since $\sigma>\sigma_1$ is any given, in light of the real interpolation inequality \eqref{inter}, the proof can reduce to the case $r=1$. We first prove the property (1).
%We only give the detailed proof in the case $r>1$, and the case $1<r\leq \infty$ can be addressed similarly.
Under the assumption $U_{0}\in \dot{B}^{\sigma_{1}}_{p,\infty}$, it follows from
Lemma \ref{lemma22} that
\begin{equation*}
\begin{aligned}
\|U(t)\|_{\dot{B}^{\sigma}_{p,r}}&\lesssim \|U(t)\|_{\dot{B}^{\sigma}_{p,1}}\lesssim \sum_{j\in \mathbb{Z}}  e^{- R_{0} 2^{2\alpha j} t}  2^{j\sigma}\|\dot{\Delta}_{j}U_{0}\|_{L^p}\lesssim t^{-\frac{1}{2\alpha}(\sigma-\sigma_{1})} \|U_{0}\|_{\dot{B}^{\sigma_{1}}_{p,\infty}}
\end{aligned}
\end{equation*}
for $\sigma>\sigma_1$
and $t>0$, where we used the fact
\begin{align}
\sup_{t>0}\sum_{j\in\mathbb{Z}}  t^{\frac{\sigma'}{2\alpha'}} 2^{j\sigma'} e^{-C' 2^{2j\alpha'}t}<\infty\quad \text{for all}~\sigma',\alpha',C'>0.\label{opoo}
\end{align}
On the other hand, since $U_0\in \dot{B}^{\sigma}_{p,r}$, one easily deduces that
\begin{align*}
\|U(t)\|_{\dot{B}^{\sigma}_{p,r}}&\lesssim \Big\| \{  e^{- R_{0} 2^{2\alpha j} t}  2^{j\sigma}\|\dot{\Delta}_{j}U_{0}\|_{L^p}\}_{j\in\mathbb{Z}} \Big\|_{l^r} \lesssim \|U_0\|_{\dot{B}^{\sigma}_{p,r}}.
\end{align*}
Thus, the upper bound \eqref{Ul1} follows.

%In fact, by \eqref{shorttimeU}, the upper bound in \eqref{Ul1} holds for any $t>0$.

Conversely, assume that $U$ satisfies \eqref{Ul1}.  For any vector-valued function $G=(G^1,G^2,\cdots G^n)\in \mathcal{S}'_h(\mathbb{R}^d)$, one has
\begin{align}
&\quad\Gamma(\frac{\sigma-\sigma_{1}}{2\alpha}+1)\widehat{G}\nonumber\\
&=\int_{0}^{\infty}t^{\frac{1}{2\alpha}(\sigma-\sigma_{1})}{\rm diag} \Big((2c_1)^{1+\frac{1}{2\alpha}(\sigma-\sigma_{1})} e^{-2c_1 t}\widehat{G}^1,\dots,(2c_{n})^{1+\frac{1}{2\alpha}(\sigma-\sigma_{1})}e^{-2c_n t}\widehat{G}^n\Big)\,dt \nonumber\\
&=\int_{0}^{\infty}\tau^{\frac{1}{2\alpha}(\sigma-\sigma_{1})}|\xi|^{\sigma-\sigma_{1}+2\alpha}\nonumber\\
&\quad\quad \times{\rm diag} \Big((2c_1)^{1+\frac{1}{2\alpha}(\sigma-\sigma_{1})} e^{-2c_1|\xi|^{2\alpha} \tau}\widehat{G}^1,\dots,(2c_{n})^{1+\frac{1}{2\alpha}(\sigma-\sigma_{1})}e^{-2c_n |\xi|^{2\alpha}\tau}\widehat{G}^n\Big)  \,d\tau~(\tau=|\xi|^{-2\alpha}t) \nonumber\\
&=\int_{0}^{\infty} \tau^{\frac{1}{2\alpha}(\sigma-\sigma_{1})} (-D(\xi))^{1-\frac{\sigma_{1}}{2\alpha}} e^{2\tau D(\xi)}|\xi|^{\sigma}\widehat{G}\,d\tau \nonumber
\end{align}
with $\Gamma(s)=\int_{0}^{\infty} t^{s-1}e^{-t}dt$. Taking $G=\mathcal{F}^{-1}(P(\xi) \widehat{\Delta_{j}U_0})$, for all $j\in\mathbb{Z}$, we get
\begin{align}
\Delta_{j}U_0&=\frac{1}{\Gamma(\frac{\sigma-\sigma_{1}}{2\alpha}+1)} \mathcal{F}^{-1}\bigg(\int_{0}^{\infty}t^{\frac{\sigma-\sigma_{1}}{2\alpha}}P(\xi)^{-1} (-D(\xi))^{1-\frac{\sigma_1}{2\alpha}} e^{2t\mathcal{L}} P(\xi)|\xi|^{\sigma}\widehat{\Delta_{j}U_0} \,dt \bigg)\nonumber\\
&=\frac{1}{\Gamma(\frac{\sigma-\sigma_{1}}{2\alpha}+1)} \int_{0}^{\infty}t^{\frac{\sigma-\sigma_{1}}{2\alpha}}\mathcal{F}^{-1}\bigg(P(\xi)^{-1} (-D(\xi))^{1-\frac{\sigma_1}{2\alpha}} P(\xi)\mathcal{F}(\Delta_{j}\Lambda^{\sigma} e^{2t\mathcal{L}} U_0) \bigg)\,dt.\nonumber
\end{align}
Hence, the classical Fourier multiplier theorem (see Lemma 2.2 in \cite{bahouri1}) implies that
\begin{equation}\nonumber
\begin{aligned}
\|\dot{\Delta}_{j}U_0\|_{L^p}&\lesssim \int_{0}^{\infty}t^{\frac{\sigma-\sigma_{1}}{2\alpha}} 2^{(2\alpha-\sigma_1)j} \| \Delta_{j}\Lambda^{\sigma} e^{2t\mathcal{L}}U_0\|_{L^p} dt\\
&\lesssim \int_{0}^{\infty}t^{\frac{\sigma-\sigma_{1}}{2\alpha}} 2^{(2\alpha-\sigma_1)j} e^{-c2^{2\alpha j}t} \|  \Delta_{j} \Lambda^{\sigma} e^{t\mathcal{L}}U_0\|_{L^p} dt,\quad j\in\mathbb{Z}
\end{aligned}
\end{equation}
Note that the decay assumption \eqref{Ul1} leads to
\begin{equation}\nonumber
\begin{aligned}
\| \Delta_{j}\Lambda^{\sigma} e^{t\mathcal{L}}U_0\|_{L^p}&\lesssim \|e^{t\mathcal{L}}U_0\|_{\dot{B}^{\sigma}_{p,r}}\lesssim t^{-\frac{1}{2\alpha}(\sigma-\sigma_1)},\quad j\in\mathbb{Z},\quad  t>0.
\end{aligned}
\end{equation}
Hence, we arrive at
\begin{equation}\nonumber
\begin{aligned}
\|U_0\|_{\dot{B}^{\sigma_{1}}_{p,\infty}}&= \sup_{j\in\mathbb{Z}} 2^{\sigma_{1}j} \|\dot{\Delta}_{j}U_0\|_{L^p}\lesssim \sup_{j\in\mathbb{Z}}\int_{0}^{\infty} 2^{2\alpha j} e^{-c2^{2\alpha j}t}dt\lesssim 1,
\end{aligned}
\end{equation}
which yields $U_{0}\in\dot{B}^{\sigma_{1}}_{p,\infty}$.

Next, we turn to prove the property (2). Assume $U_{0}\in \dot{\mathcal{B}}^{\sigma_{1}}_{p,\infty}$. The upper bound in \eqref{Ul2} follows from \eqref{Ul1} directly. In order to derive the lower bound, it follows from the definition $\dot{\mathcal{B}}^{\sigma_{1}}_{p,\infty}$ as in \eqref{Bsubset} that there exist two constants $c,M>0$ and a sequence $\{j_{k}\}_{k=1,2,\dots}$ such that
 \begin{align}
j_{k}\rightarrow-\infty~~\text{as}~~k\rightarrow \infty,\quad |j_{k}-j_{k+1}|\leq M\quad\text{and}\quad  2^{\sigma_{1}j_{k}}\|\dot{\Delta}_{j_{k}}U_{0}\|_{L^p}\geq c,\quad k=1,2,\dots\label{A}
 \end{align}
 It follows from Lemma \ref{lemma22} that
\begin{equation}\label{maomaomao}
\begin{aligned}
 \|U(t)\|_{\dot{B}^{\sigma }_{p,r}}&\geq 2^{\sigma j_{k}} \|e^{t\mathcal{L}}\dot{\Delta}_{j_k} U_{0}\|_{L^{p}}\gtrsim 2^{\sigma j_{k}}   e^{- r_{0} 2^{2j_{k}\alpha}t} \|\dot{\Delta}_{j_k}U_{0}\|_{L^p}.
 \end{aligned}
 \end{equation}
 For all $t>t_{L}$ with some $t_L>0$, since $j_{k}$ tends to $-\infty$ as $k\rightarrow\infty$, we are able to find a maximal integer $j_{k_{0}}$ satisfying $j_{k_{0}}\leq -\frac{1}{2\alpha}\log_{2} (t_{L}+ t)$. Then we have $j_{k_{0}}> -M-\frac{1}{2\alpha}\log_{2} (t_{L}+ t)$; otherwise, from \eqref{A} another integer $j_{k_{0}-1}$ fulfills $j_{k_{0}-1}\leq j_{k_{0}}+M\leq -\frac{1}{2\alpha}\log_{2} (t_{L}+ t)$ which contradicts the maximality of $j_{k_{0}}$. Therefore, it follows from \eqref{A}, \eqref{maomaomao} and the fact $2^{j_{k_{0}}}\sim \langle t\rangle ^{-\frac{1}{2\alpha}}$  that
 \begin{equation}\nonumber
\begin{aligned}
 \|U(t)\|_{\dot{B}^{\sigma }_{p,r}}& \gtrsim 2^{(\sigma-\sigma_{1}) j_{k_{0}}}\gtrsim  \langle t\rangle ^{-\frac{1}{2\alpha}(\sigma-\sigma_{1})},\quad t>t_L.
 \end{aligned}
 \end{equation}
We now deal with the case $0<t\leq t_L$. We first claim $\|e^{t_L\mathcal{L}}U_0\|_{\dot{B}^{\sigma}_{p,r}}\neq 0$. In fact, using the spectral lower bound in Lemma \ref{lemma22} and $U_0\in \dot{\mathcal{B}}^{\sigma_1}_{p,\infty}$
\begin{align*}
\|e^{t_L\mathcal{L}}U_0\|_{\dot{B}^{\sigma}_{p,r}}\geq c_0 e^{-r_0 2^{2\alpha j_{k} } t_L} 2^{j_k \sigma}\|\dot{\Delta}_{j_k}U_0\|_{L^p} \geq
 c c_0 e^{-r_0 2^{2\alpha j_{k} } t_L}  2^{j_k(\sigma-\sigma_1)} >0
\end{align*}
for some constant $c>0$ and fixed $j_{k}$ given in the definition of $\dot{\mathcal{B}}^{\sigma_1}_{p,\infty}$. Then, we consider the  problem $\partial_s \mathbf{U}(s,x)=\mathcal{L}\mathbf{U}(s,x)$ in $(s,x)\in [t,t_L]\times\mathbb{R}^d$ with the initial datum at $s=t$: $\mathbf{U}(t,x)=U(t,x)=e^{t\mathcal{L}}U_0(x)$.
%\begin{align*}
%&\partial_s \mathbf{U}(s,x)=\mathcal{L}\mathbf{U}(s,x),\quad \mathbf{U}(t,x)=U(t,x)=e^{t\mathcal{L}}U_0(x).
%\end{align*}
The upper bound in Lemma \ref{lemma22} in particular implies that $\|\mathbf{U}(t_L)\|_{\dot{B}^{\sigma}_{p,r}}\lesssim \|\mathbf{U}(t)\|_{\dot{B}^{\sigma}_{p,r}}$, or equivalently, %Since  $\mathbf{U}(s)=e^{s\mathcal{L}}U_0=U(s)$ for $s\in [t, t_L]$, we have
$\|U(t)\|_{\dot{B}^{\sigma}_{p,r}}\gtrsim\|e^{t_L\mathcal{L}}U_0\|_{\dot{B}^{\sigma}_{p,r}}\gtrsim \langle t\rangle^{-\frac{1}{2\alpha}(\sigma-\sigma_1)} $, where we used $\mathbf{U}(t_L)=e^{(t_L-t)\mathcal{L}}U(t)=e^{t_L\mathcal{L}}U_0$.  %and the fact that  $U_0\in \dot{\mathcal{B}}^{\sigma_1}_{p,\infty}\cap \dot{B}^{\sigma}_{p,r}$ is not trivial.}}
Therefore, the lower bound in \eqref{Ul2} follows.

 Conversely, we aim to infer $U_{0}\in \dot{B}^{\sigma_1}_{p,\infty}$ if we assume that $U$ satisfies the two-sided bounds \eqref{Ul2} for $t>t_{L}$ with $t_L>0$ (in fact, a weaker assumption in the second statement). The upper bound in \eqref{Ul2} implies that $U_{0}\in \dot{B}^{\sigma_{1}}_{p,\infty}$. It suffices to construct a sequence $\{j_{k}\}_{k=1,2,\dots}$ satisfying \eqref{A}. For that end, it follows from Lemma \ref{lemma22},  \eqref{Ul2} and $r<\infty$ that
\begin{equation}\label{mm11}
\begin{aligned}
\sum_{ j\in \mathbb{Z} }e^{- R_{0}  2^{2\alpha j}t}  2^{\sigma j} t^{\frac{1}{2\alpha}(\sigma-\sigma_{1})}\|\dot{\Delta}_{j}U_{0}\|_{L^p} \gtrsim t^{\frac{1}{2\alpha}(\sigma-\sigma_{1})} \|U(t)\|_{\dot{B}^{\sigma}_{p,1}}\gtrsim t^{\frac{1}{2\alpha}(\sigma-\sigma_{1})} \|U(t)\|_{\dot{B}^{\sigma}_{p,r}}>\eta_{*}>0
\end{aligned}
\end{equation}
for all $t>t_{L}$ and some global-in-time constant $\eta_{*}>0$. In particular, \eqref{mm11} holds true with $t=t_{L}+k$ for all $k=1,2,\dots$, and then we define
$$
j_{1,k}\coloneqq  -\bigg[\frac{1}{2\alpha}\log_{2}(t_{L}+k)\bigg]-1.
$$
Making use of \eqref{mm11} and the fact $2^{-2\alpha (j_{1,k}+1)}\leq t_{L}+k\leq 2^{-2\alpha j_{1,k}}$, we get
 \begin{equation}\nonumber
\begin{aligned}
\sum_{ j\in\mathbb{Z} }e^{- R_{0} 2^{2\alpha(j-j_{1,k}-1)}}  2^{(\sigma-\sigma_{1}) (j-j_{1,k})} 2^{\sigma_{1} j}\|\dot{\Delta}_{j}U_{0}\|_{L^p} > \eta_{*}>0.
\end{aligned}
\end{equation}
Shifting the index $j-j_{1,k}$ to $j'$, we deduce that
 \begin{equation}\label{226}
\begin{aligned}
&\sum_{  j' \in\mathbb{Z} } e^{- R_{0} 2^{2\alpha(j'-1)} }  2^{(\sigma-\sigma_{1})j'} \Big( 2^{(j'+j_{1,k})\sigma_{1}}\|\dot{\Delta}_{j'+j_{1,k}}U_{0}\|_{L^p}\Big) >\eta_{*}>0.
\end{aligned}
\end{equation}
Due to $e^{-  R_{0} 2^{2\alpha(j-j_{1,k}-1)}} 2^{j'(\sigma-\sigma_{1})} \in l^1(\mathbb{Z})$ for $\sigma>\sigma_{1}$,  there exists a sufficiently large integer $J>0$ such that
\begin{align}
&\sum_{|j'|>J} e^{- R_{0} 2^{2\alpha(j-j_{1,k}-1)}} 2^{(\sigma-\sigma_{1})j'}<\frac{\eta_{*}}{2\|U_{0}\|_{\dot{B}^{\sigma_{1}}_{p,\infty}}+1}.\nonumber
\end{align}
Consequently, we have
\begin{align}
&\sum_{|j'|>J} e^{- R_{0} 2^{2\alpha(j'-1)}} 2^{(\sigma-\sigma_{1})j'} \Big( 2^{(j'+j_{1,k})\sigma_{1}}\|\dot{\Delta}_{j'+j_{1,k}}U_{0}\|_{L^p}\Big) <\frac{\eta_{*}}{2}. \label{2261}
\end{align}
It follows from \eqref{226} and \eqref{2261} that
 \begin{equation}\label{267}
\begin{aligned}
&\sum_{|j'|\leq J} e^{- R_{0} 2^{2\alpha(j'-1)}}  2^{(\sigma-\sigma_{1})j'} \Big( 2^{(j'+j_{1,k})\sigma_{1}}\|\dot{\Delta}_{j'+j_{1,k}}U_{0}\|_{L^p}\Big) >\frac{\eta_{*}}{2}>0.
\end{aligned}
\end{equation}
For every given $j_{1,k}$, let $j_{2,k}\in [-J,J]$ be the integer such that
 \begin{equation}\nonumber
\begin{aligned}
&2^{(j_{2,k}+j_{1,k})\sigma_{1}}\|\dot{\Delta}_{j_{2,k}+j_{1,k}}U_{0}\|_{L^p}=\max_{|j'|\leq J}2^{(j'+j_{1,k})\sigma_{1}}\|\dot{\Delta}_{j'+j_{1,k}}U_{0}\|_{L^p}.
\end{aligned}
\end{equation}
If we define
\begin{equation}\nonumber
\begin{aligned}
&j_{k}\coloneqq  j_{1,k}+j_{2,k},\quad\quad k=1,2,\dots,
\end{aligned}
\end{equation}
then it follows from \eqref{267} and the definitions of $j_{1,k}$, $j_{2,k}$ that $j_{k}\rightarrow-\infty$ as $k\rightarrow\infty$,
\begin{equation}\nonumber
\begin{aligned}
&|j_{k}-j_{k+1}|\leq 2J+\frac{1}{2\alpha}\log_{2}(1 +\frac{1}{t_{L}})+1\quad \text{and}\quad 2^{\sigma_{1}j_{k}}\|\dot{\Delta}_{j_{k}}U_{0}\|_{L^p}\geq \tilde{c}
\end{aligned}
\end{equation}
with some constant $\tilde{c}>0$. This implies that $U_{0}\in \dot{\mathcal{B}}^{\sigma_{1}}_{p,\infty}.$ The proof of Proposition \ref{propgeneral2} is now complete.
\end{proof}

\subsection{Partially dissipative hyperbolic systems}\label{section:linear}
In this section, we are concerned with the Cauchy problem of the following conservative-dissipative form
\begin{equation}\label{normal:linear0}
\left\{
    \begin{aligned}
&\partial_{t}V+\sum_{i=1}^{d}A^{i}\partial_{x_{i}}V+LV=0,\\
&V(0,x)=V_0(x).
\end{aligned}
\right.
\end{equation}
Here $V=(V_1,V_2)^{\top}(t,x)$ with $V_1\in \mathbb{R}^{n_1}$ and $V_2\in \mathbb{R}^{n_2}$ is the unknown on $(t,x)\in \mathbb{R}_+\times\mathbb{R}^d$ ($d\geq1$) taking values in an open convex set $\mathcal{O}_{V}\subset\mathbb{R}^n$ ($n=n_1+n_2\geq2$). The constant symmetric matrices $A^i$ ($i=1,2,\dots,d$) and $L$ take the form
\begin{equation*}
\begin{aligned}
    A^{i}= \begin{pmatrix} 0  & A^{i}_{1,2} \\ A^{i}_{2,1} & A^{i}_{2,2} \end{pmatrix}\quad \text{and}\quad L=\begin{pmatrix}
    0 & 0 \\ 0 & D
    \end{pmatrix}.
\end{aligned}
\end{equation*}
Here, $D$ is an $n_2\times n_2$ positive definite matrix. Suppose that the symmetric form \eqref{normal:linear0} satisfies the [SK] condition (see Definition \ref{defnSK}).

 To emphasize the partially dissipative structure, we rewrite \eqref{normal:linear0} as
\begin{equation}\label{hplinear}
\left\{
\begin{aligned}
& \partial_{t}V_{1}+\sum_{i=1}^{d}A_{1,2}^{i}\partial_{x_{i}}V_{2}=0,\\
&\partial_{t}V_{2}+\sum_{i=1}^{d}\Big(A_{2,1}^{i}\partial_{x_{i}}V_{1}+A_{2,2}^{i}\partial_{x_{i}}V_{2}\Big)+DV_{2}=0,\\
&(V_1,V_2)|_{t=0}=(V_{1,0},V_{2,0}).
\end{aligned}
\right.
\end{equation}

In the multidimensional case, the explicit form of Green's function associated with \eqref{hplinear} cannot,
in general, be explicitly  expressed. As a consequence, we propose a general $L^p$ hypercoercivity method and give the upper and lower bounds of time-decay estimates (Proposition \ref{proplinear}). Our key ingredient lies in the introduction of a new {\emph{effective quantity}}
\begin{align}
&\Psi\coloneqq   V_1-\sum_{i=1}^{d}A^{i}_{1,2}D^{-1} \partial_{x_{i}} V_2\label{effective1}
\end{align}
associated with the initial data
\begin{align}
&\Psi_0\coloneqq  V_{1,0}-\sum_{i=1}^{d}A^{i}_{1,2}D^{-1} \partial_{x_{i}} V_{2,0},\label{W0}
\end{align}
 which describes the coupling of conservative and dissipative parts. The additional regularity of the dissipative component $V_2$
compensates the spatial derivative in the definition of $\Psi$, which leads to weaker low-frequency assumptions on $V_{2,0}$.

%Note that the initial value of \eqref{effective1} is provided in \eqref{Psi0} and associated with sharp conditions throughout this paper. Compared with the direct estimate for $V_1$, the new variable $\Psi$
Using \eqref{effective1}, we can capture the precise dependence on the initial data for the leading heat diffusion part and then establish sharp time asymptotics. Precisely, let us consider the linear parabolic problem
\begin{equation}\label{diffusion}
\left\{
\begin{aligned}
&\partial_{t}v=\mathcal{A}v,\\
&v(0,x)=\Psi_0(x),
\end{aligned}
\right.
\end{equation}
where the operator $\mathcal{A}$ is defined by
\begin{align}
\mathcal{A}=\sum_{i,l=1}^{d}A_{1,2}^{i}D^{-1}A_{2,1}^{l} \partial^2_{x_{i} x_{l}}.\label{mathcalA}
\end{align}
Under the [SK] condition, $-\mathcal A$ is strongly elliptic (cf. \eqref{eq:SK-elliptic}). Observe that the parabolic system \eqref{diffusion} can be derived by the Chapman-Enskog expansion (see for example \cite{BHN,Yong041}), which
is a good approximation of the conservative part of the solution to the first-order hyperbolic systems \eqref{hplinear}. We aim to show the asymptotic equivalence of the solution $V$ to \eqref{hplinear} and the new profile
%Here the new ingredient is that we replace the initial data $V_{0,1}$ by the effective unknown $\Psi_0$ to capture the coupling effect of $\mathbf{P}V_{0}$ and $\{\mathbf{I} - \mathbf{P}\}V_{0}$ in the sharp regularity class. We obtain the faster rates of the
%large-time convergence of $V$ to the new profile
\begin{equation}\label{Vp}
\begin{aligned}
 V^{*}=
    \begin{pmatrix}
   V_1^*  \\  V_2^*
    \end{pmatrix}\quad\text{with}\quad V_1^*\coloneqq   e^{t\mathcal{A}}\Psi_0\quad\text{and}\quad V_2^*\coloneqq   -\sum_{i=1}^{d}D^{-1} A^i_{2,1} \partial_{x_i} e^{t\mathcal{A}}\Psi_0,
\end{aligned}
\end{equation}
where $V_1^*$ is the solution to the linear parabolic equation \eqref{diffusion}. The structure allows us to develop the decay character theorem from linear diffusive systems with Fourier multiplier (see Proposition \ref{propgeneral2}) to the partially dissipative hyperbolic systems \eqref{hplinear}. Indeed, the main idea comes from the spectral analysis for damped compressible Euler equations (see Appendix \ref{appendixB}).

First, we establish the following spectral localization
estimates for the conservative component $V_1$ and the dissipative component $V_2$, respectively.
%Proposition \ref{propSK} implies that in the $L^2$ framework, the low frequencies of the solution $V$ to \eqref{hp} behave like a heat kernel, while the high frequencies decay at an exponential rate. In order to study the more general $L^p$ framework and understand the different behaviors of  the undamped component $\mathbf{P}V=(V_1,0)^{\top}$ and the damped component $\{\mathbf{I}-\mathbf{P}\}V=(0,V_2)^{\top}$, we establish the following quantitative estimates of the semigroup generated by \eqref{hp} on spectral localization.

\begin{prop}\label{LemmaspectrallocalHp}
Let $1<p<\infty$, $ \lambda>0$, and define the annulus  $\mathcal{C}\coloneqq   \{\xi\in\mathbb{R}^{d}~:~0<a_{1}<|\xi|<a_{2}<\infty\}$. %Assume that the conditions \eqref{H1}-\eqref{blockL} are satisfied.
Let $V^{\lambda}=(V^{\lambda}_{1},V^{\lambda}_{2})^{\top}$ with $V^{\lambda}_{1}\in \mathbb{R}^{n_{1}}$ and $V^{\lambda}_{2}\in \mathbb{R}^{n_{2}}$ be the global solution to  \eqref{hplinear} supplemented with the initial data %{\rm(}{\blue{tempered distribution}}{\rm)}
$V^{\lambda}_{0}=(V^{\lambda}_{1,0},V^{\lambda}_{2,0})^{\top}$. Assume that the Fourier transforms of $V^{\lambda}$ and $V^{\lambda}_{0}$ are both supported in $\lambda \mathcal{C}$. Then there exists a small constant $\lambda_{0}$ such that for all $\lambda\in (0,\lambda_{0})$, %$V^{\lambda}$ has the following spectral localization properties{\rm:}
\begin{itemize}

\item [$(\rm i)$]  The whole solution $V^{\lambda}$ satisfies
\begin{align}
    \|V^{\lambda}(t)\|_{L^p}\leq C_{1} e^{-R_{1}\lambda^2t} \|V^{\lambda}_{0}\|_{L^p}\label{lowlocaluppertotal}
\end{align}
and more accurate estimates in terms of $(V^{\lambda}_1,V^{\lambda}_2)${\rm:}
\begin{equation}\label{lowlocalupper}
\left\{
\begin{aligned}
\|V_{1}^{\lambda}(t)\|_{L^p}&\leq C_{1} e^{-R_{1}\lambda^2t}\Big( \|V^{\lambda}_{1,0}\|_{L^p}+\lambda \|V^{\lambda}_{2,0}\|_{L^p}\Big)+C_{1} e^{-\kappa_{1} t}\Big(\lambda^2 \|V^{\lambda}_{1,0}\|_{L^p}+\lambda \|V^{\lambda}_{2,0}\|_{L^p}\Big),\\
\|V^{\lambda}_{2}(t)\|_{L^p}&\leq C_{1} \lambda e^{-R_{1}\lambda^2 t}\Big(\|V^{\lambda}_{1,0}\|_{L^p}+\lambda \|V^{\lambda}_{2,0}\|_{L^p}\Big)+C_{1} e^{-\kappa_{1} t}\Big(\lambda \|V^{\lambda}_{1,0}\|_{L^p}+\|V^{\lambda}_{2,0}\|_{L^p}\Big).
\end{aligned}
\right.
\end{equation}

\item [$(\rm ii)$] Furthermore, we provide the
asymptotic profiles for the solution $(V^{\lambda}_{1},V^{\lambda}_{2})^{\top}$. Precisely,
\begin{equation}\label{asy:error}
\begin{aligned}
\|(V_{1}^{\lambda}-V_{1,*}^\lambda)(t)\|_{L^p}&\leq C_{1}\lambda  e^{-R_{1}\lambda^2t} \Big(\|V^{\lambda}_{1,0}\|_{L^p}+\lambda \|V^{\lambda}_{2,0}\|_{L^p}\Big)+C_{1} e^{-\kappa_{1} t}\Big(\lambda^2 \|V^{\lambda}_{1,0}\|_{L^p}+\lambda \|V^{\lambda}_{2,0}\|_{L^p}\Big)
\end{aligned}
\end{equation}
and
\begin{equation}\label{asy:error2}
\begin{aligned}
\| (V_2^{\lambda}- V_{2,*}^{\lambda})(t)\|_{L^p}
&\leq C_1 \lambda^2 e^{-R_1\lambda^2t} \Big(\|V^\lambda_{1,0}\|_{L^p}+\lambda\|V^\lambda_{2,0}\|_{L^p}\Big)+ C_1 e^{-\kappa_1 t} \Big(\|V^\lambda_{1,0}\|_{L^p}+\lambda\|V^\lambda_{2,0}\|_{L^p}\Big).
\end{aligned}
\end{equation}
\end{itemize}
Here $C_{1}, R_{1}$ and $\kappa_{1}$ are generic constants independent of $\lambda$ and $t$. The profiles $V_{1,*}^\lambda$ and $V_{2,*}^\lambda$ are defined by
\begin{equation}\label{profilelambda}
\begin{aligned}
V_{1,*}^\lambda\coloneqq e^{t\mathcal{A}}\Psi_0^{\lambda}\quad \text{and}\quad  V_{2,*}^\lambda\coloneqq -\sum_{i=1}^{d} D^{-1} A_{2,1}^i \partial_{x_i} e^{t\mathcal{A}}\Psi^{\lambda}_0,
\end{aligned}
\end{equation}
where the operator $\mathcal{A}$ is defined in \eqref{mathcalA}, and $\Psi^{\lambda}_0$ is given by
 \begin{align}
\Psi^{\lambda}_0\coloneqq   V^{\lambda}_{1,0}-\sum_{i=1}^{d} A^{i}_{1,2}D^{-1} \partial_{x_{i}} V^{\lambda}_{2,0}.\label{Psi0lambda}
\end{align}

%In particular, for sufficiently large time $t$, we have
%\begin{equation}
%\begin{aligned}
%&\|V_{1}\|_{L^p}\leq 2 C_{1} e^{-R_{1}\lambda^2t}( \|V_{1,0}\|_{L^p}+\lambda \|V_{2,0}\|_{L^p})
%\end{aligned}
%\end{equation}

\end{prop}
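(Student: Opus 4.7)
I would take the Fourier transform of \eqref{hplinear} to write $\widehat{V^\lambda}(t,\xi)=e^{-tE(\xi)}\widehat{V_0^\lambda}(\xi)$ with symbol $E(\xi)=L+iA(\xi)$, $A(\xi)=\sum_i A^i\xi_i$. The plan is to carry out a perturbative spectral analysis of $E(\xi)$ on the annulus $\lambda\mathcal{C}$ for small $\lambda$, and then convert Fourier-symbol estimates into $L^p$ bounds via the Mikhlin--H\"ormander multiplier theorem (available since $1<p<\infty$).

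At $\xi=0$, $E(0)=L$ has two well-separated spectral clusters: the zero eigenvalue with multiplicity $n_1$ (on the conservative subspace $\mathcal{M}$), and $\mathrm{spec}(D)\subset\{\mathrm{Re}\,z\geq\kappa\}$. Analytic perturbation theory (Kato) produces smooth spectral projectors $\Pi_0(\xi)$ (parabolic branch) and $\Pi_\infty(\xi)=I-\Pi_0(\xi)$ (hyperbolic branch) on all of $\lambda\mathcal{C}$ for $\lambda\leq\lambda_0$, together with the Rellich expansion
\begin{equation*}
\Pi_0(\xi)=\begin{pmatrix} I_{n_1} & -iA_{1,2}(\xi)D^{-1}\\ -iD^{-1}A_{2,1}(\xi) & 0\end{pmatrix}+O(|\xi|^2),
\end{equation*}
with $\Pi_\infty(\xi)$ having the complementary block structure. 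Since $A^i_{1,1}=0$ in the normal form, the first-order correction of the zero cluster vanishes, so the degenerate second-order formula yields eigenvalues $\mu_j(\xi)$ whose leading behavior is given by the eigenvalues of $A_{1,2}(\xi)D^{-1}A_{2,1}(\xi)$, i.e.\ the principal symbol of $-\mathcal{A}$. By Proposition \ref{lemmaSKpositive} (strong ellipticity), $\mathrm{Re}\,\mu_j(\xi)\geq R_1|\xi|^2$ on $\lambda\mathcal{C}$, whereas the eigenvalues in the $\Pi_\infty$ cluster retain $\mathrm{Re}\geq\kappa_1$.

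I would then split $e^{-tE(\xi)}=\Pi_0(\xi)e^{-tE(\xi)}+\Pi_\infty(\xi)e^{-tE(\xi)}$. Both branches are smooth matrix-valued symbols whose $k$-th derivatives are bounded by $C_k|\xi|^{-k}$ uniformly in $\lambda$, and they carry the time-decay factors $e^{-R_1\lambda^2 t}$ and $e^{-\kappa_1 t}$ respectively; by the Mikhlin theorem (applied as in the proof of Lemma \ref{lemma22}) this delivers \eqref{lowlocaluppertotal}. To extract the more refined estimates \eqref{lowlocalupper} I exploit the block sizes: the $(1,1)$ corner of $\Pi_0$ is $I+O(|\xi|^2)$, its off-diagonal blocks are $O(|\xi|)$, and its $(2,2)$ corner is $O(|\xi|^2)$. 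Each power of $\xi$ translates, after Mikhlin, into a corresponding power of $\lambda$, accounting precisely for the prefactors $\lambda,\lambda^2$ and the separate contributions of $\|V_{1,0}^\lambda\|_{L^p}$ and $\|V_{2,0}^\lambda\|_{L^p}$ in \eqref{lowlocalupper}.

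For the asymptotic profiles \eqref{asy:error}--\eqref{asy:error2}, the first block of $\Pi_0(\xi)\widehat{V_0^\lambda}$ equals $\widehat{V_{1,0}^\lambda}-iA_{1,2}(\xi)D^{-1}\widehat{V_{2,0}^\lambda}+O(|\xi|^2)$, which is exactly $\widehat{\Psi_0^\lambda}$ up to the quadratic remainder; on the parabolic eigenspace $e^{-tE(\xi)}$ reduces at leading order to the heat semigroup $e^{t\mathcal{A}}$, producing $V_{1,*}^\lambda=e^{t\mathcal{A}}\Psi_0^\lambda$, and the $(2,1)$ block of $\Pi_0$ yields $V_{2,*}^\lambda=-\sum_i D^{-1}A_{2,1}^i\partial_{x_i}V_{1,*}^\lambda$. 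Controlling the higher-order remainders gives the extra power of $\lambda$ in \eqref{asy:error} and the two extra powers in \eqref{asy:error2}, while the $\Pi_\infty$ contribution feeds only into the $e^{-\kappa_1 t}$ terms. The main obstacle will be making the remainder $e^{-tE(\xi)\Pi_0(\xi)}-e^{t\mathcal{A}}\Pi_0(\xi)$ quantitative without losing the parabolic decay factor $e^{-R_1\lambda^2 t}$, which I plan to handle via a Duhamel argument on the parabolic block and the spectral gap $R_1|\xi|^2$ granted by the [SK] condition.
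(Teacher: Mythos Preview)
Your approach via Fourier-side spectral perturbation of $E(\xi)=L+iA(\xi)$ is a valid alternative, and your Rellich expansion for $\Pi_0(\xi)$ together with its identification of the first block of $\Pi_0(\xi)\widehat{V_0^\lambda}$ with $\widehat{\Psi_0^\lambda}$ is correct. The paper, however, takes a genuinely different route: it works entirely in physical space with an $L^p$ energy method. The key move is a change of variables to the effective quantities
\[
\Psi^\lambda=V_1^\lambda-\sum_i A^i_{1,2}D^{-1}\partial_{x_i}V_2^\lambda,\qquad
Z^\lambda=V_2^\lambda+\sum_i D^{-1}\bigl(A_{2,1}^i\partial_{x_i}V_1^\lambda+A_{2,2}^i\partial_{x_i}V_2^\lambda\bigr),
\]
which diagonalize the system up to higher-order couplings: $\Psi^\lambda$ solves a parabolic equation with generator $\mathcal{A}$ and $Z^\lambda$ a damped equation with generator $-D$. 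The $L^p$ dissipation for $\Psi^\lambda$ is then obtained by testing against $|\Psi^\lambda|^{p-2}\Psi^\lambda$ componentwise and invoking the nonlinear Bernstein inequality for strongly elliptic operators (Lemma~\ref{lemmaA2}), which gives $-\int\mathcal{A}\Psi\,|\Psi|^{p-2}\Psi\gtrsim\lambda^2\|\Psi\|_{L^p}^p$ directly, with no multiplier theory. Gr\"onwall then yields \eqref{lowlocaluppertotal}--\eqref{lowlocalupper}, and \eqref{asy:error}--\eqref{asy:error2} follow from a short Duhamel argument on $\Psi^\lambda-e^{t\mathcal{A}}\Psi_0^\lambda$ (which has zero initial data and a source of size $\lambda^2\|V_2^\lambda\|_{L^p}$).

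The trade-offs: your spectral route makes the origin of the profile $e^{t\mathcal{A}}\Psi_0^\lambda$ very transparent (it is the leading term of the parabolic projector), but ``Mikhlin as in Lemma~\ref{lemma22}'' is not quite right --- Lemma~\ref{lemma22} is a direct $L^1$ kernel bound via integration by parts in $\xi$, not a H\"ormander--Mikhlin estimate, and to extend it to the matrix symbol $e^{-tE(\xi)}\Pi_0(\xi)$ with constants uniform in $t$ you must carefully absorb the $t^k$ factors produced by $\partial_\xi^k$ into the decay $e^{-R_1|\xi|^2t}$. The paper's energy method sidesteps all symbol calculus; the pair $(\Psi^\lambda,Z^\lambda)$ is essentially the physical-space realization of your $(\Pi_0,\Pi_\infty)$ decomposition, but the dissipation is obtained by multiplier-free estimates, which is exactly the ``general $L^p$ energy method'' the authors advertise as their way of avoiding explicit Green's function analysis in multi-dimensions.
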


\begin{remark}\normalfont
In the spectral analysis
of \eqref{hplinear}, a challenging obstacle lies in the absence of the explicit formula of Green's function. To the best of our knowledge, there is little existing literature on the lower bound of decay estimates of solutions so far. In order to overcome the difficulty, as in Appendix \ref{appendixB}, we observe that the quantity  $\Psi_0^{\lambda}$ plays a key role in
optimal decay estimates, which implies that  $V_{1}^{\lambda}-V_{1,*}^\lambda$ and $V_2^\lambda- V_{2,*}^\lambda$ enjoy better decay properties compared to those of $V_{1}^{\lambda}$ and $V_{2}^{\lambda}$, respectively (see \eqref{lowlocalupper}-\eqref{asy:error2}). %{\red{The asymptotic equivalence allows to employ Proposition \ref{propgeneral2} for the profile $V_{1}^{\lambda}$.}}
It should be emphasized that this is a completely new viewpoint of independent interest, leading to the sharp decay characterization for partially dissipative hyperbolic systems in the multidimensional setting.

%so those arguments developed by Bianchini, Hanouzet and Natalini \cite{BHN}, Kawashima and Yong \cite{KY2}, Kawashima and the second author \cite{XK2} cannot be applied to establish the  directly.

%The estimates established in Proposition \ref{LemmaspectrallocalHp} indicate that the new quantity $\Psi_0^{\lambda}$ determines the decay of solutions, while $V_{2,0}^{\lambda}$ can be of higher order. These are consistent with Green's function for some concrete models, for example, the compressible Euler equations with damping {\rm(cf. Appendix \ref{appendixfin}}. It should be emphasized that the explicit form of Green's function for \eqref{hp} cannot, in general, be expressed in the high-dimensional case. To overcome this difficulty, inspired by {\rm\cite{c2,c3}}, we develop a pure energy argument based on general $L^p$-norms in the sense of spectral localization.

\end{remark}

\begin{proof}
% For brevity, we omit the superscripts of $V^{\lambda}$ and $V_{0}^{\lambda}$.
The proof of Proposition \ref{LemmaspectrallocalHp} can be divided into several steps for clarity.

\begin{itemize}

\item  {\emph{Step 1: Decoupling %based on two effective unknowns
}}

\end{itemize}

We introduce the new {\emph{effective variable}}
\begin{align}
&\Psi^\lambda\coloneqq  V_1^\lambda-\sum_{i=1}^{d}A^{i}_{1,2}D^{-1} \partial_{x_{i}} V_2^\lambda \label{Psilambda}
\end{align}
associated with the initial data \eqref{Psi0lambda}. Moreover, in the spirit of \cite{c3}, we define
\begin{equation}
\begin{aligned}
&Z^\lambda\coloneqq    V_{2}^\lambda+\sum_{i=1}^{d}D^{-1}\Big(A_{2,1}^{i}\partial_{x_{i}}V_{1}^\lambda+A_{2,2}^{i}\partial_{x_{i}}V_{2}^\lambda\Big), \label{Z}
\end{aligned}
\end{equation}
and the initial data are correspondingly  prescribed by
\begin{equation}
\begin{aligned}
&Z_0^\lambda\coloneqq    V_{2,0}^\lambda+\sum_{i=1}^{d}D^{-1}\Big(A_{2,1}^{i}\partial_{x_{i}}V_{1,0}^\lambda+A_{2,2}^{i}\partial_{x_{i}}V_{2,0}^\lambda\Big) .\label{Z0}
\end{aligned}
\end{equation}

Furthermore, the first two equations in \eqref{hplinear} can be
 decoupled in terms of $(\Psi^\lambda,Z^\lambda)$ such that they are able to be written as a parabolic model and a damped model, up to some higher-order terms.

\begin{lemma}\label{lemma32}
Let $\Psi^\lambda$ and $Z^\lambda$ be defined by \eqref{Psilambda}-\eqref{Z}. Then, for all $0<\lambda\leq \lambda_0\ll1$, it holds that
\begin{equation}\label{3.29}
\left\{
\begin{aligned}
&\partial_{t}\Psi^\lambda-\mathcal{A}\Psi^\lambda=L_1(\Psi^\lambda,Z^\lambda),\\
&\partial_t Z^\lambda+ D Z^\lambda=L_2(\Psi^\lambda,Z^\lambda),
\end{aligned}
\right.
\end{equation}
where  $L_i(\Psi^\lambda,Z^\lambda)$ {\rm(}$i=1,2${\rm)} are given by
\begin{equation*}
\begin{aligned}
L_1(\Psi^\lambda,Z^\lambda)&\coloneqq    \bigg(\sum_{i,l=1}^{d} A^i_{1,2}D^{-1}A^l_{2,2} \partial^2_{x_{i} x_{l}}+\sum_{i=1}^{d}\mathcal{A}A_{1,2}^i D^{-1}\partial_{x_i} \bigg)     \mathcal{B}^{-1} \Big(Z^\lambda-\sum_{i=1}^d D^{-1} A_{2,1}^i  \partial_{x_{i}}\Psi^\lambda\Big) , \\
L_2(\Psi^\lambda,Z^\lambda)&\coloneqq     -\sum_{i=1}^{d} D^{-1}\bigg(A_{2,1}^{i}\sum_{l=1}^d A_{1,2}^l\mathcal{B}^{-1} \Big(\partial^2_{x_i x_l} Z^\lambda-\sum_{m=1}^d D^{-1} A_{2,1}^m  \partial_{x_i x_l x_m}^3 \Psi^\lambda\Big)+A_{2,2}^{i}D \partial_{x_i} Z^\lambda \bigg).
\end{aligned}
\end{equation*}
For any distribution $f^\lambda$ fulfilling  $\operatorname{Supp}\mathcal{F}(f^\lambda)\subset \lambda \mathcal{C}$ with $0<\lambda\leq \lambda_0\ll1$, the Fourier multiplier
\begin{align*}
\mathcal{B}^{-1}\coloneqq    \mathcal{F}^{-1}\bigg( \bigg({\rm Id}+\sum_{i=1}^d D^{-1} A^i_{2,2} \mathrm{i} \xi_i-\sum_{i,l=1}^d D^{-1} A_{2,1}^i  A_{1,2}^l D^{-1}  \xi_i \xi_l \bigg)^{-1} \mathcal{F}\Big(\cdot\Big)\bigg)
\end{align*}
satisfies $\|\mathcal{B}^{-1}(f^\lambda)\|_{L^p}\leq 2 \|f^\lambda\|_{L^p}$   $(1\leq p\leq \infty)$.
\end{lemma}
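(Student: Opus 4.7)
The plan is to derive both evolution equations by direct algebraic substitution, and then to verify the $L^p$-mapping property of $\mathcal{B}^{-1}$ separately. For the equation on $\Psi^\lambda$, I would differentiate \eqref{Psilambda} in time and substitute the evolution equations from \eqref{hplinear}. The crucial cancellation comes from the damping: the term $\sum_i A^i_{1,2} D^{-1} \partial_{x_i}(DV_2^\lambda) = \sum_i A^i_{1,2} \partial_{x_i} V_2^\lambda$ (arising with a plus sign from $-\partial_t V_2^\lambda$) exactly kills the hyperbolic contribution $-\sum_i A^i_{1,2} \partial_{x_i} V_2^\lambda$ coming from $\partial_t V_1^\lambda$. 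What survives is
\begin{align*}
\partial_t \Psi^\lambda = \sum_{i,l=1}^d A^i_{1,2} D^{-1} A^l_{2,1} \partial^2_{x_i x_l} V_1^\lambda + \sum_{i,l=1}^d A^i_{1,2} D^{-1} A^l_{2,2} \partial^2_{x_i x_l} V_2^\lambda.
\end{align*}
Writing $V_1^\lambda = \Psi^\lambda + \sum_m A^m_{1,2} D^{-1} \partial_{x_m} V_2^\lambda$ splits the first sum into $\mathcal{A}\Psi^\lambda$ (recognizing the operator in \eqref{mathcalA}) plus $\sum_m \mathcal{A} A^m_{1,2} D^{-1} \partial_{x_m} V_2^\lambda$, producing precisely the operator structure of $L_1$, still expressed through $V_2^\lambda$.

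To close the system in $(\Psi^\lambda, Z^\lambda)$, I next re-express $V_2^\lambda$. Eliminating $V_1^\lambda$ in \eqref{Z} via the same identity yields
\begin{align*}
\bigg({\rm Id} + \sum_{i=1}^d D^{-1} A^i_{2,2} \partial_{x_i} + \sum_{i,l=1}^d D^{-1} A^i_{2,1} A^l_{1,2} D^{-1} \partial^2_{x_i x_l}\bigg) V_2^\lambda = Z^\lambda - \sum_{i=1}^d D^{-1} A^i_{2,1} \partial_{x_i}\Psi^\lambda.
\end{align*}
Passing to Fourier variables (where $\partial_{x_i}\mapsto \mathrm{i}\xi_i$, so the second-order term contributes a minus sign) identifies the left-hand side with the multiplier whose inverse is $\mathcal{B}^{-1}$. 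Substituting $V_2^\lambda = \mathcal{B}^{-1}\bigl(Z^\lambda - \sum_i D^{-1} A^i_{2,1}\partial_{x_i}\Psi^\lambda\bigr)$ into the $\Psi^\lambda$-equation then yields $L_1$ in the stated form. The equation for $Z^\lambda$ is obtained analogously: differentiating \eqref{Z} in time and using \eqref{hplinear} together with the identity $DZ^\lambda = DV_2^\lambda + \sum_i(A^i_{2,1}\partial_{x_i}V_1^\lambda + A^i_{2,2}\partial_{x_i}V_2^\lambda)$, the zeroth-order terms assemble into $-DZ^\lambda$ and cancel, leaving
\begin{align*}
\partial_t Z^\lambda + DZ^\lambda = -\sum_{i,l=1}^d D^{-1} A^i_{2,1} A^l_{1,2} \partial^2_{x_i x_l} V_2^\lambda - \sum_{i=1}^d D^{-1} A^i_{2,2} D \partial_{x_i} Z^\lambda,
\end{align*}
into which I substitute the expression for $V_2^\lambda$ to reach the claimed form of $L_2$.

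The main technical obstacle is the $L^p$-boundedness of $\mathcal{B}^{-1}$. The symbol $M(\xi) = \bigl({\rm Id} + \sum_i D^{-1} A^i_{2,2}\mathrm{i}\xi_i - \sum_{i,l} D^{-1} A^i_{2,1} A^l_{1,2} D^{-1} \xi_i\xi_l\bigr)^{-1}$ is close to ${\rm Id}$ when $|\xi|\lesssim \lambda_0$, so I would write $M(\xi) = \sum_{n\geq 0}(-N(\xi))^n$ with $N(\xi) = \sum_i D^{-1} A^i_{2,2} \mathrm{i}\xi_i - \sum_{i,l} D^{-1} A^i_{2,1} A^l_{1,2} D^{-1}\xi_i\xi_l$ satisfying $\|N(\xi)\|=O(\lambda)$ on $\lambda\mathcal{C}$. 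Each term in the series, multiplied by a smooth cutoff of $\lambda\mathcal{C}$, is a classical Fourier multiplier bounded on $L^p$ for $1\leq p\leq \infty$ (by Lemma \ref{lemma22} combined with Bernstein-type gains absorbing the polynomial factors in $\xi$), and the Neumann series converges in operator norm once $\lambda_0$ is chosen small enough. Since throughout Section \ref{section:linear} the operator $\mathcal{B}^{-1}$ is only applied to functions whose Fourier supports lie in $\lambda\mathcal{C}$, this localized construction is entirely sufficient.
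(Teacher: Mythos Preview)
Your proposal is correct and follows essentially the same route as the paper: derive \eqref{formalcouple} and \eqref{Zformal} by direct time-differentiation and substitution, then invert the algebraic relation linking $V_2^\lambda$ to $(\Psi^\lambda,Z^\lambda)$ via the small-frequency operator $\mathcal{B}^{-1}$. One minor slip: the reference you need for the $L^p$-boundedness of each Neumann-series term is Bernstein's lemma (Lemma~\ref{lemma21}) or the classical Fourier multiplier theorem, not Lemma~\ref{lemma22}, which concerns semigroup bounds; the paper itself simply invokes Bernstein and smallness of $\lambda_0$ in one line.
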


\begin{proof}
It follows from the second equation in $\eqref{hplinear}$ that
\begin{align}
V_{2}^\lambda=-D^{-1} \partial_{t} V_{2}^\lambda-\sum_{i=1}^{d}D^{-1}\Big(A_{2,1}^{i}\partial_{x_{i}}V_{1}^\lambda+A_{2,2}^{i}\partial_{x_{i}}V_{2}^\lambda\Big).\label{V200}
\end{align}
Substituting \eqref{V200} into the first equation in $\eqref{hplinear}$ yields
\begin{equation}\label{formalcouple}
\begin{aligned}
&\partial_{t}\Psi^\lambda-\mathcal{A}\Psi^\lambda=\bigg(\sum_{i,l=1}^{d} A^i_{1,2}D^{-1}A^l_{2,2} \partial^2_{x_{i} x_{l}}+\sum_{i=1}^{d}\mathcal{A}A_{1,2}^iD^{-1}\partial_{x_i} \bigg)V_2^\lambda.
\end{aligned}
\end{equation}
On the other hand, differentiating $Z$ in time and using the fact $\partial_{t}V_{2}=-DZ$ due to $\eqref{hplinear}$, we have
\begin{equation}\label{Zformal}
\begin{aligned}
\partial_{t}Z^\lambda+D Z^\lambda&=-\sum_{i=1}^{d}\partial_{x_{i}} D^{-1}\Big(A_{2,1}^{i}\sum_{l=1}^d A_{1,2}^l\partial_{x_l}V_{2}^\lambda+A_{2,2}^{i}DZ^\lambda\Big).
\end{aligned}
\end{equation}
According to the definitions \eqref{effective1} of $\Psi$ and \eqref{Z} of $Z$, one knows that
\begin{align}
Z^\lambda-\sum_{i=1}^d D^{-1} A_{2,1}^i \Psi^\lambda=\mathcal{B} V_2^\lambda.
\end{align}
where the operator $\mathcal{B}$ is given by
$$
\mathcal{B}:={\rm Id}+\widetilde{\mathcal{B}}\quad\text{with} \quad  \widetilde{\mathcal{B}}:=\sum_{i=1}^d D^{-1} A^i_{2,2}\partial_{x_i}+\sum_{i,l=1}^d D^{-1} A_{2,1}^i  A_{1,2}^l D^{-1}\partial_{x_{i}x_{l}}^2.
$$
%For $|\xi|\ll1$, the symbol $b(\xi)$ of $\mathcal{B}$ takes the form ${\rm Id}+O(|\xi|)+O(|\xi|^2)$, which is pointwise invertible, and its inverse together with its $\xi$-derivatives are uniformly bounded. Choose a smooth function
%$\chi$ on $\mathbb{R}^d$ with $\chi\equiv1$ on $\mathcal C$, and set
%$m_\lambda(\xi):=\chi(\xi/\lambda)\,b(\xi)^{-1}$, $K_\lambda:=\mathcal{F}^{-1}(m_\lambda)$.
%Then $m_\lambda$ is compactly supported with uniform bounds on
%$\partial_\xi^\alpha m_\lambda$ (for $|\alpha|\le d+1$), so $\|K_\lambda\|_{L^1}$ is bounded uniformly in $\lambda$. For any function $f$ fulfilling  $\operatorname{Supp}\mathcal{F}(f)\subset \lambda \mathcal{C}$, since
%$\widehat{\mathcal B^{-1}f}=b^{-1}\widehat f=m_\lambda\,\widehat f$, Young’s inequality gives
%$\|\mathcal B^{-1}f\|_{L^p}=\|K_\lambda*f\|_{L^p}\le \|K_\lambda\|_{L^1}\|f\|_{L^p}\le C\|f\|_{L^p}$.
Let $E_\lambda:=\{f^\lambda\in\mathcal S' : \operatorname{Supp}\widehat f\subset\lambda\mathcal C\}$.
By Bernstein's inequality,  $\widetilde{\mathcal{B}} :E_\lambda\to E_\lambda$ and for $f^\lambda\in E_\lambda$, $
\|\widetilde{\mathcal{B}} f^\lambda\|_{L^p}\lesssim (\lambda+\lambda^2)\,\|f^\lambda\|_{L^p}$. If $0<\lambda\leq \lambda_0\ll1$, then  $\|\widetilde{\mathcal{B}}\|_{E_\lambda\to L^p}\le\tfrac12$. Consequently, $\mathcal B|_{E_\lambda}:E_\lambda\to E_\lambda$ is a bounded bijection (isomorphism) and satisfies $\|\mathcal B|_{E_\lambda}\|_{L^p\to L^p}\geq \frac{1}{2}$. This gives $\|\mathcal B^{-1}f^\lambda\|_{L^p}\le 2\|f^\lambda\|_{L^p}$ for all $f^\lambda\in E_\lambda$, $1\leq p\leq\infty$ and $0<\lambda\le\lambda_0\ll1$.

%Due to Bernstein's lemma and $\lambda\leq \lambda_0\ll1$ with a suitably small constant $\lambda_0$, the operator $\mathcal{B}^{-1}$ is well-defined on functions supported in the annulus $\lambda \mathcal{C}$. More precisely,  for any  and maps $L^p$ to $L^p$ for all $1\leq p\leq \infty$.

Therefore, $(V_1^\lambda,V_2^\lambda)$ can be represented in terms of $(\Psi^\lambda,Z^\lambda)$ as
\begin{align}
V_2^\lambda&=\mathcal{B}^{-1}\Big(Z^\lambda-\sum_{i=1}^d D^{-1} A_{2,1}^i  \partial_{x_{i}}\Psi^\lambda\Big),\label{V2}\\
V_1^\lambda&=\Psi^\lambda+\sum_{i=1}^d A^{i}_{1,2}D^{-1} \partial_{x_i}\mathcal{B}^{-1}\Big(Z^\lambda-\sum_{i=1}^d D^{-1} A_{2,1}^i  \partial_{x_{i}} \Psi^\lambda\Big).\label{V1}
\end{align}
Substituting \eqref{V2} and \eqref{V1} into \eqref{formalcouple} and \eqref{Zformal}, respectively, we obtain \eqref{3.29}.
\end{proof}

Next, we establish the localized Lyapunov inequality for the new variable $(\Psi^\lambda,Z^\lambda)$ in low frequencies.

\begin{lemma}
Let $1<p<\infty$, and let $\Psi^\lambda$ and $Z^\lambda$ be defined in \eqref{Psilambda} and \eqref{Z}, respectively. Then, for any $\lambda\in(0, \lambda_0]$ and $\varepsilon>0$, there exists a suitably small constant $\lambda_0$ such that we have
\begin{equation}\label{3.38}
\begin{aligned}
&\frac{d}{dt}\Big(\|\Psi^\lambda\|_{\varepsilon,L^p}+\lambda\|Z^\lambda\|_{\varepsilon,L^p}\Big)+R_1 \lambda^2\Big( \|\Psi^\lambda\|_{\varepsilon,L^p}+\lambda\|Z^\lambda\|_{\varepsilon,L^p}\Big)\leq C\varepsilon.
\end{aligned}
\end{equation} where $\|\cdot\|_{\varepsilon,L^p}\coloneqq   (\|\cdot\|_{L^p}^p+\varepsilon^p)^{1/p}$, and $C, R_1>0$ are some generic constants.
\end{lemma}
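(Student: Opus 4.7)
The plan is a direct hypocoercivity-style argument on the decoupled system \eqref{3.29}: by Proposition \ref{lemmaSKpositive}, the operator $-\mathcal{A}$ is strongly elliptic, so the $\Psi^\lambda$ equation enjoys parabolic dissipation of order $\lambda^2$ at frequency $\lambda$, while the $Z^\lambda$ equation has exponential damping of rate $\kappa$; all the remainders $L_1,L_2$ are of strictly higher order in $\lambda$ and will be absorbed. The scalar perturbation $\varepsilon$ plays a purely technical role: it keeps $\|\cdot\|_{\varepsilon,L^p}\geq\varepsilon$ so that the chain rule
$$\frac{d}{dt}\|f\|_{\varepsilon,L^p}=\|f\|_{\varepsilon,L^p}^{1-p}\int|f|^{p-2}f\cdot\partial_t f\,dx$$
is well defined uniformly in $t$.

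First I would test the parabolic equation for $\Psi^\lambda$ against $|\Psi^\lambda|^{p-2}\Psi^\lambda$. Since the symbol of $-\mathcal{A}$ is bounded below by $c|\xi|^2\,\mathrm{Id}$ and $\Psi^\lambda$ is Fourier-localized in $\lambda\mathcal{C}$, the $L^p$-dissipation estimate for strongly elliptic operators on spectrally localized data yields
$$-\int\mathcal{A}\Psi^\lambda\cdot|\Psi^\lambda|^{p-2}\Psi^\lambda\,dx\geq c_1\lambda^2\|\Psi^\lambda\|_{L^p}^p.$$
Testing the damped equation for $Z^\lambda$ against $|Z^\lambda|^{p-2}Z^\lambda$ and invoking the pointwise bound $\langle DZ,Z\rangle\geq\kappa|Z|^2$ produces the dissipation $\kappa\|Z^\lambda\|_{L^p}^p$. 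For the forcing terms, counting derivatives in the formulas for $L_1,L_2$ provided by Lemma \ref{lemma32}, together with Bernstein's lemma (each derivative on data Fourier-localized at frequency $\lambda$ costs a factor $\lambda$) and the $L^p$-boundedness of $\mathcal{B}^{-1}$ (valid for $\lambda\leq\lambda_0$ small, since $\mathcal{B}$ is then a perturbation of $\mathrm{Id}$), I obtain
$$\|L_1\|_{L^p}\lesssim\lambda^2\|Z^\lambda\|_{L^p}+\lambda^3\|\Psi^\lambda\|_{L^p},\qquad\|L_2\|_{L^p}\lesssim\lambda\|Z^\lambda\|_{L^p}+\lambda^3\|\Psi^\lambda\|_{L^p}.$$

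Dividing each energy identity by $\|\cdot\|_{\varepsilon,L^p}^{p-1}$, bounding $\|\cdot\|_{L^p}\leq\|\cdot\|_{\varepsilon,L^p}$ on the right, and using the elementary inequality $\|f\|_{L^p}^p/\|f\|_{\varepsilon,L^p}^{p-1}\geq\|f\|_{\varepsilon,L^p}-\varepsilon$ (immediate from $\|f\|_{\varepsilon,L^p}\geq\varepsilon$) to convert $L^p$-dissipation into $\varepsilon$-norm dissipation up to an $O(\varepsilon)$ term, I obtain
$$\frac{d}{dt}\|\Psi^\lambda\|_{\varepsilon,L^p}+c_1\lambda^2\|\Psi^\lambda\|_{\varepsilon,L^p}\leq C\varepsilon+C\lambda^2\|Z^\lambda\|_{\varepsilon,L^p}+C\lambda^3\|\Psi^\lambda\|_{\varepsilon,L^p}$$
and the analogous inequality for $Z^\lambda$ with damping $\kappa$ in place of $c_1\lambda^2$ and remainder of order $\lambda\|Z^\lambda\|_{\varepsilon,L^p}+\lambda^3\|\Psi^\lambda\|_{\varepsilon,L^p}$. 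Adding the $\Psi^\lambda$ inequality to $\lambda$ times the $Z^\lambda$ one and shrinking $\lambda_0$ so that $C\lambda\leq\kappa/2$, $C\lambda^3\leq\tfrac{c_1}{2}\lambda^2$, and $C\lambda^2\leq\tfrac{\kappa\lambda}{4}$, every cross term is absorbed into one of the two damping terms. Noting finally that $\kappa\lambda\geq R_1\lambda^3$ for $R_1$ chosen small (using $\lambda\leq\lambda_0$), the combined damping can be lowered uniformly to the symmetric rate $R_1\lambda^2(\|\Psi^\lambda\|_{\varepsilon,L^p}+\lambda\|Z^\lambda\|_{\varepsilon,L^p})$, which is precisely \eqref{3.38}.

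The main technical obstacle is the $L^p$ dissipation inequality in the first step: for $p=2$ it reduces to Plancherel and positivity of the symbol, but for general $1<p<\infty$ with vector-valued $\Psi^\lambda$ one must combine the Mikhlin multiplier theorem with an integration-by-parts expansion of $\int\sum_{i,l}B_{il}\partial_{x_l}\Psi^\lambda\cdot\partial_{x_i}(|\Psi^\lambda|^{p-2}\Psi^\lambda)\,dx$ (with $B_{il}=A^i_{1,2}D^{-1}A^l_{2,1}$) using the componentwise chain rule and the positive-definite structure of the symmetric symbol; this $L^p$-Bernstein dissipation estimate is available in the Danchin--Crin-Barat circle of techniques but is the one non-routine ingredient.
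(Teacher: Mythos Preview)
Your proposal is correct and follows essentially the same route as the paper: energy estimates on the decoupled system \eqref{3.29}, strong ellipticity of $-\mathcal{A}$ via Proposition~\ref{lemmaSKpositive} for the parabolic dissipation on $\Psi^\lambda$, positivity of $D$ for the damping on $Z^\lambda$, the same Bernstein bounds on $L_1,L_2$, division by $\|\cdot\|_{\varepsilon,L^p}^{p-1}$, and absorption of cross terms for $\lambda$ small. The one packaging difference is that the paper tests the $\Psi^\lambda$ equation \emph{componentwise} against $|(\Psi^\lambda)^i|^{p-2}(\Psi^\lambda)^i$ and invokes the scalar $L^p$-Bernstein dissipation Lemma~\ref{lemmaA2} for each component, whereas you test against the full vector $|\Psi^\lambda|^{p-2}\Psi^\lambda$ and flag the vector-valued $L^p$-dissipation as the non-routine step; the paper's componentwise device is precisely how it sidesteps the issue you raise at the end.
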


\begin{proof}
Multiplying each component of the equations for $\Psi^\lambda$ in $\eqref{3.29}$ by $|(\Psi^\lambda)^i|^{p-2}(\Psi^\lambda)^i$ ($i=1,\dots,n_1$) and integrating over $\mathbb{R}^d$, we arrive at
\begin{equation}\label{fffffffs}
    \begin{aligned}
    &\frac{1}{p}\frac{d}{dt}\|(\Psi^\lambda)^i\|_{L^p}^p-\int_{\mathbb{R}^d} \mathcal{A}(\Psi^\lambda)^i |(\Psi^\lambda)^i|^{p-2}(\Psi^\lambda)^i \,dx=\int_{\mathbb{R}^d}|(\Psi^\lambda)^i|^{p-2}(\Psi^\lambda)^i L_1(\Psi^\lambda,Z^\lambda)^i \,dx.
    \end{aligned}
\end{equation}
Note that $-\mathcal{A}$ is strongly elliptic due to the [SK] condition, so Lemma \ref{lemmaA2} implies that there exists a constant $c^*_p$ such that the second term in the left-hand side of \eqref{fffffffs} can be greater than $c_p^* \lambda^2 \|(\Psi^\lambda)^i\|_{L^p}^p$. Consequently, performing the summation of \eqref{fffffffs} over $i=1,\dots,n_1$ leads to
\begin{equation}\label{340}
    \begin{aligned}
    &\frac{1}{p}\frac{d}{dt}\|\Psi^\lambda\|_{L^p}^p+c^*_p\lambda^2 \|\Psi^\lambda\|_{L^p}^p\leq \|L_1(\Psi^\lambda,Z^\lambda)\|_{L^p}\|\Psi^\lambda\|_{L^p}^{p-1}.
    \end{aligned}
\end{equation}
On the other hand, by multiplying the equations for $Z^\lambda$ in $\eqref{3.29}$ by $|Z^\lambda|^{p-2}Z^\lambda$ and using
$$
\langle D Z^\lambda,|Z^\lambda|^{p-2}Z^\lambda \rangle=\langle D(|Z^\lambda|^{\frac{p-2}{2}}Z^\lambda),|Z^\lambda|^{\frac{p-2}{2}}Z^\lambda \rangle \geq \kappa |Z^\lambda|^{p}
$$
due to \eqref{dissipationU211}, we obtain
\begin{equation}\label{341}
    \begin{aligned}
    &\frac{1}{p}\frac{d}{dt}\|Z^\lambda\|_{L^p}^p+\kappa\|Z^\lambda\|_{L^p}^{p}\leq \|L_2(\Psi^\lambda,Z^\lambda)\|_{L^p}\|Z^\lambda\|_{L^p}^{p-1}.
    \end{aligned}
\end{equation}
It is convenient to take $\lambda\leq \lambda_0^*$ with $\lambda_0^*$ suitably small such that $\|\mathcal{B}^{-1}f^{\lambda}\|_{L^p}\leq  2\|f^\lambda\|_{L^p}$ for distribution $f^\lambda$ satisfying  $\operatorname{Supp}\mathcal{F}(f^\lambda)\subset \lambda \mathcal{C}$. Then, for all $\lambda\leq \min\{1,\lambda_0^*\}$, Bernstein's inequality  \ref{lemma21} implies that the high-order remainder terms $L_i(\Psi^\lambda,Z^\lambda)$ ($i=1,2$) can be bounded by
\begin{align}
\|L_1(\Psi^\lambda,Z^\lambda)\|_{L^p}&\leq C\lambda^2 \|Z^\lambda\|_{L^p}+C\lambda^3 \|\Psi^\lambda\|_{L^p},\label{342}\\
\|L_2(\Psi^\lambda,Z^\lambda)\|_{L^p}&\leq C\lambda \|Z^\lambda\|_{L^p}+C\lambda^3 \|\Psi^\lambda\|_{L^p}\label{343}
\end{align}
for some uniform constant $C>0$. Dividing both sides of \eqref{340} and \eqref{341} by $\|\Psi^\lambda\|_{\varepsilon,L^p}^{p-1}$ and $\|Z^\lambda\|_{\varepsilon,L^p}^{p-1}$, respectively, combining \eqref{342} and \eqref{343} and then choosing $\lambda_0\leq \min\{1,\lambda_0^*, \frac{c_p^*}{4C}, \frac{\kappa}{4C}\}$ and $R_1\coloneqq   \min\{\frac{c_p^*}{2},\frac{\kappa}{2}\}$, we get \eqref{3.38} immediately.
\end{proof}

\begin{itemize}

\item {\emph{Step 2: The $L^p$ upper bounds of $V_1^\lambda$ and $V_2^\lambda$}}

\end{itemize}

In this step, we shall prove the estimates \eqref{lowlocaluppertotal} and \eqref{lowlocalupper}. For that end, taking advantage of Gr\"onwall's inequality for \eqref{3.38} and letting $\varepsilon\rightarrow 0$, we arrive at
\begin{equation}\label{totalPsiZ}
\begin{aligned}
\|\Psi^\lambda\|_{L^p}+\lambda \|Z^\lambda\|_{L^p}\lesssim e^{-R_1 \lambda^2t}\Big(\|\Psi^\lambda_0\|_{L^p}+\lambda \|Z_0^\lambda\|_{L^p}\Big).
\end{aligned}
\end{equation}
The above estimate exhibits the asymptotic behavior of $(\Psi^\lambda,Z^\lambda)$. In fact, one can capture  more accurate behavior for $Z^\lambda$. Recalling \eqref{341} and \eqref{343}, we have
\begin{equation*}
\begin{aligned}
&\frac{d}{dt}\|Z^\lambda\|_{\varepsilon,L^p}+(\kappa-C\lambda) \|Z^\lambda\|_{\varepsilon,L^p}\leq C\lambda^3 \|\Psi^\lambda\|_{L^p}+(\kappa-C\lambda)\varepsilon.
\end{aligned}
\end{equation*}
As $\lambda\leq \lambda_0<\frac{\kappa}{2C}$,  Gr\"onwall's inequality ensures that
\begin{equation}\label{Z:Lp}
\begin{aligned}
\|Z^\lambda\|_{L^p}&\leq e^{-\frac{\kappa}{2}t}\|Z^\lambda_0\|_{L^p}+C\lambda^3 \int_0^te^{-\frac{\kappa}{2}(t-\tau)}\|\Psi^\lambda\|_{L^p}\,d\tau\\
&\leq e^{-\frac{\kappa}{2}t}\|Z_0^\lambda\|_{L^p}+C\lambda^3 \sup_{\tau\in[0,t]}\Big(e^{R_1\lambda^2\tau}\|\Psi^\lambda\|_{L^p}\Big)\,\int_0^te^{-\frac{\kappa}{2}(t-\tau)} e^{-R_1\lambda^2\tau}\,d\tau\\
&\lesssim e^{-\frac{\kappa}{2}t}\|Z_0^\lambda\|_{L^p}+\lambda^3 e^{-R_1 \lambda^2t}\Big(\|\Psi^\lambda_0\|_{L^p}+\lambda \|Z_0^\lambda\|_{L^p}\Big)
\end{aligned}
\end{equation}
where we have used \eqref{totalPsiZ} and the fact that, if $\lambda\leq \sqrt{\frac{C'}{2c'}}$,
\begin{equation}\label{maomao1}
\begin{aligned}
\int_{0}^{t}e^{-C'(t-\tau)}e^{-c'\lambda^2 \tau}\, d\tau= e^{-C't}\int_{0}^{t} e^{(C'-c'\lambda^2)\tau}\,d\tau\leq \frac{1}{2C'} e^{-c'\lambda^2 t},
\,\quad c', C'>0.
\end{aligned}
\end{equation}

With these estimates of $\Psi^\lambda$ and $Z^\lambda$ at hand, we can get the desired estimates for $V_1^\lambda$ and $V_2^\lambda$. Indeed, owing to \eqref{V2}, \eqref{V1}, \eqref{totalPsiZ}, \eqref{Z:Lp} and \ref{lemma21}, it holds that
\begin{equation}\label{V1es:0}
\begin{aligned}
\|V_1^\lambda\|_{L^p}&\lesssim \|\Psi^\lambda\|_{L^p}+\lambda \|Z^\lambda\|_{L^p}\lesssim e^{-R_1 \lambda^2t}\Big(\|\Psi^\lambda_0\|_{L^p}+\lambda \|Z_0^\lambda\|_{L^p}\Big)+e^{-\frac{\kappa}{2}t}\lambda \|Z_0^\lambda\|_{L^p}
\end{aligned}
\end{equation}
and
\begin{equation}\label{V2es:0}
\begin{aligned}
\|V_2^\lambda\|_{L^p}&\lesssim \lambda \|\Psi^\lambda\|_{L^p}+\|Z^\lambda\|_{L^p}\lesssim e^{-R_1 \lambda^2t}\lambda\Big(\|\Psi^\lambda_0\|_{L^p}+\lambda \|Z_0^\lambda\|_{L^p}\Big)+e^{-\frac{\kappa}{2}t}\|Z_0^\lambda\|_{L^p}.
\end{aligned}
\end{equation}
It follows from \eqref{Psi0lambda} and \eqref{Z0} that
\begin{align}\label{V2es:001}
\|\Psi^\lambda_0\|_{L^p}\lesssim \|V_{1,0}^\lambda\|_{L^p}+\lambda \|V_{2,0}^\lambda\|_{L^p}\quad\text{and}\quad \|Z_0^\lambda\|_{L^p}\lesssim (1+\lambda)\|V_{2,0}^\lambda\|_{L^p}+\lambda \|V_{1,0}^\lambda\|_{L^p}.
\end{align}
Hence, \eqref{lowlocaluppertotal} and \eqref{lowlocalupper} are followed directly.

\begin{itemize}

\item \emph{Step 3: Improved $L^p$ bounds}

\end{itemize}

Furthermore, we have the asymptotic behaviors \eqref{asy:error} and \eqref{asy:error2}. Recall that $V_{1,*}^{\lambda}$ and $V_{2,*}^{\lambda}$ are given by \eqref{profilelambda}. By \eqref{formalcouple}, the error  $\widetilde{\Psi}^\lambda\coloneqq   \Psi^\lambda-V_{1,*}^{\lambda}$ satisfies
\begin{align}
\partial_t \widetilde{\Psi}^\lambda-\mathcal{A} \widetilde{\Psi}^\lambda=\bigg(\sum_{i,l=1}^{d} A^i_{1,2}D^{-1}A^l_{2,2} \partial^2_{x_{i} x_{l}}+\sum_{i=1}^{d}\mathcal{A}A_{1,2}^iD^{-1}\partial_{x_i} \bigg)V_2^\lambda,\quad \widetilde{\Psi}^\lambda|_{t=0}=0.\label{widePsi}
\end{align}
Then, arguing similarly to \eqref{fffffffs}-\eqref{340} yields
\begin{equation}
\begin{aligned}
\frac{d}{dt}\|\widetilde{\Psi}^\lambda\|_{\varepsilon,L^p}+R_1\lambda^2 \|\widetilde{\Psi}^\lambda\|_{\varepsilon,L^p} \lesssim \lambda^2  \|V_2^\lambda\|_{L^p}+\varepsilon.
\end{aligned}
\end{equation}
This, together with the estimate for $V_2^\lambda$ in $\eqref{lowlocalupper}$, leads to
\begin{equation}\label{fgggdd}
\begin{aligned}
\|\widetilde{\Psi}^\lambda\|_{L^p}&\lesssim \lambda^2 \int_0^t e^{-R_1 \lambda^2 (t-\tau)} \|V_2^\lambda\|_{L^p}\,d\tau\\
&\lesssim  \lambda^3 \int_0^t e^{-R_1  \lambda^2 (t-\tau)}  e^{-R_1  \lambda^2 \tau} \,d\tau\, \Big(\|V_{1,0}^\lambda\|_{L^p}+\lambda \|V_{2,0}^\lambda\|_{L^p}\Big)\\
&\quad+ \lambda^2 \int_0^t e^{-R_1  \lambda^2 (t-\tau)}  e^{-\kappa_1 \tau} \,d\tau \,\Big(\lambda^2\|V_{1,0}^\lambda\|_{L^p}+\lambda \|V_{2,0}^\lambda\|_{L^p}\Big)\\
&\lesssim \lambda  e^{-R_1  \lambda^2 t } \Big(\|V_{1,0}^\lambda\|_{L^p}+\lambda \|V_{2,0}^\lambda\|_{L^p}\Big),
\end{aligned}
\end{equation}
where we have used \eqref{maomao1} and
\begin{equation}\label{maomao}
\begin{aligned}
\int_{0}^{t}e^{-C'\lambda^2(t-\tau)}e^{-c'\lambda^2 \tau}\, d\tau=e^{-C'\lambda^2t}  \int_{0}^{t} e^{(C'-c')\lambda^2 \tau}d\tau\lesssim  e^{-c'\lambda^2t}\lambda^{-2},
\,\quad 0<c'< C'.
\end{aligned}
\end{equation}
According to \eqref{lowlocalupper}, \eqref{Psilambda} and \eqref{fgggdd}, we have
\begin{align*}
\|V_1^\lambda-V_{1,*}^\lambda\|_{L^p}&\lesssim \|\widetilde{\Psi}^\lambda\|_{L^p}+\lambda\|V_2^\lambda\|_{L^p}\lesssim\lambda  e^{-R_1  \lambda^2 t } \Big(\|V_{1,0}^\lambda\|_{L^p}+\lambda \|V_{2,0}^\lambda\|_{L^p}\Big)+   e^{-\kappa_1 t } \Big(\lambda^2\|V_{1,0}^\lambda\|_{L^p}+ \lambda \|V_{2,0}^\lambda\|_{L^p}\Big).
\end{align*}
This is \eqref{asy:error} exactly.

On the other hand, it follows from \eqref{Z}
 that
\begin{equation}\nonumber
\begin{aligned}
V_2^{\lambda}-V_{2,*}^{\lambda}&=-\sum_{i=1}^{d} D^{-1} A_{2,1}^i \partial_{x_i} (V_1^{\lambda}-V_{1,*}^{\lambda})-\sum_{i=1}^d D^{-1}A_{2,2}^i \partial_{x_i} V_2^{\lambda}+Z^{\lambda},
\end{aligned}
\end{equation}
which implies, by \eqref{lowlocalupper} and \eqref{asy:error} and \eqref{Z:Lp}, that
\begin{equation}\nonumber
\begin{aligned}
\| V_2^\lambda- V_{2,*}^{\lambda} \|_{L^p} \lesssim  \lambda^2  e^{-R_1  \lambda^2 t } \Big(\|V_{1,0}^\lambda\|_{L^p}+\lambda \|V_{2,0}^\lambda\|_{L^p}\Big)+ e^{-\kappa_1 t}\Big(\|V_{1,0}^\lambda\|_{L^p}+\lambda\|V_{2,0}^\lambda\|_{L^p}\Big).
\end{aligned}
\end{equation}
The inequality
\eqref{asy:error2} follows. Hence, the proof of Proposition \ref{LemmaspectrallocalHp} is complete.
\end{proof}

In the following proposition, we study the large-time behavior of solutions to \eqref{hplinear} pertaining to data in Besov spaces. Let $\mathbf{P}$ and $\{\mathbf{I} - \mathbf{P}\}$ denote the projections such that \eqref{3} holds, and $\dot{\mathbb{B}}^{\sigma,s}_{p,2}$ stand for the hybrid space endowed with the norm
$$
\|\cdot\|_{\dot{\mathbb{B}}^{\sigma,s}_{p,2}}=\|\cdot^{\ell}\|_{\dot{B}^{\sigma}_{p,1}}+\|\cdot\|_{\dot{B}^{s}_{2,1}}^{h}.
$$
See Appendix \ref{appendixA} for more details of Besov spaces or hybrid Besov spaces.

\begin{prop}\label{proplinear}
Let $1< p<\infty$, $\sigma_1, s\in \mathbb{R}$,  $\sigma>\sigma_1$ and $\sigma'>\sigma_1+1$. Assume that the initial data $V_{0}=\mathbf{P}V_0+\{\mathbf{I} - \mathbf{P}\}V_0$ fulfill $(\{\mathbf{I} - \mathbf{P}\}V_0)^{\ell}\in \dot{B}^{\sigma_1+1}_{p,\infty}$ and $V^h_0\in \dot{B}^{s}_{2,1}$ such that $\|V_0\|_{\dot{B}^{s}_{2,1}}^{h}<\infty$. Then the  solution $V=\mathbf{P}V+\{\mathbf{I} - \mathbf{P}\}V$ to \eqref{hplinear}
%if $V{\red{\in \mathcal{S}'(\mathbb{R}^d)}}$ is the solution to \eqref{hplinear}, then $V=\mathbf{P}V+\{\mathbf{I} - \mathbf{P}\}V$
has the following decay properties.
\begin{itemize}
    \item  (Upper bounds){\rm:}  For any $t>0$, it holds that
    \begin{equation}\label{upperlinear}
\left\{
\begin{aligned}
&\|\mathbf{P}V(t)\|_{\dot{\mathbb{B}}^{\sigma,s}_{p,2}}\leq C \langle t\rangle^{-\frac{1}{2}(\sigma-\sigma_{1})}, \\
&\|\{\mathbf{I} - \mathbf{P}\}V(t)\|_{\dot{\mathbb{B}}^{\sigma',s}_{p,2}}\leq C \langle t\rangle^{-\frac{1}{2}(\sigma'-\sigma_{1})-\frac{1}{2}},
\end{aligned}
\right.
\end{equation}
if and only if $(\mathbf{P}V_{0})^{\ell}\in\dot{B}^{\sigma_{1}}_{p,\infty}$.

    \item  (Asymptotics){\rm:} If additionally $(\mathbf{P}V_{0})^{\ell}\in\dot{B}^{\sigma_{1}}_{p,\infty}$, then there exists a time $\tilde{t}_0>0$ such that
\begin{equation}
\left\{
\begin{aligned}
&\|\mathbf{P}(V-V^{*})(t)\|_{\dot{\mathbb{B}}^{\sigma,s}_{p,2}}\leq C \langle t\rangle^{-\frac{1}{2}(\sigma-\sigma_{1})-\frac{1}{2}},\\
&\|\{\mathbf{I} - \mathbf{P}\}(V-V^{*})(t)\|_{\dot{\mathbb{B}}^{\sigma',s}_{p,2}}\leq C \langle t\rangle^{-\frac{1}{2}(\sigma'-\sigma_{1})-1},\label{asy:linear}
\end{aligned}
\right.
\end{equation}
for any  $t\geq \tilde{t}_0$.
%where the profile $V^{*}$ is defined in \eqref{Vp}.

    \item  (Two-sided bounds){\rm:} There exists a time $\tilde{t}_1>0$ such that $V$ satisfies \eqref{upperlinear} and\begin{equation}\label{twosidelinear}
\left\{
\begin{aligned}
&c\langle t\rangle^{-\frac{1}{2}(\sigma-\sigma_{1})}\leq \|\mathbf{P}V(t)\|_{\dot{\mathbb{B}}^{\sigma,s}_{p,2}}\leq C \langle t\rangle^{-\frac{1}{2}(\sigma-\sigma_{1})}, \\
&c\langle t\rangle^{-\frac{1}{2}(\sigma'-\sigma_{1})-\frac{1}{2}}\leq \|\{\mathbf{I} - \mathbf{P}\} V(t)\|_{\dot{\mathbb{B}}^{\sigma',s}_{p,2}}\leq C\langle t\rangle^{-\frac{1}{2}(\sigma'-\sigma_{1})-\frac{1}{2}}
\end{aligned}
\right.
\end{equation}
for $t\geq \tilde{t}_1$, if and only if $\Psi_0^{\ell}\in\dot{\mathcal{B}}^{\sigma_{1}}_{p,\infty}$. %Here $\Psi_0$ is defined in \eqref{W0}.
\end{itemize}
Here $c,C>0$ are some constants independent of $t$, $\Psi_0$ and $V^*$ are given by \eqref{W0} and \eqref{Vp}, respectively. %{\red{and the subset $\dot{\mathcal{B}}^{\sigma_{1}}_{p,\infty}\subset\dot{B}^{\sigma_{1}}_{p,\infty}$ is  defined by \eqref{Bsubset}}}. %and $\mathcal{D}_{p,\sigma_1}$ is given by
\end{prop}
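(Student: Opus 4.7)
The plan is a Littlewood--Paley frequency splitting that couples, in low frequencies, the dyadic spectral bounds of Proposition \ref{LemmaspectrallocalHp} (applied block by block with $\lambda=2^{j}$) with the decay character result Proposition \ref{propgeneral2} applied to the parabolic profile $V^{*}$ of \eqref{Vp}; high frequencies are absorbed by the standard exponential decay in $\dot{B}^{s}_{2,1}$ for hyperbolic systems under the [SK] condition. A key preliminary observation is that, because $(\{\mathbf{I}-\mathbf{P}\}V_{0})^{\ell}\in\dot{B}^{\sigma_{1}+1}_{p,\infty}$, the identity $\Psi_{0}^{\ell}=(\mathbf{P}V_{0})^{\ell}-\sum_{i}A^{i}_{1,2}D^{-1}\partial_{x_{i}}V_{2,0}^{\ell}$ makes $\Psi_{0}^{\ell}\in\dot{B}^{\sigma_{1}}_{p,\infty}$ equivalent to $(\mathbf{P}V_{0})^{\ell}\in\dot{B}^{\sigma_{1}}_{p,\infty}$, so the two characterizations appearing in \eqref{upperlinear} and \eqref{twosidelinear} may be rephrased in terms of the effective quantity $\Psi_{0}$, which is exactly what Proposition \ref{propgeneral2} sees through $V_{1}^{*}=e^{t\mathcal{A}}\Psi_{0}$.

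For the forward direction of the upper bound \eqref{upperlinear}, I multiply \eqref{lowlocalupper} by $2^{j\sigma}$ (respectively $2^{j\sigma'}$) and sum over low-frequency indices $j\le J_{0}$. The elementary inequality \eqref{opoo} together with the two low-frequency hypotheses on $\mathbf{P}V_{0}$ and $\{\mathbf{I}-\mathbf{P}\}V_{0}$ produces the stated rates; the extra factor $\lambda$ on the $V_{2}^{\lambda}$ estimate of \eqref{lowlocalupper} is what delivers the additional $\langle t\rangle^{-1/2}$ for $\{\mathbf{I}-\mathbf{P}\}V$. The high-frequency piece in $\dot{B}^{s}_{2,1}$ decays exponentially under [SK] by a Kawashima-type energy method, hence is dominated.

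The asymptotic bounds \eqref{asy:linear} follow by direct dyadic summation of the error estimates \eqref{asy:error} and \eqref{asy:error2} weighted by $2^{j\sigma}$ and $2^{j\sigma'}$, under the same low-frequency hypotheses; the extra factor $\lambda$ (resp.\ $\lambda^{2}$) in those estimates supplies the $\langle t\rangle^{-1/2}$ (resp.\ $\langle t\rangle^{-1}$) improvement over \eqref{upperlinear}. The two-sided bounds \eqref{twosidelinear} then follow by triangle inequalities: if $\Psi_{0}^{\ell}\in\dot{\mathcal{B}}^{\sigma_{1}}_{p,\infty}$, Proposition \ref{propgeneral2} (with $\alpha=1$, using that Proposition \ref{lemmaSKpositive} grants $\mathcal{A}$ the symbol structure \eqref{symbol} via strong ellipticity) yields two-sided bounds for $V^{*}$, and the faster decay of $V-V^{*}$ transfers the lower bound to $\mathbf{P}V$ for $t$ large enough. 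Conversely, a two-sided bound on $\mathbf{P}V$ already implies the upper bound, hence $(\mathbf{P}V_{0})^{\ell}\in\dot{B}^{\sigma_{1}}_{p,\infty}$ and thus $\Psi_{0}^{\ell}\in\dot{B}^{\sigma_{1}}_{p,\infty}$, so \eqref{asy:linear} is available and a reverse triangle inequality carries the lower bound to $V_{1}^{*}$, whereupon the converse half of Proposition \ref{propgeneral2} produces $\Psi_{0}^{\ell}\in\dot{\mathcal{B}}^{\sigma_{1}}_{p,\infty}$.

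The main obstacle will be the converse of the upper bound in \eqref{upperlinear}: one has to propagate a decay assumption on $\mathbf{P}V$ back to $V_{1}^{*}$ without already knowing $\Psi_{0}^{\ell}\in\dot{B}^{\sigma_{1}}_{p,\infty}$. This requires running the integral-representation argument from the second half of the proof of Proposition \ref{propgeneral2} block-by-block on $V_{1}^{*}$, while controlling the $j$-dependent remainder produced by \eqref{asy:error} against the exponential factor $e^{-R_{1}2^{2j}t}$ and exploiting the separation of rates $R_{1}2^{2j}\ll\kappa_{1}$ that holds uniformly in the low-frequency regime $j\le J_{0}$.
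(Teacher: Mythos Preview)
Your plan is correct and tracks the paper's proof closely: dyadic summation of Proposition~\ref{LemmaspectrallocalHp} for low frequencies, [SK]-exponential decay for high frequencies, Proposition~\ref{propgeneral2} for the profile $V^{*}$, and the Gamma-integral representation trick for the converse of the upper bound. Two points deserve comment.

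First, a small but genuine simplification over the paper: for the converse of the two-sided bound you invoke \eqref{asy:linear} directly, having already secured $(\mathbf{P}V_{0})^{\ell}\in\dot{B}^{\sigma_{1}}_{p,\infty}$ from the upper-bound converse. The paper instead reproves a faster-decay estimate for $\mathbf{P}(V-V^{*})^{\ell}$ via Duhamel on the effective equation for $\Psi-V_{1}^{*}$ (their Lemma yielding \eqref{con:asy}), using only the assumed decay \eqref{upperlinear} of $V_{2}$. Your route is shorter and perfectly valid once the first bullet is established; the paper's route is more self-contained in that it does not chain the three bullets together.

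Second, a gap you should close explicitly. When you write that Proposition~\ref{propgeneral2} ``yields two-sided bounds for $V^{*}$'' you have it only for $V_{1}^{*}=e^{t\mathcal{A}}\Psi_{0}$. For the dissipative profile $V_{2}^{*}=-\sum_{i}D^{-1}A^{i}_{2,1}\partial_{x_{i}}e^{t\mathcal{A}}\Psi_{0}$ the upper bound is immediate, but the \emph{lower} bound in the second line of \eqref{twosidelinear} requires $\bigl|\sum_{i}A^{i}_{2,1}\omega_{i}\bigr|\ge c_{2,1}>0$ on the sphere, which the paper extracts from the [SK] condition by a short contradiction argument (if this symbol vanished, the full matrix $\sum_{i}A^{i}\omega_{i}$ would be block lower-triangular with zero top-left block, violating Definition~\ref{defnSK}). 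This step is not captured by Proposition~\ref{lemmaSKpositive} alone. Also, for \eqref{asy:linear} you need high-frequency control of $V^{*}$ separately from $V$: this comes from the heat-semigroup smoothing $\|e^{t\mathcal{A}}\Psi_{0}\|_{\dot{B}^{s}_{2,1}}^{h}\lesssim e^{-c t}t^{-1/2}\|V_{0}\|_{\dot{B}^{s}_{2,1}}^{h}$, not from the hyperbolic [SK] decay.
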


\noindent
\textbf{{Proof.}} Let the threshold $J_0$ (see Appendix \ref{appendixA}) satisfy $2^{J_0}\leq \lambda_{0}$, where the small constant $\lambda_{0}$ is given by Proposition \ref{LemmaspectrallocalHp}. %In addition to the conditions \eqref{H1}-\eqref{blockL}, we assume that for any $\sigma, s,\sigma_1\in\mathbb{R}$ with $\sigma>\sigma_1+1$, $V_{0}=\mathbf{P}V_0+\{\mathbf{I} - \mathbf{P}\}V_0$ fulfills $(\{\mathbf{I} - \mathbf{P}\}V_0)^{\ell}\in \dot{B}^{\sigma_1+1}_{p,\infty}$ and $V_0^h\in \dot{B}^{s}_{2,1}$ such that $\|V_0\|_{\dot{B}^{s}_{2,1}}^{h}<\infty$.
From \eqref{eigh:SK} and Parseval's equality, it is shown that there exists a constant $R_*>0$ such that
the solution $V$ has the following exponential decay in high frequencies
\begin{align}
&\|V\|_{\dot{B}^{s}_{2,1}}^{h}\lesssim e^{-R_{*}t}\|V_{0}\|_{\dot{B}^{s}_{2,1}}^{h}\label{hhhh}
\end{align}
for $s\in \mathbb{R}$. See also Proposition 4.1 in \cite{XK2} for similar details.

In order to get the upper bounds in \eqref{upperlinear}, we need to focus only on the sharp algebra decay in low frequencies under the additional condition $(\mathbf{P}V_0)^{\ell}\in\dot{B}^{\sigma_1}_{p,\infty}$.
%Noticing that $\dot{\Delta}_{j}\dot{S}_{J_0}=0$ with $|j-j'|\geq5$,
It follows from $\eqref{lowlocalupper}$ that
\begin{equation}\label{ssfg}
\left\{
\begin{aligned}
\|\dot{\Delta}_{j}(\mathbf{P}V)^{\ell}(t)\|_{L^p}&\lesssim e^{-R_{1}2^{2j}t}\Big(\|\dot{\Delta}_{j} (\mathbf{P}V_0)^{\ell}\|_{L^p}+2^{j}\|\dot{\Delta}_{j} (\{\mathbf{I} - \mathbf{P}\}V_0)^{\ell}\|_{L^p}\Big)\\
\|\dot{\Delta}_{j}(\{\mathbf{I} - \mathbf{P}\}V)^{\ell}(t)\|_{L^p}&\lesssim e^{-R_{1}2^{2j}t}2^{j}\Big(\|\dot{\Delta}_{j} (\mathbf{P}V_0)^{\ell}\|_{L^p}+2^{j}\|\dot{\Delta}_{j} (\{\mathbf{I} - \mathbf{P}\}V_0)^{\ell}\|_{L^p}\Big)\\
&\quad+e^{-\kappa_{1}t}\|\dot{\Delta}_{j} (\{\mathbf{I} - \mathbf{P}\}V_0)^{\ell}\|_{L^p}.
\end{aligned}
\right.
\end{equation}
Through the standard procedure and \eqref{opoo},
 we arrive at
\begin{equation}\label{ffff}
\begin{aligned}
\|(\mathbf{P}V)^{\ell}(t)\|_{\dot{B}^{\sigma}_{p,1}}&\lesssim t^{-\frac{1}{2}(\sigma-\sigma_{1})}\Big(\|(\mathbf{P}V_0)^{\ell}\|_{\dot{B}^{\sigma_{1}}_{p,\infty}}+\|(\{\mathbf{I} - \mathbf{P}\}V_0)^{\ell}\|_{\dot{B}^{\sigma_{1}+1}_{p,\infty}}\Big),\quad \sigma>\sigma_1,
\end{aligned}
\end{equation}
and
\begin{equation}\label{ffff0}
\begin{aligned}
\|(\{\mathbf{I} - \mathbf{P}\}V)^{\ell}(t)\|_{\dot{B}^{\sigma'}_{p,1}}&\lesssim  t^{-\frac{1}{2}(\sigma'-\sigma_{1}+1)}\Big(\|(\mathbf{P}V_{0})^{\ell}\|_{\dot{B}^{\sigma_{1}}_{p,\infty}}+\|(\{\mathbf{I} - \mathbf{P}\}V_{0})^{\ell}\|_{\dot{B}^{\sigma_{1}+1}_{p,\infty}}\Big)\\
&\quad+e^{-\kappa_1t}\|(\{\mathbf{I} - \mathbf{P}\}V_0)^{\ell}\|_{\dot{B}^{\sigma'}_{p,1}}\\
&\lesssim t^{-\frac{1}{2}(\sigma'-\sigma_{1}+1)}\Big(\|(\mathbf{P}V_{0})^{\ell}\|_{\dot{B}^{\sigma_{1}}_{p,\infty}}+\|(\{\mathbf{I} - \mathbf{P}\}V_{0})^{\ell}\|_{\dot{B}^{\sigma_{1}+1}_{p,\infty}}\Big)\\
&\quad+e^{-\kappa_1t}\|(\{\mathbf{I} - \mathbf{P}\}V_0)^{\ell}\|_{\dot{B}^{\sigma_1+1}_{p,\infty}},\quad \sigma'>\sigma_1+1.
\end{aligned}
\end{equation}
In addition, for any $\sigma>\sigma_1$ and  $\sigma'>\sigma_1+1$, respectively, it holds that
\begin{equation}
\left\{
\begin{aligned}
\|(\mathbf{P}V)^{\ell}(t)\|_{\dot{B}^{\sigma}_{p,1}}&\lesssim \|(\mathbf{P}V)^{\ell}(t)\|_{\dot{B}^{\sigma_{1}}_{p,\infty}}\lesssim \|(\mathbf{P}V_0)^{\ell}\|_{\dot{B}^{\sigma_{1}}_{p,\infty}}+ \|(\{\mathbf{I} - \mathbf{P}\}V_{0})^{\ell}\|_{\dot{B}^{\sigma_{1}+1}_{p,\infty}},\\
\|(\{\mathbf{I} - \mathbf{P}\}V)^{\ell}(t)\|_{\dot{B}^{\sigma'}_{p,1}}&\lesssim \|(\{\mathbf{I} - \mathbf{P}\}V)^{\ell}(t)\|_{\dot{B}^{\sigma_{1}+1}_{p,\infty}}\lesssim\|(\mathbf{P}V_0)^{\ell}\|_{\dot{B}^{\sigma_{1}}_{p,\infty}}+ \|(\{\mathbf{I} - \mathbf{P}\}V_{0})^{\ell}\|_{\dot{B}^{\sigma_{1}+1}_{p,\infty}}.\label{llll}
\end{aligned}
\right.
\end{equation}
Note that  the fact that $e^{-\kappa_1t}\leq C_{k}(1+t)^{-k}$ for any $t>0$ and $k\geq 0$,
%\begin{align}
%\|V_{0}^{\ell}\|_{\dot{B}^{\sigma}_{p,1}}\lesssim 2^{(\sigma-\sigma_{1})J_0}\|(\mathbf{P}V_0)^{\ell}\|_{\dot{B}^{\sigma_{1}}_{p,\infty}}+2^{(\sigma-\sigma_{1}-1)J_0}\|(\{\mathbf{I} - \mathbf{P}\}V_0)^{\ell}\|_{\dot{B}^{\sigma_{1}+1}_{p,\infty}}.
%\end{align}
gathering \eqref{hhhh} and \eqref{ffff}-\eqref{llll}, we get the upper bounds \eqref{upperlinear}. %Following the same line in the above analysis, one also has the more precise estimates $\eqref{preciselinear}$.  % Furthermore, we explain the more precise estimates in $\eqref{preciselinear}$. In fact, from \eqref{opoo}, $\eqref{ssfg}_2$ and the assumptions of initial data one has
%\begin{equation}\label{IPVell}
%\begin{aligned}
%2^{(\sigma_1+1)j}\|\dot{\Delta}_{j}(\{\mathbf{I} - \mathbf{P}\}V)^{\ell}\|_{L^p}&\lesssim t^{-1}(\|(\mathbf{P}V_0)^{\ell}\|_{\dot{B}^{\sigma_1}_{p,\infty}}+\|(\{\mathbf{I} - \mathbf{P}\}V_0)^{\ell}\|_{\dot{B}^{\sigma_1+1}_{p,\infty}})+e^{-\kappa_1 t}\|(\{\mathbf{I} - \mathbf{P}\}V_0)^{\ell}\|_{\dot{B}^{\sigma_1+1}_{p,\infty}}\\
%&\lesssim t^{-1}(\|(\mathbf{P}V_0)^{\ell}\|_{\dot{B}^{\sigma_1}_{p,\infty}}+\|(\{\mathbf{I} - \mathbf{P}\}V_0)^{\ell}\|_{\dot{B}^{\sigma_1+1}_{p,\infty}}).
%\end{aligned}
%\end{equation}
%In addition, $\eqref{preciselinear}$ can be obtained by replacing $\sigma_1$ with $\sigma'$ and performing similar computations in \eqref{ffff}-\eqref{ffff0}.

Conversely, if we assume that $V$ satisfies \eqref{upperlinear}, then
\begin{equation}\label{upperlinearP}
\begin{aligned}
\|(\mathbf{P}V)^{\ell}(t)\|_{\dot{B}^{\sigma}_{p,1}}\lesssim \langle t\rangle^{-\frac{1}{2}(\sigma-\sigma_1)}.
\end{aligned}
\end{equation}
On the other hand, it follows from $\eqref{asy:error}$ that
\begin{equation}\label{asy:error11}
\begin{aligned}
&\quad \|\dot{\Delta}_{j}(\mathbf{P}(V-V^{*}))^{\ell}(t)\|_{L^p}\\
& \lesssim 2^{j}e^{-R_1 2^{2j}t}\Big(\|\dot{\Delta}_j(\mathbf{P}V_0)^{\ell}\|_{L^p}+2^{j}\|\dot{\Delta}_j(\{\mathbf{I} - \mathbf{P}\}V_0)^{\ell}\|_{L^p}\Big)\\
&\quad+e^{-\kappa_1 t}\Big( 2^{2j}\|\dot{\Delta}_j(\mathbf{P}V_0)^{\ell}\|_{L^p}+2^{j}\|\dot{\Delta}_j(\{\mathbf{I} - \mathbf{P}\}V_0)^{\ell}\|_{L^p}\Big)\\
&\lesssim 2^{j}e^{-R_1 2^{2j}t}\|\dot{\Delta}_j(\mathbf{P}V_0)^{\ell}\|_{L^p}+\Big(2^{2j}e^{-R_1 2^{2j}t}+ 2^{j} e^{-\kappa_1 t} \Big)\|\dot{\Delta}_j(\{\mathbf{I} - \mathbf{P}\}V_0)^{\ell}\|_{L^p}.
\end{aligned}
\end{equation}
So, one may employ Proposition \ref{propgeneral2} for the profile $V^{*}$ given in \eqref{Vp}. It follows from  the [SK] condition and \eqref{eq:SK-elliptic} that $-\mathcal{A}$ is the strongly elliptic operator. Consequently, one can construct an orthogonal matrix $P(\xi)$ such that \eqref{symbol} holds true for $\alpha=1$. By Lemma \ref{lemma22}, we deduce that there exist two constants $r_0, R_0>0$ such that
\begin{equation}\label{GGG11}
\begin{aligned}
e^{-r_{0}\lambda^2t} \|\dot{\Delta}_j\Psi_0^{\ell}\|_{L^p}\lesssim \|e^{t\mathcal{A}}\dot{\Delta}_j\Psi_0^{\ell}\|_{L^{p}}\lesssim e^{-R_{0}\lambda^2t} \|\dot{\Delta}_j\Psi_0^{\ell}\|_{L^p}.
\end{aligned}
\end{equation}
To proceed, we need to absorb the higher-order term involving $(\mathbf{P}V_0)^{\ell}$ on the right-hand side of \eqref{asy:error11}. To that end, from \eqref{GGG11}, \eqref{asy:error11} and $\|\dot{\Delta}_{j}(\mathbf{P}V_0)^{\ell}\|_{L^p}\lesssim \|\dot{\Delta}_{j}(\Psi_0)^{\ell}\|_{L^p}+2^j \|\dot{\Delta}_j (\{\mathbf{I} - \mathbf{P}\}V_0)^\ell\|_{L^p}$, one infers
\begin{equation}\nonumber
\begin{aligned}
\|\dot{\Delta}_{j}(\mathbf{P}V)^{\ell}(t)\|_{L^p}&\gtrsim \|e^{t\mathcal{A}}\dot{\Delta}_j\Psi^{\ell}_0\|_{L^{p}}-\|\dot{\Delta}_{j}(\mathbf{P}(V-V^{*}))^{\ell}(t)\|_{L^p}\\
&\gtrsim e^{-r_{0}2^{2j}t} \|\dot{\Delta}_{j}\Psi_0^{\ell}\|_{L^p}-2^{j} e^{-R_1 2^{2j}t} \|\dot{\Delta}_{j}(\mathbf{P}V_0)^{\ell}\|_{L^p}\\
&\quad-\Big(2^{2j}e^{-R_1 2^{2j}t}+ 2^{j} e^{-\kappa_1 t} \Big)\|\dot{\Delta}_j(\{\mathbf{I} - \mathbf{P}\}V_0)^{\ell}\|_{L^p}\\
&\gtrsim \Big(e^{-r_{0}2^{2j}t}-2^{j} e^{-R_1 2^{2j}t} \Big) \|\dot{\Delta}_{j}(\mathbf{P}V_0)^{\ell}\|_{L^p}\\
&\quad-\Big(2^{j}e^{-\min\{r_0,R_1\} 2^{2j}t}+ 2^{j} e^{-\kappa_1 t} \Big)\|\dot{\Delta}_j(\{\mathbf{I} - \mathbf{P}\}V_0)^{\ell}\|_{L^p},
\end{aligned}
\end{equation}
Thus, there exists a suitably small constant $r_*>0$ such that
\begin{equation*}
\begin{aligned}
\|\Lambda^{\sigma}e^{r_*t \Delta}\dot{\Delta}_{j}(\mathbf{P}V_0)^{\ell}\|_{L^p}&\lesssim
e^{-r_{1}2^{2j}t}2^{j\sigma}\|\dot{\Delta}_{j}(\mathbf{P}V_0)^{\ell}\|_{L^p}\\
&\lesssim 2^{j\sigma}\|\dot{\Delta}_{j}(\mathbf{P}V)^{\ell}(t)\|_{L^p}+2^{j(\sigma+1)} e^{-R_1 2^{2j}t}  \|\dot{\Delta}_{j}(\mathbf{P}V_0)^{\ell}\|_{L^p}\\
&\quad+\Big(e^{-\min\{r_0,R_1\} 2^{2j}t}+ e^{-\kappa_1 t} \Big) 2^{j(\sigma+1)}\|\dot{\Delta}_j(\{\mathbf{I} - \mathbf{P}\}V_0)^{\ell}\|_{L^p},
\end{aligned}
\end{equation*}
where $r_*>0$ is a suitably small constant. Thus, for all $\sigma>\sigma_1$, $t>0$ and $j\in\mathbb{Z}$, we deduce from \eqref{opoo} and the decay of $(\mathbf{P}V)^{\ell}$ in \eqref{upperlinearP}  that
\begin{align}
\|\Lambda^{\sigma}e^{r_*t \Delta}\dot{\Delta}_{j}(\mathbf{P}V_0)^{\ell}\|_{L^p}& \lesssim \|(\mathbf{P}V)^{\ell}(t)\|_{\dot{B}^{\sigma}_{p,1}}+  \|(\mathbf{P}V_0)^{\ell}\|_{\dot{B}^{\sigma_1}_{p,\infty}} \sum_{j\in \mathbb{Z}} 2^{(\sigma-\sigma_1+1)j} e^{-R_1 2^{2j}t} \nonumber\\
&\quad+  \|(\{\mathbf{I} - \mathbf{P}\}V_0)^{\ell}\|_{\dot{B}^{\sigma_1+1}_{p,\infty}} \Big( \sum_{j\in \mathbb{Z}} 2^{(\sigma-\sigma_1)j} e^{-\min\{r_0,R_1\} 2^{2j}t} +e^{-\kappa_1 t} \sum_{j\leq J_0} 2^{(\sigma-\sigma_1)j} \Big)\nonumber\\
& \lesssim t^{-\frac{1}{2}(\sigma-\sigma_{1})}+t^{-\frac{1}{2}(\sigma-\sigma_{1}+1)}+e^{-\kappa_1t}\lesssim (1+t)^{-\frac{1}{2}(\sigma-\sigma_{1})},\label{nnnnnnn}
\end{align}
for $t\geq 1$. On the other hand, one has the same bound on $[0,1]$ owing to the uniform estimate $$\|\Lambda^{\sigma}e^{r_*t \Delta}\dot{\Delta}_{j}(\mathbf{P}V_0)^{\ell}\|_{L^p}\lesssim \|(\mathbf{P}V_0)^{\ell}\|_{\dot{B}^{\sigma}_{p,1}}\lesssim \|(\mathbf{P}V_0)^{\ell}\|_{\dot{B}^{\sigma_1}_{p,\infty}}.$$  Then, similar to the proof of Proposition \ref{propgeneral2}, we have
\begin{align}
\Delta_{j}(\mathbf{P}V_0)^{\ell}&=\frac{1}{\Gamma(\frac{1}{2}(\sigma-\sigma_{1})+1)} \int_{0}^{\infty}t^{\frac{1}{2}(\sigma-\sigma_{1})} \mathcal{F}^{-1}\bigg( (2r_*|\xi|^2)^{1-\frac{\sigma_1}{2}} \mathcal{F}\Big(\Delta_{j}\Lambda^{\sigma} e^{2r_*t \Delta}(\mathbf{P}V_0)^{\ell}\Big) \,dt \bigg).\nonumber
\end{align}
In light of \eqref{nnnnnnn} and the classical Fourier multiplier theorem (\cite{bahouri1}[Lemma 2.2]), it follows that, for $j\in\mathbb{Z}$ and some constant $R^*>0$,
\begin{equation}\nonumber
\begin{aligned}
\|\Delta_{j}(\mathbf{P}V_0)^{\ell}\|_{L^p}&\lesssim \int_{0}^{\infty}t^{\frac{1}{2}(\sigma-\sigma_{1})} 2^{(2-\sigma_1)j} \|\Lambda^{\sigma} e^{2r_*t \Delta}(\mathbf{P}V_0)^{\ell}\|_{L^p}dt\\
&\lesssim \int_{0}^{\infty}t^{\frac{1}{2}(\sigma-\sigma_{1})} 2^{(2-\sigma_1)j} e^{-R^* 2^{2j}t} \|\Lambda^{\sigma} e^{r_*t \Delta}(\mathbf{P}V_0)^{\ell}\|_{L^p}dt\lesssim 2^{(2-\sigma_1)j} \int_{0}^{\infty}  e^{-R^* 2^{2j}t} dt\lesssim 2^{-\sigma_1 j}.
\end{aligned}
\end{equation}
This yields $(\mathbf{P}V_0)^{\ell}\in \dot{B}^{\sigma_{1}}_{p,\infty}$. %Therefore, the proof of Proposition \ref{proplinear} is finished.

Next, we turn to proving the asymptotic behavior \eqref{asy:linear}, that is, the faster decay rates of $\mathbf{P}(V-V^{*})$ and $\{\mathbf{I} - \mathbf{P}\}(V-V^{*})$. Note that $\|\dot{\Delta}_j\Psi_0\|_{L^2}\lesssim 2^{j}\|\dot{\Delta}_j V_0\|_{L^2}$ for all $j\geq J_0-1$. By using Lemma \ref{lemma22} and \eqref{opoo}, we arrive at
\begin{align*}
\|\mathbf{P}V^*(t)\|_{\dot{B}^s_{2,1}}^h&\lesssim \sum_{j\geq J_0-1 } 2^{s j} e^{-R_0 2^{2j} t} \|\dot{\Delta}_j\Psi_0\|_{L^2} \\
&\lesssim e^{-\frac{1}{2}R_0 2^{2(J_0-1)} t} \sum_{j\in\mathbb{Z}} 2^j    e^{-\frac{1}{2}R_0 2^{2j} t} \, \sup_{j\geq J_0}2^{(s-1)j} \|\dot{\Delta}_j \Psi_0\|_{L^2}\\
&\lesssim e^{-\frac{1}{2}R_0 2^{2(J_0-1)} t} t^{-\frac{1}{2}} \|V_0\|_{\dot{B}^{s}_{2,1}}^h.
\end{align*}
which, together with \eqref{hhhh}, \eqref{upperlinearP} and \eqref{asy:error11}, leads to the first decay estimate \eqref{asy:linear} for $t\geq \tilde{t}_0$ with $\tilde{t}_0>0$:
$$
\|\mathbf{P}(V-V^{*})(t)\|_{\dot{\mathbb{B}}^{\sigma,s}_{p,2}}\lesssim \|(\mathbf{P}(V-V^{*}))^{\ell}(t)\|_{\dot{B}^{\sigma}_{p,1}}+\|(V,\mathbf{P} V^{*})(t)\|_{\dot{B}^{s}_{2,1}}^h\leq C \langle t\rangle^{-\frac{1}{2}(\sigma-\sigma_{1})-\frac{1}{2}}.
$$

Furthermore, it follows from \eqref{asy:error2} that
\begin{equation}\nonumber
\begin{aligned}
\|\dot{\Delta}_{j}(\{\mathbf{I} - \mathbf{P}\}(V-V^*))
^{\ell}\|_{L^p}&\lesssim e^{-R_12^{2j}t} 2^{2j}\Big(\|\dot{\Delta}_{j}(\mathbf{P}V_{0})^{\ell}\|_{L^p}+2^{j}\|\dot{\Delta}_{j}(\{\mathbf{I} - \mathbf{P}\}V_{0})^{\ell}\|_{L^p}\Big)\\
&\quad+ e^{-\kappa_1 t} \Big(\|\dot{\Delta}_{j}\mathbf{P}V_{0})^{\ell}\|_{L^p}+2^{j}\|\dot{\Delta}_{j}(\{\mathbf{I} - \mathbf{P}\}V_{0})^{\ell}\|_{L^p}\Big).
\end{aligned}
\end{equation}
Recalling \eqref{opoo}, for any $\sigma'>\sigma_1+1$, we get
\begin{equation}\label{sfggvv}
\begin{aligned}
\|(\{\mathbf{I} - \mathbf{P}\}(V-V^*))
^{\ell}(t)\|_{\dot{B}^{\sigma'}_{p,1}}&\lesssim t^{-\frac{1}{2}(\sigma'-\sigma_1+2)}\Big(\|(\mathbf{P}V_{0})^{\ell}\|_{\dot{B}^{\sigma_1}_{p,\infty}}+\|(\{\mathbf{I} - \mathbf{P}\}V_{0})^{\ell}\|_{\dot{B}^{\sigma_1+1}_{p,\infty}}\Big)\\
&\quad +e^{-\kappa_1 t} \Big(\|(\mathbf{P}V_{0})^{\ell}\|_{\dot{B}^{\sigma}_{p,1}}+\|(\{\mathbf{I} - \mathbf{P}\}V_{0})^{\ell}\|_{\dot{B}^{\sigma}_{p,1}}\Big)\\
&\lesssim t^{-\frac{1}{2}(\sigma'-\sigma_1+2)} \Big(\|(\mathbf{P}V_{0})^{\ell}\|_{\dot{B}^{\sigma_1}_{p,\infty}}+\|(\{\mathbf{I} - \mathbf{P}\}V_{0})^{\ell}\|_{\dot{B}^{\sigma_1+1}_{p,\infty}}\Big).
\end{aligned}
\end{equation}
On the other hand, owing to Lemma \ref{lemma22}, \eqref{opoo} and \eqref{GGG11}, the high-frequency part of $\{\mathbf{I} - \mathbf{P}\}V^*$ satisfies
\begin{equation}
 \begin{aligned}\label{highV*}
 \|\{\mathbf{I} - \mathbf{P}\}V^*(t)\|_{\dot{B}^{s}_{2,1}}^{h}\lesssim  \|e^{t\mathcal{A}}\Psi_0\|_{\dot{B}^{s+1}_{2,1}}^h  &\lesssim \sum_{j\geq J_0-1 } 2^{(s+1) j} e^{-R_0 2^{2j} t} \|\dot{\Delta}_j\Psi_0\|_{L^2}\\
 &\lesssim   e^{-\frac{1}{2}R_0 2^{2(J_0-1)} t} t^{-1} \|V_0\|_{\dot{B}^{s}_{2,1}}^h.
 \end{aligned}
 \end{equation}
Therefore, the second decay estimate \eqref{asy:linear}  for $t\geq \tilde{t}_0>0$ is proved by \eqref{hhhh}, \eqref{sfggvv} and \eqref{highV*}.

%Next, we prove the "only if" part, i.e. $(\mathbf{P}V_0)^{\ell}\in \dot{B}^{\sigma_{1}}_{p,\infty}$ if $\eqref{asy:linear}$ holds true.

Let us focus on the
``if'' part of two-sided bounds of \eqref{twosidelinear}.  %we prove the characterization of upper and lower bounds for decay estimates of solutions to \eqref{hp}. Suppose that the conditions \eqref{H1}-\eqref{blockL} hold, and the initial data $V_{0}$ satisfies $(\{\mathbf{I} - \mathbf{P}\}V_0)^{\ell}\in \dot{B}^{\sigma_1+1}_{p,\infty}$ and $V_0^h\in \dot{B}^{s}_{2,1}$ such that $\|V_0\|_{\dot{B}^{s}_{2,1}}^{h}<\infty$ for any $\sigma, s,\sigma_1\in\mathbb{R}$ with $\sigma>\sigma_1+1$.
The upper bounds in \eqref{twosidelinear} follow directly from the embedding $\dot{\mathcal{B}}^{\sigma_{1}}_{p,\infty} \hookrightarrow \dot{B}^{\sigma_{1}}_{p,\infty}$.
We only need to show the lower bound properties in \eqref{twosidelinear}.
Note that $e^{\mathcal{A}t}\Psi_0$ is the solution to the linear parabolic equation \eqref{diffusion}. It follows from Proposition \ref{propgeneral2}  that
\begin{equation}\label{V1ell}
\begin{aligned}
\|(\mathbf{P}V^*)^{\ell}(t)\|_{\dot{B}^{\sigma}_{p,1}}=\|e^{t\mathcal{A}}\Psi_0^{\ell}\|_{\dot{B}^{\sigma}_{p,1}}\gtrsim \langle t\rangle^{-\frac{1}{2}(\sigma-\sigma_1)},\quad\quad \sigma>\sigma_1.
\end{aligned}
\end{equation}
Furthermore, it is shown, by $\eqref{asy:linear}_1$ and \eqref{V1ell}, that
\begin{equation}\nonumber
\begin{aligned}
 \|(\mathbf{P}V)^{\ell}(t)\|_{\dot{B}^{\sigma}_{p,1}} &\geq \|(\mathbf{P}V^*)^{\ell}(t)\|_{\dot{B}^{\sigma}_{p,1}}- \|(\mathbf{P}(V-V^*))^{\ell}(t)\|_{\dot{B}^{\sigma}_{p,1}}\\
&\geq \frac{1}{C_0}\langle t\rangle^{-\frac{1}{2}(\sigma-\sigma_1)}-C_0\langle t\rangle^{-\frac{1}{2}(\sigma-\sigma_1+1)}\geq \frac{1}{2C_0} \langle t\rangle^{-\frac{1}{2}(\sigma-\sigma_1)}
\end{aligned}
\end{equation}
for some uniform constant $C_0>1$  and suitably large time $t$.

On the other hand, we claim that there  exists a constant $c_{2,1}>0$ such that $
\Big|\sum\limits_{i=1}^{d}A_{2,1}^{i}\omega_{i}\Big|\geq c_{2,1}>0$ for $\omega\in S^{d-1}$. Indeed, if  $\sum\limits_{i=1}^{d}A_{2,1}^{i}\omega_{i}=0$, then in view of $A_{1,1}^{i}=0$ and the symmetry of $A^{i}$, we have
\begin{equation}\nonumber
\begin{aligned}
\sum_{i=1}^{d}A^{i}\omega_{i}=\begin{pmatrix} 0 & 0 \\ 0 & \sum_{i=1}^{d}A_{2,2}^{i}\omega_{i}
    \end{pmatrix}.
\end{aligned}
\end{equation}
This contradicts the [SK] condition in the sense of Definition \ref{defnSK}. Thus, we deduce that
$$
\Big|\sum_{i=1}^{d} A_{2,1}^{i} \xi_i \mathcal{F}\Big(e^{t\mathcal{A}}\Psi_0^{\ell}\Big) \Big|\geq c_{2,1}|\xi| \Big|\mathcal{F}\Big( e^{t\mathcal{A}}\Psi_0^{\ell}\Big)\Big|,
$$
which, together with \eqref{Vp} and \eqref{V1ell}, leads to
\begin{equation}\label{V2ell}
\begin{aligned}
\|(\{\mathbf{I} - \mathbf{P}\}V^*)^{\ell}(t)\|_{\dot{B}^{\sigma'}_{p,1}}&\gtrsim \|(\mathbf{P}V^*)^{\ell}(t)\|_{\dot{B}^{\sigma'+1}_{p,1}}\gtrsim \langle t\rangle^{-\frac{1}{2}(\sigma'-\sigma_1+1)},\quad \sigma'>\sigma_1+1.
\end{aligned}
\end{equation}
In view of $\eqref{asy:linear}_2$ and \eqref{V2ell}, one arrives at
\begin{equation}\nonumber
\begin{aligned}
\|\{\mathbf{I} - \mathbf{P}\}V(t)\|_{\dot{\mathbb{B}}^{\sigma,s}_{p,2}}&\geq \|(\{\mathbf{I} - \mathbf{P}\}V)^{\ell}(t)\|_{\dot{B}^{\sigma}_{p,1}}\\
&\geq \|(\{\mathbf{I} - \mathbf{P}\}V^*)^{\ell}(t)\|_{\dot{B}^{\sigma}_{p,1}}-\|(\{\mathbf{I} - \mathbf{P}\}(V-V^*))^{\ell}(t)\|_{\dot{B}^{\sigma}_{p,1}}\\
&\geq \frac{1}{C_0} \langle t\rangle^{-\frac{1}{2}(\sigma-\sigma_1+1)}-C_0\langle t\rangle^{-\frac{1}{2}(\sigma-\sigma_1+2)}\geq \frac{1}{2C_0}\langle t\rangle^{-\frac{1}{2}(\sigma-\sigma_1+1)},
\end{aligned}
\end{equation}
for some uniform constant $C_0$ and all $t\geq \tilde{t}_1$ with a suitably large time $\tilde{t}_1>0$. Thus, the lower bound part of \eqref{twosidelinear} is established for $t > t_0$ with some time $t_0 > 0$.

Conversely, we assume \eqref{upperlinear} for $t>0$ and \eqref{twosidelinear} for $t>\tilde{t}_1$. To show the ``only if'' part, let us first establish the following lemma
for the improved decay estimate of $\mathbf{P}(V-V^*)$.
\begin{lemma}
If \eqref{upperlinear} holds for $t>0$, then
\begin{equation}\label{con:asy}
\begin{aligned}
\|(\mathbf{P}(V-V^*))^{\ell}(t)\|_{\dot{B}^{\sigma}_{p,1}}=\|(V_1-V_1^*)^{\ell}(t)\|_{\dot{B}^{\sigma}_{p,1}}\lesssim \langle t\rangle^{-\frac{1}{2}(\sigma-\sigma_1+1)},\quad \sigma>\sigma_1.
\end{aligned}
\end{equation}
\end{lemma}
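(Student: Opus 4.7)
The strategy is to dyadically sum the spectral-localization estimate \eqref{asy:error} with $\lambda\sim 2^j$ (equivalently, to use the block-by-block bound \eqref{asy:error11} that the paper has just derived), exploiting the extra factor of $\lambda$ appearing in front of $e^{-R_1\lambda^2 t}$. This extra power is precisely what produces the gain of $\langle t\rangle^{-1/2}$ in the decay rate, relative to the decay of $\mathbf{P}V$ itself. The standing hypothesis \eqref{upperlinear} has already been shown, in the paragraph immediately preceding this lemma, to imply $(\mathbf{P}V_0)^\ell\in\dot B^{\sigma_1}_{p,\infty}$, and the hypothesis of Proposition \ref{proplinear} gives $(\{\mathbf{I}-\mathbf{P}\}V_0)^\ell\in\dot B^{\sigma_1+1}_{p,\infty}$; these are the only low-frequency data conditions I will need.

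First, I would apply $2^{j\sigma}$ and sum over $j\leq J_0$ in the dyadic estimate \eqref{asy:error11}. Writing $2^{j\sigma}=2^{j(\sigma-\sigma_1+1)}\cdot 2^{j(\sigma_1-1)}$ for the $\mathbf{P}V_0$ contribution and $2^{j\sigma}=2^{j(\sigma-\sigma_1-1)}\cdot 2^{j(\sigma_1+1)}$ for the $\{\mathbf{I}-\mathbf{P}\}V_0$ contribution, and factoring out the Besov norms $\|(\mathbf{P}V_0)^\ell\|_{\dot B^{\sigma_1}_{p,\infty}}$ and $\|(\{\mathbf{I}-\mathbf{P}\}V_0)^\ell\|_{\dot B^{\sigma_1+1}_{p,\infty}}$, the estimate reduces to controlling
\begin{equation*}
\sum_{j\leq J_0} 2^{j(\sigma-\sigma_1+1)} e^{-R_1 2^{2j}t}\quad\text{and}\quad e^{-\kappa_1 t}\sum_{j\leq J_0}2^{j(\sigma-\sigma_1)}.
\end{equation*}
Since $\sigma>\sigma_1$, the elementary fact \eqref{opoo} yields the first sum $\lesssim t^{-\frac{1}{2}(\sigma-\sigma_1+1)}$, while the second sum is a convergent geometric series times an exponentially decaying factor, hence dominated by $\langle t\rangle^{-\frac{1}{2}(\sigma-\sigma_1+1)}$ for any $t\geq 1$.

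For short times $0<t\leq 1$, the bound \eqref{con:asy} is equivalent to a uniform bound, which follows from the triangle inequality together with the two uniform estimates
\begin{equation*}
\|(\mathbf{P}V)^\ell(t)\|_{\dot B^\sigma_{p,1}}\lesssim 1\quad\text{and}\quad \|(\mathbf{P}V^{*})^\ell(t)\|_{\dot B^\sigma_{p,1}}\lesssim 1,
\end{equation*}
where the first follows from \eqref{llll} and the second from the upper spectral bound of Lemma \ref{lemma22} applied to $V_1^{*}=e^{t\mathcal{A}}\Psi_0$, combined with the observation $\Psi_0^\ell\in\dot B^{\sigma_1}_{p,\infty}$ (a consequence of \eqref{Psi0} together with $(\mathbf{P}V_0)^\ell\in\dot B^{\sigma_1}_{p,\infty}$ and $(\{\mathbf{I}-\mathbf{P}\}V_0)^\ell\in\dot B^{\sigma_1+1}_{p,\infty}$).

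The essentially routine step is the dyadic summation; the real conceptual point is that the improved rate in \eqref{con:asy} is already built into the spectral estimate \eqref{asy:error}, which identifies $V_1^*$ as a genuine parabolic profile capturing the leading order of $\mathbf{P}V$ down to a remainder of size $\lambda$. Thus no additional smoothness or spectral localization of $V_0$ is required beyond what \eqref{upperlinear} and the hypotheses of Proposition \ref{proplinear} already supply, and the only mild care required is bookkeeping to separate the contributions of $(\mathbf{P}V_0)^\ell$ and $(\{\mathbf{I}-\mathbf{P}\}V_0)^\ell$ at the correct Besov regularities.
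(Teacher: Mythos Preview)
Your approach is correct and yields the stated bound, but it differs from the paper's argument. You use the spectral-localization bound \eqref{asy:error11} (which encodes the initial data directly) together with the already-established fact that $(\mathbf{P}V_0)^\ell\in\dot B^{\sigma_1}_{p,\infty}$; this is essentially the same computation that produced \eqref{asy:linear} in the ``Asymptotics'' bullet, and you could in fact have simply cited \eqref{asy:linear} for $t\ge\tilde t_0$ instead of redoing the dyadic summation. The paper instead argues via Duhamel on the equation \eqref{errorPsi000} for $\Psi-V_1^{*}$, whose source term is of the form $\nabla^2 V_2$, and then inserts the time-decay of $V_2$ supplied directly by the hypothesis \eqref{upperlinear}; finally it writes $V_1-V_1^{*}=(\Psi-V_1^{*})+\sum_i A^i_{1,2}D^{-1}\partial_{x_i}V_2$ and uses the decay of $V_2$ once more. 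The paper's route thus works purely from the decay hypothesis \eqref{upperlinear} on the solution, without detouring through the initial-data regularity, which makes the lemma logically self-contained; your route is shorter in context because it recycles machinery already in place.

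Two minor remarks on your write-up. First, your factor splittings such as $2^{j\sigma}=2^{j(\sigma-\sigma_1+1)}\cdot 2^{j(\sigma_1-1)}$ are literally correct but do not match what you actually do: after absorbing the extra $2^j$ (respectively $2^{2j}$) coming from \eqref{asy:error11}, the relevant decomposition is $2^{j(\sigma+1)}=2^{j(\sigma-\sigma_1+1)}\cdot 2^{j\sigma_1}$ (respectively $2^{j(\sigma+2)}=2^{j(\sigma-\sigma_1+1)}\cdot 2^{j(\sigma_1+1)}$). The sums you then control are nonetheless the right ones. Second, the place where $(\mathbf{P}V_0)^\ell\in\dot B^{\sigma_1}_{p,\infty}$ is established is the ``only if'' half of the first bullet of Proposition~\ref{proplinear}, not the paragraph immediately preceding the lemma.
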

\begin{proof}
Keep in mind in Lemma \ref{lemma32} that the effective quantity $\Psi\coloneqq  V_1-\sum\limits_{i=1}^{d}A^{i}_{1,2}D^{-1} \partial_{x_{i}} V_2$ fulfills
\begin{equation}\label{errorPsi000}
\left\{
\begin{aligned}
&\partial_t(\Psi-V_1^*)-\mathcal{A}(\Psi-V_1^*)=\bigg(\sum_{i,l=1}^{d} A^i_{1,2}D^{-1}A^l_{2,2} \partial^2_{x_{i} x_{l}}+\sum_{i=1}^{d}\mathcal{A}A_{1,2}^i D^{-1}\partial_{x_i} \bigg) V_2,\\
&(\Psi-V_1^*)|_{t=0}=0.
\end{aligned}
\right.
\end{equation}
Here $V_1^*=e^{t\mathcal{A}}\Psi_0$. Applying Duhamel's principle to \eqref{errorPsi000} and Lemma \ref{lemma22}, we have
\begin{equation*}
\begin{aligned}
\|\dot{\Delta}_j(\Psi-V_1^*)^\ell(t)\|_{L^p}&\lesssim \int_0^t e^{-R_0 2^{2j} t} 2^{2j} \|\dot{\Delta}_j V_2^\ell\|_{L^p}d\tau,
\end{aligned}
\end{equation*}
which implies that
\begin{equation*}
\begin{aligned}
\|(\Psi-V_1^*)^\ell(t)\|_{\dot{B}^{\sigma}_{p,1}}\lesssim \int_0^t \|V_2^\ell\|_{\dot{B}^{\sigma+2}_{p,1}}d\tau\lesssim \int_0^t \|V_2^\ell\|_{\dot{B}^{\sigma+1}_{p,\infty}}d\tau,\quad \sigma>\sigma_1.
\end{aligned}
\end{equation*}
In view of \eqref{opoo}, one also has
\begin{equation*}
\begin{aligned}
\|(\Psi-V_1^*)^\ell(t)\|_{\dot{B}^{\sigma}_{p,1}}\lesssim \int_0^t (t-\tau)^{-\frac{1}{2}(\sigma-\sigma_1+1)}\|V_2^\ell\|_{\dot{B}^{\sigma+1}_{p,\infty}}d\tau,\quad \sigma>\sigma_1.
\end{aligned}
\end{equation*}
Consequently, employing the decay of $V_2$ in \eqref{twosidelinear}, we derive
\begin{equation*}
\begin{aligned}
\|(\Psi-V_1^*)^\ell(t)\|_{\dot{B}^{\sigma}_{p,1}}&\lesssim \int_0^t \langle t-\tau\rangle^{-\frac{1}{2}(\sigma-\sigma_1+1)}\|V_2^\ell\|_{\dot{B}^{\sigma+1}_{p,\infty}}d\tau\\
&\lesssim \int_0^t \langle t-\tau\rangle^{-\frac{1}{2}(\sigma-\sigma_1+1)} \langle \tau\rangle^{-\frac{1}{2}(\sigma-\sigma_1+2)}d\tau\lesssim \langle t\rangle^{-\frac{1}{2}(\sigma-\sigma_1+1)},
\end{aligned}
\end{equation*}
 Using $V_1=\Psi+\sum\limits_{i=1}^{d}A^{i}_{1,2}D^{-1} \partial_{x_{i}} V_2$ and employing the decay of $V_2$ once again yields \eqref{con:asy}.
\end{proof}

To proceed, it follows from \eqref{twosidelinear} and $\eqref{con:asy}$ that
\begin{equation}\nonumber
\begin{aligned}
\|e^{t\mathcal{A}}\Psi_0^{\ell}\|_{\dot{B}^{\sigma}_{p,1}}= \|(\mathbf{P}V^*)^{\ell}(t)\|_{\dot{B}^{\sigma}_{p,1}}&\lesssim \|(\mathbf{P}V)^{\ell}(t)\|_{\dot{B}^{\sigma}_{p,1}}+ \|(\mathbf{P}(V-V^*))^{\ell}(t)\|_{\dot{B}^{\sigma}_{p,1}}\lesssim \langle t\rangle^{-\frac{1}{2}(\sigma-\sigma_1)}
\end{aligned}
\end{equation}
for all $t>0$ and
\begin{equation}\nonumber
\begin{aligned}
\|e^{t\mathcal{A}}\Psi_0^{\ell}\|_{\dot{B}^{\sigma}_{p,1}}&\geq \|(\mathbf{P}V)^{\ell}(t)\|_{\dot{B}^{\sigma}_{p,1}}- \|(\mathbf{P}(V-V^*))^{\ell}(t)\|_{\dot{B}^{\sigma}_{p,1}}\\
&\geq \frac{1}{C_0} \langle t\rangle^{-\frac{1}{2}(\sigma-\sigma_1)}-C_0\langle t\rangle^{-\frac{1}{2}(\sigma-\sigma_1+1)}\geq \frac{1}{2C_0} \langle t\rangle^{-\frac{1}{2}(\sigma-\sigma_1)}
\end{aligned}
\end{equation}
for all $t>\tilde{t}_2$, where $C_0>1$ is a constant, and $\tilde{t}_2>\tilde{t}_1>0$ is a suitably large time. %For $0<t\leq t_L$, as in the proof of Proposition \ref{propgeneral2}, one also has $\|e^{t\mathcal{A}}\Psi_0^{\ell}\|_{\dot{B}^{\sigma}_{p,1}}\gtrsim  \|e^{t_0\mathcal{A}}\Psi_0^{\ell}\|_{\dot{B}^{\sigma}_{p,1}}\gtrsim  \langle t\rangle^{-\frac{1}{2}(\sigma-\sigma_1)} \|e^{t_0\mathcal{A}}\Psi_0^{\ell}\|_{\dot{B}^{\sigma}_{p,1}}$.
Hence, following the argument \eqref{mm11}-\eqref{267} in the proof of Proposition \ref{propgeneral2}, we can infer $\Psi_0^{\ell}\in \dot{\mathcal{B}}^{\sigma_1}_{p,\infty}$. The proof of Proposition \ref{proplinear} is eventually finished.\hfill $\Box$

\section{Global existence}\label{global existence}
In this section, we prove the global existence of strong solutions in the $L^p$ framework, where initial data are a small
perturbation of constant
equilibrium. Here, we pinpoint the \textit{critical} regularity for the global solutions in hybrid
Besov spaces with different exponents in low and high frequencies. %First, we give structural assumptions that are needed in the global existence-in-time result.
Precisely, we assume that
\begin{align}
\text{the system \eqref{GEQSYM} has an entropy in the sense of Definition \ref{defnentropy}},\quad\label{H1}%\tag{$\mathcal{H}_1$}
\end{align}
and
\begin{align}
\text{the linearized system of \eqref{m1} satisfies the [SK] condition in the sense  of Definition \ref{defnSK}.} \quad\label{H2}%\tag{$\mathcal{H}_2$}
\end{align}
In addition,  we suppose the block in \eqref{block} fulfills
\begin{equation}\label{blockL}%\tag{$\mathcal{H}_3$}
\begin{aligned}
A^i_{1,1}=A^i_{1,1}(\bar{U})=0.
    \end{aligned}
\end{equation}

%one can see that there exists a exists a positive defined matrix $B$ such that
%\begin{equation}\nonumber
%\begin{aligned}
%  B L= \begin{pmatrix}
%    0 & 0 \\ 0 & D
%    \end{pmatrix}.
%    \end{aligned}
%\end{equation}
%Defining the new variable $V'=B^{-\frac{1}{2}} V$, one can see that $V'$ satisfies the system of the form \eqref{m1} and \eqref{block1}.
Since \eqref{GEQSYM} and the reformulated system \eqref{m1} are equivalent in the regime of classical solutions,  we focus on \eqref{m1} in the following. The special structure on $R^i$ ($i=1,\dots,d$) and the property $\mathbf{P}R^0=0$
have been playing a key role in the global existence and decay estimates of solutions.

From the point of view of spectral analysis, a good candidate for the solution space is the set of functions $V\in \mathcal{C}^{1}(\mathbb{R}_{+}\times \mathbb{R}^{d}; \mathbb{R}^{n})$ defined by
\begin{multline*}
\mathbf{E}\coloneqq  \Big\{V\in \mathcal{C}^{1}(\mathbb{R}_{+}\times \mathbb{R}^{d}; \mathbb{R}^{n}): \mathbf{P}V\in \mathcal{C}(\mathbb{R}_{+};\dot{\mathbb{B}}^{\frac{d}{p}-1,\frac{d}{2}+1}_{p,2})\cap L^1(\mathbb{R}_{+};\dot{\mathbb{B}}^{\frac{d}{p}+1,\frac{d}{2}+1}_{p,2}), \\
\{\mathbf{I}-\mathbf{P}\}V \in \mathcal{C}(\mathbb{R}_{+};\dot{\mathbb{B}}^{\frac{d}{p},\frac{d}{2}+1}_{p,2})\cap L^1(\mathbb{R}_{+};\dot{\mathbb{B}}^{\frac{d}{p},\frac{d}{2}+1}_{p,2}) \Big\}.
\end{multline*}
%$$\mathbf{P}V\in \mathcal{C}(\mathbb{R}_{+};\dot{\mathbb{B}}^{\frac{d}{p}-1,\frac{d}{2}+1}_{p,2})\cap L^1(\mathbb{R}_{+};\dot{\mathbb{B}}^{\frac{d}{p}+1,\frac{d}{2}+1}_{p,2}) ~~ \text{and}~~
%\{\mathbf{I}-\mathbf{P}\}V \in\mathcal{C}(\mathbb{R}_{+};\dot{\mathbb{B}}^{\frac{d}{p},\frac{d}{2}+1}_{p,2})\cap L^1(\mathbb{R}_{+};\dot{\mathbb{B}}^{\frac{d}{p},\frac{d}{2}+1}_{p,2}).$$
See Appendix \ref{appendixA} for more details on the hybrid Besov space $\dot{\mathbb{B}}^{\sigma,s}_{p,2}$.

The global existence
of classical solutions to the Cauchy problem \eqref{m1}-\eqref{m1d} in the critical $L^{p}$ framework is stated as follows.
\begin{theorem}\label{theorem0}
Let $d\geq1$  and
\begin{equation}\label{p}
\left\{\begin{array}{l}
2\leq p\leq4,\quad\quad \quad\,  \text{if}\quad  1\leq d\leq4,\\
2\leq p\leq\frac{2d}{d-2},\quad \quad \text{if}\quad d\geq 5.
\end{array}\right.
\end{equation}
Assume that \eqref{H1}, \eqref{H2} and \eqref{blockL} are in force. There exists a constant $\varepsilon_0>0$ such that if
\begin{equation}\label{a1}
\begin{aligned}
E_{p,0}\coloneqq  \|\mathbf{P}V_{0}\|_{\dot{\mathbb{B}}^{\frac{d}{p}-1,\frac{d}{2}+1}_{p,2}}+\|\{\mathbf{I} - \mathbf{P}\}V_{0}\|_{\dot{\mathbb{B}}^{\frac{d}{p},\frac{d}{2}+1}_{p,2}}\leq \varepsilon_0,
\end{aligned}
\end{equation}
then the Cauchy problem \eqref{m1}-\eqref{m1d} admits a unique global solution $V$ in the space $\mathbf{E}$.
%\begin{multline*}
%E\coloneqq  \Big\{V\in \mathcal{C}^{1}(\mathbb{R}_{+}\times \mathbb{R}^{d}; \mathbb{R}^{n}): \mathbf{P}V\in \mathcal{C}(\mathbb{R}_{+};\dot{\mathbb{B}}^{\frac{d}{p}-1,\frac{d}{2}+1}_{p,2})\cap L^1(\mathbb{R}_{+};\dot{\mathbb{B}}^{\frac{d}{p}+1,\frac{d}{2}+1}_{p,2}), \\
%\{\mathbf{I}-\mathbf{P}\}V \in \mathcal{C}(\mathbb{R}_{+};\dot{\mathbb{B}}^{\frac{d}{p},\frac{d}{2}+1}_{p,2})\cap L^1(\mathbb{R}_{+};\dot{\mathbb{B}}^{\frac{d}{p},\frac{d}{2}+1}_{p,2}) \Big\}
%\end{multline*}
Moreover, it holds that
\begin{equation}\label{r2}
\begin{aligned}
&\widetilde{E}_{p}(t)+D_{p}(t)\leq C E_{p,0}\quad \mbox{for all} \quad t>0,
\end{aligned}
\end{equation}
where $C>0$ is a generic positive constant. The energy and dissipation functionals are defined by
\begin{equation}\label{Et}
\begin{aligned}
\widetilde{E}_{p}(t)\coloneqq  & \|\mathbf{P}V\|_{\widetilde{L}^{\infty}_{t}(\dot{\mathbb{B}}^{\frac{d}{p}-1,\frac{d}{2}+1}_{p,2})}+\|\{\mathbf{I}-\mathbf{P}\}V\|_{\widetilde{L}^{\infty}_{t}(\dot{\mathbb{B}}^{\frac{d}{p},\frac{d}{2}+1}_{p,2})}
\end{aligned}
\end{equation}
and
\begin{equation}\label{Dt}
\begin{aligned}
D_{p}(t)\coloneqq &\|\mathbf{P}V\|_{L^1_{t}(\dot{\mathbb{B}}^{\frac{d}{p}+1,\frac{d}{2}+1}_{p,2})}+\|\{\mathbf{I}-\mathbf{P}\}V\|_{L^1_{t}(\dot{\mathbb{B}}^{\frac{d}{p},\frac{d}{2}+1}_{p,2})}.
\end{aligned}
\end{equation}
\end{theorem}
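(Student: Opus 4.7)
\medskip

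\noindent\textbf{Proof proposal.} The plan is to follow the classical \emph{a priori} estimate + local existence + continuation scheme, with all the delicate work concentrated in the derivation of uniform-in-time estimates that close the bootstrap on $\widetilde E_p(t)+D_p(t)$. Local existence in the hybrid Besov space $\dot{\mathbb{B}}^{\frac{d}{p},\frac{d}{2}+1}_{p,2}$ follows by a Friedrichs-type iteration combined with the blow-up criterion on the $L^1$-time integrability of the Lipschitz bound from \cite{XK1}. The main task is therefore to establish that any smooth solution satisfies
\begin{equation*}
\widetilde E_p(t)+D_p(t)\leq C\bigl(E_{p,0}+\widetilde E_p(t)\,D_p(t)\bigr),
\end{equation*}
after which a standard continuity argument, together with the smallness assumption \eqref{a1}, yields \eqref{r2} and global existence.

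For low frequencies I would exploit the diagonalization from Lemma~\ref{lemma32}. Introducing the effective unknowns
\begin{equation*}
\Psi\coloneqq V_1-\sum_{i=1}^{d} A^{i}_{1,2}D^{-1}\partial_{x_{i}} V_2,\qquad Z\coloneqq V_{2}+\sum_{i=1}^{d}D^{-1}\bigl(A_{2,1}^{i}\partial_{x_{i}}V_{1}+A_{2,2}^{i}\partial_{x_{i}}V_{2}\bigr),
\end{equation*}
the low-frequency dynamics reduces to a parabolic equation for $\Psi$ driven by the strongly elliptic operator $\mathcal{A}$ (using Proposition~\ref{lemmaSKpositive}), plus a damped equation for $Z$ with an $O(\lambda)$ coupling. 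Applying Littlewood--Paley truncation, multiplying by $|\dot\Delta_j\Psi|^{p-2}\dot\Delta_j\Psi$ and $|\dot\Delta_j Z|^{p-2}\dot\Delta_j Z$ respectively and invoking the strong ellipticity inequality of Lemma~\ref{lemmaA2} give, after summing in the appropriate Besov norm and absorbing the small-$\lambda$ cross terms exactly as in the proof of Proposition~\ref{LemmaspectrallocalHp}, the Lyapunov bound
\begin{equation*}
\|(\Psi,Z)\|^{\ell}_{\widetilde L^{\infty}_t(\dot B^{\frac{d}{p}-1}_{p,1})}+\|(\Psi,Z)\|^{\ell}_{L^{1}_t(\dot B^{\frac{d}{p}+1}_{p,1})}\lesssim E_{p,0}+\text{nonlinear remainders}.
\end{equation*}
Translating back to $(V_1,V_2)$ through \eqref{V2}--\eqref{V1} yields the desired low-frequency control of $\mathbf{P}V$ in $\dot B^{\frac{d}{p}-1}_{p,1}\cap L^1_t(\dot B^{\frac{d}{p}+1}_{p,1})$ and of $\{\mathbf{I}-\mathbf{P}\}V$ in $\dot B^{\frac{d}{p}}_{p,1}\cap L^1_t(\dot B^{\frac{d}{p}}_{p,1})$.

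For high frequencies I would work directly on the normal form \eqref{normal:linear} in $L^2$-type Besov spaces, since the high-frequency eigenvalues are complex. The standard strategy is to combine a basic $L^2$ energy estimate (producing only dissipation of $V_2$) with a Kawashima-type compensating functional built from the skew-symmetric matrix $K(\omega)$ provided by the last item of Proposition~\ref{propSK}; this recovers dissipation on $\nabla V_1$ at high frequencies. Applied in each dyadic block $\dot\Delta_j$ with $j\geq J_0$ and summed with weights $2^{j(\frac{d}{2}+1)}$, this delivers exponential decay of the high-frequency part and hence control of $V$ in $\widetilde L^{\infty}_t(\dot B^{\frac{d}{2}+1}_{2,1})\cap L^1_t(\dot B^{\frac{d}{2}+1}_{2,1})$, with nonlinear source terms.

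The main obstacle, and where the bulk of the analysis lies, is the nonlinear estimates on $R^0,R^i$ defined in \eqref{R0Ri}. These are at least quadratic in $V$ and must be estimated in the hybrid $L^p$--$L^2$ norm, using the product, composition and commutator laws announced in Appendix~\ref{appendixA}. Two points are delicate: first, because low frequencies are measured in $L^p$ with $p\neq 2$ while high frequencies are measured in $L^2$, standard Bony decompositions have to be split carefully between the two regimes, and the condition $2\leq p\leq 2d/(d-2)$ (respectively $p\leq 4$) is exactly what guarantees that the relevant Sobolev embeddings $\dot B^{\frac{d}{p}}_{p,1}\hookrightarrow L^\infty$ and $\dot B^{\frac{d}{p}-1}_{p,1}\cdot\dot B^{\frac{d}{p}}_{p,1}\hookrightarrow \dot B^{\frac{d}{p}-1}_{p,1}$ close at the critical regularity. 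Second, one has to use the structural identity $\mathbf{P}R^0=0$ (guaranteed by $\mathcal{M}\perp G(U)$) together with the fact that each $R^i$ is quadratic in $V$ to produce the factor $\widetilde E_p(t)\,D_p(t)$ on the right-hand side rather than a mere $\widetilde E_p(t)^{2}$, which is what makes the bootstrap close. Once the nonlinear contributions are shown to be absorbed by $\widetilde E_p(t)\,D_p(t)$, combining the low- and high-frequency estimates gives the master inequality and completes the proof.
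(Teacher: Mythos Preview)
Your overall scheme (a priori estimate + local existence + continuation via the Lipschitz blow-up criterion) matches the paper. For low frequencies you propose a direct $L^p$ energy method on the diagonalized variables $(\Psi,Z)$, whereas the paper instead applies Duhamel's formula with the linear semigroup $\mathcal{G}(t)$ and feeds in the pointwise frequency-localized bounds from Proposition~\ref{LemmaspectrallocalHp}; both routes are viable, and the Duhamel approach has the advantage of cleanly separating the linear spectral analysis from the nonlinear source estimates (one only needs $\|R^i\|^{\ell}_{L^1_t(\dot B^{d/p}_{p,1})}$, with the crucial gain of one factor $2^j$ on $R^0$ coming from $\mathbf{P}R^0=0$).

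The genuine gap is at high frequencies. You propose to work on the normal form \eqref{m1} viewed as the constant-coefficient linear system plus the forcing $R^0+\sum_i\partial_{x_i}R^i$. At the top regularity $\dot B^{\frac{d}{2}+1}_{2,1}$ this loses one derivative: the semigroup at high frequencies has only $O(1)$ (damping) decay, not parabolic smoothing, so estimating $\partial_{x_i}R^i$ in $\dot B^{\frac{d}{2}+1}_{2,1}$ forces $R^i\in\dot B^{\frac{d}{2}+2}_{2,1}$, which is not available. Commutator estimates do not help here because the principal part $\sum_i A^i\partial_{x_i}$ has constant coefficients. The paper resolves this by \emph{changing variables back to the entropy unknown} $\mathcal{W}=W-\bar W$ and working with the genuinely quasilinear symmetric system \eqref{normale}: there the variable coefficients $A^0(\mathcal{W}+\bar W)$, $A^i(\mathcal{W}+\bar W)$ allow commutators $[\dot\Delta_j,A^0(\cdot)^{-1}A^i(\cdot)]\partial_{x_i}\mathcal{W}$ to absorb the derivative, and the Kawashima compensating functional is applied to the \emph{constant-coefficient} rewriting \eqref{UUe}. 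After closing the estimate on $\mathcal{W}$ in $\dot B^{\frac{d}{2}+1}_{2,1}$, one must translate back to $V$ via composition estimates (Corollary~\ref{corocom}), which is an additional step you do not mention. Without this detour through the entropy variable the high-frequency estimate does not close at critical regularity.
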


%\begin{remark}\normalfont
%In comparison with those $L^2$ global-in-time existence results (see \cite{c1,c2, XK1}), we additionally assume  that $A^i_{1,1}(\bar{U})=0$ in \eqref{block}, which avoids the appearance of complex eigenvalues of the linearized system and ensures the strongly elliptic property of the operator $-\mathcal{A}$ {\rm(}see \eqref{mathcalA} and \eqref{eq:SK-elliptic}{\rm)}. In fact, that condition implies the zero equilibrium of velocity in the context of fluid dynamics, which appears in many nonlinear concrete models, for example, the perturbation of constant equilibrium for the compressible Euler equations with damping (see Appendix \ref{appendixB}).
%state $(\bar{\rho}, 0)$ with $\bar{\rho}>0$It shuld be noted that in different settings satisfy the structural conditions discussed in this paper, for example, around the constant equilibrium state $(\bar{\rho}, 0)$ with $\bar{\rho}>0$ and and the Jin-Xin  model (cf. \cite{JinXin1}).
%When $U_1$ is scalar, i.e. $n_1=1$, it can be removed by using the Galilean change $(t,x)\rightarrow (t,x_1-A^1_{1,1}t,\dots,x_d-A^d_{1,1}t)$. \end{remark}

\begin{remark}\normalfont
Crin-Barat and Danchin \cite{c3} assumed that
$A^i_{1,1}(\bar U)=0$ in \eqref{blockL} and
established global solutions in $L^p$-type
hybrid Besov spaces when $V_0^{\ell}\in \dot B^{d/p}_{p,1}$.
%for partially dissipative hyperbolic systems.
%which is sharp in contrast to that of the non-dissipative case {\rm(}even linear{\rm)} of hyperbolic systems,  where the well-posedness in $L^p$ for $p\neq 2$ fails {\rm(}see \cite{brenner1}{\rm)}. Compared with the significant results in \cite{c3,danchinnote2},
By contrast, Theorem~\ref{theorem0} considers different regularity requirements
for the conservative component $\mathbf{P}V_0$ and the dissipative component
$\{\mathbf{I}-\mathbf{P}\}V_0$ based on the sharp frequency-localized decay properties (see Proposition~\ref{LemmaspectrallocalHp}). In
particular, the dissipation \eqref{Dt} yields
$V\in L^{1}(\mathbb{R}_{+};\dot{B}^{d/p+1}_{p,1})$ and hence
$\nabla V\in L^{1}(\mathbb{R}_{+};L^{\infty})$, which is consistent with the classical
blow-up criterion for hyperbolic systems in \cite{bahouri1} and \cite[Theorem~1.4]{XK1}. Moreover,
the functional setting allows to remove the additional structural assumptions on
$F^i(U)$ and $G(U)$ that are
required in \cite{c3}.

\end{remark}

The proof of Theorem \ref{theorem0} relies on the following \emph{a priori} estimates of the solution $V$ to \eqref{m1} and the standard continuation argument.
\begin{prop}\label{propapriori}
Let $V$ be the smooth solution to the Cauchy problem \eqref{m1} defined on $[0,T)\times\mathbb{R}^d$. For any $t\in[0,T)$, if $V$ satisfies
\begin{equation}\label{abound}
\begin{aligned}
\|V\|_{L^{\infty}_{t}(L^{\infty})}\leq 1,
\end{aligned}
\end{equation}
then there exists a uniform constant $M^*>0$ such that
\begin{align}
E_{p}(t)+D_{p}(t)\leq M^* e^{M^* D_p(t)}(1+E_p(t))^2\bigg((1+E_{p,0})E_{p,0}+(1+E_p(t))^2 E_p(t)D_{p}(t) \bigg), \label{prioribound}
\end{align}
where
\begin{align}
    E_{p}(t)\coloneqq \|\mathbf{P}V\|_{L^{\infty}_{t}(\dot{\mathbb{B}}^{\frac{d}{p}-1,\frac{d}{2}+1}_{p,2})}+\|\{\mathbf{I}-\mathbf{P}\}V\|_{L^{\infty}_{t}(\dot{\mathbb{B}}^{\frac{d}{p},\frac{d}{2}+1}_{p,2})}.\label{Etr}
\end{align}
\end{prop}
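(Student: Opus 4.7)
The plan is to split the analysis at the low/high frequency threshold $J_0$ introduced in Appendix \ref{appendixA} and derive separate bounds on each regime, reconciling them through hybrid product and composite estimates. In low frequencies ($j\leq J_0-1$) the system behaves parabolically after the decoupling of Proposition \ref{LemmaspectrallocalHp}, whereas in high frequencies ($j\geq J_0$) the eigenvalues are complex and an $L^2$ hypocoercivity argument built on the skew-symmetric compensator of Proposition \ref{propSK}(iv) is required. The hybrid space $\dot{\mathbb{B}}^{\sigma,s}_{p,2}$ matches this dichotomy precisely, and the discrepancy $d/p-1$ versus $d/p$ between the low-frequency regularities of $\mathbf{P}V$ and $\{\mathbf{I}-\mathbf{P}\}V$ mirrors exactly the different exponents appearing in \eqref{lowlocalupper}.

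For the low-frequency part, I would apply $\dot{\Delta}_j$ with $j\leq J_0-1$ to the conservative-dissipative form \eqref{hp121}, view the result as the linear problem \eqref{hplinear} forced by $R^0+\sum_i\partial_{x_i}R^i$, and plug this source into Duhamel's formula against the spectrally localized propagator. The crucial point is $\mathbf{P}R^0=0$: only divergence-type nonlinearities feed the conservative block, and the extra derivative is absorbed by the parabolic factor $2^{2j}e^{-R_1 2^{2j}t}$, which is what upgrades $\widetilde{L}^\infty_t \dot{B}^{d/p-1}_{p,1}$ to $L^1_t \dot{B}^{d/p+1}_{p,1}$. Weighting by $2^{j(d/p-1)}$ for $\mathbf{P}V$ and $2^{jd/p}$ for $\{\mathbf{I}-\mathbf{P}\}V$ and summing yields a bound of the form
\begin{equation*}
\|\mathbf{P}V\|^\ell_{\widetilde{L}^\infty_t(\dot{B}^{d/p-1}_{p,1})\cap L^1_t(\dot{B}^{d/p+1}_{p,1})}+\|\{\mathbf{I}-\mathbf{P}\}V\|^\ell_{\widetilde{L}^\infty_t(\dot{B}^{d/p}_{p,1})\cap L^1_t(\dot{B}^{d/p}_{p,1})}\lesssim E_{p,0}+\mathcal{N}^{\ell}(t),
\end{equation*}
with $\mathcal{N}^{\ell}(t)$ collecting the relevant $L^1_t$-Besov norms of $R^0$ and $R^i$.

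For high frequencies I would pass to the normal form derived in Appendix \ref{appendix0} and build the Lyapunov functional
\begin{equation*}
\mathcal{L}_j^2\coloneqq \|\dot{\Delta}_j V\|_{L^2}^2-\eta\sum_{i=1}^d 2^{-j}\int_{\mathbb{R}^d}\bigl(K^i\dot{\Delta}_j V\bigr)\cdot\partial_{x_i}\dot{\Delta}_j V\,dx,
\end{equation*}
where $K(\omega)=\sum_i K^i\omega_i$ is the compensator of Proposition \ref{propSK}(iv). For $\eta$ small, $\mathcal{L}_j\sim\|\dot{\Delta}_j V\|_{L^2}$, and combining the symmetry of the $A^i$, the coercivity \eqref{dissipationU211} of $L$ on the dissipative block, and the uniform positivity of $[KA]^{\mathrm{sys}}+L$ produces
\begin{equation*}
\tfrac{d}{dt}\mathcal{L}_j+c\,\mathcal{L}_j\lesssim\|\dot{\Delta}_j R^0\|_{L^2}+\sum_{i=1}^d\|\dot{\Delta}_j\partial_{x_i}R^i\|_{L^2}.
\end{equation*}
Weighting by $2^{j(d/2+1)}$ and summing over $j\geq J_0$ gives the $\widetilde{L}^\infty_t \dot{B}^{d/2+1}_{2,1}$ and $L^1_t \dot{B}^{d/2+1}_{2,1}$ control of $V^h$ in terms of $E_{p,0}$ and a high-frequency nonlinear remainder $\mathcal{N}^{h}(t)$.

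Closure amounts to bounding $\mathcal{N}^\ell(t)+\mathcal{N}^h(t)$ by $E_p(t)$ and $D_p(t)$. Since $R^0$ and $R^i$ are smooth and at least quadratic in $V-\bar{U}$, the composite estimates together with the non-standard $L^p$-$L^2$ hybrid product laws of Appendix \ref{appendixA} give, under the pointwise assumption \eqref{abound}, a schematic bound of the type $\mathcal{N}^\ell+\mathcal{N}^h\lesssim (1+E_p)^2\bigl((1+E_{p,0})E_{p,0}+(1+E_p)^2 E_p D_p\bigr)$. The main obstacle here is the hybrid product analysis across the threshold $J_0$: a single factor may carry mass simultaneously in low-frequency $L^p$ and high-frequency $L^2$, so the Bony decomposition must be split on both sides of $J_0$, with paraproducts and remainders tracked separately in order to recover the correct exponents in $\dot{\mathbb{B}}^{\sigma,s}_{p,2}$. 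A Gronwall argument based on the embedding $\dot{B}^{d/p+1}_{p,1}\hookrightarrow \mathrm{Lip}$, which yields $\|\nabla V\|_{L^1_t(L^\infty)}\lesssim D_p(t)$ and is exactly the integrability encoded in $D_p$, then converts the time integral of the nonlinear remainders into the multiplicative prefactor $e^{M^* D_p(t)}$, delivering \eqref{prioribound}.
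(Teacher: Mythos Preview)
Your low-frequency argument via Duhamel and Proposition \ref{LemmaspectrallocalHp}, exploiting $\mathbf{P}R^0=0$, matches the paper's Step~1 essentially verbatim.

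The high-frequency part, however, has a genuine gap. You propose to work with the constant-coefficient normal form \eqref{m1} and treat $R^0+\sum_i\partial_{x_i}R^i$ as a forcing term in the Lyapunov inequality. After weighting by $2^{j(d/2+1)}$ and summing, your remainder $\mathcal{N}^h$ must absorb $\|\partial_{x_i}R^i\|^h_{L^1_t(\dot{B}^{d/2+1}_{2,1})}=\|R^i\|^h_{L^1_t(\dot{B}^{d/2+2}_{2,1})}$. Since $R^i$ is quadratic in $V$, any product or composite estimate at regularity $d/2+2$ forces $\|V\|_{\dot{B}^{d/2+2}_{2,1}}$ to appear, which is one derivative beyond the critical space $\dot{B}^{d/2+1}_{2,1}$ carried by $E_p$ and $D_p$. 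The estimate cannot close. The paper flags exactly this obstruction (see the discussion preceding Lemma~\ref{lemmahigh}) and resolves it in Step~2 by reverting to the \emph{quasilinear} symmetric entropy form \eqref{normale} in the variable $\mathcal{W}=W-\bar{W}$, where the top-order nonlinearity sits in the variable coefficients $A^i(\mathcal{W}+\bar{W})$. Applying $\dot{\Delta}_j$ then produces commutators $[\dot{\Delta}_j,A^0(\mathcal{W}+\bar{W})^{-1}A^i(\mathcal{W}+\bar{W})]\partial_{x_i}\mathcal{W}$ together with a symmetric transport term $\frac12\int\partial_{x_i}A^i(\mathcal{W}+\bar{W})|\dot{\Delta}_j\mathcal{W}|^2$; the commutators gain one derivative via Lemma~\ref{lemmacommutator}, and the transport term is controlled by $\|\nabla\mathcal{W}\|_{L^\infty}$, which after Gr\"onwall yields precisely the factor $e^{M^*D_p(t)}$. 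A final change of variables $V\leftrightarrow\mathcal{W}$ through Corollary~\ref{corocom} (estimates \eqref{420}--\eqref{423}) transfers the bound back to $V$.

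A related symptom: in your formulation the nonlinearity is a pure source, so there is no mechanism for Gr\"onwall to generate the exponential prefactor $e^{M^*D_p(t)}$; that factor is a signature of the variable-coefficient transport structure you have linearized away.
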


\begin{proof}
%First of all, we suppose the following bound
%\begin{equation}\label{abound}
%\begin{aligned}
%\|V\|_{L^{\infty}_{t}(L^{\infty})}\lesssim 1
%\end{aligned}
%\end{equation}
%for $0\leq t<T$ with any time $T>0$.
The \emph{a priori} estimates of $V$ can be split into three steps.
\begin{itemize}
\item {\emph{Step 1: Low-frequency analysis}}
\end{itemize}

Compared with the direct energy estimates developed in \cite{c3,danchinnote2}, the low-frequency analysis mainly resorts to Duhamel's principle and Proposition \ref{LemmaspectrallocalHp}.
Let $\{\mathcal{G}(t)\}_{t>0}$ be the semigroup generated by the linear operator on the left-hand side of \eqref{m1}. We always choose $J_0$ to be small enough such that $2^{J_0}\leq \lambda_0$, as required in Proposition \ref{LemmaspectrallocalHp}. Employing the low-frequency cutoff operator $\dot{S}_{J_0}$ to \eqref{m1} and applying the localized operator $\dot{\Delta}_{j}$ to the resulting system yields
\begin{equation}\label{localized}
    \begin{aligned}
&\dot{\Delta}_{j}V^{\ell}_{t}+\sum_{i=1}^{d}A^{i}\partial_{x_{i}}\dot{\Delta}_{j}V^{\ell}+L\dot{\Delta}_{j}V^{\ell}=\dot{\Delta}_{j}(R^{0})^{\ell}+\sum_{i=1}^{d}\partial_{x_{i}}\dot{\Delta}_{j}(R^{i})^{\ell},\quad \dot{\Delta}_j V^\ell|_{t=0}=\dot{\Delta}_j V_0^\ell.
\end{aligned}
\end{equation}
Furthermore, one may derive the frequency-localized Duhamel formula:
\begin{equation}\label{Duhanel1}
    \begin{aligned}
    \dot{\Delta}_{j}V^{\ell}=\mathcal{G}(t)\dot{\Delta}_jV_0^\ell+\int_0^t \mathcal{G}(t-\tau)\Big(\dot{\Delta}_{j}(R^{0})^{\ell}+\sum_{i=1}^{d}\partial_{x_{i}}\dot{\Delta}_{j}(R^{i})^{\ell}\Big)\,d\tau.
\end{aligned}
\end{equation}
It then follows from Proposition~\ref{LemmaspectrallocalHp} that, for any
$j\leq J_0$,
\begin{equation}\label{f-l}
\begin{aligned}
\|\mathbf{P}\mathcal{G}(t)\dot{\Delta}_{j}V_0^\ell\|_{L^p}&\lesssim e^{-R_{1}2^{2j}t} \Big(\|\mathbf{P} \dot{\Delta}_{j} V_0^\ell \|_{L^p}+2^{j} \|\{\mathbf{I} - \mathbf{P}\} \dot{\Delta}_{j} V_0^\ell\|_{L^p} \Big),\\
\|\mathbf{P}\mathcal{G}(t)\dot{\Delta}_{j}(R^{0})^{\ell}\|_{L^p}%&\lesssim e^{-R_{1}2^{2j}t}\Big(\|\dot{\Delta}_j\mathbf{P}(R^0)^\ell\|_{L^p}+2^j\|\dot{\Delta}_j\{\mathbf{I} - \mathbf{P}\}(R^{0})^{\ell}\|_{L^p}\Big)\\
&\lesssim 2^{j}  e^{-R_{1}2^{2j}t} \|\dot{\Delta}_{j}(R^{0})^{\ell}\|_{L^p},\\
\|\mathbf{P}\mathcal{G}(t)\dot{\Delta}_{j}\partial_{x_{i}}(R^{i})^{\ell}\|_{L^p}&\lesssim 2^{j} e^{-R_{1}2^{2j}t} \|\dot{\Delta}_{j}(R^{i})^{\ell}\|_{L^p},\quad i=1,\dots,d.
\end{aligned}
\end{equation}
Indeed, the first inequality in \eqref{f-l} follows directly from
\eqref{lowlocalupper}, provided that $J_0$ is chosen sufficiently small.
The second estimate, with the factor $2^j$, relies on the key identities
$\mathbf{P}R^{0}=0$ and $\{\mathbf{I}-\mathbf{P}\}R^{0}=R^{0}$.
The third inequality is a consequence of \eqref{lowlocaluppertotal}, since
each term $\partial_{x_i}R^{i}$ ($i=1,\dots,d$) involves one spatial
derivative. Consequently, we have
\begin{equation}\label{PVj}
\begin{aligned}
\|\dot{\Delta}_{j}(\mathbf{P}V)^{\ell}\|_{L^p}&\lesssim e^{-R_1 2^{2j}t}\Big(\|\dot{\Delta}_{j}(\mathbf{P}V_0)^{\ell}\|_{L^p}+2^{j}\|\dot{\Delta}_{j}(\{\mathbf{I}-\mathbf{P}\}V_0)^{\ell}\|_{L^p}\Big)+\int_{0}^{t} e^{-R_{1}2^{2j}(t-\tau)} 2^{j} \sum_{i=0}^{d}\|\dot{\Delta}_{j}(R^{i})^{\ell}\|_{L^{p}}\,d\tau
\end{aligned}
\end{equation}
for $j\leq J_0$. In view of Young's inequality for \eqref{PVj} in time, we arrive at
\begin{equation}\label{localPV}
\begin{aligned}
&\quad\|\dot{\Delta}_{j}(\mathbf{P}V)^{\ell}\|_{L^{\infty}_{t}(L^p)}+2^{2j}\|\dot{\Delta}_{j}(\mathbf{P}V)^{\ell}\|_{L^1_t(L^p)}\\
& \lesssim \|\dot{\Delta}_{j}(\mathbf{P}V_0)^{\ell}\|_{L^p}+2^{j}\|\dot{\Delta}_{j}(\{\mathbf{I}-\mathbf{P}\}V_0)^{\ell}\|_{L^p} +2^{j}\sum_{i=0}^{d}\| (R^i)^{\ell}\|_{L^1_t(L^p)}.
\end{aligned}
\end{equation}
Similarly, one can obtain the frequency-localization estimate for $\{\mathbf{I}-\mathbf{P}\}V$. With the aid of  Proposition \ref{LemmaspectrallocalHp},
we obtain
\begin{multline*}
\begin{aligned}
\|\{\mathbf{I} - \mathbf{P}\}\mathcal{G}(t)\dot{\Delta}_{j}V_0^\ell\|_{L^p}\lesssim e^{-R_{1}2^{2j}t}2^j \Big(\|\mathbf{P} \dot{\Delta}_{j} V_0^\ell \|_{L^p}+2^{j} \|\{\mathbf{I} - \mathbf{P}\} \dot{\Delta}_{j} V_0^\ell\|_{L^p} \Big)+e^{-\kappa_1 t}\|\{\mathbf{I} - \mathbf{P}\} \dot{\Delta}_{j} V_0^\ell\|_{L^p},
\end{aligned}
\end{multline*}
and similarly,
\begin{equation*}
\begin{aligned}
&\|\{\mathbf{I} - \mathbf{P}\}\mathcal{G}(t)\dot{\Delta}_{j}(R^{0})^{\ell}\|_{L^p}\lesssim \Big(2^{2j} e^{-R_{1}2^{2j}t} +e^{-\kappa_1 t} \Big)\|\dot{\Delta}_{j}(R^{0})^{\ell}\|_{L^p},\\
&\|\{\mathbf{I} - \mathbf{P}\}\mathcal{G}(t)\dot{\Delta}_{j}\partial_{x_{i}}(R^{i})^{\ell}\|_{L^p}\lesssim \Big(2^{2j} e^{-R_{1}2^{2j}t} +e^{-\kappa_1 t} 2^j \Big) \|\dot{\Delta}_{j}(R^{i})^{\ell}\|_{L^p},\quad i=1,\dots,d
\end{aligned}
\end{equation*}
for any $j\leq J_0$. Furthermore, it follows from \eqref{Duhanel1} that
\begin{equation}\nonumber
\begin{aligned}
\|\dot{\Delta}_{j}(\{\mathbf{I}-\mathbf{P}\}V)^{\ell}\|_{L^{p}}&\lesssim 2^{j} e^{-R_1 2^{2j}t} \Big(\|\dot{\Delta}_{j}(\mathbf{P}V_0)^{\ell}\|_{L^p}+2^{j}\|\dot{\Delta}_{j}(\{\mathbf{I}-\mathbf{P}\}V_0)^{\ell}\|_{L^p} \Big)+e^{-\kappa_1 t} \|\dot{\Delta}_{j}(\{\mathbf{I}-\mathbf{P}\}V_0)^{\ell}\|_{L^p}\\
&\quad+\int_{0}^{t}\Big( e^{-R_{1}2^{2j}(t-\tau)} 2^{2j}+e^{-\kappa_{1}(t-\tau)}  \Big) \sum_{i=0}^{d}\|\dot{\Delta}_{j}(R^{i})^{\ell}\|_{L^{p}}\, d\tau
\end{aligned}
\end{equation}
for $j\leq J_0$,  which leads to
\begin{equation}\label{localIPV}
\begin{aligned}
& \quad 2^j \|\dot{\Delta}_{j}(\{\mathbf{I} - \mathbf{P}\}V)^{\ell}\|_{L^{\infty}_{t}(L^p)}+ 2^j \|\dot{\Delta}_{j}(\{\mathbf{I} - \mathbf{P}\}V)^{\ell}\|_{L^{1}_{t}(L^p)}\\
&\lesssim \|\dot{\Delta}_{j}(\mathbf{P}V_0)^{\ell}\|_{L^p}+ 2^j \|\dot{\Delta}_{j}(\{\mathbf{I}-\mathbf{P}\}V_0)^{\ell}\|_{L^p} + 2^j \sum_{i=0}^{d}\| (R^i)^{\ell}\|_{L^1_t(L^p)}.
\end{aligned}
\end{equation}
Actually, those estimates \eqref{localPV}-\eqref{localIPV} hold for any $j\in \mathbb{Z}$ due to the fact that $\dot{\Delta}_{j}\dot{S}_{J_0}=0$ when $j\geq J_0+1$.
Adding up \eqref{localPV} and \eqref{localIPV}, multiplying the resulting  inequality by  $2^{j(\frac{d}{p}-1)}$ and then taking the summation over $j\in\mathbb{Z}$, we deduce that
\begin{equation}\nonumber
\begin{aligned}
&\quad \|(\mathbf{P}V)^{\ell}\|_{\widetilde{L}^{\infty}_{t}(\dot{B}^{\frac{d}{p}-1}_{p,1})\cap L^1_{t}(\dot{B}^{\frac{d}{p}+1}_{p,1})}+\|(\{\mathbf{I} - \mathbf{P}\}V)^{\ell}\|_{\widetilde{L}^{\infty}_{t}(\dot{B}^{\frac{d}{p}}_{p,1})\cap L^1_{t}(\dot{B}^{\frac{d}{p}}_{p,1})}\\
& \lesssim \|(\mathbf{P}V_0)^{\ell}\|_{\dot{B}^{\frac{d}{p}-1}_{p,1}}+\|(\{\mathbf{I} - \mathbf{P}\}V_0)^{\ell}\|_{\dot{B}^{\frac{d}{p}}_{p,1}}+\sum_{i=0}^{d}\|(R^i)^{\ell}\|_{L^1_{t}(\dot{B}^{\frac{d}{p}}_{p,1})}.
\end{aligned}
\end{equation}
Note that $\mathbf{r}(\cdot)=(r^1,r^2,\dots,r^n)(\cdot)$ fulfills $r^i(\bar{W})=D_{\mathcal{W}}r^i(\bar{W})=0$ ($i=0,2,\dots,n$). Hence, using Lemmas \ref{prop3.2}-\ref{lemma64} and  \eqref{abound} ensures that
\begin{equation}\nonumber
\begin{aligned}
\sum_{i=0}^{d}\|R^i\|_{L^1_{t}(\dot{B}^{\frac{d}{p}}_{p,1})}^{\ell}&\lesssim \|V\|_{L^2_{t}(\dot{B}^{\frac{d}{p}}_{p,1})}^2.
\end{aligned}
\end{equation}
Furthermore, we see that, by employing \eqref{inter}, the decomposition $V=(\mathbf{P}V)^\ell+(\{\mathbf{I-\mathbf{P}}\}
V)^\ell+V^h$ and Bernstein's inequality  \ref{lemma21},
\begin{align}\label{VL2}
\|V\|_{L^2_{t}(\dot{B}^{\frac{d}{p}}_{p,1})}&\lesssim \|(\mathbf{P}V)^\ell\|_{L^2_{t}(\dot{B}^{\frac{d}{p}}_{p,1})}+\|(\{\mathbf{I} - \mathbf{P}\}V)^\ell\|_{L^2_{t}(\dot{B}^{\frac{d}{p}}_{p,1})}+\|V\|_{L^{2}_t(\dot{B}^{\frac{d}{2}+1}_{2,1})}^h\lesssim \sqrt{E_{p}(t)D_{p}(t)}. %\nonumber\\
%&\lesssim \|(\mathbf{P}V)^\ell\|_{L^{\infty}_t(\dot{B}^{\frac{d}{p}-1}_{p,1})}^{\frac{1}{2}}\|(\mathbf{P}V)^\ell\|_{L^{1}_t(\dot{B}^{\frac{d}{p}+1}_{p,1})}^{\frac{1}{2}}+\|(\{\mathbf{I} - \mathbf{P}\}V)^\ell\|_{L^{\infty}_t(\dot{B}^{\frac{d}{p}}_{p,1})}^{\frac{1}{2}}\|(\{\mathbf{I} - \mathbf{P}\}V)^\ell\|_{L^{1}_t(\dot{B}^{\frac{d}{p}}_{p,1})}^{\frac{1}{2}}\nonumber\\
%&\quad+\Big(\|V\|_{L^{\infty}_t(\dot{B}^{\frac{d}{2}+1}_{2,1})}^h\Big)^{\frac{1}{2}}\Big(\|V\|_{L^{1}_t(\dot{B}^{\frac{d}{2}+1}_{2,1})}^h\Big)^{\frac{1}{2}}.
\end{align}
Consequently, we conclude that
\begin{equation}\label{localglobal}
\begin{aligned}
&\quad\|(\mathbf{P}V)^{\ell}\|_{\widetilde{L}^{\infty}_{t}(\dot{B}^{\frac{d}{p}-1}_{p,1})\cap L^1_{t}(\dot{B}^{\frac{d}{p}+1}_{p,1})}+\|(\{\mathbf{I} - \mathbf{P}\}V)^{\ell}\|_{\widetilde{L}^{\infty}_{t}(\dot{B}^{\frac{d}{p}}_{p,1})\cap L^1_{t}(\dot{B}^{\frac{d}{p}}_{p,1})}\\
&\lesssim\|(\mathbf{P}V_0)^{\ell}\|_{\dot{B}^{\frac{d}{p}-1}_{p,1}}+\|(\{\mathbf{I} - \mathbf{P}\}V_0)^{\ell}\|_{\dot{B}^{\frac{d}{p}}_{p,1}}+E_{p}(t)D_{p}(t).
\end{aligned}
\end{equation}

\begin{itemize}
\item {\emph{Step 2: High-frequency analysis}}
\end{itemize}

As mentioned before,  Proposition \ref{LemmaspectrallocalHp} only gives the sharp frequency-localization estimates in low frequencies, which are
not sufficient for the high-frequency estimates. In the
$L^2$ framework, we shall go back to the system \eqref{entropeq}, relying on its symmetric structure.

Set $\mathcal{W}\coloneqq  W-\bar{W}$. Using \eqref{tildeH} and \eqref{rtildeU}, we write
\begin{equation}\label{normale}
    \begin{aligned}
    &A^0(\mathcal{W}+\bar{W})\partial_{t}\mathcal{W}+\sum_{i=1}^{d}A^{i}(\mathcal{W}+\bar{W})\partial_{x_{i}}\mathcal{W}+L(\bar{W})\mathcal{W} =\mathbf{r}(\mathcal{W}+\bar{W}).
\end{aligned}
\end{equation}
The initial data are prescribed by
\begin{align}
\mathcal{W}(0,x)=\mathcal{W}_0(x)\coloneqq  W(V_0)(x)-\bar{W}.\label{normald}
\end{align}
We emphasize that $\mathcal{W}\rightarrow V$ is a diffeomorphism in a ball around the zero equilibrium, so one can recover the desired estimates arising from $\mathcal{W}$.

%In fact, since we establish the estimates of $\mathcal{W}$ being the solution to \eqref{normal}, one can recover the desired bounds for $V$.

To proceed, we shall take advantage of commutator estimates.  Applying $\dot{\Delta}_{j}$ \eqref{normale} and using \eqref{tildeH}, we get
\begin{equation}\label{normalj}
    \begin{aligned}
    &\quad A^0(\mathcal{W}+\bar{W})\partial_{t}\dot{\Delta}_{j}\mathcal{W}+\sum_{i=1}^{d}A^{i}(\mathcal{W}+\bar{W})\partial_{x_{i}}\dot{\Delta}_{j}\mathcal{W}+L(\bar{W})\dot{\Delta}_{j}\mathcal{W}\\
    & =-\mathcal{R}_{1,j}-\mathcal{R}_{2,j}+\dot{\Delta}_{j}\mathbf{r}(\mathcal{W}+\bar{W}),
\end{aligned}
\end{equation}
where  $\mathcal{R}_{i,j}$ ($i=1,2$) are the commutator terms
\begin{align*}
 &\mathcal{R}_{1,j}\coloneqq  \sum_{i=1}^{d}A^0(\mathcal{W}+\bar{W})[\dot{\Delta}_j,A^0(\mathcal{W}+\bar{W})^{-1}A^{i}(\mathcal{W}+\bar{W})]\partial_{x_{i}}\mathcal{W},\\
 &\mathcal{R}_{2,j}\coloneqq  A^0(\mathcal{W}+\bar{W})[\dot{\Delta}_{j},A^0(\mathcal{W}+\bar{W})^{-1}]L(\bar{W})\mathcal{W}.
 \end{align*}
Recall that $L(\bar{W})$ is nonnegative definite, and its null space coincides with $\mathcal{M}$. Taking the $L^2$ inner product of \eqref{normalj} with  $\dot{\Delta}_{j}\mathcal{W}$, we obtain
\begin{equation}\label{L11}
    \begin{aligned}
    &\quad\frac{1}{2}\frac{d}{dt}\|\dot{\Delta}_{j}\mathcal{W}\|_{L^2}^2+\tilde{\kappa} \|\{\mathbf{I} - \mathbf{P}\}\dot{\Delta}_{j}\mathcal{W}\|_{L^2}^2\\
    &\leq \frac{1}{2}\sum_{i=1}^{d}\|\partial_{x_i}A^{i}(\mathcal{W}+\bar{W})\|_{L^{\infty}}\|\dot{\Delta}_{j}\mathcal{W}\|_{L^2}^2+\Big(\mathcal{R}_{1,j}+\mathcal{R}_{2,j}+\|\dot{\Delta}_{j}\mathbf{r}(\mathcal{W}+\bar{W})\|_{L^2}\Big)\|\dot{\Delta}_{j}\mathcal{W}\|_{L^2}
  \end{aligned}
\end{equation}
for some constant $\tilde{\kappa}>0$.

To create the dissipation of $\mathbf{P}\mathcal{W}$,
the [SK] condition given by Definition \ref{defnSK} is now so classical. The stability assumption was originally introduced in \cite{SK} and guarantees
the necessary coupling between conserved/non-conserved quantities in order to have dissipation in each state variable, which leads to the global in time smooth solutions for small initial data, see for example  \cite{HN, KY1, Yong041} in the Sobolev framework and \cite{c2,c3, XK2} in the critical Besov framework.
It is convenient to rewrite \eqref{entropeq} in the following form
\begin{equation}\label{UUe}
    \begin{aligned}
&A^0(\bar{W})\partial_{t}\mathcal{W}+\sum_{i=1}^{d}A^{i}(\bar{W})\partial_{x_{i}}\mathcal{W}+L(\bar{W})\mathcal{W}=\sum_{i=1}^3h_i(\mathcal{W}),
\end{aligned}
\end{equation}
where $A^0=A^0(\bar{W})$, $A^{i}=A^{i}(\bar{W})$ and $L=L(\bar{W})$ are constant matrices. The nonlinear terms $h_{i}\,(i=1,2,3)$ are formulated in order by
\begin{equation}\nonumber
    \begin{aligned}
h_1(\mathcal{W})&\coloneqq  A^0(\bar{W})A^0(\mathcal{W}+\bar{W})^{-1}\mathbf{r}(\mathcal{W}+\bar{W}),\\
    h_2(\mathcal{W})&\coloneqq  -\sum_{i=1}^{d}A^0(\bar{W}) \Big( A^0(\mathcal{W}+\bar{W})^{-1}A^{i}(\mathcal{W}+\bar{W})-A^0(\bar{W})^{-1}A^{i}(\bar{W}) \Big)\partial_{x_i}\mathcal{W},\\
    h_3(\mathcal{W})&\coloneqq  A^0(\bar{W})\Big( A^0(\mathcal{W}+\bar{W})^{-1}-A^0(\bar{W})^{-1}\Big)L(\bar{W})\mathcal{W}.
\end{aligned}
\end{equation}
Applying the homogeneous operator $\dot{\Delta}_{j}$
to \eqref{UUe} gives
\begin{equation}\label{localizedS}
    \begin{aligned}
&A^0(\bar{W})\dot{\Delta}_{j}\partial_{t}\mathcal{W}+\sum_{i=1}^{d}A^{i}(\bar{W})\dot{\Delta}_{j}\partial_{x_{i}}\mathcal{W}+L(\bar{W})\dot{\Delta}_{j}\mathcal{W}=\sum_{i=1}^3\dot{\Delta}_{j}h_i(\mathcal{W}).
\end{aligned}
\end{equation}
%Since $A^i(\bar{W})$ ($i=0,1,2,\dots,d$) and $L(\bar{W})$ satisfies the [SK] condition,
As in \cite{SK}, the [SK] condition implies that there exists a $n\times n$ matrix $K(\omega)$ depending smoothly on $\omega\in \mathbb{S}^{d-1}$ satisfying $K(-\omega)=-K(\omega)$ and that $K(\omega)$ is skew-symmetric. In addition, $[K(\omega)A(\omega)]^{{\rm{sym}}}+L(\bar{W})$ is positive definite for $\omega\in \mathbb{S}^{d-1}$, where $A(\omega)\coloneqq  \sum_{i=1}^{d}A^{i}(\bar{W})\omega_{i}$ with $\omega\in \mathbb{S}^{d-1}$ such that $\omega |\xi|=\xi$. Perform the Fourier transform with respect to $x$ on both sides of \eqref{localizedS} and multiply the resulting equation by $-\mathrm{i}|\xi|K(\omega)$.
Furthermore, performing the inner product  with $\widehat{\dot{\Delta}_{j}\mathcal{W}}$ and taking the real part of each term in the resulting equality yields
\begin{equation}\label{L12}
    \begin{aligned}
    \frac{1}{2}&\frac{d}{dt} |\xi|  {\rm Im}\Big\langle   K(\omega) A^0(\bar{W})\widehat{\dot{\Delta}_{j}\mathcal{W}}, \widehat{\dot{\Delta}_{j}\mathcal{W}}\Big\rangle\\
    &\quad\quad~+|\xi|^2\Big\langle  [ K(\omega)A(\omega)]^{{\rm{sym}}}\widehat{\dot{\Delta}_{j}\mathcal{W}}, \widehat{\dot{\Delta}_{j}\mathcal{W}}\Big\rangle-|\xi| {\rm Im} \langle  K(\omega) L(\bar{W}) \widehat{\dot{\Delta}_{j}\mathcal{W}},\widehat{\dot{\Delta}_{j}\mathcal{W}} \Big\rangle\\
    &~={\rm Im}\Big\langle   \sum_{i=1}^3 \widehat{\dot{\Delta}_{j} h_i(\mathcal{W})}, |\xi|K(\omega) \widehat{\dot{\Delta}_{j}\mathcal{W}}  \Big\rangle,
  \end{aligned}
\end{equation}
where we used that equality
$|\xi|{\rm Im}\left\langle  K(\omega) A^{0}\frac{d}{dt} \widehat{\dot{\Delta}_{j}\mathcal{W}}, \widehat{\dot{\Delta}_{j}\mathcal{W}}\right\rangle =\frac{1}{2}\frac{d}{dt} |\xi|  {\rm Im}\Big\langle K(\omega) A^0(\bar{W})\widehat{\dot{\Delta}_{j}\mathcal{W}}, \widehat{\dot{\Delta}_{j}\mathcal{W}}\Big\rangle$ owing to the skew-symmetry of $K(\omega) A^{0}$. Together \eqref{L12} and \eqref{L11}, we get the Lyapunov inequality for any $j\geq J_0-1$:
    \begin{equation}\label{L}
    \begin{aligned}
    \frac{d}{dt}\mathcal{L}_{j}(t)+\mathcal{D}_{j}(t)&\lesssim \sum_{i=1}^{d}\|\partial_{x_i}A^{i}(\mathcal{W}+\bar{W})\|_{L^{\infty}}\|\dot{\Delta}_{j}\mathcal{W}\|_{L^2}^2\\
    &\quad+(\mathcal{R}_{1,j}+\mathcal{R}_{2,j}+\|\dot{\Delta}_{j}\mathbf{r}(\mathcal{W}+\bar{W})\|_{L^2})\|\dot{\Delta}_{j}\mathcal{W}\|_{L^2}+\eta 2^{-j}\sum_{i=1}^{3} \| \dot{\Delta}_{j}h_i(\mathcal{W})\|_{L^2},
    \end{aligned}
    \end{equation}
   where
    \begin{equation}\nonumber
    \begin{aligned}
    \mathcal{L}_{j}(t)&\coloneqq  \frac{1}{2}\|\dot{\Delta}_{j}\mathcal{W}\|_{L^2}^2+\frac{\eta}{2}\int_{\mathbb{R}^d}  |\xi|{\rm Im} \Big\langle K(\omega) A^0(\bar{W}) \widehat{\dot{\Delta}_{j}\mathcal{W}}, \widehat{\dot{\Delta}_{j}\mathcal{W}}\Big\rangle \,d\xi
  \end{aligned}
\end{equation}
  and
    \begin{equation}\nonumber
    \begin{aligned}
     \mathcal{D}_{j}(t)&\coloneqq  \tilde{\kappa} \|\{\mathbf{I} - \mathbf{P}\}\dot{\Delta}_{j}\mathcal{W}\|_{L^2}^2\\
     &\quad+\eta \int_{\mathbb{R}^d}\Big( |\xi|^2\Big\langle [ K(\omega)A(\omega)]^{{\rm{sym}}}\widehat{\dot{\Delta}_{j}\mathcal{W}}, \widehat{\dot{\Delta}_{j}\mathcal{W}}\Big\rangle-|\xi| {\rm Im} \Big\langle  K(\omega) L(\bar{W})  \widehat{\dot{\Delta}_{j}\mathcal{W}},\widehat{\dot{\Delta}_{j}\mathcal{W}} \Big\rangle\Big) d\xi.
  \end{aligned}
\end{equation}
Choosing $\eta>0$ small enough and taking advantage of the positivity of $[K(\omega)A(\omega)]^{{\rm{sym}}}+L$, we have
        \begin{equation}\nonumber
    \begin{aligned}
    \mathcal{D}_{j}(t)\gtrsim \mathcal{L}_{j}(t)\sim \|\dot{\Delta}_{j}\mathcal{W}\|_{L^2}^2,\quad\quad j\geq J_0-1.
  \end{aligned}
\end{equation}
Furthermore, it follows from \eqref{L} that
\begin{equation}\label{sqrtL}
    \begin{aligned}
    & \frac{d}{dt}\sqrt{\mathcal{L}_{j}(t)+\zeta}+c\sqrt{\mathcal{L}_{j}(t)+\zeta}\\
    &\lesssim \sqrt{\zeta}+\|\nabla \mathcal{W}\|_{L^{\infty}}\sqrt{\mathcal{L}_{j}(t)+\zeta}+\mathcal{R}_{1,j}+\mathcal{R}_{2,j}+\|\dot{\Delta}_{j}\mathbf{r}(\mathcal{W}+\bar{W})\|_{L^2}+2^{-j}\sum_{i=1}^{3}  \| \dot{\Delta}_{j}h_{i}(\mathcal{W})\|_{L^2}
    \end{aligned}
    \end{equation}
for any $\zeta>0$. Integrating  \eqref{sqrtL} over time and having $\zeta\rightarrow 0$.  With the aid of G\"onwall's inequality, we obtain
\begin{equation}\label{Z12345}
   \begin{aligned}
\|\dot{\Delta}_{j}\mathcal{W}\|_{L^{\infty}_{t}(L^2)\cap L^1_{t}(L^2)}&\lesssim e^{C\|\nabla\mathcal{W}\|_{L^1_t(L^{\infty})}}\bigg(\|\dot{\Delta}_{j}\mathcal{W}_0\|_{L^2}+\sum_{i=1}^{5}\int_{0}^{t} Z_{i,j} d\tau \bigg),
    \end{aligned}
    \end{equation}
where $Z_{i,j}$ ($i=1,2,3,4,5$) are given by
 \begin{equation*}
    \begin{aligned}
    Z_{1,j}&\coloneqq  \mathcal{R}_{1,j}, \quad\quad  Z_{2,j}\coloneqq  \mathcal{R}_{2,j},\\
    Z_{3,j}&\coloneqq   \| \dot{\Delta}_{j}\mathbf{r}(\mathcal{W}+\bar{W})\|_{L^2}+2^{-j}\| \dot{\Delta}_{j}h_1(\mathcal{W})\|_{L^2},\\
    Z_{4,j}&\coloneqq   2^{-j}\| \dot{\Delta}_{j}h_2(\mathcal{W})\|_{L^2},\quad Z_{5,j}\coloneqq   2^{-j}\| \dot{\Delta}_{j}h_3(\mathcal{W})\|_{L^2}.
    \end{aligned}
    \end{equation*}
By multiplying both sides of \eqref{Z12345} by $2^{j(\frac{d}{2}+1)}$ and
summing up over $j\geq J_0-1$, we deduce that
\begin{equation}\label{highexistence}
   \begin{aligned}
&\|\mathcal{W}\|_{\widetilde{L}^{\infty}_{t}(\dot{B}^{\frac{d}{2}+1}_{2,1})\cap L^1_{t}(\dot{B}^{\frac{d}{2}+1}_{2,1})}^{h}\lesssim e^{C\|\nabla\mathcal{W}\|_{L^1_t(L^{\infty})}}\bigg(\|\mathcal{W}_0\|_{\dot{B}^{\frac{d}{2}+1}_{2,1}}^{h}+\sum_{i=1}^{5}\int_{0}^{t}\sum_{j\geq J_0-1} 2^{j(\frac{d}{2}+1)} Z_{i,j} d\tau\bigg).
    \end{aligned}
    \end{equation}
 In what follows, we turn to bound those nonlinear terms on the right-hand side of \eqref{highexistence}. It follows from  the commutator estimates in Lemma \ref{lemmacommutator} and the embedding $\dot{\mathbb{B}}^{s,s-\frac{d}{p}+\frac{d}{2}}_{p,2}\hookrightarrow \dot{B}^{s}_{p,1}$ for $2\leq p\leq \min\{4,\frac{2p}{p-2}\}$ that
    \begin{equation}\label{firstnon}
    \begin{aligned}
    \int_{0}^{t} \sum_{j\geq J_0-1}2^{(\frac{d}{2}+1)j} Z_{1,j} d\tau &\lesssim \int_{0}^{t}\sum_{j\geq J_0-1}2^{(\frac{d}{2}+1)j}\sum_{i=1}^d \|[\dot{\Delta}_j, A^0(\mathcal{W}+\bar{W})^{-1}A^i(\mathcal{W}+\bar{W})]\partial_{x_i}\mathcal{W}\|_{L^2}\, d\tau\\
    &\lesssim  \Big\|\nabla\Big(A^0(\mathcal{W}+\bar{W})^{-1}A^i(\mathcal{W}+\bar{W})\Big)\Big\|_{L^{\infty}_{t}(\dot{\mathbb{B}}^{\frac{d}{p},\frac{d}{2}}_{p,2})} \|\mathcal{W}\|_{L^{1}_t(\dot{\mathbb{B}}^{\frac{d}{p}+1,\frac{d}{2}+1}_{p,2})}.
    \end{aligned}
    \end{equation}
To bound the composite function in \eqref{firstnon}, we employ Corollary \ref{corocom}
and \eqref{abound} to get
    \begin{equation*}
    \begin{aligned}
&\Big\|\nabla\Big(A^0(\mathcal{W}+\bar{W})^{-1}A^i(\mathcal{W}+\bar{W})\Big)\Big\|_{L^{\infty}_{t}(\dot{\mathbb{B}}^{\frac{d}{p},\frac{d}{2}}_{p,2})}
%&\quad\sim \Big\|A^0(\mathcal{W}+\bar{W})^{-1}A^i(\mathcal{W}+\bar{W})-A^0(\bar{W})^{-1}A^{i}(\bar{W})\Big\|_{L^{\infty}_{t}(\dot{\mathbb{B}}^{\frac{d}{p}+1,\frac{d}{2}+1}_{p,2})}\\
\lesssim (1+\|\mathcal{W}\|_{L^{\infty}_{t}(\dot{\mathbb{B}}^{\frac{d}{p},\frac{d}{2}+1}_{p,2})})\|\mathcal{W}\|_{L^{\infty}_{t}(\dot{\mathbb{B}}^{\frac{d}{p}+1,\frac{d}{2}+1}_{p,2})},
    \end{aligned}
    \end{equation*}
from which one can obtain
\begin{equation}\label{firstnon2}
\int_{0}^{t}\sum_{j\geq J_0-1}2^{(\frac{d}{2}+1)j} Z_{1,j} d\tau\lesssim (1+\|\mathcal{W}\|_{L^{\infty}_{t}(\dot{\mathbb{B}}^{\frac{d}{p},\frac{d}{2}+1}_{p,2})})\|\mathcal{W}\|_{L^{\infty}_{t}(\dot{\mathbb{B}}^{\frac{d}{p}+1,\frac{d}{2}+1}_{p,2})}\|\mathcal{W}\|_{L^{1}_t(\dot{\mathbb{B}}^{\frac{d}{p}+1,\frac{d}{2}+1}_{p,2})}.
 \end{equation}
Once again using Lemma
\ref{lemmacommutator} and
Corollary \ref{corocom} for $2\leq p\leq \min\{4,\frac{2p}{p-2}\}$, the second nonlinear
term  $Z_{2,j}$ can be  bounded by
\begin{equation}\label{secondnon}
   \begin{aligned}
    &\quad\int_{0}^{t} \sum_{j\geq J_0-1}2^{(\frac{d}{2}+1)j} Z_{2,j} d\tau\\
    &\lesssim \int_{0}^{t}\sum_{j\geq J_0-1}2^{(\frac{d}{2}+1)j}\sum_{i=1}^d \|[\dot{\Delta}_j, A^0(\mathcal{W}+\bar{W})^{-1}A^i(\mathcal{W}+\bar{W})]L(\bar{W})\mathcal{W}\|_{L^2}\, d\tau\\
    &\lesssim (1+\|\mathcal{W}\|_{L^{\infty}_{t}(\dot{\mathbb{B}}^{\frac{d}{p},\frac{d}{2}+1}_{p,2})})\|\mathcal{W}\|_{L^{\infty}_{t}(\dot{\mathbb{B}}^{\frac{d}{p},\frac{d}{2}+1}_{p,2})}\|\mathcal{W}\|_{L^{1}_t(\dot{\mathbb{B}}^{\frac{d}{p}+1,\frac{d}{2}+1}_{p,2})}.
    \end{aligned}
    \end{equation}
Note that $\mathbf{r}(\cdot)=(r^1,r^2,\dots,r^n)(\cdot)$ fulfills $r^i(\bar{W})=D_{\mathcal{W}}r^i(\bar{W})=0$ ($i=0,2,\dots,n$). Then,
by applying  Lemma \eqref{corocom} to
the composite functions $\mathbf{r}(\mathcal{W}+\bar{W})$ and $h_1(\mathcal{W})$, we arrive at
    \begin{equation}\label{thirdnon}
    \begin{aligned}
       &\quad\int_{0}^{t} \sum_{j\geq J_0-1}2^{(\frac{d}{2}+1)j} Z_{3,j} \,d\tau\lesssim  \|\mathcal{W}\|_{L^{\infty}_{t}(\dot{\mathbb{B}}^{\frac{d}{p},\frac{d}{2}+1}_{p,2})}  \|\mathcal{W}\|_{L^{1}_{t}(\dot{\mathbb{B}}^{\frac{d}{p}+1,\frac{d}{2}+1}_{p,2})}.
    \end{aligned}
    \end{equation}
The product laws and composite function estimates
in hybrid Besov
spaces (see Corollaries \ref{coroApp1} and \ref{corocom}) adapted to $Z_{4,j}$ and $Z_{5,j}$
 indicate that
    \begin{equation}\label{lasttwonon}
    \begin{aligned}
      &\quad\int_{0}^{t} \sum_{j\geq J_0-1}2^{(\frac{d}{2}+1)j}(Z_{4,j}+Z_{5,j}) \,d\tau\\
 &\quad+\Big\|A^0(\bar{W})\Big( A^0(\mathcal{W}+\bar{W})^{-1}-A^0(\bar{W})^{-1}\Big)L(\bar{W})\mathcal{W}\Big\|_{L^1_t(\dot{B}^{\frac{d}{2}+1}_{2,1})}^h\\
      &\lesssim \sum_{i=1}^d\Big\|A^0(\mathcal{W}+\bar{W})^{-1}A^{i}(\mathcal{W}+\bar{W})-A^0(\bar{W})^{-1}A^{i}(\bar{W})\Big\|_{L^{\infty}_{t}(\dot{\mathbb{B}}^{\frac{d}{p},\frac{d}{2}}_{p,2})}\| \mathcal{W}\|_{L^{1}_{t}(\dot{\mathbb{B}}^{\frac{d}{p}+1,\frac{d}{2}+1}_{p,2})}\\
      &\quad+\|A^0(\bar{W})\Big( A^0(\mathcal{W}+\bar{W})^{-1}-A^0(\bar{W})^{-1}\Big)\|_{L^{\infty}_{t}(\dot{\mathbb{B}}^{\frac{d}{p},\frac{d}{2}+1}_{p,2})}\| \mathcal{W}\|_{L^{1}_{t}(\dot{\mathbb{B}}^{\frac{d}{p}+1,\frac{d}{2}+1}_{p,2})}\\
      &\lesssim (1+\|\mathcal{W}\|_{L^{\infty}_{t}(\dot{\mathbb{B}}^{\frac{d}{p},\frac{d}{2}+1}_{p,2})}) \|\mathcal{W}\|_{L^{\infty}_{t}(\dot{\mathbb{B}}^{\frac{d}{p},\frac{d}{2}+1}_{p,2})} \| \mathcal{W}\|_{L^{1}_{t}(\dot{\mathbb{B}}^{\frac{d}{p}+1,\frac{d}{2}+1}_{p,2})}.
        \end{aligned}
    \end{equation}
Finally, adding up \eqref{firstnon}-\eqref{lasttwonon} and using $\dot{\mathbb{B}}^{\frac{d}{p},\frac{d}{2}+1}_{p,2}\hookrightarrow  \dot{\mathbb{B}}^{\frac{d}{p}+1,\frac{d}{2}+1}_{p,2}$ yields for all $t>0$,
\begin{equation}\label{Uhigh00}
   \begin{aligned}
&\quad\|\mathcal{W}\|_{\widetilde{L}^{\infty}_{t}(\dot{B}^{\frac{d}{2}+1}_{2,1})\cap L^1_{t}(\dot{B}^{\frac{d}{2}+1}_{2,1})}^{h}\\
&\lesssim e^{C\|\nabla\mathcal{W}\|_{L^1_t(L^{\infty})}}\bigg(\|\mathcal{W}_0\|_{\dot{B}^{\frac{d}{2}+1}_{2,1}}^{h}+ (1+\|\mathcal{W}\|_{L^{\infty}_{t}(\dot{\mathbb{B}}^{\frac{d}{p},\frac{d}{2}+1}_{p,2})}) \|\mathcal{W}\|_{L^{\infty}_{t}(\dot{\mathbb{B}}^{\frac{d}{p},\frac{d}{2}+1}_{p,2})} \| \mathcal{W}\|_{L^{1}_{t}(\dot{\mathbb{B}}^{\frac{d}{p}+1,\frac{d}{2}+1}_{p,2})}\bigg)
    \end{aligned}
    \end{equation}
This implies our desired \emph{a priori} estimates for $V$. Indeed, observe that $\mathcal{W}$ is the smooth function on $V$, so it follows from the $L^1$-time integrability control of Lipschitz bound that
\begin{align}\label{Lipschitz}
\|\nabla W\|_{L^1_t(L^{\infty})}\lesssim \|\nabla V\|_{L^1_t(L^{\infty})}\lesssim \|\nabla V\|_{L^1_t(\dot{B}^{\frac{d}{p}}_{p,1})}\lesssim \|V\|_{L^1_t(\dot{\mathbb{B}}^{\frac{d}{p}+1,\frac{d}{2}+1}_{p,2})}\lesssim D_p(t).
\end{align}
On the other hand, owing to $\mathcal{W}(0)=V(0)=0$, by applying Corollary \ref{corocom} to the composite functions $\mathcal{W}_0=\mathcal{W}(V_0)$ and $\mathcal{W}=\mathcal{W}(V)$, we deduce that
\begin{align}
  \|\mathcal{W}_0\|_{\dot{B}^{\frac{d}{2}+1}_{2,1}}^{h}&\lesssim (1+\|V_0\|_{\dot{\mathbb{B}}^{\frac{d}{p},\frac{d}{2}+1}_{p,2}}) \|V_0\|_{\dot{\mathbb{B}}^{\frac{d}{p},\frac{d}{2}+1}_{p,2}}\lesssim (1+E_{p,0})E_{p,0},\label{420}\\
  \|\mathcal{W}\|_{L^{\infty}_{t}(\dot{\mathbb{B}}^{\frac{d}{p},\frac{d}{2}+1}_{p,2})}&\lesssim (1+\|V\|_{L^{\infty}_{t}(\dot{\mathbb{B}}^{\frac{d}{p},\frac{d}{2}+1}_{p,2})})\|V\|_{L^{\infty}_{t}(\dot{\mathbb{B}}^{\frac{d}{p},\frac{d}{2}+1}_{p,2})},\label{421}\\
\| \mathcal{W}\|_{L^{1}_{t}(\dot{\mathbb{B}}^{\frac{d}{p}+1,\frac{d}{2}+1}_{p,2})}&\lesssim (1+\|V\|_{L^{\infty}_{t}(\dot{\mathbb{B}}^{\frac{d}{p},\frac{d}{2}+1}_{p,2})})\|V\|_{L^{1}_{t}(\dot{\mathbb{B}}^{\frac{d}{p}+1,\frac{d}{2}+1}_{p,2})}.\label{422}
\end{align}
 Regarding $V=V(\mathcal{W})$, one also has
\begin{equation}\label{423}
\begin{aligned}
\|V\|_{L^{\infty}_{t}(\dot{B}^{\frac{d}{2}+1}_{2,1})\cap L^1_t(\dot{B}^{\frac{d}{2}+1}_{2,1})}^h&\lesssim (1+\|\mathcal{W}\|_{L^{\infty}_{t}(\dot{\mathbb{B}}^{\frac{d}{p},\frac{d}{2}+1}_{p,2})})\|\mathcal{W}\|_{L^{\infty}_{t}(\dot{\mathbb{B}}^{\frac{d}{p},\frac{d}{2}+1}_{p,2})\cap L^1_{t}(\dot{\mathbb{B}}^{\frac{d}{p}+1,\frac{d}{2}+1}_{p,2})}\\
&\lesssim(1+\|V\|_{L^{\infty}_{t}(\dot{\mathbb{B}}^{\frac{d}{p},\frac{d}{2}+1}_{p,2})})^2 \|\mathcal{W}\|_{L^{\infty}_{t}(\dot{\mathbb{B}}^{\frac{d}{p},\frac{d}{2}+1}_{p,2})\cap L^1_{t}(\dot{\mathbb{B}}^{\frac{d}{p}+1,\frac{d}{2}+1}_{p,2})}.
\end{aligned}
\end{equation}
In fact, all the above norms in \eqref{421}-\eqref{423} can be replaced by Chemin-Lerner norms. Inserting those estimates \eqref{Lipschitz}-\eqref{423} into \eqref{Uhigh00}, we conclude that
\begin{equation}\label{Uhigh}
   \begin{aligned}
&\quad\|V\|_{L^{\infty}_{t}(\dot{B}^{\frac{d}{2}+1}_{2,1})\cap L^1_{t}(\dot{B}^{\frac{d}{2}+1}_{2,1})}^{h}\lesssim e^{CD_p(t)}(1+E_p(t))^2\bigg((1+E_{p,0})E_{p,0}+(1+E_p(t))^2 E_p(t)D_{p}(t) \bigg),
    \end{aligned}
    \end{equation}
where we used the bounds
$\|V\|_{L^{\infty}_t(\dot{\mathbb{B}}^{\frac{d}{p},\frac{d}{2}+1}_{p,2})}\lesssim E_{p}(t)$ and $\|V\|_{L^{1}_t(\dot{\mathbb{B}}^{\frac{d}{p}+1,\frac{d}{2}+1}_{p,2})}\lesssim D_{p}(t)$.

Combining \eqref{localglobal} and \eqref{Uhigh}, we conclude \eqref{prioribound} and finish the proof of Proposition \ref{propapriori}.
\end{proof}

\vspace{2mm}

\noindent
\emph{Proof of Theorem \ref{theorem0}.} For $n\geq1$, one can construct a sequence $\{V_0^n\}_{n\geq1}$ of smooth initial data satisfying
$$
\|\mathbf{P}V_0^n\|_{\dot{\mathbb{B}}^{\frac{d}{p}-1,\frac{d}{2}+1}_{p,2}}+\|\{\mathbf{I} - \mathbf{P}\}V_0^n\|_{\dot{\mathbb{B}}^{\frac{d}{p},\frac{d}{2}+1}_{p,2}}\lesssim E_{p,0}.
$$
According to the classical local well-posedness theory for hyperbolic symmetric systems (see {\emph{e.g.}}, \cite{bahouri1}), the system \eqref{m1} with the initial data $V_0^n$ for $n\geq1$ admits a unique classical solution $V^n$ in $\mathcal{C}([0,T^n);H^s(\mathbb{R}^d))$ ($s>d/2+1)$ on $[0,T^n)\times\mathbb{R}^d$ with the maximal time $T^n>0$.

Set $X^n(t)\coloneqq E_{p}(V^n) +D_p(V^n)$ with $E_{p}(V)$ and $D_p(V)$ defined in \eqref{Etr} and \eqref{Dt}, respectively. Define the time
\begin{align*}
T^n_*\coloneqq  \sup \Big\{t\in (0, T^n)\,:\, X^n(t)\leq 2M^*E_{p,0}\Big\}.
\end{align*}
Here the constant $M^*$ is given in Proposition \ref{propapriori}. It is clear that $0<T_n^*\leq T^n$.  It follows from Proposition \ref{propapriori} and $\|V^n\|_{L^{\infty}_t(L^{\infty})}\lesssim X^n(t)$ that, by choosing $E_{p,0}\leq \varepsilon_0$ with a sufficiently small $\varepsilon_0>0$, one has
%\eqref{localglobal} and \eqref{Uhigh}, we deduce that, under the assumption \eqref{abound},
%\begin{equation}\label{priorn}
%    \begin{aligned}
%    & X^n(t)\leq M^* e^{M^* X^n(t)}(1+X^n(t))^2\bigg((1+E_{p,0})E_{p,0}+(1+X^n(t))^2 X^n(t)^2 \bigg)\quad \text{for}\quad 0\leq t<T^n,
%\end{aligned}
%\end{equation}
%where $C_*>0$ is a universal constant, and we have set $X^n(t)=E_{p}(V^n) +D_p(V^n)$ with $E_{p}(V)$ and $D_p(V)$ defined in \eqref{Etr} and \eqref{Dt}, respectively.
%From the uniform estimate \eqref{priorn} and $\|V^n\|_{L^{\infty}_t(L^{\infty})}\lesssim X^n(t)$, it follows that, by choosing $E_{p,0}\leq \varepsilon_0$ with a sufficiently small $\varepsilon_0>0$, one has \eqref{abound} and
\begin{align*}
X^n(t)\leq \frac{3}{2}M^* E_{p,0}\quad \text{for}\quad 0\leq t<T^n_*.
\end{align*}
Therefore, a classical continuity argument leads to  $T^n_*=T^n$. Furthermore, the gain of the Lipschitz bound $\|V^n\|_{L^1_t(L^{\infty})}\lesssim X^n(t)\lesssim E_{p,0}$ for $0\leq t<T^n_*$, combined with the classical blow-up criterion (see \cite{bahouri1}[Page 188]), ensures that $T^n_*=T^n=\infty$, which implies that $V^n$ is indeed a global classical solution.

Finally, with the aid of the above uniform bounds, a standard process based on Ascoli's Theorem and Cantor’s diagonal extraction indicates that there exists a limit $V$ such that, up to a subsequence, as $n\rightarrow \infty$, $V^n$ converges to $V$ weakly-* in $L^{\infty}(\mathbb{R}_{+};\dot{B}^{\frac{d}{p}}_{p,1}\cap \dot{B}^{\frac{d}{p}+1}_{p,1})$ and strongly in $L^{\infty}(0,T;\dot{B}^{\frac{d}{p}}_{p,1}(K))$ with any compact subset $K$ of $\mathbb{R}^d$ and time $T>0$, which implies that the limit $V$ satisfies the original system \eqref{m1} subject to the initial data $V_0$. Due to Fatou's property, $V$ satisfies the uniform bound $E_p(t)+D_p(t)\lesssim E_{p,0}$ for all $t\geq0$. Following similar computations in the proof of Proposition \ref{propapriori}, we can obtain $\widetilde{E}_p(t)\lesssim \widetilde{E}_{p,0}$.
%\begin{align}
%\|\mathbf{P}V\|_{\widetilde{L}^{\infty}_t(\dot{\mathbb{B}}^{\frac{d}{p}-1,\frac{d}{2}+1}_{p,2})}+\|\{\mathbf{I}-\mathbf{P}\}V\|_{\widetilde{L}^{\infty}_t(\dot{\mathbb{B}}^{\frac{d}{p},\frac{d}{2}+1}_{p,2})}\lesssim \mathcal{X}_{p,0},\label{4.6}
%\end{align}
Then, the continuity properties $\mathbf{P}V\in \mathcal{C}(\mathbb{R}_+;\dot{\mathbb{B}}^{\frac{d}{p}-1,\frac{d}{2}+1}_{p,2})$ and $\{\mathbf{I} - \mathbf{P}\}V\in \mathcal{C}(\mathbb{R}_+;\dot{\mathbb{B}}^{\frac{d}{p},\frac{d}{2}+1}_{p,2})$ can be proved by a frequency cutoff argument (cf. \cite{bahouri1}[Page 196]). %is shown by the $\widetilde{L}^{\infty}(\mathbb{R}_+;\dot{\mathbb{B}}^{\frac{d}{p}-1,\frac{d}{2}+1}_{p,2})$-bound of $\mathbf{P}V$ and the frequency cutoff argument (cf. \cite{bahouri1}[Page 196]). Similarly, one has $\{\mathbf{I} - \mathbf{P}\}V\in \mathcal{C}(\mathbb{R}_+;\dot{\mathbb{B}}^{\frac{d}{p},\frac{d}{2}+1}_{p,2})$.
The uniqueness in $L^{\infty}(\mathbb{R}_+;\dot{\mathbb{B}}^{\frac{d}{p},\frac{d}{2}}_{p,2})$ can be established by the standard argument  (for example, see \cite{c3}). We feel free to omit the details for brevity.   \hfill $\Box$

\section{Sharp characterization of
large-time asymptotics}\label{section:decay}
In this section, the main purpose is to study the sharp characterization of large-time asymptotics  for the Cauchy problem \eqref{m1}-\eqref{m1d} based on Theorem \ref{theorem0}. In the previous pioneering works, in particular, by the Japanese School in the
'70s and early '80s, for instance, Kawashima, Matsumura and Nishida (see \cite{KMN}), the $L^1$-assumption on initial data was additionally assumed, which leads to the large-time behavior for compressible Navier-Stokes equations and the Boltzmann equation. Later, that condition has been greatly developed by Schonbek \cite{S-ARMA,S-CPDE,S-JAMS} and Wiegner \cite{wiegner1} for incompressible Navier-Stokes equations.

In\cite{UKS}, Umeda, Kawashima and Shizuta  first employed the pointwise energy estimates in Fourier
spaces to show the time-decay property for a general class of symmetric hyperbolic-parabolic composition systems (including the symmetric hyperbolic system), if initial data belong to $L^2\cap L^{q}(1\leq q<2)$. They found that the overall decay rate in these norms was
the heat kernel rate. The main ingredients of analysis lie in the estimates of low-frequency and high-frequency integrals in Fourier spaces. Subsequently, Kawashima
\cite{Kawashimadoctoral} in his doctoral thesis investigated the corresponding nonlinear symmetric systems and obtained the upper bounds of decay estimates of classical small-amplitude solutions in $H^{l}(\mathbb{R}^{n})\cap L^p(\mathbb{R}^{n}) (l>2+d/2, 1\leq p<2)$.
For one-dimensional systems of dissipative balance laws endowed with the convex entropy,  Ruggeri and Serre \cite{RS1} showed that the constant equilibrium state $\bar{U}$ is time asymptotically $L^2$-stable under the zero-mass initial disturbance,  by constructing an appropriate Lyapunov functional. Furthermore, it was proved by Bianchini, Hanouzet and Natalini \cite{BHN} that classical solutions approached the constant equilibrium state in the $L^{p}$-norm at the rate $O(t^{-\frac{d}{2}(1-\frac{1}{p})})$, as $t\rightarrow\infty$, for $p\in[\min\{n,2\},\infty]$. The second author and Kawashima \cite{XK2,XK3} proposed the additional integrability in terms of negative Besov spaces, namely the sets $\dot{B}^{s}_{2,\infty}$. Owing to the critical embedding
$$
\|z\|_{\dot{B}^{\frac{d}{2}-\frac{d}{q}}_{2,\infty}}\lesssim \|z\|_{L^q}, \quad 1\leq q \leq 2,
$$
it makes assumptions in spaces $\dot{B}^{s}_{2,\infty}$
with a negative $s$ that is weaker than the pioneering work \cite{KMN}(this corresponds to the endpoint value $s=-d/2$) or (see \cite{UKS}) in $L^q$ for some $q\in [1,2)$ (taking $s=d/2-d/q$).
The readers are referred to the recent works \cite{c2,c3}, where
Crin-Barat and Danchin
employed the energy method of Lyapunov type
on partially dissipative and deduced the upper bounds of decay estimates. Their approach still requires the $\dot{B}^{\sigma_1}_{2,\infty}(\sigma_1\in (-d/2,d/2-1])$ condition on the low-frequency
part of initial data but not necessarily small.
%which has been proved so far ensure that $V(t)\rightarrow 0$ in the tempered distributional space as time $t$ goes to $\infty$.

To the best of our knowledge, the lower bounds of decay rates for the partially dissipative hyperbolic systems \eqref{m1}-\eqref{m1d} are left open since from the seminal work \cite{UKS},
which has been a challenging problem since
Green's function to \eqref{hp} is not explicit in the multi-dimensions. In this paper, we aim at solving that issue and provide a necessary and sufficient regularity condition for time-decay estimates of solutions to \eqref{m1}-\eqref{m1d}
in the new critical functional setting. More precisely, based on Theorem \ref{theorem0}, we
propose the following low-frequency assumption,
according to different regularity indices of
the conservative component
$\mathbf{P}V_0$ and the dissipative component
$\{\mathbf{I}-\mathbf{P}\}V_0$:
\begin{align*}
(\mathbf{P}V_0)^{\ell}\in \dot{B}^{\sigma_1}_{p,\infty}\quad\text{and}\quad  (\{\mathbf{I} - \mathbf{P}\}V_0)^{\ell}\in \dot{B}^{\sigma_1+1}_{p,\infty}\quad \text{for}\quad -\frac{d}{p}\leq \sigma_1<\frac{d}{p}-1.
\end{align*}
The $L^p$-type low-frequency regularity condition was first introduced to investigate the large-time behavior of global solutions so far, revealing a new parabolic profile that enables us to achieve the upper bounds for decay rates. Furthermore, we prove that the condition $(\mathbf{P}V_0)^{\ell}\in \dot{B}^{\sigma_1}_{p,\infty}$ gives a sharp (not only sufficient but also necessary) condition on the upper bounds of decay rates.

\begin{theorem}\label{thm1}{\rm(}Upper bound{\rm)}.
Let the assumptions in Theorem \ref{theorem0}, $d\geq 2$ and $p\neq 4$ when $d=2$ be in force. %such that $-\frac{d}{p}<\frac{d}{p}-1$.
Let $V$ be the global solution to the Cauchy problem \eqref{m1}-\eqref{m1d}. Additionally, assume that $(\{\mathbf{I}-\mathbf{P}\}V_{0})^{\ell}\in\dot{B}^{\sigma_1+1}_{p,\infty}$ with $-\frac{d}{p}\leq \sigma_{1}<\frac{d}{p}-1$. Then, there exists a universal constant $C>0$ such that the solution $V=\mathbf{P}V+\{\mathbf{I} - \mathbf{P}\}V$ has the decay estimates
\begin{equation}\label{decayupper}
\left\{
\begin{aligned}
&\|\mathbf{P}V(t)\|_{\dot{\mathbb{B}}^{\sigma,\frac{d}{2}+1}_{p,2}}\leq C (1+t)^{-\frac{1}{2}(\sigma-\sigma_{1})},\quad\quad\quad\quad\quad\quad ~\sigma_1<\sigma\leq \frac{d}{p}+1, \\
&\|\{\mathbf{I}-\mathbf{P}\}V(t)\|_{\dot{\mathbb{B}}^{\sigma',\frac{d}{2}+1}_{p,2}}\leq C (1+t)^{-\frac{1}{2}(\sigma'-\sigma_{1})-\frac{1}{2}},\quad\quad \sigma_1+1<\sigma'\leq \frac{d}{p}
\end{aligned}
\right.
\end{equation}
for all $t>0$ if and only if $(\mathbf{P}V_{0})^{\ell}\in\dot{B}^{\sigma_{1}}_{p,\infty}$.

%Here we write $\langle t\rangle\coloneqq    \sqrt{1+t^2}$ and define the hybrid norm
%\begin{equation}
%\begin{aligned}
%&\|f\|_{\dot{\mathbb{B}}^{\sigma,s}_{p,2}}\coloneqq    \|f^{\ell}\|_{\dot{B}^{\sigma}_{p,1}}+\|f\|_{\dot{B}^{s}_{2,1}}^{h}.
%\end{aligned}
%\end{equation}
\end{theorem}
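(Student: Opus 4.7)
\medskip

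\noindent\textbf{Proof proposal.} The plan is to split the argument into the sufficiency and necessity parts. For sufficiency, I would decompose $V=V_L+\delta V$, where $V_L$ solves the linear problem \eqref{hp} with initial datum $V_0$ and $\delta V$ collects the nonlinear corrections. Since the global bounds provided by Theorem \ref{theorem0} ensure $V_0^h\in \dot{B}^{d/2+1}_{2,1}$ and yield uniform control of $V$ in the hybrid space $\dot{\mathbb{B}}^{d/p,d/2+1}_{p,2}$, Proposition \ref{proplinear} applied to $V_L$ directly provides
\begin{equation*}
\|\mathbf{P}V_L(t)\|_{\dot{\mathbb{B}}^{\sigma,d/2+1}_{p,2}}\lesssim \langle t\rangle^{-\frac{1}{2}(\sigma-\sigma_1)},\qquad \|\{\mathbf{I}-\mathbf{P}\}V_L(t)\|_{\dot{\mathbb{B}}^{\sigma',d/2+1}_{p,2}}\lesssim \langle t\rangle^{-\frac{1}{2}(\sigma'-\sigma_1)-\frac{1}{2}},
\end{equation*}
provided $(\mathbf{P}V_0)^\ell\in\dot{B}^{\sigma_1}_{p,\infty}$ and $(\{\mathbf{I}-\mathbf{P}\}V_0)^\ell\in\dot{B}^{\sigma_1+1}_{p,\infty}$.

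Next I would turn to the error $\delta V$, which solves the same linear system as $V_L$ but with zero initial data and a forcing of the form $R^0+\sum_i\partial_{x_i}R^i$ built from the quadratic nonlinearities in \eqref{R0Ri}. Mimicking the Duhamel argument used in Step 1 of the global existence proof (relying on the frequency-localized semigroup bounds of Proposition \ref{LemmaspectrallocalHp} in low frequencies and the exponential high-frequency decay from Proposition \ref{propSK}), the key is to show that $\delta V$ decays \emph{strictly faster} than $V_L$, in the spirit of Wiegner's theorem alluded to as Proposition \ref{properror1}. I would bootstrap as follows: using the global smallness of $\|V\|_{\dot{\mathbb{B}}^{d/p,d/2+1}_{p,2}}$ together with the product/composition laws collected in Appendix \ref{appendixA} (Corollaries \ref{coroApp1}--\ref{corocom}) and the vanishing property $\mathbf{P}R^0=0$, the source terms satisfy nonlinear bounds of the form $\|R^i\|\lesssim \|V\|\,\|V\|$ at an integrability level that, once inserted in the Duhamel integral against the parabolic low-frequency kernel, gains one additional half-power of $\langle t\rangle^{-1/2}$ compared with $V_L$. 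Iterating the bootstrap a finite number of times, exactly as in \eqref{con:asy} in the linear analysis, I expect
\begin{equation*}
\|\mathbf{P}\delta V(t)\|_{\dot{\mathbb{B}}^{\sigma,d/2+1}_{p,2}}\lesssim \langle t\rangle^{-\frac{1}{2}(\sigma-\sigma_1)-\frac{1}{2}},\qquad \|\{\mathbf{I}-\mathbf{P}\}\delta V(t)\|_{\dot{\mathbb{B}}^{\sigma',d/2+1}_{p,2}}\lesssim \langle t\rangle^{-\frac{1}{2}(\sigma'-\sigma_1)-1}.
\end{equation*}
Adding these estimates to those of $V_L$ yields \eqref{decayupper}.

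For the necessity, I would reverse the implication by using that $V_L=V-\delta V$ inherits the assumed decay of $V$, since the error $\delta V$ is always one-half rate faster. In particular, $\|(\mathbf{P}V_L)^\ell(t)\|_{\dot{B}^\sigma_{p,1}}\lesssim \langle t\rangle^{-\frac{1}{2}(\sigma-\sigma_1)}$ for $\sigma>\sigma_1$, so that the ``only if'' direction of Proposition \ref{proplinear} applies to the linear evolution of $V_0$ and forces $(\mathbf{P}V_0)^\ell\in\dot{B}^{\sigma_1}_{p,\infty}$. This closes the equivalence once the restrictions $\sigma\le d/p+1$ and $\sigma'\le d/p$ are tracked so that the nonlinear source terms remain controlled in the functional setting of Theorem \ref{theorem0}.

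The main obstacle I anticipate is the nonlinear error estimate for $\delta V$: one needs a uniform gain of $\langle t\rangle^{-1/2}$ in the hybrid $L^p$--$L^2$ framework, which requires a delicate interpolation between the conservative and dissipative components (reflecting that $\{\mathbf{I}-\mathbf{P}\}V$ decays faster than $\mathbf{P}V$ by precisely one-half rate), together with nonstandard product laws across the low/high-frequency threshold. The constraints on $p$ (in particular the exclusion of $p=4$ when $d=2$) and on the range $-d/p\le\sigma_1<d/p-1$ should enter precisely at the point where these product estimates and the embeddings $\dot{\mathbb{B}}^{\sigma,s}_{p,2}\hookrightarrow \dot{B}^\sigma_{p,1}$ are invoked; handling these borderline indices will be the most delicate bookkeeping in the proof.
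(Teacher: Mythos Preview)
Your strategy matches the paper's: decompose $V=V_L+\delta V$, invoke Proposition~\ref{proplinear} for $V_L$, prove a Wiegner-type faster decay for $\delta V$ (the paper's Proposition~\ref{properror1}) to obtain sufficiency, and run the inverse Wiegner argument (the paper's Proposition~\ref{properrorin}) for necessity. Two points where the paper's execution differs from what you sketch are worth flagging.

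First, the uniform half-power gain you anticipate for $\delta V$ is not always available. The paper introduces a parameter $\alpha^*\in(0,1]$ (equal to $1$ only when $-d/p\le\sigma_1<d/p-2$, and degrading to $d/p-1-\sigma_1-\varepsilon$ otherwise) and shows the gain is $\sigma_1^*/2$ with $\sigma_1^*\in\{\alpha^*,\alpha^*/2\}$ depending on the range of $\sigma$; see \eqref{5.4}--\eqref{sigma222}. This forces a case-by-case analysis (Cases~1--3 in Lemma~\ref{lemmalow}) and an \emph{a posteriori} extension step from $\sigma\le d/p+1-\alpha^*$ to the full range $\sigma\le d/p+1$ once the decay of $V$ itself is known (the argument around \eqref{dxq3}--\eqref{dxq5}). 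Second, the error estimate is not closed by finite bootstrap but via a single time-weighted functional $\mathcal{D}_p(t)$; to absorb the $\mathcal{D}_p(t)$-term on the right without imposing smallness of $\|(\mathbf{P}V_0)^\ell\|_{\dot B^{\sigma_1}_{p,\infty}}$, the paper uses a nonlinear decomposition $R^i=\mathcal{I}_L^i+\delta\mathcal{I}_1^i+\delta\mathcal{I}_2^i$ together with the \emph{regularity evolution} estimate of Proposition~\ref{propevolution}, which propagates the low-frequency $\dot B^{\sigma_1}_{p,\infty}$ bound of $(\mathbf{P}V)^\ell$ and then interpolates against the small $E_{p,0}$ to make the coefficient of $\mathcal{D}_p(t)$ small. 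For the necessity, note that the faster decay of $\delta V$ cannot be recycled from the sufficiency step (which assumed $(\mathbf{P}V_0)^\ell\in\dot B^{\sigma_1}_{p,\infty}$); it must be re-proved from the assumed decay \eqref{decayupper} of $V$ alone, which is precisely Proposition~\ref{properrorin}.
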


As mentioned in Subsection \ref{section:linear}, the parabolic system \eqref{diffusion} supplemented with the initial data $\Psi_0$ given by \eqref{W0} is a
good approximation of the conservative part of the solution to partially dissipative hyperbolic systems. As an intermediate step of
 lower bounds of decay rates of solutions to \eqref{m1}-\eqref{m1d}, we study the asymptotic stability of the conservative component $\mathbf{P}V $ and dissipative component $\{\mathbf{I} - \mathbf{P}\}V$,
%Our asymptotic stability result emphasizes the role of $V_{0,2}$, different from that of Bianchini et al. \cite{BHN} where the profile $e^{-t\mathcal{A}}V_{1,0}$ was studied.
which exhibits faster decay rates of $V-V^*$ compared to those of $V$ in \eqref{decayupper}.
\begin{theorem}\label{thm2}
{\rm(}Asymptotic stability{\rm)}. Let the assumptions in Theorem \ref{theorem0}, $d\geq 2$ and $p\neq 4$ when $d=2$ be in force. %, so that $-\frac{d}{p} < \frac{d}{p} - 1$.
 Let $V$ be the global solution to the Cauchy problem \eqref{m1}–\eqref{m1d} with initial data $V_0$. Suppose, in addition, that the low-frequency part satisfies $(\{\mathbf{I} - \mathbf{P}\}V_{0})^{\ell}\in\dot{B}^{\sigma_{1}+1}_{p,\infty}$ and $(\mathbf{P}V_{0})^{\ell}\in\dot{B}^{\sigma_{1}}_{p,\infty}$ with $-\frac{d}{p}\leq \sigma_{1}<\frac{d}{p}-1$. Then there exists a time $t_0$ such that for all $t>t_0$, we have
\begin{equation}\label{asy:stab}
\begin{aligned}
&\|\mathbf{P}(V-V^{*})(t)\|_{\dot{\mathbb{B}}^{\sigma,\frac{d}{2}+1}_{p,2}}\leq
\begin{cases}
(1+t)^{-\frac{1}{2}(\sigma-\sigma_1+\alpha^*)},
& \mbox{\quad\quad\quad~~$\sigma_{1}<\sigma\leq \frac{d}{p}-1$},\\
(1+t)^{-\frac{1}{2}(\sigma-\sigma_1+\frac{1}{2}\alpha^*)},
& \mbox{\quad\quad\quad ~~$\frac{d}{p}-1<\sigma\leq \frac{d}{p}+1-\alpha^*$}
\end{cases}
\end{aligned}
\end{equation}
and
\begin{equation}\label{asy:stab1}
\begin{aligned}
&\|\{\mathbf{I} - \mathbf{P}\}(V-V^{*})(t)\|_{\dot{\mathbb{B}}^{\sigma',\frac{d}{2}+1}_{p,2}}\leq
\begin{cases}
\langle t\rangle^{-\frac{1}{2}(\sigma'-\sigma_1+1+\alpha^*)},
& \mbox{~ $\sigma_{1}+1<\sigma'\leq \frac{d}{p}-2$},\\
\langle t\rangle^{-\frac{1}{2}(\sigma'-\sigma_1+1+\frac{1}{2}\alpha^*)},
& \mbox{~ $\frac{d}{p}-2<\sigma'\leq \frac{d}{p}-\alpha^*$}.
\end{cases}
\end{aligned}
\end{equation}
\iffalse
\begin{equation}\label{asy:stab}
\left\{
\begin{aligned}
&\|\mathbf{P}V(t)\|_{\dot{\mathbb{B}}^{\sigma,\frac{d}{2}+1}_{p,2}}\leq C (1+t)^{-\frac{1}{2}(\sigma-\sigma_{1})},\quad\quad\quad\quad\quad\quad\quad\quad \sigma_1<\sigma\leq \frac{d}{p}+1,\\
&\|\mathbf{P}(V-V^{*})(t)\|_{\dot{\mathbb{B}}^{\sigma',\frac{d}{2}+1}_{p,2}}\leq
\begin{cases}
\langle t\rangle^{-\frac{1}{2}(\sigma'-\sigma_1+\alpha^*)},
& \mbox{\quad\quad\quad~~$\sigma_{1}<\sigma'\leq \frac{d}{p}-1$},\\
\langle t\rangle^{-\frac{1}{2}(\sigma'-\sigma_1+\frac{1}{2}\alpha^*)},
& \mbox{\quad\quad\quad ~~$\frac{d}{p}-1<\sigma'\leq \frac{d}{p}+1-\alpha^*$}
\end{cases}
\end{aligned}
\right.
\end{equation}
or
\begin{equation}\label{asy:stab1}
\left\{
\begin{aligned}
&\|\{\mathbf{I} - \mathbf{P}\}V(t)\|_{\dot{\mathbb{B}}^{\sigma,\frac{d}{2}+1}_{p,2}}\leq C (1+t)^{-\frac{1}{2}(\sigma-\sigma_{1}+1)},\quad~\quad\quad\quad\quad \sigma_1<\sigma\leq \frac{d}{p},\\
&\|\{\mathbf{I} - \mathbf{P}\}(V-V^{*})(t)\|_{\dot{\mathbb{B}}^{\sigma',\frac{d}{2}+1}_{p,2}}\leq
\begin{cases}
\langle t\rangle^{-\frac{1}{2}(\sigma'-\sigma_1+1+\alpha^*)},
& \mbox{~ $\sigma_{1}+1<\sigma'\leq \frac{d}{p}-2$},\\
\langle t\rangle^{-\frac{1}{2}(\sigma'-\sigma_1+1+\frac{1}{2}\alpha^*)},
& \mbox{~ $\frac{d}{p}-2<\sigma'\leq \frac{d}{p}-\alpha^*$},
\end{cases}
\end{aligned}
\right.
\end{equation}
\fi
Here, the profile $V^{*}$ is defined in \eqref{Vp}, and  the constant $\alpha^{*}\in(0,1]$ is given by
\begin{equation}\label{sigma2}
\begin{aligned}
&\alpha^{*}=
\begin{cases}
1,
& \mbox{\quad if ~~$-\frac{d}{p}\leq \sigma_{1}<\frac{d}{p}-2$},\\
\frac{d}{p}-1-\sigma_{1}-\varepsilon,
& \mbox{\quad if  ~~$\frac{d}{p}-2\leq \sigma_{1}<\frac{d}{p}-1$}
\end{cases}
\end{aligned}
\end{equation}
for any $0<\varepsilon<\frac{d}{p}-1-\sigma_1$.
\end{theorem}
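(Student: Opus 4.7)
The overall plan is to split $V - V^{*} = \delta V + (V_L - V^{*})$, where $V_L$ denotes the solution to the linear Cauchy problem \eqref{hplinear} with initial data $V_0$ and $\delta V \coloneqq V - V_L$. The linear contribution $V_L - V^{*}$ is controlled directly by the asymptotic statement \eqref{asy:linear} in Proposition \ref{proplinear}, which delivers exactly the one-half improvement $\langle t\rangle^{-\frac{1}{2}(\sigma-\sigma_{1})-\frac{1}{2}}$ (resp.\ $\langle t\rangle^{-\frac{1}{2}(\sigma'-\sigma_{1})-1}$) predicted by \eqref{asy:stab}--\eqref{asy:stab1} in the case $\alpha^{*}=1$. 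Hence the proof reduces to showing that the nonlinear error $\delta V$ inherits this (or the $\varepsilon$-truncated) enhanced decay.

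Subtracting the linear system from \eqref{m1} gives that $\delta V$ solves \eqref{hplinear} with zero initial data and source $R^{0}+\sum_{i=1}^{d}\partial_{x_{i}}R^{i}$, quadratic in $V$. I would then apply Duhamel's formula together with the frequency-localized semigroup estimates of Proposition \ref{LemmaspectrallocalHp}, in the spirit of \eqref{f-l}--\eqref{PVj} from the proof of Proposition \ref{propapriori}. The algebraic facts that $\mathbf{P}R^{0}=0$ and that each $R^{i}$ ($i\geq 1$) carries a spatial derivative produce one extra power of the low-frequency $2^{j}$ under the action of $\mathbf{P}\mathcal{G}(t)$, which is exactly the mechanism behind the additional $\langle t\rangle^{-1/2}$-factor in \eqref{asy:stab}. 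High-frequency contributions decay exponentially by the Lyapunov argument of Section \ref{global existence} and are therefore harmless for $t\geq t_{0}$. The nonlinear sources $R^{i}$ themselves are estimated in suitable low-frequency hybrid norms by means of the product and composition laws of Appendix \ref{appendixA}, combined with the upper bounds of Theorem \ref{thm1}; crucially, the enhanced decay of $\{\mathbf{I}-\mathbf{P}\}V$ grants an extra $\langle\tau\rangle^{-1/2}$ in every quadratic term involving a dissipative factor.

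The main obstacle lies in the borderline regime $\frac{d}{p}-2\leq\sigma_{1}<\frac{d}{p}-1$, where the time convolution $\int_{0}^{t}\langle t-\tau\rangle^{-a}\langle\tau\rangle^{-b}\,d\tau$ is only logarithmically integrable -- the classical Wiegner obstruction -- and forces the $\varepsilon$-truncated value $\alpha^{*}=d/p-1-\sigma_{1}-\varepsilon$ recorded in \eqref{sigma2}. A further subtlety is the halving of the gain when $\sigma$ (respectively $\sigma'$) crosses the Besov threshold $d/p-1$ (respectively $d/p-2$): above this threshold one is forced to interpolate with the high-frequency $L^{2}$-based part of the hybrid norm $\dot{\mathbb{B}}^{\sigma,d/2+1}_{p,2}$, which yields only the weaker improvement $\frac{\alpha^{*}}{2}$ inside the exponent. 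I would close the argument by a short bootstrap: first derive a provisional enhancement of $\delta V$, insert it back into the quadratic nonlinearities $R^{i}$ to sharpen their time decay, and iterate once to obtain \eqref{asy:stab}--\eqref{asy:stab1} for all $t\geq t_{0}$ with some sufficiently large $t_0>0$.
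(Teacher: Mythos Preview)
Your proposal is correct and follows the paper's strategy closely: decompose $V-V^{*}=(V_L-V^{*})+\delta V$, invoke the asymptotic statement \eqref{asy:linear} of Proposition \ref{proplinear} for the linear piece, and obtain enhanced decay for $\delta V$ via Duhamel together with the frequency-localized bounds of Proposition \ref{LemmaspectrallocalHp} (exploiting $\mathbf{P}R^{0}=0$ and the derivative structure of $R^{i}$). The paper packages the $\delta V$ estimate as a single time-weighted functional $\mathcal{D}_{p}(t)$ (Proposition \ref{properror1}) and closes not by bootstrap iteration but by a careful decomposition $R^{i}=\mathcal{I}^{i}_{L}+\delta\mathcal{I}^{i}_{1}+\delta\mathcal{I}^{i}_{2}$ combined with interpolation against the small energy $E_{p,0}$ and the regularity-evolution bound of Proposition \ref{propevolution}; your suggestion to feed in the already-established upper bounds of Theorem \ref{thm1} is closer to the paper's alternative route in Proposition \ref{properrorin} and is equally valid once Theorem \ref{thm1} is available.

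One point needs correction: the halving of the gain to $\tfrac{1}{2}\alpha^{*}$ above the threshold $\sigma=d/p-1$ is \emph{not} caused by interpolation with the $L^{2}$ high-frequency part of the hybrid norm. It is a pure time-convolution constraint. Placing the quadratic source in $\dot{B}^{\sigma_{1}+1-\sigma_{1}^{*}}_{p,\infty}$, the product laws give it decay $\langle\tau\rangle^{-\frac{1}{2}(d/p-\sigma_{1}+1-\sigma_{1}^{*})}$; for the inequality \eqref{ineq} to return the rate $\langle t\rangle^{-\frac{1}{2}(\sigma-\sigma_{1}+\sigma_{1}^{*})}$ one needs both $\frac{1}{2}(d/p-\sigma_{1}+1-\sigma_{1}^{*})>1$ and $\sigma-\sigma_{1}+\sigma_{1}^{*}\leq d/p-\sigma_{1}+1-\sigma_{1}^{*}$, i.e.\ $2\sigma_{1}^{*}\leq d/p+1-\sigma$. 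At the endpoint $\sigma=d/p+1-\alpha^{*}$ this forces $\sigma_{1}^{*}\leq\tfrac{1}{2}\alpha^{*}$, which is exactly the halving recorded in \eqref{sigma222} (see Case 3 of Lemma \ref{lemmalow}).
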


By imposing the sharp assumption $\Psi_0^{\ell}\in\dot{\mathcal{B}}^{\sigma_{1}}_{p,\infty}$ (see \eqref{Bsubset}) rather than
$(\mathbf{P}V_0)^{\ell}\in\dot{\mathcal{B}}^{\sigma_{1}}_{p,\infty}$, we can establish the upper and lower bounds for decay rates.

%Moreover, the asymptotic equivalence established in Theorem \ref{thm3} also allows us to establish a sharp description of upper and lower bounds for time-decay rates. The corresponding lower bounds for time-decay estimates imply  the optimality of the decay rates in \eqref{upperlinear}. To capture a sharp condition of initial data, inspired by the theorem for decay characters developed by   Bjorland-Schonbek \cite{bjorland1} and  Brandolese \cite{brandolese1} based on $L^2$-type norms,

%We establish the optimal upper and lower bounds for solutions to \eqref{m1}-\eqref{m1d} as follows. It should be noted that the assumption $\Psi_0^{\ell}\in\dot{\mathcal{B}}^{\sigma_{1}}_{p,\infty}$ cannot be inferred from $(\mathbf{P}V_0)^{\ell}\in\dot{\mathcal{B}}^{\sigma_{1}}_{p,\infty}$.

\begin{theorem}\label{thm3} {\rm(}Upper and lower bounds{\rm)}. Let the assumptions in Theorem \ref{theorem0}, $d\geq 2$ and $p\neq 4$ when $d=2$ be in force. Set
$$
\Psi_0\coloneqq  V_{1,0}-\sum_{i=1}^{d}A^{i}_{1,2}D^{-1} \partial_{x_{i}} V_{2,0}.
$$
%In addition, assume $d\geq 2$ and $p\neq 4$ when $d=2$.
Let $V$ be the global solution to the Cauchy problem \eqref{m1}-\eqref{m1d}. Additionally, assume $(\{\mathbf{I}-\mathbf{P}\}V_{0})^{\ell}\in\dot{B}^{\sigma_{1}+1}_{p,\infty}$ with $-\frac{d}{p}\leq \sigma_{1}<\frac{d}{p}-1$. Then there exist two universal constants $c, C>0$ and a time $t_1>0$ such that the solution $V=\mathbf{P}V+\{\mathbf{I} - \mathbf{P}\}V$ satisfies %\eqref{decayupper} and
\begin{equation}\label{decaytwoside}
\left\{
\begin{aligned}
c(1+t)^{-\frac{1}{2}(\sigma-\sigma_{1})}
&\le \|\mathbf{P}V(t)\|_{\dot{\mathbb{B}}^{\sigma,\frac{d}{2}+1}_{p,2}}
\le C (1+t)^{-\frac{1}{2}(\sigma-\sigma_{1})},
\quad\quad\,\,\,
\phantom{\sigma_1+1{}}\sigma_1<\sigma\le \frac{d}{p}+1, \\[0.4em]
c(1+t)^{-\frac{1}{2}(\sigma'-\sigma_{1})-\frac{1}{2}}
&\le \|\{\mathbf{I}-\mathbf{P}\}V(t)\|_{\dot{\mathbb{B}}^{\sigma',\frac{d}{2}+1}_{p,2}}
\le C(1+t)^{-\frac{1}{2}(\sigma'-\sigma_{1})-\frac{1}{2}},
\quad
\sigma_1+1<\sigma'\le \frac{d}{p}
\end{aligned}
\right.
\end{equation}
for all $t>t_1$ if and only if $\Psi_0^{\ell}\in\dot{\mathcal{B}}^{\sigma_{1}}_{p,\infty}$.  %Here, $\Psi_0$ is defined in \eqref{W0}.

\end{theorem}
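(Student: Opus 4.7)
The plan is to follow the same blueprint as the linear case of Proposition \ref{proplinear}, using Theorem \ref{thm2} as the crucial bridge: since the nonlinear error $V-V^{\ast}$ decays strictly faster than the parabolic profile $V^{\ast}$ by the gain $\alpha^{\ast}>0$ of \eqref{sigma2}, the sharp two-sided decay of $V$ is inherited from that of $V^{\ast}$, and the latter is completely determined by the low frequencies of $\Psi_0$ through Proposition \ref{propgeneral2}. All one has to do is combine these ingredients via two triangle inequalities, one for each direction of the equivalence.

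For the sufficiency direction, assume $\Psi_0^{\ell}\in\dot{\mathcal{B}}^{\sigma_1}_{p,\infty}$. The embedding $\dot{\mathcal{B}}^{\sigma_1}_{p,\infty}\hookrightarrow\dot{B}^{\sigma_1}_{p,\infty}$, together with the identity
\[
(\mathbf{P}V_0)^{\ell}=\Psi_0^{\ell}+\sum_{i=1}^{d}A^i_{1,2}D^{-1}\partial_{x_i}(\{\mathbf{I}-\mathbf{P}\}V_0)^{\ell}
\]
and Bernstein's inequality, gives $(\mathbf{P}V_0)^{\ell}\in\dot{B}^{\sigma_1}_{p,\infty}$, so the upper bounds in \eqref{decaytwoside} follow directly from Theorem \ref{thm1}. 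For the lower bound on $\mathbf{P}V$: Proposition \ref{lemmaSKpositive} ensures that $-\mathcal{A}$ is strongly elliptic, which places its symbol in the framework \eqref{symbol} with $\alpha=1$, and Proposition \ref{propgeneral2} yields
\[
\|(\mathbf{P}V^{\ast})^{\ell}(t)\|_{\dot B^{\sigma}_{p,1}}=\|e^{t\mathcal{A}}\Psi_0^{\ell}\|_{\dot B^{\sigma}_{p,1}}\gtrsim \langle t\rangle^{-\frac{1}{2}(\sigma-\sigma_1)},\qquad \sigma>\sigma_1.
\]
Combining with the faster-rate estimate \eqref{asy:stab} through
\[
\|\mathbf{P}V(t)\|_{\dot{\mathbb{B}}^{\sigma,\frac{d}{2}+1}_{p,2}}\geq \|(\mathbf{P}V^{\ast})^{\ell}(t)\|_{\dot B^{\sigma}_{p,1}}-\|\mathbf{P}(V-V^{\ast})(t)\|_{\dot{\mathbb{B}}^{\sigma,\frac{d}{2}+1}_{p,2}}
\]
produces the lower bound in \eqref{decaytwoside} for $t$ larger than some $t_1>0$. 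The lower bound on $\{\mathbf{I}-\mathbf{P}\}V$ is obtained by an analogous triangle argument combined with the [SK] consequence $|\sum_i A^i_{2,1}\omega_i|\geq c_{2,1}>0$ already established in the proof of Proposition \ref{proplinear}, which upgrades $\|(\{\mathbf{I}-\mathbf{P}\}V^{\ast})^{\ell}\|_{\dot B^{\sigma'}_{p,1}}\gtrsim\|e^{t\mathcal{A}}\Psi_0^{\ell}\|_{\dot B^{\sigma'+1}_{p,1}}\gtrsim\langle t\rangle^{-\frac{1}{2}(\sigma'-\sigma_1+1)}$, against the faster error bound \eqref{asy:stab1}.

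For the necessity direction, suppose \eqref{decaytwoside}. The upper half, inserted into the equivalence of Theorem \ref{thm1}, already yields $(\mathbf{P}V_0)^{\ell}\in\dot B^{\sigma_1}_{p,\infty}$ and thus $\Psi_0^{\ell}\in\dot B^{\sigma_1}_{p,\infty}$. To refine this to membership in the smaller class $\dot{\mathcal B}^{\sigma_1}_{p,\infty}$, run the previous triangle in reverse:
\[
\|e^{t\mathcal{A}}\Psi_0^{\ell}\|_{\dot B^{\sigma}_{p,1}}\geq \|\mathbf{P}V(t)\|_{\dot{\mathbb{B}}^{\sigma,\frac{d}{2}+1}_{p,2}}-\|\mathbf{P}(V-V^{\ast})(t)\|_{\dot{\mathbb{B}}^{\sigma,\frac{d}{2}+1}_{p,2}}\geq c\,\langle t\rangle^{-\frac{1}{2}(\sigma-\sigma_1)}
\]
for all $t$ beyond some threshold, thanks to the gain in \eqref{asy:stab}. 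Matched against the corresponding upper bound for $e^{t\mathcal{A}}\Psi_0^{\ell}$, the converse half of the $L^p$ decay characterization Proposition \ref{propgeneral2} forces $\Psi_0^{\ell}\in\dot{\mathcal B}^{\sigma_1}_{p,\infty}$.

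The main obstacle will be a careful endpoint/range analysis: Theorem \ref{thm2} only supplies a strict gain in the subrange $\sigma\leq d/p+1-\alpha^{\ast}$ (resp. $\sigma'\leq d/p-\alpha^{\ast}$), which is narrower than the range of $\sigma,\sigma'$ appearing in \eqref{decaytwoside}. To cover the full range, one choice is to tune the parameter $\varepsilon$ in \eqref{sigma2} so that $\alpha^{\ast}$ can be made arbitrarily small, extending the admissible range up to (but not including) $d/p+1$; the remaining endpoint is then handled by interpolating the lower bound at an interior value of $\sigma$ with the upper bound from Theorem \ref{thm1}. One must also verify that the same threshold $t_1$ works uniformly in $\sigma$, but since only finitely many scales matter after truncating in low frequencies, this reduces to a standard Littlewood–Paley bookkeeping on the hybrid space $\dot{\mathbb{B}}^{\sigma,\frac{d}{2}+1}_{p,2}$.
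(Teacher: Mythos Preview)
Your proposal is correct and follows a genuinely different route from the paper. The paper argues through the \emph{linear} solution $V_L$ rather than the parabolic profile $V^{\ast}$: for the ``if'' direction it writes $V=V_L+\delta V$, invokes Corollary~\ref{properror} (the Wiegner-type error bound on $\delta V=V-V_L$) for the faster remainder, and pulls the lower bound on $V_L$ from Proposition~\ref{proplinear}; for the ``only if'' direction it uses the \emph{inverse} Wiegner estimate Proposition~\ref{properrorin} (which needs only the decay of $V$, not any a~priori regularity on $V_0$) to get faster decay of $\delta V$, deduces two-sided bounds for $V_L$, and then appeals to Proposition~\ref{proplinear} again. Your path instead compares $V$ directly to $V^{\ast}$ via Theorem~\ref{thm2} and closes with Proposition~\ref{propgeneral2}, which is shorter and conceptually cleaner since $V^{\ast}$ is the object whose decay is literally characterized by $\Psi_0^\ell\in\dot{\mathcal B}^{\sigma_1}_{p,\infty}$. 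The trade-off is that your necessity argument first has to bootstrap $(\mathbf{P}V_0)^\ell\in\dot B^{\sigma_1}_{p,\infty}$ from Theorem~\ref{thm1} before Theorem~\ref{thm2} becomes available, whereas the paper's Proposition~\ref{properrorin} is self-contained in that respect; on the other hand, your route avoids stating and proving a separate inverse-Wiegner proposition. Your handling of the range gap $\frac{d}{p}+1-\alpha^{\ast}<\sigma\le\frac{d}{p}+1$ by interpolation is exactly what the paper does (note that when $-\frac{d}{p}\le\sigma_1<\frac{d}{p}-2$ one has $\alpha^{\ast}=1$ fixed, so tuning $\varepsilon$ is not an option there and interpolation is genuinely needed). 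One minor point: in your reverse triangle you should subtract the high-frequency piece $\|\mathbf{P}V^{\ast}\|^{h}_{\dot B^{d/2+1}_{2,1}}$ as well when isolating $\|e^{t\mathcal A}\Psi_0^\ell\|_{\dot B^\sigma_{p,1}}$, but this decays exponentially (cf.\ \eqref{highV*}) and is harmless.
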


Theorems \ref{thm1}-\ref{thm3} provide a sharp decay characterization for
partially dissipative systems of balance laws in multidimensions, which is of independent interest. The proof is mainly motivated by Wiegner's argument \cite{wiegner1} regarding the energy
decay of Leray's weak solutions to the incompressible Navier-Stokes equations and inverse Wiegner’s argument \cite{skalak1} (bounding the discrepancy between the nonlinear solution and the linear solution). As a side effect, the smallness of low frequencies of initial data can be removed in the Fourier semigroup framework in contrast to \cite{XK2,XK3}.

\begin{remark}\normalfont
We comment on a few points of immediate relevance:
\begin{itemize}
\item  The decay properties of hyperbolic--parabolic composite systems
(including hyperbolic balance systems) were first investigated in
\cite{UKS} more than forty years ago. Since then, numerous works have been
devoted to the large-time behavior of solutions (see, for instance,
\cite{Kawashimadoctoral,BHN,XK2,XK3,c2,c3}). However, the optimality of the
corresponding time-decay rates is left open. To the best of our
knowledge, the paper provides the first proof of lower bounds
for decay estimates not only in the $L^2$ framework but also in the general
$L^p$ one, which  allows to rely less on
traditional spectral/Green's function analysis. A key novelty lies in the introduction of the
new effective quantity $\Psi(t,x)$, which captures the intrinsic interaction
between the conservative and dissipative components. Moreover,
Theorem~\ref{thm2} identifies the asymptotic profiles of both $\mathbf{P}V$
and $\{\mathbf{I}-\mathbf{P}\}V$, yielding faster decay rates for the
remainders. In particular, decay bounds are obtained for the
highest-order derivatives ($\sigma=d/p+1$), and hence for the critical
Lipschitz norm $\|\nabla V\|_{L^\infty}$, which is not available in
the previous literature.

%The decay property for a general class of symmetric hyperbolic-parabolic composition systems (including the symmetric hyperbolic system) was addressed in \cite{UKS} more than 40 years ago. Subsequently, there are a number of works devoted to the large-time behavior of solutions to nonlinear hyperbolic-parabolic systems or partially hyperbolic systems, see for instance \cite{Kawashimadoctoral,BHN,XK2,XK3,c2,c3}. However, whether those time-decay rates are \textit{optimal} or not  has remained an open question. To the best of our knowledge, our work proves the lower bounds of decay estimates and provide the first answer to that issue not only in the $L^2$ framework but also in the general $L^p$ framework. %which imply that those decay rates obtained in prior works are indeed optimal.

%We propose a general $L^p$ energy method, which allows to pay less attention to the traditional spectral analysis. The novelty lies in the introduction of the new effective quantity $\Psi(t,x)$, which reveals the intrinsic interaction effect between conservative and dissipative components. More importantly, the shape of profiles to the conservative component $\mathbf{P}V$ and $\{\mathbf{I} - \mathbf{P}\}V$ are shown by Theorem \ref{thm2}, which provides faster decay rates of  the difference between the solution and asymptotic profiles. The two-sided bounds of decay estimates for the highest order derivatives ($\sigma=d/p+1$) and thus the critical Lipschitz norm $\|\nabla V\|_{L^{\infty}}$ are available, which are not shown in the previous literature so far.

\item The present work also provides the first study of the {\emph{inverse problem}} for the
large-time asymptotics of partially dissipative hyperbolic systems.
More precisely, we show that the additional
$\dot{B}^{\sigma_{1}}_{p,\infty}$ assumption is sharp in comparison with
previous results: it is not only sufficient but also {\emph{necessary}} to
obtain two-sided decay estimates for solutions. In particular, we establish a theory of decay characters for partially dissipative systems of balance laws: Under
$(\{\mathbf{I}-\mathbf{P}\}V_{0})^{\ell}\in\dot{B}^{\sigma_{1}+1}_{2,\infty}$
with $-\frac{d}{2}\leq \sigma_{1}<\frac{d}{2}-1$, the solution $V$
satisfies
\begin{equation}\nonumber
\left\{
\begin{aligned}
c(1+t)^{-\frac{1}{2}(\sigma-\sigma_1)}
&\leq \|\Lambda^{\sigma}\mathbf{P}V(t)\|_{\dot{B}^{0}_{2,1}}
\leq C (1+t)^{-\frac{1}{2}(\sigma-\sigma_1)},
\qquad \sigma_1<\sigma\leq \frac{d}{2}+1, \\
c(1+t)^{-\frac{1}{2}(\sigma'-\sigma_1+1)}
&\leq \|\Lambda^{\sigma'}\{\mathbf{I}-\mathbf{P}\}V(t)\|_{\dot{B}^{0}_{2,1}}
\leq C(1+t)^{-\frac{1}{2}(\sigma'-\sigma_1+1)},
\qquad \sigma_1+1<\sigma'\leq \frac{d}{2},
\end{aligned}
\right.
\end{equation}
if and only if  $\Psi_0$ admits the decay character
$r^*=r^*(\Psi_0)=-\sigma_1-\frac{d}{2}$ (see
\cite{bjorland1,brandolese1,niche1}), that is,
\begin{align*}
0
&< \liminf_{r\to0^+}
r^{-2(r^*+\frac{d}{2}+\sigma)}
\int_{|\xi|\le r} |\xi|^{2\sigma}|\widehat{\Psi_0}(\xi)|^2\,d\xi \le \limsup_{r\to0^+}
r^{-2(r^*+\frac{d}{2}+\sigma)}
\int_{|\xi|\le r} |\xi|^{2\sigma}|\widehat{\Psi_0}(\xi)|^2\,d\xi
<\infty.
\end{align*}

\end{itemize}
\end{remark}

In what follows, we provide illustrations on the proofs of Theorems \ref{thm1}-\ref{thm3}.
The decay characterization for the linear problem \eqref{hplinear} has been investigated in Proposition \ref{proplinear}. Furthermore, to establish
the optimal time-decay bounds of the solution to the nonlinear problem \eqref{m1}, the well-known
Wiegner's argument for viscous incompressible flows \cite{wiegner1}
 can be
adapted to suit partially dissipative hyperbolic systems.

%so our goal is to prove the large-time asymptotic equivalency of the solution $V_L$ to the linear system \eqref{hplinear} and the solution $V$ to the nonlinear system \eqref{m1}. To achieve this equivalency, our idea is to establish improved decay rates of the error $\delta V=V-V_L$, in the spirit of Wiegner's theorem  \cite{wiegner1} and inverse Wiegner's theorem \cite{skalak1} in the study of viscous incompressible flows.

 First, we prove that if $V_L$ has the upper bounds for decay estimates (Proposition \ref{proplinear}), then $\delta V=V-V_L$ obeys faster rates (see Proposition \ref{properror1} below). %This result is entirely novel in the study of partially dissipative hyperbolic systems. To achieve it, we adapt the method from \cite{BSXZ} for viscous compressible flows to partially dissipative hyperbolic systems. It should be noted that the smallness of the corresponding norms for $V_0^\ell$ can be removed due to the faster rates of $\delta V$,
 This result is essentially derived by
 the frequency-localization Duhamel's principle:
 \begin{equation}\label{LocalLDuhamel}
\begin{aligned}
\dot{\Delta}_j\delta V=\int_{0}^{t} \mathcal{G}(t-\tau) \dot{\Delta}_j R^{0} d\tau+\sum_{i=1}^{d}\int_{0}^{t} \mathcal{G}(t-\tau)\partial_{x_{i}}\dot{\Delta}_j R^{i} d\tau.
\end{aligned}
\end{equation}
Observe that there is the
spatial derivative on $R^i$ ($i=1,2,\dots,d$) in \eqref{LocalLDuhamel}.
%, which allows for the   improvement of time-decay rates of $\delta V$ in low frequencies.
However, the challenging difficulty lies in the presence of the low-order nonlinear term $R^0$, which cannot provide
faster time-decay rates directly. To address this issue,
we make the best use of the property $\mathbf{P}R^0=0$ %and Proposition \ref{LemmaspectrallocalHp}
and quantitative pointwise estimates for $\mathbf{P}\delta V$ and  $\{\mathbf{I} - \mathbf{P}\}\delta V$ separately. Consequently, we have %combined with derivative terms for $R^i=0$ ($i=1,2,\dots,d$),
 the factor ``$2^j$''-gain (see \eqref{555} and \eqref{deltaJdeltaIPV} for details)
and leads to the improvement in the decay rates of $\delta V$ up to $t^{-1/2}$. On the other hand, to analyze the nonlinear term without using the smallness of $\|{\mathbf{P}}V_0\|_{\dot{B}^{\sigma_1}_{p,\infty}}$ and $\|\{\mathbf{I} - \mathbf{P}\}V_0\|_{\dot{B}^{\sigma_{1}+1}_{p,\infty}}$,
%Another challenge stems from the high-order $\dot{B}^{\sigma_1+1}_{p,\infty}$-regularity of ${\mathbf{I} - \mathbf{P}}V_0$ such that the $\dot{B}^{\sigma}_{p,1}$-norm ($\sigma\in ( \sigma_1,\sigma_1+1]$) does not decay.
we need an accurate decomposition of the nonlinear terms $R^i$ and make full use of faster decay rates of the component
 $\{\mathbf{I} - \mathbf{P}\}V$. Precisely,
 \begin{align*}
| R^i|\sim |V|^2&\lesssim |V_L|^2+\delta \mathcal{I}_1+\delta\mathcal{I}_2
 \end{align*}
with
 \begin{align*}
 \delta \mathcal{I}_1&=|\delta V|(|\{\mathbf{I} - \mathbf{P}\} V_L|+|\{\mathbf{I} - \mathbf{P}\} V|)+|\{\mathbf{I} - \mathbf{P}\}\delta V| (|\mathbf{P} V_L|+|\mathbf{P} V|), \quad
 \delta \mathcal{I}_2 =|\mathbf{P}\delta V|( |\mathbf{P}V|+|\mathbf{P}V_L|),
 \end{align*}
which enables us to establish faster decay rates between the nonlinear solution and the linear solution for low frequencies, by quantitatively utilizing  the different decay properties of $\mathbf{P}V_L$, $\{\mathbf{I} - \mathbf{P}\}V_L$, $\mathbf{P}\delta V$ and $\{\mathbf{I} - \mathbf{P}\}\delta V$.
%Note that the quadratic term
%provides the fast decay rates,   and
%$(\delta \mathcal{I}_1,\delta \mathcal{I}_2)$
%can be bounded by the faster decay estimates Though  $\{\mathbf{I} - \mathbf{P}\} V$ and $\{\mathbf{I} - \mathbf{P}\} V_L$  in the error term $ \delta \mathcal{I}_2$ require a higher regularity cost such that the regularity assigned for error $\delta V$ will be lower, we can achieve enough decay due to the faster rates caused by the projector $\{\mathbf{I} - \mathbf{P}\}$. Regarding $\delta \mathcal{I}_2$, one employs classical product laws to derive the desired bounds with a small quantity due to the faster rates from the error.
In the second step, we establish the time-weighted estimates of high frequencies for the error $\delta V$. That step strongly relies on the elementary $L^2$ approach and the [SK] condition. The symmetric form \eqref{entropeq} in terms of entropy enables us to avoid the technical obstacle that there is a loss of one derivative.
It is worth noting that non-standard
product laws, composite estimates and commutator estimates have been well developed
by means of Bony's para-product decomposition in order to handle nonlinear terms in hybrid norms of $L^p$-$L^2$ type (see Lemmas \ref{NonClassicalProLaw1} and \ref{lemmacommutator}, Corollaries \ref{coroApp1} and \ref{corocom} ).
The last step is
dedicated to establishing the regularity evolution of
$\mathbf{P} V$ and $\{\mathbf{I} - \mathbf{P}\}V$, which allows to close the time-weighted estimate
for $\delta V$.

%make use of the damping effect due to the [SK] condition and perform $L^2$ time-weighted estimates on the entropy variable $W$ to avoid the loss of derivatives. Then, one recovers the desired bounds of $V$ from $W$ using non-standard product laws and continuity of composite functions. With these observations, we arrive at
%\begin{align*}
%\mathcal{D}_{p}(t)\lesssim 1+\varepsilon_V  \mathcal{D}_{p}(t),
%\end{align*}
%where $\mathcal{D}_{p}(t)$ is the time-weighted functional defined in \eqref{errores}, and $\varepsilon_V $ is a constant depending on $V$. Under $(\mathbf{P}V_{0})^{\ell}\in\dot{B}^{\sigma_{1}}_{p,\infty}$ and $(\mathbf{\{I-P\}}V_{0})^{\ell}\in\dot{B}^{\sigma_{1}+1}_{p,\infty}$, we prove that $(\mathbf{\{I-P\}}V)^{\ell}$ and $(\mathbf{\{I-P\}}V)^{\ell}$ are uniformly bounded with respect to all $t>0$. Then, employing interpolation inequalities between these bounds and the energy function $E_p(t)$ (see \eqref{Et}) ensures the smallness of $\varepsilo_V $, which leads to the boundedness of $\mathcal{D}_{p}(t)$ and the desired rates of the error $\delta V$.
Finally, we prove the necessary part of the low-frequency assumption in terms of $\dot{B}^{\sigma_{1}}_{p,\infty}$
on the upper and lower bounds for decay rates. For that purpose, we develop the inverse Wiegner's argument from incompressible flows \cite{skalak1} to the partially dissipative hyperbolic systems in the $L^p$ framework (Proposition \ref{properrorin} ). Indeed, one can prove that
$\delta V$ satisfies faster rates if the global-in-time solution $V$ constructed in Theorem \ref{theorem0} satisfies \eqref{decayupper}. Furthermore, it can be shown that the linear
solution $(\mathbf{P} V_{L}, \{\mathbf{I} - \mathbf{P}\}V_{L})$
to \eqref{hplinear} has the same decay rates as $(\mathbf{P} V, \{\mathbf{I} - \mathbf{P}\}V)$. Consequently, the linear theorem (Proposition \ref{proplinear}) implies the desired low-frequency regularity assumptions.

%Based on careful analysis on these product terms with different rates for terms involving $\{\mathbf{I} - \mathbf{P}\}$ and $\mathbf{P}$ as well as suitably apportions of regularity and the faster decay rates of $\delta V$, we succeed in establishing time-weighted estimates of $\delta V$ (see Section \ref{sectionsuffic} for more details).

%the following bound
%$$
%\mathcal{D}_{p}(t)\lesssim \Big(\|\mathbf{P}V\|_{L^{\infty}_{t}(\dot{B}^{\sigma_1+\frac{1}{2}\alpha^*}_{p,\infty})}+\| \delta V\|_{L^{\infty}_{t}(\dot{B}^{\sigma_1+1}_{p,\infty})}\Big)\mathcal{D}_{p}(t)+\mathcal{D}_{p,\sigma_1}\| V\|_{L^{\infty}_{t}(\dot{B}^{\sigma_1+1}_{p,\infty})}+1.
%$$
%Here $\mathcal{D}_{p}(t)$ is the time-weighted functional of $\delta V$ given in \eqref{Dt}.

%\vspace{2mm}

%\subsection{Outline of the paper}

\subsection{Sufficient conditions for decay}\label{sectionsufficient}
In this section, we shall develop the classical Wiegner's argument from incompressible Navier-Stokes equations \cite{wiegner1} to
partially dissipative hyperbolic
systems and prove the “if" part in Theorems \ref{thm1} or \ref{thm3} and Theorem \ref{thm2}.

\subsubsection{Error estimates}

A key ingredient in Wiegner's argument is to establish faster decay estimates
of $
\delta V\coloneqq  V-V_{L}
$, where $V_{L}$
denotes the solution to the linear problem \eqref{hp}. This indicates that the nonlinear solution $V$ is asymptotically equivalent to its linear counterpart $V_L$ in large times. %Consequently, to study the large-time stability of the nonlinear problem, it suffices to employ the linear theorem established in Proposition \ref{proplinear}. Define
 Specifically, the difference $\delta V$ solves
\begin{equation}\label{deltav}
    \begin{aligned}
    &\partial_{t}\delta V+\sum_{i=1}^{d}A^{i}\partial_{x_{i}}\delta V+L\delta V=R^{0}+\sum_{i=1}^{d}\partial_{x_{i}}R^{i},\quad\quad \delta V|_{t=0}=0,
\end{aligned}
\end{equation}
where the nonlinear terms $R^{0}$ and $R^{i}$ are given by \eqref{R0Ri}.

For the reader’s convenience, we first introduce the difference functionals.  Define the following norm for initial data
\begin{align}
\mathcal{D}_{p,\sigma_1}\coloneqq  \|(\mathbf{P}V_0)^{\ell}\|_{\dot{B}^{\sigma_1}_{p,\infty}}+\|(\{\mathbf{I} - \mathbf{P}\}V_0)^{\ell}\|_{\dot{B}^{\sigma_1+1}_{p,\infty}}+\|V_0\|_{\dot{B}^{\frac{d}{2}+1}_{2,1}}^{h}\label{mathcalD0}
\end{align}
and the difference functional
\begin{equation}\label{mathcalDt}
\begin{aligned}
\mathcal{D}_{p}(t)&\coloneqq  \sup_{\sigma_{1}<\sigma\leq  \frac{d}{p}+1-\alpha^*} \|\langle t\rangle^{\frac{1}{2}(\sigma-\sigma_{1}+\sigma_{1}^*)}\mathbf{P}\delta V\|_{L^{\infty}_{t}(\dot{B}^{\sigma}_{p,1})}^{\ell}\\
%&\quad+\sup_{\frac{d}{p}\leq \sigma\leq \frac{d}{p}+1} \|\langle t\rangle^{\frac{1}{2}(\sigma-\sigma_{1})}\mathbf{P}\delta V\|_{L^{\infty}_{t}(\dot{B}^{\sigma}_{p,1})}^{\ell}+\sup_{\frac{d}{p}-1\leq \sigma\leq \frac{d}{p}} \|\langle t\rangle^{\frac{1}{2}(\sigma-\sigma_{1}+1)}\{\mathbf{I}-\mathbf{P}\}\delta V\|_{L^{\infty}_{t}(\dot{B}^{\sigma}_{p,1})}^{\ell}\\
&\quad+\sup_{\sigma_{1}+1<\sigma'\leq \frac{d}{p}-\alpha^*} \|\langle t\rangle^{\frac{1}{2}(\sigma'-\sigma_{1}+1+\sigma_{2}^*)}\{\mathbf{I}-\mathbf{P}\}\delta V\|_{L^{\infty}_{t}(\dot{B}^{\sigma'}_{p,1})}^{\ell}+\|\langle \tau\rangle^{\beta^*} \delta V\|_{\widetilde{L}^{\infty}_{t}(\dot{B}^{\frac{d}{2}+1}_{2,1})}^{h},
\end{aligned}
\end{equation}
where
\begin{align}\label{5.4}
   \alpha^*\coloneqq  \begin{cases}
1,
& \mbox{\quad if  ~~$-\frac{d}{p}\leq \sigma_1<\frac{d}{p}-2$},\\
\frac{d}{p}-1-\sigma_1-\varepsilon_1,
& \mbox{\quad if  ~~ $\frac{d}{p}-2\leq \sigma_1<\frac{d}{p}-1$}
%\min\Big\{\frac{d}{p}+1+\sigma_1,\frac{d}{p}-1-\sigma_1\Big\}-\varepsilon_1,
%& \mbox{\quad if  ~~$-\frac{d}{p}-1<\sigma_1<-\frac{d}{p}$},
\end{cases}
\end{align}
for any $0<\varepsilon_1<\frac{d}{p}-1-\sigma_1$. The pair $(\sigma_{1}^*,\sigma_{2}^*)\in (0,1]$ is given by
\begin{equation}\label{sigma222}
\begin{aligned}
&\sigma_{1}^*\coloneqq
\begin{cases}
\alpha^*,
& \mbox{\quad if  ~~$\sigma_{1}<\sigma\leq \frac{d}{p}-1$},\\
\frac{1}{2}\alpha^*,
& \mbox{\quad if  ~~$\frac{d}{p}-1<\sigma\leq \frac{d}{p}+1-\alpha^*$},
\end{cases}
&\sigma_{2}^*\coloneqq
\begin{cases}
\alpha^*,
& \mbox{\quad if ~~$\sigma_{1}+1<\sigma'\leq \frac{d}{p}-2$},\\
\frac{1}{2}\alpha^*,
& \mbox{\quad if  ~~$\frac{d}{p}-2<\sigma'\leq \frac{d}{p}-\alpha^*$},
\end{cases}
\end{aligned}
\end{equation}
and $\beta^*>0$ is the maximal decay exponent $\beta^*\coloneqq  \frac{1}{2}(\frac{d}{p}+1-\sigma_{1}-\frac{1}{2}\alpha^*-\varepsilon_1)$.

As a consequence, the time-weighted energy estimate for $\delta V$ is included in the following proposition, which is the analogue of Wiegner's theorem (see \cite{wiegner1}).

\begin{prop}\label{properror1}
Let the assumptions of Theorem \ref{theorem0} be in force. %and let $V_{L}$ and  $V$ be global solutions to the linear problem \eqref{hp} and the nonlinear problem \eqref{m1}-\eqref{m1d}, respectively, subject to the same initial datum $V_0$.
Assume additionally that the initial datum $V_{0}$ satisfies $(\mathbf{P}V_{0})^{\ell}\in\dot{B}^{\sigma_{1}}_{p,\infty}$ and $(\{\mathbf{I}-\mathbf{P}\}V_{0})^{\ell}\in\dot{B}^{\sigma_{1}+1}_{p,\infty}$ with $-\frac{d}{p}\leq \sigma_{1}<\frac{d}{p}-1$. Then  the difference $
\delta V\coloneqq  V-V_{L}
$ fulfills the following time-weighted inequality
\begin{equation}
\begin{aligned}
&\mathcal{D}_{p}(t)\leq C(1+\mathcal{D}_{p,\sigma_1})\mathcal{D}_{p,\sigma_1} \label{errores}
\end{aligned}
\end{equation}
for any $t>0$, where $C$ is a positive constant independent of the time $t$.
\end{prop}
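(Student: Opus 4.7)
The plan is to run a Wiegner-type bootstrap that splits the analysis of $\delta V$ into low and high frequencies. For the low frequencies, I would apply $\dot{\Delta}_j$ with $j \leq J_0$ (with $2^{J_0}\leq \lambda_0$ as in Proposition \ref{LemmaspectrallocalHp}) to \eqref{deltav} and invoke the frequency-localized Duhamel formula together with the sharp spectral estimates \eqref{lowlocalupper}. The key algebraic observation $\mathbf{P}R^0=0$ means that $R^0$ acts as a purely dissipative source, so the bound on $\mathbf{P}\mathcal{G}(t-\tau)\dot{\Delta}_j R^0$ inherits the extra factor $2^j$ from the mixed block of \eqref{lowlocalupper}. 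This matches the $2^j$ gain already available from the explicit $\partial_{x_i}$ on each $R^i$ and is precisely the spatial derivative that converts the decay rates of $V_L$ given by Proposition \ref{proplinear} into the enhanced rates prescribed by $\sigma_1^*, \sigma_2^*$ inside $\mathcal{D}_p(t)$.

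To handle the nonlinear sources I would exploit the decomposition $R^i \sim V\otimes V \lesssim |V_L|^2 + \delta\mathcal{I}_1 + \delta\mathcal{I}_2$ highlighted in the introduction. Every term in $\delta\mathcal{I}_1$ carries at least one dissipative factor, which by Proposition \ref{proplinear} (for $V_L$) and the bootstrap hypothesis on $\{\mathbf{I}-\mathbf{P}\}\delta V$ decays faster by $\langle t\rangle^{-1/2}$ than its conservative counterpart. The remaining piece $\delta\mathcal{I}_2$ is fully conservative-conservative, but it is controlled by pairing one $\mathbf{P}\delta V$ with the $2^j$ gain and with Bony-type low-frequency product laws from Corollary \ref{coroApp1}, closing on the left-hand side of the bootstrap. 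The source term $|V_L|^2$ admits a known rate directly from Proposition \ref{proplinear}. Convolving these pointwise bounds with $e^{-R_1 2^{2j}(t-\tau)}$ and summing in $j$ with the weights dictated by \eqref{mathcalDt} delivers the prescribed $\dot{B}^{\sigma}_{p,1}$-decay for $\mathbf{P}\delta V$ and $\dot{B}^{\sigma'}_{p,1}$-decay for $\{\mathbf{I}-\mathbf{P}\}\delta V$.

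For the high frequencies, the $L^p$ parabolic estimate of Proposition \ref{LemmaspectrallocalHp} is unavailable, so I would fall back on the entropy-symmetric approach of Step 2 in the proof of Proposition \ref{propapriori}. Rewriting \eqref{deltav} in terms of $\mathcal{W}=W(V)-\bar{W}$ and performing a $\dot{\Delta}_j$-localized, time-weighted $L^2$ estimate with weight $\langle t\rangle^{\beta^*}$, one uses the Kawashima compensator $K(\omega)$ from the [SK] condition to recover dissipation on the conservative block. The right-hand side is handled by the commutator and composite estimates of Lemma \ref{lemmacommutator} and Corollaries \ref{coroApp1}, \ref{corocom}, while the weight-derivative term $\beta^*\langle t\rangle^{\beta^*-1}\|\delta V\|_{\dot{B}^{d/2+1}_{2,1}}^h$ is absorbed by interpolating with the low-frequency weighted bounds already obtained.

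The main obstacle will be the bookkeeping around the borderline regime $\sigma_1 \in [d/p-2, d/p-1)$. There the maximal attainable error decay rate saturates strictly before the parabolic ceiling, forcing the use of the reduced exponent $\alpha^* = d/p-1-\sigma_1-\varepsilon_1$ and the split definitions of $\sigma_1^*$ and $\sigma_2^*$ inside $\mathcal{D}_p(t)$. The nonlinear time integrals arising from $\delta\mathcal{I}_2$ develop endpoint logarithmic divergences that must be averted by this $\varepsilon_1$-shift and by keeping $\sigma<d/p+1$ and $\sigma'<d/p$. Once these endpoint adjustments are made, the smallness $E_{p,0}\leq \varepsilon_0$ from Theorem \ref{theorem0} renders the quadratic contributions absorbable, and a standard continuity argument closes the bootstrap inequality \eqref{errores}.
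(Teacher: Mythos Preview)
Your low- and high-frequency strategies match the paper's architecture (Duhamel plus Proposition \ref{LemmaspectrallocalHp} for $j\leq J_0$, the entropy form \eqref{normale} with the compensator $K(\omega)$ for $j\geq J_0-1$, and the decomposition $R^i=\mathcal{I}_L^i+\delta\mathcal{I}_1^i+\delta\mathcal{I}_2^i$). However, the closing step you propose has a genuine gap.

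You write that ``the smallness $E_{p,0}\leq\varepsilon_0$ renders the quadratic contributions absorbable.'' This is not how the paper closes \eqref{errores}, and in fact it cannot close that way. After combining the low- and high-frequency estimates one arrives at an inequality of the form
\[
\mathcal{D}_p(t)\lesssim \Big(\mathcal{D}_{p,\sigma_1+\frac{1}{2}\alpha^*}
+\|(\mathbf{P}V,\mathbf{P}\delta V)\|_{L^\infty_t(\dot{B}^{\sigma_1+\frac{1}{2}\alpha^*}_{p,\infty})}
+\|\delta V\|_{L^\infty_t(\dot{B}^{\sigma_1+1}_{p,\infty})}\Big)\mathcal{D}_p(t)
+(1+\mathcal{D}_{p,\sigma_1})\mathcal{D}_{p,\sigma_1}.
\]
The coefficient in front of $\mathcal{D}_p(t)$ involves low-frequency norms at regularity $\sigma_1+\tfrac{1}{2}\alpha^*<\tfrac{d}{p}-1$ and $\sigma_1+1$. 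At low frequencies, lower-index Besov norms are \emph{larger}, so none of these is dominated by $E_{p,0}$ (whose low-frequency indices are $\tfrac{d}{p}-1$ and $\tfrac{d}{p}$). Since $\mathcal{D}_{p,\sigma_1}$ is \emph{not} assumed small---this is precisely the point of the result---you cannot absorb that coefficient using $E_{p,0}$ alone.

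The missing ingredient is a separate \emph{regularity evolution} estimate (Proposition \ref{propevolution} in the paper): one shows directly that
$\|(\mathbf{P}V,\mathbf{P}\delta V)^\ell\|_{L^\infty_t(\dot{B}^{\sigma_1}_{p,\infty})}
+\|(\{\mathbf{I}-\mathbf{P}\}\delta V)^\ell\|_{L^\infty_t(\dot{B}^{\sigma_1+1-\alpha^*}_{p,\infty})}
\lesssim \mathcal{D}_{p,\sigma_1}$
uniformly in $t$, using the semigroup bounds and product laws but \emph{not} the bootstrap on $\mathcal{D}_p(t)$. Only then can one interpolate between this (large) bound at index $\sigma_1$ and the small bound $E_{p,0}$ at index $\tfrac{d}{p}-1$ (resp.\ $\tfrac{d}{p}$) to obtain smallness of the coefficient at the intermediate index $\sigma_1+\tfrac{1}{2}\alpha^*$ (resp.\ $\sigma_1+1$), via $\mathcal{D}_{p,\sigma_1}^{\theta}E_{p,0}^{1-\theta}\ll 1$. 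Without this interpolation device your bootstrap does not close.
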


Proposition \ref{properror1} provides the decay rates of $\mathbf{P}\delta V$ and $\{\mathbf{I} - \mathbf{P}\}\delta V$, which are faster than those of $\mathbf{P}V_{L}$ and $\{\mathbf{I} - \mathbf{P}\}V_{L}$, respectively. %We emphasize that the set of $\sigma'$ such that the second decay estimate $\eqref{nonlinearfaster}_2$ holds is not empty since $\sigma_1+1<\frac{d}{p}-\alpha^*$.

\begin{coro}\label{properror}
For any $t>0$, the difference $\delta V=V-V_{L}$ satisfies that
\begin{equation}\label{nonlinearfaster}
\left\{
\begin{aligned}
\|\mathbf{P}\delta V\|_{\dot{\mathbb{B}}^{\sigma,\frac{d}{2}+1}_{p,2}}&\lesssim (1+\mathcal{D}_{p,\sigma_1})\mathcal{D}_{p,\sigma_1}\langle t\rangle^{-\frac{1}{2}(\sigma-\sigma_1+\sigma_1^*)},\quad\quad\quad \sigma_1<\sigma\leq \frac{d}{p}+1-\alpha^*,\\
\|\{\mathbf{I} - \mathbf{P}\}\delta V\|_{\dot{\mathbb{B}}^{\sigma',\frac{d}{2}+1}_{p,2}}&\lesssim (1+\mathcal{D}_{p,\sigma_1})\mathcal{D}_{p,\sigma_1}\langle t\rangle^{-\frac{1}{2}(\sigma'-\sigma_1+1+\sigma_2^*)},\quad\quad \sigma_1+1<\sigma'\leq \frac{d}{p}-\alpha^*.
\end{aligned}
\right.
\end{equation}
%Here $C>0$ is a constant independent of time, $\beta^*\coloneqq  \frac{1}{2}(\frac{d}{p}-\sigma_{1}+\sigma_*)-$, $\mathcal{D}_{p,\sigma_1}\coloneqq  \|(\mathbf{P}V_0)^{\ell}\|_{\dot{B}^{\sigma_1}_{p,\infty}}+\|(\{\mathbf{I} - \mathbf{P}\}V_0)^{\ell}\|_{\dot{B}^{\sigma_1+1}_{p,\infty}}+\|V_0\|_{\dot{B}^{\frac{d}{2}+1}_{2,1}}^{h}$ and $\sigma_*$ is given by \eqref{sigma2}.
\end{coro}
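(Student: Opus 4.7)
The plan is to derive Corollary \ref{properror} as a direct unpacking of Proposition \ref{properror1}, splitting each hybrid Besov norm on the left-hand side of \eqref{nonlinearfaster} into its low- and high-frequency parts via the identity $\|\cdot\|_{\dot{\mathbb{B}}^{\sigma,\frac{d}{2}+1}_{p,2}}=\|(\cdot)^{\ell}\|_{\dot{B}^{\sigma}_{p,1}}+\|\cdot\|_{\dot{B}^{\frac{d}{2}+1}_{2,1}}^{h}$. Since Proposition \ref{properror1} already yields $\mathcal{D}_p(t)\le C(1+\mathcal{D}_{p,\sigma_1})\mathcal{D}_{p,\sigma_1}$, the entire corollary is a matter of bookkeeping: one must reread the three pieces entering the definition \eqref{mathcalDt} of $\mathcal{D}_p(t)$ and verify that each piece produces a decay rate at least as fast as the one advertised in \eqref{nonlinearfaster} for the full hybrid norm.

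For the low-frequency terms, the bounds follow immediately from the first two suprema in \eqref{mathcalDt}: for any $\sigma_1<\sigma\le d/p+1-\alpha^*$, one has $\|(\mathbf{P}\delta V)^\ell(t)\|_{\dot{B}^{\sigma}_{p,1}}\le \langle t\rangle^{-\frac12(\sigma-\sigma_1+\sigma_1^*)}\mathcal{D}_p(t)$, and likewise for $\{\mathbf{I}-\mathbf{P}\}\delta V$ with the exponents $(\sigma',\sigma_2^*)$ on the range $\sigma_1+1<\sigma'\le d/p-\alpha^*$. For the high-frequency term, the Chemin–Lerner bound $\|\langle\tau\rangle^{\beta^*}\delta V\|_{\widetilde{L}^\infty_t(\dot{B}^{d/2+1}_{2,1})}^h\le \mathcal{D}_p(t)$ together with the embedding $\widetilde{L}^\infty_t(\dot{B}^{d/2+1}_{2,1})\hookrightarrow L^\infty_t(\dot{B}^{d/2+1}_{2,1})$ (for the summation index $r=1$) yields $\langle t\rangle^{\beta^*}\|\delta V(t)\|_{\dot{B}^{d/2+1}_{2,1}}^h\lesssim \mathcal{D}_p(t)$, and the continuity of $\mathbf{P},\{\mathbf{I}-\mathbf{P}\}$ on $L^2$-based Besov spaces transfers this bound to $\mathbf{P}\delta V$ and $\{\mathbf{I}-\mathbf{P}\}\delta V$.

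The only algebraic check required is that $\beta^*$ dominates the exponents appearing on the low-frequency side across the allowed ranges. A direct case analysis based on \eqref{5.4}--\eqref{sigma222} shows that the largest low-frequency rate occurs at the upper endpoint, giving $\frac12(d/p+1-\sigma_1-\tfrac12\alpha^*)$ for $\mathbf{P}\delta V$ and $\frac12(d/p-\sigma_1-\tfrac12\alpha^*)+\frac12$ for $\{\mathbf{I}-\mathbf{P}\}\delta V$, which match $\beta^*$ up to the built-in $\varepsilon_1/2$ safety margin. This margin is precisely the reason for the $-\varepsilon_1$ appearing in the definition of $\beta^*$, and it guarantees $\langle t\rangle^{-\beta^*}\le \langle t\rangle^{-\frac12(\sigma-\sigma_1+\sigma_1^*)}$ throughout the stated range; the endpoint is accommodated by the same $\varepsilon_1$ safety margin appearing in the definition of $\alpha^*$ when $\sigma_1$ lies in $[d/p-2,d/p-1)$.

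Summing the low- and high-frequency bounds produces \eqref{nonlinearfaster}, with constant $C(1+\mathcal{D}_{p,\sigma_1})\mathcal{D}_{p,\sigma_1}$ inherited from Proposition \ref{properror1}. There is no genuine obstacle here beyond Proposition \ref{properror1} itself; the only subtlety is the exponent-matching between the hybrid-norm endpoints and $\beta^*$, which is precisely what the $\varepsilon_1$ convention was introduced to absorb.
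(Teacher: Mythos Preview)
Your approach is correct and matches the paper's implicit treatment: the corollary (and the companion display \eqref{Dtdecay}) are read directly off the definition \eqref{mathcalDt} of $\mathcal{D}_p(t)$ once Proposition~\ref{properror1} supplies the bound $\mathcal{D}_p(t)\lesssim(1+\mathcal{D}_{p,\sigma_1})\mathcal{D}_{p,\sigma_1}$, and the paper gives no further argument. Splitting the hybrid norm into its low- and high-frequency parts and invoking the three pieces of $\mathcal{D}_p(t)$ is exactly what is intended.

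One point in your exponent check is backwards, however. You claim the $-\varepsilon_1$ in $\beta^*=\tfrac12\bigl(\tfrac{d}{p}+1-\sigma_1-\tfrac12\alpha^*-\varepsilon_1\bigr)$ is a ``safety margin'' guaranteeing $\langle t\rangle^{-\beta^*}\le \langle t\rangle^{-\frac12(\sigma-\sigma_1+\sigma_1^*)}$ across the whole range. In fact the $-\varepsilon_1$ makes $\beta^*$ \emph{smaller}, hence $\langle t\rangle^{-\beta^*}$ \emph{larger}: at the upper endpoint $\sigma=\tfrac{d}{p}+1-\alpha^*$ (where $\sigma_1^*=\tfrac12\alpha^*$) one finds $\tfrac12(\sigma-\sigma_1+\sigma_1^*)=\beta^*+\tfrac{\varepsilon_1}{2}$, so the high-frequency piece decays $\varepsilon_1/2$ \emph{slower} than the stated rate there, not faster. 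The paper glosses over this same discrepancy. It is harmless in practice because $\varepsilon_1>0$ is a free parameter---for any fixed target one may rerun Proposition~\ref{properror1} with a strictly smaller auxiliary $\varepsilon_1'$---and because every downstream use of the corollary (Theorems~\ref{thm1}--\ref{thm3}) only requires $\delta V$ to decay strictly faster than $V_L$, which $\beta^*$ already delivers. So your strategy is sound; just correct the direction of the $\varepsilon_1$ reasoning.
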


Indeed, the proof of Proposition \ref{properror1} relies on both the low-frequency and high-frequency error estimates {\rm(}see Lemmas \ref{lemmalow}-\ref{lemmahigh}{\rm)} as well as the regularity evolution of $\delta V$ {\rm(}see Proposition \ref{propevolution}{\rm)}.
Let us keep in mind that the solution $V$ in Theorem \ref{theorem0} satisfies
\begin{equation}\label{Vuniform}
\begin{aligned}
&\|V\|_{\widetilde{L}^{\infty}_t(\dot{\mathbb{B}}^{\frac{d}{p},\frac{d}{2}+1}_{p,2})\cap L^1_t(\dot{\mathbb{B}}^{\frac{d}{p}+1,\frac{d}{2}+1}_{p,2})}\lesssim E_{p,0},\quad t\in\mathbb{R}_{+}.
\end{aligned}
\end{equation}
By applying the same argument as in \eqref{r2}, one can arrive at
\begin{equation}\label{Etlinear}
\begin{aligned}
\|\mathbf{P}V_{L}\|_{\widetilde{L}^{\infty}_{t}(\dot{\mathbb{B}}^{\frac{d}{p}-1,\frac{d}{2}+1}_{p,2})\cap L^1_{t}(\dot{\mathbb{B}}^{\frac{d}{p}+1,\frac{d}{2}+1}_{p,2})}+\|\{\mathbf{I}-\mathbf{P}\}V_{L}\|_{\widetilde{L}^{\infty}_{t}(\dot{\mathbb{B}}^{\frac{d}{p},\frac{d}{2}+1}_{p,2})\cap L^1_{t}(\dot{\mathbb{B}}^{\frac{d}{p},\frac{d}{2}+1}_{p,2})}\lesssim E_{p,0},
\end{aligned}
\end{equation}
which in turn implies
\begin{equation}\label{VLuniform}
\begin{aligned}
&\|V_L\|_{\widetilde{L}^{\infty}_t(\dot{\mathbb{B}}^{\frac{d}{p},\frac{d}{2}+1}_{p,2})\cap L^1_t(\dot{\mathbb{B}}^{\frac{d}{p}+1,\frac{d}{2}+1}_{p,2})}\lesssim E_{p,0},\quad t\in\mathbb{R}_{+}.
\end{aligned}
\end{equation}
As established by Proposition \ref{proplinear},
the linear solution $V_{L}$ to \eqref{hplinear} has the decay estimate \eqref{upperlinear}.
As a matter of fact,
it follows from the proof of Proposition \ref{proplinear} (see \eqref{hhhh} and \eqref{llll}) that
\begin{equation}\label{preciselinear1}
\left\{
\begin{aligned}
  & \| \mathbf{P}V_{L}\|_{\dot{B}^{\sigma}_{p,1}}\lesssim \|\mathbf{P} V_{L}\|_{\dot{\mathbb{B}}^{\sigma,\frac{d}{2}+1}_{p,2}}\lesssim \langle t\rangle^{-\frac{1}{2}(\sigma-\sigma_2)}\mathcal{D}_{p,\sigma_2},\quad\quad\quad\quad\quad \quad\quad\quad \sigma_2<\sigma\leq \frac{d}{p}+1,\\
   & \|\{\mathbf{I} - \mathbf{P}\} V_{L}\|_{\dot{B}^{\sigma'}_{p,1}}\lesssim\|\{\mathbf{I} - \mathbf{P}\} V_{L}\|_{\dot{\mathbb{B}}^{\sigma',\frac{d}{2}+1}_{p,2}}\lesssim \langle t\rangle^{-\frac{1}{2}(\sigma'-\sigma_2+1)}\mathcal{D}_{p,\sigma_2},\quad \sigma_2+1<\sigma'\leq \frac{d}{p}+1,\\
   &\| V_{L}\|_{\dot{B}^{\sigma''}_{p,1}}\lesssim \|V_{L}\|_{\dot{\mathbb{B}}^{\sigma'',\frac{d}{2}+1}_{p,2}}\lesssim \langle t\rangle^{-\frac{1}{2}(\sigma''-\sigma_2)}\mathcal{D}_{p,\sigma_2},\quad \quad~~\quad\quad\quad\quad\quad\quad\quad \sigma_2+1<\sigma''\leq \frac{d}{p}+1
   %&\| \mathbf{P}V_{L}\|_{\dot{B}^{\sigma_2}_{p,\infty}}+\|\{\mathbf{I} - \mathbf{P}\} V_{L}\|_{\dot{B}^{\sigma_2+1}_{p,\infty}}\lesssim \mathcal{D}_{p,\sigma_2},
   %  \|\{\mathbf{I} - \mathbf{P}\}V\|_{\dot{B}^{\sigma'}_{p,\infty}}&\lesssim \langle t\rangle^{-\frac{1}{2}}E_{p,\sigma'},\quad\quad\quad\quad\quad\quad\quad\quad\quad\quad\quad\quad~~ \sigma'\geq \sigma_1+1,
\end{aligned}
\right.
\end{equation}
for $ \sigma_1\leq \sigma_2<\frac{d}{p}+1$, where the norm $\mathcal{D}_{p,\sigma}$ associated with the initial data is defined by
\begin{equation}\label{varepsilonLsigma'}
\begin{aligned}
\mathcal{D}_{p,\sigma}&\coloneqq  \|(\mathbf{P}V_0)^{\ell}\|_{\dot{B}^{\sigma}_{p,\infty}}+\|(\{\mathbf{I} - \mathbf{P}\}V_0)^{\ell}\|_{\dot{B}^{\sigma+1}_{p,\infty}}+\|V_0\|_{\dot{B}^{\frac{d}{2}+1}_{2,1}}^{h}.
\end{aligned}
\end{equation}
Note that
$\mathcal{D}_{p,\sigma_1}$ is not necessarily small; however,  there is some smallness of $\mathcal{D}_{p,\sigma_2}$ with $\sigma_1<\sigma_2<\frac{d}{p}-1$ due to the interpolation between $\mathcal{D}_{p,\sigma_1}$ and $E_{p,0}$, which  plays a role in handling those nonlinear terms in \eqref{deltav}.

In addition, we shall repeatedly use the fact that for $0\leq \gamma_{1}\leq \gamma_{2}$ and $\gamma_2>1$,
\begin{equation}\label{ineq}
\begin{aligned}
&\int_{0}^{t} \langle t-\tau\rangle^{-\gamma_{1}} \langle\tau\rangle^{-\gamma_{2}} \,d\tau\lesssim
\langle t\rangle^{-\gamma_{1}}.
\end{aligned}
\end{equation}
%For any general $\gamma_1,\gamma_2\geq 0$, one also has
%\begin{equation}\label{ineq0}
%\begin{aligned}
%&\int_{0}^{t} \langle t-\tau\rangle^{-\gamma_{1}} \langle\tau\rangle^{-\gamma_{2}} \,d\tau\lesssim\langle t\rangle^{-(\gamma_{1}+\gamma_{2}-1)-}
%\end{aligned}
%\end{equation}
%In addition, we give some necessary estimates of $\delta V$, $V$ and $V_L$ to simplify the calculations.
Also, owing to
the definition \eqref{mathcalDt} of $\mathcal{D}_{p}(t)$, we have the decay estimates
\begin{equation}
\left\{
\begin{aligned}\label{Dtdecay}
&\|\mathbf{P}\delta V(t)\|_{\dot{B}^{\sigma}_{p,1}}\lesssim \langle t\rangle^{-\frac{1}{2}(\sigma-\sigma_1+\sigma_1^*)} \mathcal{D}_{p}(t),\quad\quad\quad \quad\quad\sigma_1<\sigma\leq \frac{d}{p}+1-\alpha^*,\\
&\|\{\mathbf{I} - \mathbf{P}\}\delta V(t)\|_{\dot{B}^{\sigma'}_{p,1}}\lesssim \langle t\rangle^{-\frac{1}{2}(\sigma'-\sigma_1+1+\sigma_2^*)} \mathcal{D}_{p}(t),\quad \sigma_1+1<\sigma'\leq \frac{d}{p}-\alpha^*,\\
& \|\delta V(t)\|_{\dot{B}^{\sigma''}_{p,1}}\lesssim \langle t\rangle^{-\frac{1}{2}(\sigma''-\sigma_1+\sigma_1^*)} \mathcal{D}_{p}(t),\quad\quad\quad \quad\quad\quad~ \sigma_1+1<\sigma''\leq \frac{d}{p}+1-\alpha^*
\end{aligned}
\right.
\end{equation}
for all $t>0$.

 %In particular, from \eqref{Dtdecay} and \eqref{preciselinear1} we have
%\begin{equation}\label{precisedecay}
%\left\{
%\begin{aligned}
%&\|\delta V(t)\|_{\dot{B}^{\sigma}_{p,1}}\lesssim \langle t\rangle^{-\frac{1}{2}(\sigma-\sigma_1+\sigma_1^*)} \mathcal{D}_{p}(t),\quad\quad\quad \quad \sigma_1+1<\sigma\leq \frac{d}{p}+1-\alpha^*,\\
% &\| V_{L}\|_{\dot{B}^{\sigma'}_{p,1}}\lesssim \|V_{L}\|_{\dot{\mathbb{B}}^{\sigma',\frac{d}{2}+1}_{p,2}}\lesssim \langle t\rangle^{-\frac{1}{2}(\sigma'-\sigma_2)}\mathcal{D}_{p,\sigma_2},\quad \sigma'>\sigma_2+1.
%\end{aligned}
%\right.
%\end{equation}

\subsubsection{Bounds for the low frequencies}

%In order to prove the desired estimate \label{errores}, we first bound the functional $\mathcal{D}_{p}(t)$ defined by
%\begin{equation}
%\begin{aligned}
%\mathcal{D}_{p}(t)&\coloneqq  \sup_{\sigma_{1}-1<\sigma< \frac{d}{p}} \|\langle t\rangle^{\frac{1}{2}(\sigma-\sigma_{1}+\sigma_{*})}\mathbf{P}\delta V\|_{L^{\infty}_{t}(\dot{B}^{\sigma}_{p,1})}^{\ell}+\sup_{\sigma_{1}<\sigma<\frac{d}{p}-1} \|\langle t\rangle^{\frac{1}{2}(\sigma-\sigma_{1}+1+\sigma_{*})}\{\mathbf{I}-\mathbf{P}\}\delta V\|_{L^{\infty}_{t}(\dot{B}^{\sigma}_{p,1})}^{\ell}\\
%&\quad+\|\langle \tau\rangle^{\beta^*} \delta V\|_{L^{\infty}_{t}(\dot{B}^{\frac{d}{2}+1}_{2,1})}^{h}.
%\end{aligned}
%\end{equation}
This part is devoted to the time-decay estimates of $\mathbf{P}\delta V$ and $\{\mathbf{I} - \mathbf{P}\}\delta V$ in low frequencies.

\begin{lemma}\label{lemmalow}
Let $\sigma_1<\sigma\leq  \frac{d}{p}+1-\alpha^*$. It holds that
\begin{equation}\label{PV1}
\begin{aligned}
\|\mathbf{P}\delta V\|_{\dot{B}^{\sigma}_{p,1}}^{\ell} &\lesssim \langle t\rangle^{-\frac{1}{2}(\sigma-\sigma_1+\sigma_1^*)}\Big(\mathcal{D}_{p,\sigma_1+\frac{1}{2}\alpha^*}+\|(\mathbf{P}V,\mathbf{P}\delta V)\|_{L^{\infty}_{t}(\dot{B}^{\sigma_1+\frac{1}{2}\alpha^*}_{p,\infty})}+\| \delta V\|_{L^{\infty}_{t}(\dot{B}^{\sigma_1+1}_{p,\infty})}\Big)\mathcal{D}_{p}(t)\\
&\quad+\langle t\rangle^{-\frac{1}{2}(\sigma-\sigma_1+\sigma_1^*)}\Big( \|\delta V\|_{L^{\infty}_{t}(\dot{B}^{\sigma_1+1}_{p,\infty})}\mathcal{D}_{p,\sigma_1}+ \mathcal{D}_{p,\sigma_1}^2 \Big),
\end{aligned}
\end{equation}
where $\alpha^*$ and $\sigma_1^*$  are defined by  \eqref{5.4} and \eqref{sigma222}, respectively.
%where $\sigma_3\in(0,\beta_2^*)$ is a constant.
\end{lemma}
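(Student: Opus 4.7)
\textbf{Proof proposal for Lemma \ref{lemmalow}.} My plan is to start from the frequency-localization Duhamel formula \eqref{LocalLDuhamel} applied to $\mathbf{P}\delta V$, namely
\begin{equation*}
\dot{\Delta}_j \mathbf{P}\delta V=\int_0^t \mathbf{P}\mathcal{G}(t-\tau)\dot{\Delta}_j R^0\,d\tau+\sum_{i=1}^d\int_0^t \mathbf{P}\mathcal{G}(t-\tau)\partial_{x_i}\dot{\Delta}_j R^i\,d\tau,
\end{equation*}
and plug in the sharp spectral bound \eqref{f-l} from Proposition \ref{LemmaspectrallocalHp}. Using $\mathbf{P}R^0=0$ and the spatial derivative present in each $R^i$ with $i\geq1$, I gain the factor $2^j$ in both terms and obtain, for $j\leq J_0$,
\begin{equation*}
\|\dot{\Delta}_j\mathbf{P}\delta V(t)\|_{L^p}\lesssim \int_0^t 2^j e^{-R_1 2^{2j}(t-\tau)}\sum_{i=0}^d \|\dot{\Delta}_j R^i(\tau)\|_{L^p}\,d\tau.
\end{equation*}
Multiplying by $2^{j\sigma}$, summing over $j\leq J_0$ and invoking \eqref{opoo} to transform the weighted exponential sum into a time-decay factor, I shall reduce the problem to the integral bound
\begin{equation*}
\|\mathbf{P}\delta V(t)\|_{\dot{B}^\sigma_{p,1}}^{\ell}\lesssim \int_0^t (t-\tau)^{-\frac{1}{2}(\sigma-\gamma+1)}\sum_{i=0}^d\|R^i(\tau)\|_{\dot{B}^\gamma_{p,\infty}}^{\ell}\,d\tau
\end{equation*}
for suitable auxiliary exponents $\gamma$; splitting $[0,t]=[0,t/2]\cup[t/2,t]$ and choosing $\gamma$ differently on each piece will then generate the right weight $\langle t\rangle^{-\frac{1}{2}(\sigma-\sigma_1+\sigma_1^*)}$ after applying \eqref{ineq}.

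Next, to bound the nonlinear terms $R^i$ I will insert the decomposition $V=V_L+\delta V$ and write
\begin{equation*}
|R^i|\lesssim |V|^2\lesssim |V_L|^2+\delta\mathcal{I}_1+\delta\mathcal{I}_2,
\end{equation*}
with $\delta\mathcal{I}_1$ containing at least one dissipative factor (either $\{\mathbf{I}-\mathbf{P}\}V_L$, $\{\mathbf{I}-\mathbf{P}\}V$ or $\{\mathbf{I}-\mathbf{P}\}\delta V$) and $\delta\mathcal{I}_2=|\mathbf{P}\delta V|(|\mathbf{P}V|+|\mathbf{P}V_L|)$ collecting the purely conservative cross terms. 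For each piece, I apply the product laws of $L^p$--$L^2$ hybrid type collected in Appendix \ref{appendixA} (Corollary \ref{coroApp1} in particular), followed by the linear decay estimates \eqref{preciselinear1} for $\mathbf{P}V_L$, $\{\mathbf{I}-\mathbf{P}\}V_L$, the weighted decay \eqref{Dtdecay} for $\mathbf{P}\delta V$ and $\{\mathbf{I}-\mathbf{P}\}\delta V$, and the uniform bound \eqref{Vuniform} for the full nonlinear solution $V$. The crucial point is that every time a $\mathbf{P}\delta V$-factor appears one can take the low-frequency norm $\|\cdot\|_{L^\infty_t(\dot{B}^{\sigma_1+\frac12\alpha^*}_{p,\infty})}$ of it (or $\|\delta V\|_{L^\infty_t(\dot{B}^{\sigma_1+1}_{p,\infty})}$ when it is paired with a genuinely dissipative factor), thereby producing the structure on the right-hand side of \eqref{PV1}.

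The main technical obstacle will be to avoid the smallness assumption on $\mathcal{D}_{p,\sigma_1}$: since this norm is only bounded and need not be small, I must make sure that the bad linear--linear term $|V_L|^2$ is absorbed by $\mathcal{D}_{p,\sigma_1}^2$ (the last summand in \eqref{PV1}), while in every mixed term where $\delta V$ or $V$ enters quadratically I can sacrifice a little decay and use interpolation between $\mathcal{D}_{p,\sigma_1}$ and $E_{p,0}$ to recover a small factor $\mathcal{D}_{p,\sigma_2}$ with $\sigma_1<\sigma_2<d/p-1$. The careful choice of the intermediate exponent $\gamma$ and of the regularity indices fed into the product laws is what forces the split in \eqref{sigma222} between the regimes $\sigma\leq d/p-1$ and $\sigma>d/p-1$: in the latter range one loses half of the gain because the product $|V|^2$ saturates the Besov algebra at $d/p$, so only $\sigma_1^*=\frac12\alpha^*$ survives. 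Once these integrals are estimated one checks that the contributions assemble exactly into the right-hand side of \eqref{PV1}.
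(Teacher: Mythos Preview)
Your strategy coincides with the paper's: Duhamel formula with the $2^j$ gain coming from $\mathbf{P}R^0=0$ and the derivative on $R^i$, the trilinear splitting $R^i=\mathcal{I}_L^i+\delta\mathcal{I}_1^i+\delta\mathcal{I}_2^i$, and the case analysis on $\sigma$ relative to $d/p-1$. A few small corrections are in order. First, the paper does \emph{not} split $[0,t]$ into halves or vary $\gamma$; it fixes the single auxiliary index $\gamma=\sigma_1+1-\sigma_1^*$ in \eqref{fgggggg} and then applies the convolution inequality \eqref{ineq} directly (the low-frequency cutoff $j\le J_0$ is what turns your singular factor $(t-\tau)^{-\cdots}$ into the harmless $\langle t-\tau\rangle^{-\cdots}$). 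Second, the product laws actually used at low frequencies are the pure $L^p$ Besov estimates \eqref{product2}, \eqref{product3} together with the composite bounds \eqref{F1}, \eqref{F11} from Lemmas~\ref{prop3.2} and~\ref{lemma64}; the hybrid Corollary~\ref{coroApp1} you cite is reserved for the high-frequency part. Third, the interpolation trick you describe (extracting smallness via $\mathcal{D}_{p,\sigma_1}^{\theta}E_{p,0}^{1-\theta}$) is not carried out inside this lemma; the lemma only produces the prefactors $\mathcal{D}_{p,\sigma_1+\frac12\alpha^*}$, $\|(\mathbf{P}V,\mathbf{P}\delta V)\|_{L^\infty_t(\dot{B}^{\sigma_1+\frac12\alpha^*}_{p,\infty})}$ and $\|\delta V\|_{L^\infty_t(\dot{B}^{\sigma_1+1}_{p,\infty})}$, and their smallness is established only afterwards when closing Proposition~\ref{properror1}.
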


\begin{proof}
Applying Duhamel's principle to \eqref{deltav} yields that
\begin{equation}\label{duhamel}
\begin{aligned}
\delta V=\int_{0}^{t} \mathcal{G}(t-\tau)R^{0} d\tau+\sum_{i=1}^{d}\int_{0}^{t} \mathcal{G}(t-\tau)\partial_{x_{i}}R^{i} d\tau.
\end{aligned}
\end{equation}
%Since there is a derivative on the term involving $R^{i}$ in \eqref{duhamel} and we note the key fact that  $\mathbf{P} R^0=0$,
Following essentially the same argument as \eqref{Duhanel1}-\eqref{PVj}, we arrive at
\begin{equation}\label{555}
\begin{aligned}
\|\dot{\Delta}_{j}\mathbf{P}\delta V\|_{L^{p}}&\lesssim
\int_{0}^{t} e^{-R_{1}2^{2j}(t-\tau)} 2^{j} \sum_{i=0}^{d}\|\dot{\Delta}_{j}R^{i}\|_{L^{p}}\,d\tau,\quad j\leq J_0.
\end{aligned}
\end{equation}
%We need a more accurate decomposition of the non-linear terms $R^{0}$ and $R^{i}$ due to the different behaviors of conservative and dissipative components for the linear part and the error part.
Since the nonlinear terms $R^{i}$ ($i=0,1,2,\dots,d$) defined in \eqref{R0Ri} are quadratic, there exists a $n\times n$ functional matrix $S^{i}(V)$ depending on $V$ smoothly such that $S^{i}(0)=0$
and $R^i=S^{i}(V) V$.
To handle the integral on the right-hand side of \eqref{555}, we decompose them in terms of the linear part and the difference part of solutions:
\begin{equation}\label{RiS}
\begin{aligned}
%R^0&=S^{0}(V)\{\mathbf{I} - \mathbf{P}\}V=S^{0}(V_{L})\{\mathbf{I} - \mathbf{P}\}V_{L}+(S^{0}(V)-S^{0}(V_{L}) )\{\mathbf{I} - \mathbf{P}\}V_{L}\\
%&\quad+S^{0}(V_{L}) \{\mathbf{I} - \mathbf{P}\}\delta V+(S^{0}(V)-S^{0}(V_{L}))\delta V,\\
R^i=S^{i}(V) V=\mathcal{I}^i_L+\delta \mathcal{I}^i_1+\delta \mathcal{I}^i_2
\end{aligned}
\end{equation}
with
\begin{equation}
\begin{aligned}
(\text{double linear part})\quad \mathcal{I}^i_L&\coloneqq   S^i(V_L)V_L ,\\
(\text{error part with $\{\mathbf{I} - \mathbf{P}\}$})\quad \delta \mathcal{I}_1^i&\coloneqq  (S^i(V)-S^i(\mathbf{P} V) )\delta V+(S^i(V)-S^i(V_L))\{\mathbf{I} - \mathbf{P}\} V_L \\
&\quad+ S^i(\mathbf{P}V) \{\mathbf{I} - \mathbf{P}\} \delta V,\\
%&\quad+\{\mathbf{I} - \mathbf{P}\}\delta V^{\top}S^{i}(V) \{\mathbf{I} - \mathbf{P}\} \delta V,\\
(\text{error part with $\mathbf{P}$})\quad \delta \mathcal{I}_2^i&\coloneqq  (S^i(V)-S^i(V_L)) \mathbf{P} V_L +S^i(\mathbf{P}V)\mathbf{P}\delta V.
\end{aligned}
\end{equation}
Multiplying \eqref{555} by $2^{\sigma j}$, taking the summation over $j\leq J_0$ and applying \eqref{opoo}, we get
\begin{equation}\label{fgggggg}
\begin{aligned}
\|\mathbf{P}\delta V(t)\|_{\dot{B}^{\sigma}_{p,1}}^{\ell}&\lesssim\int_{0}^{t} \langle t-\tau\rangle^{-\frac{1}{2}(\sigma-\sigma_{1}+\sigma_1^*)} \sum_{i=0}^d\|(\mathcal{I}^i_L,\delta\mathcal{I}^i_{1},\delta\mathcal{I}_{2}^i)\|_{\dot{B}^{\sigma_{1}+1-\sigma_1^*}_{p,\infty}}^{\ell}\, d\tau
\end{aligned}
\end{equation}
for $\sigma>\sigma_1$ and $0<\sigma_1^*\leq 1$.
For bounding the integral with respect to time in \eqref{fgggggg},  we  proceed differently depending on whether
$\sigma_1<\sigma\leq \frac{d}{p}-1$ and
$\frac{d}{p}-1<\sigma\leq \frac{d}{p}+1-\alpha^*$.

%in \eqref{preciselinear1} and \eqref{precisedecay} we will take $\sigma_2=\sigma_1$ when addressing the quadratic linear terms $\mathcal{I}^i_L$ and take some  $\sigma_2>\sigma_1$ when estimating the error terms $\delta\mathcal{I}_1^i$ and $\delta\mathcal{I}_2^i$.
\begin{itemize}
\item \emph{Case 1: $\sigma_1<\sigma\leq \frac{d}{p}-1$ and $-\frac{d}{p}\leq \sigma_1<\frac{d}{p}-2$.}
\end{itemize}
In that case, we set  $\sigma_1^*=1$, which ensures the faster $\langle t\rangle^{-\frac{1}{2}}$-rate in \eqref{fgggggg}. % In this case, define
%$$
%\gamma^*=\frac{1}{2}(\frac{d}{p}-2-\sigma_1)\in (0,\frac{d}{p}-2-\sigma_1).
%$$
The nonlinear terms $\mathcal{I}_{L}^i$, $\delta\mathcal{I}_1^i$ and  $\delta\mathcal{I}_2^i$ on the right-hand side of \eqref{fgggggg} are analyzed separately as follows. It follows from  \eqref{preciselinear1} with $\sigma_2=\sigma_1$, \eqref{product3} and \eqref{F1} that
\begin{equation}\label{mmfgg}
\begin{aligned}
\|\mathcal{I}_{L}^i\|_{\dot{B}^{\sigma_{1}}_{p,\infty}}&\lesssim \|S(V_L)\|_{\dot{B}^{\frac{d}{p}-1-\varepsilon}_{p,1}}\|V_L\|_{\dot{B}^{\sigma_1+1+\varepsilon}_{p,\infty}}\lesssim \|V_L\|_{\dot{B}^{\frac{d}{p}-1-\varepsilon}_{p,1}} \|V_L\|_{\dot{B}^{\sigma_1+1+\varepsilon}_{p,1}}\lesssim \langle t\rangle^{-\frac{1}{2}(\frac{d}{p}-\sigma_1)} \mathcal{D}_{p,\sigma_1}^2,
\end{aligned}
\end{equation}
where we considered $0<\varepsilon\ll\frac{d}{p}-2-\sigma_1$ such that $\sigma_1+1<\frac{d}{p}-1-\varepsilon$.
Then, we turn to bound each term in $\delta \mathcal{I}_{1}^i$.

Thanks to the estimate \eqref{F11} for the composite function $S^i(V)-S^i(V_L)$ and the faster rate of $\{\mathbf{I} - \mathbf{P}\}V_L$ in \eqref{preciselinear1} with $\sigma_2=\sigma_1$, we obtain
\begin{equation*}
\begin{aligned}
\|(S^i(V)-S^i(V_L))\{\mathbf{I} - \mathbf{P}\}V_L\|_{\dot{B}^{\sigma_{1}}_{p,\infty}}&\lesssim \|S^i(V)-S^i(V_L)\|_{\dot{B}^{\sigma_1+1}_{p,\infty}}\|\{\mathbf{I} - \mathbf{P}\} V_L\|_{\dot{B}^{\frac{d}{p}-1}_{p,1}}\\
&\lesssim \|\delta V\|_{\dot{B}^{\sigma_1+1}_{p,\infty}} \|\{\mathbf{I} - \mathbf{P}\} V_L\|_{\dot{B}^{\frac{d}{p}-1}_{p,1}}\\
&\lesssim  \|\delta V\|_{\dot{B}^{\sigma_1+1}_{p,\infty}} \mathcal{D}_{p,\sigma_1}\langle t\rangle^{-\frac{1}{2}(\frac{d}{p}-\sigma_1)},
\end{aligned}
\end{equation*}
where  \eqref{product3} was used. Similarly, it follows from \eqref{Dtdecay}, \eqref{product3} and \eqref{F11} that
\begin{equation*}
\begin{aligned}
\|S^i(\mathbf{P}V)\{\mathbf{I} - \mathbf{P}\}\delta V\|_{\dot{B}^{\sigma_{1}}_{p,\infty}}&\lesssim \|\mathbf{P}V\|_{\dot{B}^{\sigma_1+1}_{p,\infty}} \|\{\mathbf{I} - \mathbf{P}\}\delta V\|_{\dot{B}^{\frac{d}{p}-1}_{p,1}}\lesssim \|\mathbf{P}V\|_{\dot{B}^{\sigma_1+1}_{p,\infty}} \mathcal{D}_{p}(t)\langle t\rangle^{-\frac{1}{2}(\frac{d}{p}-\sigma_1+1)} .
\end{aligned}
\end{equation*}
The estimate of $(S^i(V)-S^i(\mathbf{P}V))\delta V$  requires the decomposition that
 $V-\mathbf{P}V=\{\mathbf{I} - \mathbf{P}\}V=\{\mathbf{I} - \mathbf{P}\}(V_L+\delta V)$. We get
\begin{equation*}
\begin{aligned}
\|(S^i(V)-S^i(\mathbf{P}V))\delta V\|_{\dot{B}^{\sigma_{1}}_{p,\infty}}%&\lesssim \|\{\mathbf{I} - \mathbf{P}\}V\|_{\dot{B}^{\frac{d}{p}-1}_{p,1}} \|\delta V\|_{\dot{B}^{\sigma_1+1}_{p,\infty}}\\
&\lesssim \Big(\|\{\mathbf{I} - \mathbf{P}\}V_L\|_{\dot{B}^{\frac{d}{p}-1}_{p,1}} +\|\{\mathbf{I} - \mathbf{P}\}\delta V\|_{\dot{B}^{\frac{d}{p}-1}_{p,1}} \Big)\|\delta V\|_{\dot{B}^{\sigma_1+1}_{p,\infty}}\\
&\lesssim \|\delta V\|_{\dot{B}^{\sigma_1+1}_{p,\infty}}(\mathcal{D}_{p}(t)+\mathcal{D}_{p,\sigma_1})\langle t\rangle^{-\frac{1}{2}(\frac{d}{p}-\sigma_1)}.
\end{aligned}
\end{equation*}
Hence, we conclude that
\begin{equation}\label{I1i}
\begin{aligned}
\|\delta \mathcal{I}_1^i\|_{\dot{B}^{\sigma_{1}}_{p,\infty}}&\lesssim  \Big(\|\mathbf{P}V\|_{\dot{B}^{\sigma_1+1}_{p,\infty}}+\|\delta V\|_{\dot{B}^{\sigma_1+1}_{p,\infty}})\mathcal{D}_{p}(t)+\|\delta V\|_{\dot{B}^{\sigma_1+1}_{p,\infty}} \mathcal{D}_{p,\sigma_1}\Big)\langle t\rangle^{-\frac{1}{2}(\frac{d}{p}-\sigma_1)}.
\end{aligned}
\end{equation}
Now, arguing as for proving \eqref{I1i}, we easily get the estimate for $\delta \mathcal{I}_2^i$,
\begin{equation}\label{I2i}
\begin{aligned}
\|\delta \mathcal{I}_2^i\|_{\dot{B}^{\sigma_{1}}_{p,\infty}}&\lesssim  \|\delta V\|_{\dot{B}^{\frac{d}{p}-1}_{p,1}} \|(\mathbf{P}V_L,\mathbf{P}V)\|_{\dot{B}^{\sigma_1+1}_{p,\infty}}\lesssim  \|(\mathbf{P}V_L,\mathbf{P}V)\|_{\dot{B}^{\sigma_1+1}_{p,\infty}}\mathcal{D}_{p}(t) \langle t\rangle^{-\frac{1}{2}(\frac{d}{p}-\sigma_1)}.
\end{aligned}
\end{equation}
Furthermore, one can employ \eqref{preciselinear1} with $\sigma_2=\sigma_1+\frac{1}{2}$ to get
\begin{equation}
\begin{aligned}
\|\mathbf{P}V_L\|_{\dot{B}^{\sigma_1+1}_{p,\infty}}&\lesssim \langle t\rangle^{-\frac{1}{4}}\mathcal{D}_{p,\sigma_1+\frac{1}{2}}.
\end{aligned}
\end{equation}
%where we have used $\|\delta V\|_{\dot{B}^{\sigma_1+1}_{2,1}}^h\lesssim\|(V,V_L)\|_{\dot{B}^{\frac{d}{p}+1}_{2,1}}^h\lesssim E_{p,0}\lesssim E_{p,\sigma_1+1}$.
Notice that $\frac{1}{2}(\frac{d}{p}-\sigma_1)>1$ and $\frac{1}{2}(\sigma-\sigma_1+1)\leq \frac{1}{2}(\frac{d}{p}-\sigma_1)$ for $\sigma_{1}<\sigma\leq \frac{d}{p}-1$. Plugging the above estimates \eqref{mmfgg}-\eqref{I2i} for $\mathcal{I}_{L}$, $\delta \mathcal{I}_1^i$ and $\delta \mathcal{I}_2^i$ into \eqref{fgggggg},
we end up with, thanks to \eqref{ineq},
\begin{equation}\label{fg1}
\begin{aligned}
\|\mathbf{P}\delta V\|_{\dot{B}^{\sigma}_{p,1}}^{\ell}
\lesssim&\int_{0}^{t} \langle t-\tau\rangle^{-\frac{1}{2}(\sigma-\sigma_{1}+1)} \|(\mathcal{I}_{L}, \delta \mathcal{I}_1^i, \delta \mathcal{I}_2^i)\|_{\dot{B}^{\sigma_1}_{p,\infty}}\,d\tau\\
%\lesssim&\int_{0}^{t} \langle t-\tau\rangle^{-\frac{1}{2}(\sigma-\sigma_{1}+1)} \langle \tau \rangle^{-\frac{1}{2}\left(\frac{d}{p}-\sigma_{1}\right)}d\tau\, \left((\|\delta V^{\ell}\|_{L^{\infty}_t(\dot{B}^{\sigma_1+1}_{p,1})}+\| V\|_{L^{\infty}_{t}(\dot{B}^{\sigma_1+2}_{p,1})}+E_{p,\sigma_1+1})\mathcal{D}_{p}(t)+\mathcal{D}_{p,\sigma_1}^2 \right)\\
\lesssim & \langle t\rangle^{-\frac{1}{2}(\sigma-\sigma_1+1)}\bigg( \Big(\mathcal{D}_{p,\sigma_1+\frac{1}{2}}+\|\mathbf{P}V\|_{\dot{B}^{\sigma_1+1}_{p,\infty}}+\|\delta V\|_{\dot{B}^{\sigma_1+1}_{p,\infty}}\Big)\mathcal{D}_{p}(t)+ \|\delta V\|_{\dot{B}^{\sigma_1+1}_{p,\infty}} \mathcal{D}_{p,\sigma_1} +\mathcal{D}_{p,\sigma_1}^2\bigg)
\end{aligned}
\end{equation}
for  $\sigma_{1}<\sigma\leq \frac{d}{p}-1$ and $-\frac{d}{p}\leq \sigma_1<\frac{d}{p}-2$.

\begin{itemize}
\item \emph{Case 2: $\sigma_1<\sigma\leq \frac{d}{p}-1$ and $\frac{d}{p}-2\leq \sigma_1<\frac{d}{p}-1$.}
\end{itemize}
Since $\sigma_1$ is close to $\frac{d}{p}-1$, we may not expect the best $t^{-\frac{1}{2}}$-rate for the error $\delta V$. %In fact, we establish a $t^{-\frac{1}{2}(\frac{d}{p}-1-\sigma_1-\varepsilon)}$ improvement of decay rates for the error $\delta V$ for any suitably small $0<\varepsilon\ll1$.
In fact, we take $\sigma_1^*=\alpha^*=\frac{d}{p}-1-\sigma_1-\varepsilon_1$ and transfer the $(1-\alpha^*)$-order regularity to deal with those nonlinear terms in \eqref{fgggggg} involving $\mathcal{I}_{L}^i$ and $\delta\mathcal{I}_1^i$, which allows us to ensure the sufficient decay on nonlinear terms and then obtain the $t^{-\frac{1}{2}(\frac{d}{p}-1-\sigma_1-\varepsilon_1)}$ -rate for $\delta V$.

Given that  $\frac{d}{p}-\alpha^*-\frac{\varepsilon_1}{2}=\sigma_1+1+\frac{\varepsilon_1}{2}>1+\sigma_1$, we perform the product law \eqref{product3}, \eqref{F1} and \eqref{preciselinear1} to obtain
\begin{equation*}
\begin{aligned}
\|\mathcal{I}_{L}^i\|_{\dot{B}^{\sigma_{1}+1-\alpha^*}_{p,\infty}}&\lesssim
\|S^i(V_L)\|_{\dot{B}^{\frac{d}{p}-\alpha^*-\frac{\varepsilon_1}{2}}_{p,1}} \| V_L\|_{\dot{B}^{\sigma_{1}+1+\frac{\varepsilon_1}{2}}_{p,\infty}}\\
&\lesssim \| V_L\|_{\dot{B}^{\frac{d}{p}-\alpha^*-\frac{\varepsilon_1}{2}}_{p,1}}  \| V_L\|_{\dot{B}^{\sigma_{1}+1+\frac{\varepsilon_1}{2}}_{p,1}}\lesssim \langle t\rangle^{-\frac{1}{2}\left(\frac{d}{p}-\sigma_1+1-\alpha^*\right)}\mathcal{D}_{p,\sigma_1}^2.
\end{aligned}
\end{equation*}
%With the aid of \eqref{Vuniform}-\eqref{Dtdecay} with faster rates of $|\{\mathbf{I} - \mathbf{P}\}V_L$ and $\{\mathbf{I} - \mathbf{P}\}\delta V$,
Arguing similarly as for proving \eqref{I1i}, we get
\begin{equation*}
\begin{aligned}
\|\delta \mathcal{I}^i_1\|_{\dot{B}^{\sigma_{1}+1-\alpha^*}_{p,\infty}}&\lesssim \|(S^i(V)-S^i(V_L))\{\mathbf{I} - \mathbf{P}\}V_L\|_{\dot{B}^{\sigma_{1}+1-\alpha^*}_{p,\infty}}+\|S^i(\mathbf{P}V)\{\mathbf{I} - \mathbf{P}\}\delta V\|_{\dot{B}^{\sigma_{1}+1-\alpha^*}_{p,\infty}}\\
&\quad+\|(S^i(V)-S^i(\mathbf{P}V))\delta V\|_{\dot{B}^{\sigma_{1}+1-\alpha^*}_{p,\infty}}\\
&\lesssim \|\delta V\|_{\dot{B}^{\sigma_1+1}_{p,\infty}}\|\{\mathbf{I} - \mathbf{P}\} V_L\|_{\dot{B}^{\frac{d}{p}-\alpha^*}_{p,1}}+\|\mathbf{P}V\|_{\dot{B}^{\sigma_1+1}_{p,\infty}} \|\{\mathbf{I} - \mathbf{P}\}\delta V\|_{\dot{B}^{\frac{d}{p}-\alpha^*}_{p,1}}\\
&\quad+\Big(\|\{\mathbf{I} - \mathbf{P}\}V_L\|_{\dot{B}^{\frac{d}{p}-\alpha^*}_{p,1}} +\|\{\mathbf{I} - \mathbf{P}\}\delta V\|_{\dot{B}^{\frac{d}{p}-\alpha^*}_{p,1}}\Big)\|\delta V\|_{\dot{B}^{\sigma_1+1}_{p,\infty}}\\
&\lesssim \|\delta V\|_{\dot{B}^{\sigma_1+1}_{p,\infty}}(\mathcal{D}_{p}(t)+\mathcal{D}_{p,\sigma_1})\bigg(\langle t\rangle^{-\frac{1}{2}\left(\frac{d}{p}-\sigma_1+1-\alpha^*\right)}+\langle t\rangle^{-\frac{1}{2}\left(\frac{d}{p}-\sigma_1+1-\frac{1}{2}\alpha^*\right)}\bigg).
\end{aligned}
\end{equation*}
To handle the term involving $\delta \mathcal{I}_{2}^i$, we may use the decomposition $V=\mathbf{P}V+\{\mathbf{I}-\mathbf{P}\}V$ and derive the following three terms:
\begin{equation*}
\begin{aligned}
\|\delta \mathcal{I}_2^i\|_{\dot{B}^{\sigma_{1}+1-\alpha^*}_{p,\infty}}&\lesssim \|(S^i(V)-S^i(V_L))\mathbf{P}V_L\|_{\dot{B}^{\sigma_{1}+1-\alpha^*}_{p,\infty}}+\|S^i(\mathbf{P}V_L)\mathbf{P}\delta V\|_{\dot{B}^{\sigma_{1}+1-\alpha^*}_{p,\infty}}\\
&\quad+\|(S^i(\mathbf{P}V)-S^i(\mathbf{P}V_L))\mathbf{P}\delta V\|_{\dot{B}^{\sigma_{1}+1-\alpha^*}_{p,\infty}}.
\end{aligned}
\end{equation*}
It follows from the standard product laws that
\begin{align*}
\|(S^i(V)-S^i(V_L))\mathbf{P}V_L\|_{\dot{B}^{\sigma_{1}+1-\alpha^*}_{p,\infty}}&\lesssim \|\delta V\|_{\dot{B}^{\frac{d}{p}-\frac{1}{2}\alpha^*}_{p,1}} \|\mathbf{P}V_L\|_{\dot{B}^{\sigma_1+1-\frac{1}{2}\alpha^*}_{p,\infty}},\\
\|S^i(\mathbf{P}V_L)\mathbf{P}\delta V\|_{\dot{B}^{\sigma_{1}+1-\alpha^*}_{p,\infty}}&\lesssim \|\mathbf{P}\delta V\|_{\dot{B}^{\frac{d}{p}-\frac{1}{2}\alpha^*}_{p,1}} \|\mathbf{P}V_L\|_{\dot{B}^{\sigma_1+1-\frac{1}{2}\alpha^*}_{p,\infty}},\\
\|(S^i(\mathbf{P}V)-S^i(\mathbf{P}V_L))\mathbf{P}\delta V\|_{\dot{B}^{\sigma_{1}+1-\alpha^*}_{p,\infty}}&\lesssim \|\mathbf{P}\delta V\|_{\dot{B}^{\frac{1}{2}\left(\sigma_1+1+\frac{d}{p}-\alpha^*\right)}_{p,1}}^2.
\end{align*}
%$\eqref{Dtdecay}$ and \eqref{preciselinear1} with $\sigma_2=\sigma_1+\frac{1}{2}\alpha^*$ that
%\begin{equation*}
%\begin{aligned}
%\|\delta \mathcal{I}_2^i\|_{\dot{B}^{\sigma_{1}+1-\alpha^*}_{p,\infty}}&\lesssim  \|\delta V\|_{\dot{B}^{\frac{d}{p}-\frac{1}{2}\alpha^*}_{p,1}} \|\mathbf{P}V_L\|_{\dot{B}^{\sigma_1+1-\frac{1}{2}\alpha^*}_{p,\infty}}+ \|\delta V\|_{\dot{B}^{\frac{d}{p}-\frac{1}{2}\alpha^*}_{p,1}} \|\mathbf{P}V\|_{\dot{B}^{\sigma_1+1-\frac{1}{2}\alpha^*}_{p,\infty}}.
%&\lesssim \Big(\mathcal{D}_{p,\sigma_1+\frac{1}{2}\alpha^*} +\|\mathbf{P}V\|_{\dot{B}^{\sigma_1+\frac{1}{2}\alpha^*}_{p,\infty}} \Big)\mathcal{D}_{p}(t) {\red{\langle t\rangle^{-\frac{1}{2}\left(\frac{d}{p}-\sigma_1+\frac{1}{2}\alpha^*\right)}}}.
%\end{aligned}
%\end{equation*}
Note that  $\|\delta V\|_{\dot{B}^{\frac{d}{p}-\frac{1}{2}\alpha^*}_{p,1}}$ decays at the rate $\langle t\rangle^{-\frac{1}{2}(\frac{d}{p}-\sigma_1)}$  (see $\eqref{Dtdecay}$), which may not be sufficient in this case. To ensure enough decay, we exploit \eqref{preciselinear1} with $\sigma_2=\sigma_1+\frac{1}{2}\alpha^*$ to obtain
$$
\|\mathbf{P}V_L\|_{\dot{B}^{\sigma_1+1-\frac{1}{2}\alpha^*}_{p,1}}\lesssim \langle t\rangle^{-\frac{1}{2}(1-\alpha^*)}\mathcal{D}_{p,\sigma_1+\frac{1}{2}\alpha^*}.
$$
In addition, the interpolation inequality \eqref{inter}, together with \eqref{Dtdecay}, implies
$$
\|\mathbf{P}\delta V\|_{\dot{B}^{\frac{1}{2}(\sigma_1+1+\frac{d}{p}-\alpha^*)}_{p,1}}^2\leq \|\mathbf{P}\delta V\|_{\dot{B}^{\frac{d}{p}+1-\alpha^*}_{p,\infty}} \|\mathbf{P}\delta V\|_{\dot{B}^{\sigma_1+1}_{p,\infty}}\lesssim \langle t\rangle^{-\frac{1}{2}(\frac{d}{p}-\sigma_1+1-\frac{1}{2}\alpha^*)} \|\delta V\|_{\dot{B}^{\sigma_1+1}_{p,\infty}}.
$$
%and therefore
%$$
%\|\delta V\|_{\dot{B}^{\frac{d}{p}-\frac{1}{2}\alpha^*}_{p,1}} \|\mathbf{P}V_L\|_{\dot{B}^{\sigma_1+1-\frac{1}{2}\alpha^*}_{p,\infty}}\lesssim \langle t\rangle^{-\frac{1}{2}\left(\frac{d}{p}-\sigma_1+1-\alpha^*\right)} \mathcal{D}_{p,\sigma_1+\frac{1}{2}\alpha^*} \mathcal{D}_{p}(t) .
%$$
Therefore, we have
\begin{equation*}
\begin{aligned}
\|\delta \mathcal{I}^i_2\|_{\dot{B}^{\sigma_{1}+1-\alpha^*}_{p,\infty}}&\lesssim\Big(  \mathcal{D}_{p,\sigma_1+\frac{1}{2}\alpha^*}+\|\delta V\|_{\dot{B}^{\sigma_1+1}_{p,\infty}} \Big) \mathcal{D}_{p}(t) \bigg(\langle t\rangle^{-\frac{1}{2}\left(\frac{d}{p}-\sigma_1+1-\alpha^*\right)}+\langle t\rangle^{-\frac{1}{2}\left(\frac{d}{p}-\sigma_1+1-\frac{1}{2}\alpha^*\right)}\bigg).
\end{aligned}
\end{equation*}
Owing to \eqref{ineq}, it is clear that
\begin{equation}\nonumber
\begin{aligned}
&\int_{0}^{t} \langle t-\tau\rangle^{-\frac{1}{2}(\sigma-\sigma_{1}+\alpha^*)} \langle \tau \rangle^{-\frac{1}{2}\left(\frac{d}{p}-\sigma_{1}+1-\alpha^*\right)}\,d\tau\lesssim  \langle t\rangle^{-\frac{1}{2}(\sigma-\sigma_1+\alpha^*)},
\end{aligned}
\end{equation}
since $\frac{1}{2}(\sigma-\sigma_1+\alpha^*)\leq \frac{1}{2}(\frac{d}{p}-\sigma_1)$ for $\sigma\leq \frac{d}{p}-1$ and $\frac{1}{2}\left(\frac{d}{p}-\sigma_1+1-\alpha^*\right)>1$. Therefore, collecting the above estimates for $\mathcal{I}^i_L$, $\delta \mathcal{I}^i_1$ and $\delta \mathcal{I}^i_2$, it follows  from \eqref{fgggggg} that
\begin{equation}\label{fg2}
\begin{aligned}
\|\mathbf{P}\delta V\|_{\dot{B}^{\sigma}_{p,1}}^{\ell}&\lesssim \langle t\rangle^{-\frac{1}{2}(\sigma-\sigma_1+\alpha^*)}\bigg( \Big(\mathcal{D}_{p,\sigma_1+\frac{1}{2}\alpha^*}+\|\mathbf{P}V\|_{\dot{B}^{\sigma_1+\frac{1}{2}\alpha^*}_{p,\infty}}+\|\delta V\|_{\dot{B}^{\sigma_1+1}_{p,\infty}}\Big)\mathcal{D}_{p}(t)\\
&\quad+\mathcal{D}_{p,\sigma_1}\|\delta V\|_{\dot{B}^{\sigma_1+1}_{p,\infty}}+\mathcal{D}_{p,\sigma_1}^2\bigg).
\end{aligned}
\end{equation}

\begin{itemize}
\item \emph{Case 3: $\frac{d}{p}-1<\sigma\leq \frac{d}{p}+1-\alpha^*$.}
\end{itemize}
Let us end the error estimate for $\mathbf{P}\delta V$
for the last case where we take $\sigma_1^*=\frac{1}{2}\alpha^*=\frac{1}{2}\min\Big\{1,\frac{d}{p}-1-\sigma_1-\varepsilon_1  \Big\}$ with $0<\varepsilon_1<\frac{d}{p}-1-\sigma_1$. First, it follows  from \eqref{product2} and \eqref{preciselinear1} with  $\frac{d}{p}-\alpha^*>\sigma_1+1$ that
\begin{equation*}
\begin{aligned}
\|\mathcal{I}_{L}^i\|_{\dot{B}^{\sigma_{1}+1-\frac{1}{2}\alpha^*}_{p,\infty}}&\lesssim  \| V_L\|_{\dot{B}^{\sigma_{1}+1+\frac{1}{2}\alpha^*}_{p,\infty}}  \| V_L\|_{\dot{B}^{\frac{d}{p}-\alpha^*}_{p,1}}\lesssim \langle t\rangle^{-\frac{1}{2}\left(\frac{d}{p}-\sigma_1+1-\frac{1}{2}\alpha^*\right)}\mathcal{D}_{p,\sigma_1}^2.
\end{aligned}
\end{equation*}
In light of \eqref{preciselinear1} and the definition of $\mathcal{D}_p(t)$, we arrive at
\begin{equation*}
\begin{aligned}
\|\delta \mathcal{I}^i_1\|_{\dot{B}^{\sigma_{1}+1-\frac{1}{2}\alpha^*}_{p,\infty}}&\lesssim \|(S^i(V)-S^i(V_L))\{\mathbf{I} - \mathbf{P}\}V_L\|_{\dot{B}^{\sigma_{1}+1-\frac{1}{2}\alpha^*}_{p,\infty}}+\|S^i(\mathbf{P}V)\{\mathbf{I} - \mathbf{P}\}\delta V\|_{\dot{B}^{\sigma_{1}+1-\frac{1}{2}\alpha^*}_{p,\infty}}\\
&\quad+\|(S^i(V)-S^i(\mathbf{P}V))\delta V\|_{\dot{B}^{\sigma_{1}+1-\frac{1}{2}\alpha^*}_{p,\infty}}\\
&\lesssim \|\delta V\|_{\dot{B}^{\sigma_1+1}_{p,\infty}}\|\{\mathbf{I} - \mathbf{P}\} V_L\|_{\dot{B}^{\frac{d}{p}-\frac{1}{2}\alpha^*}_{p,1}}+\|\mathbf{P}V\|_{\dot{B}^{\sigma_1+1}_{p,\infty}} \|\{\mathbf{I} - \mathbf{P}\}\delta V\|_{\dot{B}^{\frac{d}{p}-\frac{1}{2}\alpha^*}_{p,1}}\\
&\quad+\Big(\|\{\mathbf{I} - \mathbf{P}\}V_L\|_{\dot{B}^{\frac{d}{p}-\frac{1}{2}\alpha^*}_{p,1}} +\|\{\mathbf{I} - \mathbf{P}\}\delta V\|_{\dot{B}^{\frac{d}{p}-\frac{1}{2}\alpha^*}_{p,1}}\Big)\|\delta V\|_{\dot{B}^{\sigma_1+1}_{p,\infty}}\\
&\lesssim \|\delta V\|_{\dot{B}^{\sigma_1+1}_{p,\infty}}(\mathcal{D}_{p}(t)+\mathcal{D}_{p,\sigma_1})\langle t\rangle^{-\frac{1}{2}\left(\frac{d}{p}-\sigma_1+1-\alpha^*\right)},
\end{aligned}
\end{equation*}
where we used the fact that
$$
\|\{\mathbf{I} - \mathbf{P}\}\delta V\|_{\dot{B}^{\frac{d}{p}-\frac{1}{2}\alpha^*}_{p,1}}\lesssim \|\{\mathbf{I} - \mathbf{P}\}\delta V\|_{\dot{B}^{\frac{d}{p}-\alpha^*}_{p,1}}^{\ell}+\|\delta V\|_{\dot{B}^{\frac{d}{2}+1}_{2,1}}^h\lesssim \mathcal{D}_{p}(t) \langle t\rangle^{-\frac{1}{2}\left(\frac{d}{p}-\sigma_1+1-\frac{1}{2}\alpha^*\right)} .
$$
Bounding $\delta \mathcal{I}_2^i$
requires a more elaborate analysis. Precisely,
\begin{equation*}
\begin{aligned}
\|\delta \mathcal{I}_2^i\|_{\dot{B}^{\sigma_{1}+1-\frac{1}{2}\alpha^*}_{p,\infty}}&\lesssim \|(S^i(V)-S^i(V_L))\mathbf{P}V_L\|_{\dot{B}^{\sigma_{1}+1-\frac{1}{2}\alpha^*}_{p,\infty}}+\|S^i(\mathbf{P}V_L)\mathbf{P}\delta V\|_{\dot{B}^{{\sigma_{1}+1}-\frac{1}{2}\alpha^*}_{p,\infty}}\\
&\quad+\|(S^i(\mathbf{P}V)-S^i(\mathbf{P}V_L))\mathbf{P}\delta V\|_{\dot{B}^{{\sigma_{1}+1}-\frac{1}{2}\alpha^*}_{p,\infty}}.
\end{aligned}
\end{equation*}
In the case that
$-\frac{d}{p}\leq\sigma_1<\frac{d}{p}-2$, we take $\alpha^*=1$. It follows from \eqref{product3}, \eqref{F1} and \eqref{Dtdecay} that
\begin{equation*}
\begin{aligned}
&\quad\|(S^i(V)-S^i(V_L))\mathbf{P}V_L\|_{\dot{B}^{{\sigma_{1}+1}-\frac{1}{2}\alpha^*}_{p,\infty}}+\|S^i(\mathbf{P}V_L)\mathbf{P}\delta V\|_{\dot{B}^{{\sigma_{1}+1}-\frac{1}{2}\alpha^*}_{p,\infty}}\\
& \lesssim \|\delta V\|_{\dot{B}^{\frac{d}{p}}_{p,1}}\|\mathbf{P}V_L\|_{\dot{B}^{\sigma_1+1-\frac{1}{2}\alpha^*}_{p,\infty}}\lesssim       \langle t\rangle^{-\frac{1}{2}\left(\frac{d}{p}-\sigma_1+\frac{1}{2}\alpha^*\right)}\|\mathbf{P}V_L\|_{\dot{B}^{\sigma_1+1-\frac{1}{2}\alpha^*}_{p,1}}\mathcal{D}_{p}(t),
\end{aligned}
\end{equation*}
which gives the desired decay rate.
%When $\sigma_1<\frac{d}{p}-2$, we have $\alpha^*=1$, so we obtain the desired bound.
However, in the case that $\frac{d}{p}-2\leq \sigma_1<\frac{d}{p}-1$, we take $\alpha^*=\frac{d}{p}-1-\sigma_1-\varepsilon_1<1$, which implies that   $\frac{1}{2}\alpha^*<1-\frac{1}{2}\alpha^*$. Hence, we need to get some additional decay from $\|\mathbf{P}V_L\|_{\dot{B}^{\sigma_1+1-\frac{1}{2}\alpha^*}_{p,1}}$.
Indeed, it follows from \eqref{preciselinear1} with $\sigma_2=\sigma_1+\frac{1}{2}\alpha^*$ that
$$
\|\mathbf{P}V_L\|_{\dot{B}^{\sigma_1+1-\frac{1}{2}\alpha^*}_{p,1}}\lesssim \langle t\rangle^{-\frac{1}{2}(1-\alpha^*)}\mathcal{D}_{p,\sigma_1+\frac{1}{2}\alpha^*}.
$$
%let $\theta$ satisfy $\sigma_1+1-\frac{1}{2}\alpha^*=(\sigma_1+\frac{1}{2}\alpha^*)\theta+\frac{d}{p}(1-\theta)$ and $(1-\theta)\frac{1}{2}(\frac{d}{p}-\sigma_1)\leq \frac{1}{2}(1-\alpha^*)$. Then, it follows from \eqref{preciselinear1} and \eqref{inter} that
%$$
%\|\mathbf{P}V_L\|_{\dot{B}^{\sigma_1+1-\frac{1}{2}\alpha^*}_{p,1}}\lesssim \|\mathbf{P}V_L\|_{\dot{B}^{\sigma_1+\frac{1}{2}\alpha^*}_{p,1}}^{\theta}\|\mathbf{P}V_L\|_{\dot{B}^{\frac{d}{p}}_{p,\infty}}^{1-\theta}\lesssim \langle t\rangle^{-\frac{1}{2}(1-\alpha^*)}(\|\mathbf{P}V_L\|_{\dot{B}^{\sigma_1+\frac{1}{2}\alpha^*}_{p,\infty}}+\mathcal{D}_{p,\sigma_1}).
%$$
To deal with $\|(S^i(\mathbf{P}V)-S^i(\mathbf{P}V_L))\mathbf{P}\delta V\|_{\dot{B}^{\frac{d}{p}-\frac{1}{2}\alpha^*}_{p,1}}$,
%  we need to derive a maximal decay rate $\langle t\rangle^{-\frac{1}{2}\left(\frac{d}{p}-\sigma_1+1-\alpha^*\right)}$ which can be obtained from $\|\mathbf{P}\delta V\|_{\dot{B}^{\frac{d}{p}+1-\alpha^*}_{p,1}}$. However, it exceeds the restrictions required in the usual product law \eqref{product3} when $\frac{d}{p}+1-\alpha^*>\frac{d}{p}$.  To overcome this difficulty,
%let $s_*, s_{**}\in (0,1)$ fulfill $\sigma_1+1=s_*(\sigma_1+\frac{1}{2}\alpha^*)+(1-s_*)(\frac{d}{p}+1-\alpha^*)$ and  $\frac{d}{p}-\frac{1}{2}\alpha^*=s_{**}(\sigma_1+\frac{1}{2}\alpha^*)+(1-s_{**})(\frac{d}{p}+1-\alpha^*)$. Then, one can check $s_*=1-s_{**}$.
using the product estimate \eqref{product2} and the interpolation inequality \eqref{inter}, we have
\begin{equation*}
\begin{aligned}
\|(S^i(\mathbf{P}V)-S^i(\mathbf{P}V_L))\mathbf{P}\delta V\|_{\dot{B}^{\sigma_1+1-\frac{1}{2}\alpha^*}_{p,\infty}}&\lesssim \|\mathbf{P}\delta V\|_{\dot{B}^{\frac{1}{2}(\frac{d}{p}+\sigma_1+1-\frac{1}{2}\alpha^*)}_{p,\infty}}^2\\
&\lesssim \|\mathbf{P}\delta V\|_{\dot{B}^{\sigma_1+\frac{1}{2}\alpha^*}_{p,1}} \|\mathbf{P}\delta V\|_{\dot{B}^{\frac{d}{p}+1-\alpha^*}_{p,1}}\\
%&\lesssim \|\mathbf{P}\delta V\|_{\dot{B}^{\sigma_1+1}_{p,\infty}} \|\mathbf{P}\delta V\|_{\dot{B}^{\frac{d}{p}-\frac{1}{2}\alpha^*}_{p,1}}\\
% &\lesssim \bigg(\|\mathbf{P}\delta V\|_{\dot{B}^{\sigma_1+\frac{1}{2}\alpha^*}_{p,\infty}}^{s_*} \|\mathbf{P}\delta V\|_{\dot{B}^{\frac{d}{p}+1-\alpha^*}_{p,\infty}}^{1-s_*}\bigg) \bigg(\|\mathbf{P}\delta V\|_{\dot{B}^{\sigma_1+\frac{1}{2}\alpha^*}_{p,\infty}}^{s_{**}} \|\mathbf{P}\delta V\|_{\dot{B}^{\frac{d}{p}+1-\alpha^*}_{p,\infty}}^{1-s_{**}}\bigg)\\
% &\lesssim \|\mathbf{P}\delta V\|_{\dot{B}^{\sigma_1+\frac{1}{2}\alpha^*}_{p,\infty}}\|\mathbf{P}\delta V\|_{\dot{B}^{\frac{d}{p}+1-\alpha^*}_{p,\infty}}\\
 &\lesssim \langle t\rangle^{-\frac{1}{2}\left(\frac{d}{p}-\sigma_1+1-\frac{1}{2}\alpha^*\right)}\|\mathbf{P} \delta V\|_{\dot{B}^{\sigma_1+\frac{1}{2}\alpha^*}_{p,\infty}} \mathcal{D}_p(t).
\end{aligned}
\end{equation*}
%where we have chosen $s_*=1-s_{**}$ for convenience.
Putting together all the above estimates involving
 $\mathcal{I}^i_L$, $\delta \mathcal{I}_1^i$ and $\delta \mathcal{I}_2^i$,
we can obtain
\begin{equation}\label{fgggggg000}
\begin{aligned}
\|\mathbf{P}\delta V(t)\|_{\dot{B}^{\sigma}_{p,1}}^{\ell}&\lesssim\int_{0}^{t} \langle t-\tau\rangle^{-\frac{1}{2}\left(\sigma-\sigma_{1}+\frac{1}{2}\alpha^*\right)} \langle \tau\rangle^{-\frac{1}{2}\left(\frac{d}{p}-\sigma_1+1-\frac{1}{2}\alpha^*\right)} \, d\tau\,\bigg(\mathcal{D}_{p,\sigma_1}^2+\mathcal{D}_{p,\sigma_1}\|\delta V\|_{\dot{B}^{\sigma_1+1}_{p,\infty}} \\
&\quad +\Big(\mathcal{D}_{p,\sigma_1+\frac{1}{2}\alpha^*}+\|(\mathbf{P} V,\mathbf{P}\delta V)\|_{\dot{B}^{\sigma_1+\frac{1}{2}\alpha^*}_{p,\infty}}+\|\delta V\|_{\dot{B}^{\sigma_1+1}_{p,\infty}}\Big) \mathcal{D}_{p}(t) \bigg)
\end{aligned}
\end{equation}
for $\frac{d}{p}-1<\sigma\leq \frac{d}{p}+1-\alpha^*$.

Keep in mind that $\sigma-\sigma_{1}+\frac{1}{2}\alpha^*\leq \frac{d}{p}-\sigma_{1}+1-\frac{1}{2}\alpha^*$ and $\frac{1}{2}\left(\frac{d}{p}-\sigma_{1}+1-\frac{1}{2}\alpha^*\right)>1$ for $\frac{d}{p}-1<\sigma<\frac{d}{p}+1-\alpha^*$.
It is shown by \eqref{ineq} that
\begin{equation*}
\begin{aligned}
 &\int_{0}^{t}\langle t-\tau\rangle^{-\frac{1}{2}\left(\sigma-\sigma_{1}+\frac{1}{2}\alpha^*\right)}\langle\tau\rangle^{-\frac{1}{2}\left(\frac{d}{p}-\sigma_{1}+1-\frac{1}{2}\alpha^*\right)}d\tau\,\lesssim  \langle t\rangle^{-\frac{1}{2}\left(\sigma-\sigma_{1}+\frac{1}{2}\alpha^*\right)}.
\end{aligned}
\end{equation*}
Therefore, we deduce that
\begin{equation}\label{fg3}
\begin{aligned}
&\quad\|\mathbf{P}\delta V(t)\|_{\dot{B}^{\sigma}_{p,1}}^{\ell}\lesssim\langle t\rangle^{-\frac{1}{2}\left(\sigma-\sigma_{1}+\frac{1}{2}\alpha^*\right)}\bigg(\mathcal{D}_{p,\sigma_1}\|\delta V\|_{\dot{B}^{\sigma_1+1}_{p,\infty}} +\mathcal{D}_{p,\sigma_1}^2\\
&\hspace{30mm} +\Big(\mathcal{D}_{p,\sigma_1+\frac{1}{2}\alpha^*}+\|(\mathbf{P} V,\mathbf{P}\delta V)\|_{\dot{B}^{\sigma_1+\frac{1}{2}\alpha^*}_{p,\infty}}+\|\delta V\|_{\dot{B}^{\sigma_1+1}_{p,\infty}}\Big) \mathcal{D}_{p}(t) \bigg).
\end{aligned}
\end{equation}

Combining those estimates \eqref{fg1}-\eqref{fg3}, %and using the fact that $\mathcal{D}_{p,\sigma}$ decreases as $\sigma\rightarrow \sigma_1$,
we  conclude that \eqref{PV1} holds. Hence, the proof of Lemma \ref{lemmalow} is complete.
\end{proof}

Similarly, we have the time-weighted decay estimate for $\{\mathbf{I} - \mathbf{P}\}\delta V$ in low frequencies.
\begin{lemma}
Let $\sigma_1+1<\sigma'\leq  \frac{d}{p}-\alpha^*$. It holds that
\begin{equation}\label{PV2}
\begin{aligned}
\|\{\mathbf{I} - \mathbf{P}\}\delta V\|_{\dot{B}^{\sigma'}_{p,1}}^{\ell}&\lesssim \langle t\rangle^{-\frac{1}{2}(\sigma'-\sigma_1+1+\sigma_2^*)}\Big(\mathcal{D}_{p,\sigma_1+\frac{1}{2}\alpha^*}+\|(\mathbf{P} V,\mathbf{P}\delta V)\|_{\dot{B}^{\sigma_1+\frac{1}{2}\alpha^*}_{p,\infty}}+\|\delta V\|_{L^{\infty}_{t}(\dot{B}^{\sigma_1+1}_{p,\infty})}\Big)\mathcal{D}_{p}(t)\\
&\quad+\langle t\rangle^{-\frac{1}{2}(\sigma'-\sigma_1+1+\sigma_2^*)}\Big(\mathcal{D}_{p,\sigma_1}^2+\mathcal{D}_{p,\sigma_1}\|\delta V\|_{L^{\infty}_{t}(\dot{B}^{\sigma_1+1}_{p,\infty})}\Big),
\end{aligned}
\end{equation}
where $\alpha^*$ and $\sigma_2^*$  are defined by  \eqref{5.4} and \eqref{sigma222}, respectively.
\end{lemma}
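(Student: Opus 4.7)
The plan is to mirror the proof of Lemma~\ref{lemmalow} verbatim, replacing the $\mathbf{P}$-component throughout by $\{\mathbf{I}-\mathbf{P}\}$, and exploiting the fact that Proposition~\ref{LemmaspectrallocalHp} furnishes an extra $2^j$-factor for the dissipative projection in low frequencies, alongside a harmless exponentially decaying tail stemming from the damped mode. Concretely, starting from Duhamel's formula \eqref{duhamel}, applying $\dot\Delta_j$, invoking the fundamental identity $\mathbf{P}R^0=0$ and the second inequality in \eqref{lowlocalupper} would give, for $j\le J_0$,
\begin{equation*}
\|\dot\Delta_j\{\mathbf{I}-\mathbf{P}\}\delta V(t)\|_{L^p}\lesssim \int_0^t\Bigl(2^{2j}e^{-R_1 2^{2j}(t-\tau)}+2^j e^{-\kappa_1(t-\tau)}\Bigr)\sum_{i=0}^d\|\dot\Delta_j R^i\|_{L^p}\,d\tau.
\end{equation*}
The extra $2^j$-gain over the $\mathbf{P}$-case is precisely what produces the additional $\langle t\rangle^{-1/2}$ encoded in the index $\sigma'-\sigma_1+1$ of \eqref{PV2}.

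Next, multiplying by $2^{\sigma' j}$, summing over $j\le J_0$ and applying \eqref{opoo} to the parabolic part would yield
\begin{equation*}
\|\{\mathbf{I}-\mathbf{P}\}\delta V(t)\|_{\dot B^{\sigma'}_{p,1}}^\ell\lesssim\int_0^t\langle t-\tau\rangle^{-\frac12(\sigma'-\sigma_1+1+\sigma_2^*)}\sum_{i=0}^d\|(\mathcal{I}_L^i,\delta\mathcal{I}_1^i,\delta\mathcal{I}_2^i)\|_{\dot B^{\sigma_1+1-\sigma_2^*}_{p,\infty}}^\ell\,d\tau+\mathcal{R}(t),
\end{equation*}
where $\mathcal{R}(t)$ is the remainder coming from the damped tail $2^j e^{-\kappa_1(t-\tau)}$ and is harmless: since $e^{-\kappa_1(t-\tau)}\lesssim\langle t-\tau\rangle^{-N}$ for arbitrarily large $N$ and each $R^i$ is quadratic in $V$, the bounds \eqref{preciselinear1} and \eqref{Dtdecay} immediately absorb $\mathcal{R}(t)$ into the right-hand side of \eqref{PV2}. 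The decomposition $R^i=\mathcal{I}_L^i+\delta\mathcal{I}_1^i+\delta\mathcal{I}_2^i$ from \eqref{RiS} is then reused verbatim.

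From that point the analysis bifurcates into two regimes that parallel Cases~1--3 of Lemma~\ref{lemmalow}: Case~A, with $\sigma_1+1<\sigma'\le d/p-2$, in which $\sigma_2^*=\alpha^*=1$ is automatic because the hypothesis $\sigma'\le d/p-2$ forces $\sigma_1<d/p-3$; and Case~B, with $d/p-2<\sigma'\le d/p-\alpha^*$, where $\sigma_2^*=\tfrac12\alpha^*$ and a non-trivial regularity transfer becomes necessary, via \eqref{preciselinear1} applied with the shifted index $\sigma_2=\sigma_1+\tfrac12\alpha^*$. In each case, $\mathcal{I}_L^i,\delta\mathcal{I}_1^i,\delta\mathcal{I}_2^i$ are estimated by combining the product laws \eqref{product2}--\eqref{product3}, the composite function bounds \eqref{F1}--\eqref{F11}, the linear decay \eqref{preciselinear1} and the bootstrap decay \eqref{Dtdecay}, exactly as in the previous proof. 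The time convolutions are then closed by \eqref{ineq}, since $\tfrac12(d/p-\sigma_1+2-\tfrac12\sigma_2^*)>1$ throughout the admissible range of $\sigma_1$.

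The principal obstacle is the borderline regime $\sigma'$ near $d/p-\alpha^*$ combined with $\sigma_1$ close to $d/p-1$, which mirrors Case~3 of Lemma~\ref{lemmalow}: one cannot extract the full $\langle t\rangle^{-1/2}$-gain directly from $\mathbf{P}V_L$ and must instead redistribute $(1-\tfrac12\alpha^*)$ units of regularity between the linear profile and the error component via the interpolation inequality \eqref{inter} before invoking a sharp product bound. This step is structurally identical to the treatment of $\|(S^i(\mathbf{P}V)-S^i(\mathbf{P}V_L))\mathbf{P}\delta V\|$ in the previous lemma, and once it closes, collecting all contributions produces the desired inequality \eqref{PV2}.
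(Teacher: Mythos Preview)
Your approach is essentially the same as the paper's: Duhamel plus the $\{\mathbf{I}-\mathbf{P}\}$-estimate from Proposition~\ref{LemmaspectrallocalHp}, then recycling the bounds on $\mathcal{I}_L^i,\delta\mathcal{I}_1^i,\delta\mathcal{I}_2^i$ from Lemma~\ref{lemmalow} and closing the time convolutions via \eqref{ineq}. One minor inaccuracy: the damped tail should read $e^{-\kappa_1(t-\tau)}$ rather than $2^j e^{-\kappa_1(t-\tau)}$, because for $R^0$ (with $\mathbf{P}R^0=0$) the second line of \eqref{lowlocalupper} gives no extra $2^j$ on the exponential term; the paper handles this tail by the low-frequency embedding $\|\cdot\|_{\dot B^{\sigma'}_{p,1}}^\ell\lesssim\|\cdot\|_{\dot B^{\sigma_1+1-\sigma_2^*}_{p,\infty}}^\ell$ (valid since $\sigma'>\sigma_1+1-\sigma_2^*$), which reduces it to the same nonlinear bound \eqref{I1I2I3d} already established, rather than invoking $e^{-\kappa_1(t-\tau)}\lesssim\langle t-\tau\rangle^{-N}$ directly.
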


\begin{proof}
In view of Proposition \ref{LemmaspectrallocalHp} and $\mathbf{P}R^0=0$, $\dot{\Delta}_j\{\mathbf{I} - \mathbf{P}\}\delta V$ satisfies
\begin{equation}\label{deltaJdeltaIPV}
\begin{aligned}
\|\dot{\Delta}_{j}\{\mathbf{I} - \mathbf{P}\}\delta V\|_{L^{p}}&\lesssim
\int_{0}^{t} \Big(e^{-R_{1}2^{2j}(t-\tau)} 2^{2j}+e^{-\kappa_1 (t-\tau)}\Big)\sum_{i=0}^{d}\|\dot{\Delta}_{j}R^{i}\|_{L^{p}}\,d\tau.
\end{aligned}
\end{equation}
Thus, we have
\begin{equation}\label{fgggggg1}
\begin{aligned}
\|\{\mathbf{I}-\mathbf{P}\}\delta V(t)\|_{\dot{B}^{\sigma'}_{p,1}}^{\ell}&\lesssim
\int_{0}^{t} \langle t-\tau\rangle^{-\frac{1}{2}(\sigma'-\sigma_{1}+1+\sigma_2^*)}  \sum_{i=0}^d\|(\mathcal{I}^i_L,\delta\mathcal{I}^i_{1},\delta\mathcal{I}_{2}^i)\|_{\dot{B}^{\sigma_{1}+1-\sigma_2^*}_{p,\infty}}^{\ell}\, d\tau\\
&\quad+\int_{0}^{t} e^{-\kappa_1 (t-\tau)} \sum_{i=0}^d\|(\mathcal{I}^i_L,\delta\mathcal{I}^i_{1},\delta\mathcal{I}^i_{2})\|_{\dot{B}^{\sigma'}_{p,1}}^{\ell}\,d\tau.
\end{aligned}
\end{equation}
Following from the similar procedure leading to \eqref{PV1}, we can deduce that
\begin{equation}\label{I1I2I3d}
\begin{aligned}
\|(\mathcal{I}^i_L,\delta\mathcal{I}^i_1,\delta\mathcal{I}^i_2)\|_{\dot{B}^{\sigma_1+1-\sigma_2^*}_{p,\infty}}^{\ell}
&\lesssim \bigg( \mathcal{D}_{p,\sigma_1}^2+\mathcal{D}_{p,\sigma_1}\|\delta V\|_{\dot{B}^{\sigma_1+1}_{p,\infty}}\\
&\quad\quad+\Big(\mathcal{D}_{p,\sigma_1+\frac{1}{2}\alpha^*}+\|(\mathbf{P} V,\mathbf{P}\delta V)\|_{\dot{B}^{\sigma_1+\frac{1}{2}\alpha^*}_{p,\infty}}+\|\delta V\|_{\dot{B}^{\sigma_1+1}_{p,\infty}}\Big)\mathcal{D}_{p}(t)\bigg)\\
&\quad\times \begin{cases}
\langle t\rangle^{-\frac{1}{2}\left(\frac{d}{p}-\sigma_{1}\right)},
& \mbox{\quad if ~~$\sigma_1+1<\sigma'\leq \frac{d}{p}-2$},\\
\langle t\rangle^{-\frac{1}{2}\left(\frac{d}{p}-\sigma_{1}+1-\frac{1}{2}\alpha^*\right)},
& \mbox{\quad if ~~$\frac{d}{p}-2<\sigma'\leq \frac{d}{p}-\alpha^*$},
\end{cases}
\end{aligned}
\end{equation}
which enables us to obtain
\begin{equation}\label{Expod}
\begin{aligned}
&\quad\int_{0}^{t} \langle t-\tau\rangle^{-\frac{1}{2}(\sigma'-\sigma_{1}+1+\sigma_2^*)} \sum_{i=0}^d\|(\mathcal{I}^i_L,\delta\mathcal{I}^i_{1},\delta\mathcal{I}_{2}^i)\|_{\dot{B}^{\sigma_{1}+1-\sigma_2^*}_{p,\infty}}^{\ell}\, d\tau\\
&\lesssim  \langle t\rangle^{-\frac{1}{2}(\sigma'-\sigma_{1}+1+\sigma_2^*)}\bigg( \mathcal{D}_{p,\sigma_1}^2+\mathcal{D}_{p,\sigma_1}\|\delta V\|_{L^{\infty}_{t}(\dot{B}^{\sigma_1+1}_{p,\infty})}\\
&\quad+\Big(\mathcal{D}_{p,\sigma_1+\frac{1}{2}\alpha^*}+\|(\mathbf{P} V,\mathbf{P}\delta V)\|_{L^{\infty}_{t}(\dot{B}^{\sigma_1+\frac{1}{2}\alpha^*}_{p,\infty})}+\|\delta V\|_{L^{\infty}_{t}(\dot{B}^{\sigma_1+1}_{p,\infty})}\Big)\mathcal{D}_{p}(t)\bigg).
\end{aligned}
\end{equation}
On the other hand, observe that $\sigma'>\sigma_1+1>\sigma_1+1-\sigma_2^*$, we infer that the second integration on the right-hand side of \eqref{fgggggg1} can be bounded by
\begin{equation}\label{Expod2}
\begin{aligned}
&\quad\int_{0}^{t} e^{-\kappa_1 (t-\tau)}\|(\mathcal{I}^i_L,\delta\mathcal{I}^i_1,\delta\mathcal{I}^i_2)\|_{\dot{B}^{\sigma'}_{p,1}}^{\ell}\,d\tau\\
&\lesssim\int_{0}^{t} e^{-\kappa_1 (t-\tau)} \|(\mathcal{I}^i_L,\delta\mathcal{I}^i_1,\delta\mathcal{I}^i_2)\|_{\dot{B}^{\sigma_1+1-\sigma_2^*}_{p,\infty}}^{\ell}\,d\tau\\
&\lesssim  \bigg( \mathcal{D}_{p,\sigma_1}^2+\mathcal{D}_{p,\sigma_1}\|V\|_{L^{\infty}_{t}(\dot{B}^{\sigma_1+1}_{p,\infty})}+\Big(\mathcal{D}_{p,\sigma_1+\frac{1}{2}\alpha^*}+\|(\mathbf{P} V,\mathbf{P}\delta V)\|_{L^{\infty}_{t}(\dot{B}^{\sigma_1+\frac{1}{2}\alpha^*}_{p,\infty})}+\|\delta V\|_{L^{\infty}_{t}(\dot{B}^{\sigma_1+1}_{p,\infty})}\Big)\mathcal{D}_{p}(t)\bigg)\\
&\quad\quad \times\begin{cases}
\langle t\rangle^{-\frac{1}{2}\left(\frac{d}{p}-\sigma_{1}\right)},
& \mbox{\quad if ~~$\sigma_1+1<\sigma'\leq \frac{d}{p}-2$},\\
\langle t\rangle^{-\frac{1}{2}\left(\frac{d}{p}-\sigma_{1}+1-\frac{1}{2}\alpha^*\right)},
& \mbox{\quad if ~~$\frac{d}{p}-2<\sigma'\leq \frac{d}{p}-\alpha^*$}.
\end{cases}
\end{aligned}
\end{equation}
Thanks to  $\frac{1}{2}(\sigma'-\sigma_{1}+1+\sigma_2^*)\leq \frac{1}{2}(\frac{d}{p}-\sigma_1)$ for $\sigma_1+1<\sigma'\leq \frac{d}{p}-2$ and $\frac{1}{2}(\sigma'-\sigma_{1}+1+\sigma_2^*)\leq \frac{1}{2}\left(\frac{d}{p}-\sigma_1+1-\frac{1}{2}\alpha^*\right)$ for $\frac{d}{p}-2<\sigma'\leq \frac{d}{p}-\alpha^*$, we  arrive at \eqref{PV2} readily with the combination of \eqref{Expod}-\eqref{Expod2}.
\end{proof}
%\subsubsection{Low-frequency analysis}

\subsubsection{Bounds for the high frequencies}
To achieve the high-frequency estimates of $\delta V$ in Proposition \ref{properror1}, it is natural to look at the difference system \eqref{deltav}. The problem is that the nonlinear term on the right-hand side of \eqref{deltav} will cause a loss of one derivative. In the critical regularity framework, however, one cannot afford any loss of regularity in  high frequencies.  To overcome the difficulty,  we use the perturbed norm form \eqref{normale} and track the decay exponent for high frequencies according to the definition of $\mathcal{D}_{p}(t)$.
% Recall that $\mathcal{W}=W-\bar{W}$ satisfies the system \eqref{normale} subject to the initial data $\mathcal{W}_0=\mathcal{W}(V_0)$.
Precisely, we have the following time-weighted estimate of $\delta V$ in high frequencies.
\begin{lemma}\label{lemmahigh}
Under those assumptions of Proposition \ref{properror1}, it holds that
   \begin{equation}\label{high}
    \begin{aligned}
    \|\langle \tau\rangle^{\beta^*}\delta V\|_{\widetilde{L}^{\infty}_{t}(\dot{B}^{\frac{d}{2}+1}_{2,1})}^{h}&\lesssim (1+\mathcal{D}_{p,\sigma_1})\mathcal{D}_{p,\sigma_1}+E_{p,0}  \mathcal{D}_{p}(t)
    \end{aligned}
    \end{equation}
with $\beta^*=\frac{1}{2}(\frac{d}{p}-\sigma_{1}+1-\frac{1}{2}\alpha^*-\varepsilon_1)$.
\end{lemma}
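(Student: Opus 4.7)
The plan is to reprise the high-frequency scheme of Step~2 in the proof of Proposition \ref{propapriori}, but now applied to the difference and with a time weight. To avoid the apparent loss of one derivative in the source of \eqref{deltav}, I would work with the entropy variable: setting $\delta\mathcal{W}\coloneqq \mathcal{W}-\mathcal{W}_L$, where $\mathcal{W}$ solves \eqref{normale} and $\mathcal{W}_L$ satisfies its linearization at $\bar{W}$, one derives a symmetric hyperbolic--dissipative equation for $\delta\mathcal{W}$ of the same structural type as \eqref{UUe}, with $\delta\mathcal{W}|_{t=0}=0$ and inhomogeneity built from the nonlinear remainders $h_1(\mathcal{W}),h_2(\mathcal{W}),h_3(\mathcal{W})$ together with an extra quadratic term in $(\mathcal{W}_L,\delta\mathcal{W})$ coming from the linearization of the coefficients. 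Since $\mathcal{W}\leftrightarrow V$ is a diffeomorphism near $\bar U$ with identity Jacobian at the equilibrium, high-frequency norms of $\delta V$ and $\delta\mathcal{W}$ are equivalent by Corollary \ref{corocom}.

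Next, applying $\dot{\Delta}_j$, pairing in $L^2$ with $\dot{\Delta}_j\delta\mathcal{W}$, and combining with the Fourier-multiplier identity obtained from the skew-symmetric compensator $K(\omega)$ of Proposition \ref{propSK} exactly as in \eqref{L11}--\eqref{sqrtL}, I expect for each $j\geq J_0-1$ a Lyapunov inequality
\begin{equation*}
\frac{d}{dt}\sqrt{\mathcal{L}_j^\delta+\zeta}+c\sqrt{\mathcal{L}_j^\delta+\zeta}\lesssim \sqrt{\zeta}+\|\nabla\mathcal{W}\|_{L^\infty}\sqrt{\mathcal{L}_j^\delta+\zeta}+\sum_{i=1}^{5}Z_{i,j}^\delta,
\end{equation*}
with $\mathcal{L}_j^\delta\sim \|\dot{\Delta}_j\delta\mathcal{W}\|_{L^2}^2$ and with $Z_{i,j}^\delta$ the direct analogues of the remainders $Z_{1,j},\dots,Z_{5,j}$ in the existence proof. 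Multiplying by $\langle t\rangle^{\beta^*}$, invoking Gr\"onwall's inequality together with $\|\nabla\mathcal{W}\|_{L^1_t(L^\infty)}\lesssim D_p(t)\lesssim E_{p,0}$, and summing with the weight $2^{j(d/2+1)}$ reduces the claim to controlling
\begin{equation*}
\sum_{j\geq J_0-1}2^{j(d/2+1)}\int_0^t\langle\tau\rangle^{\beta^*}Z_{i,j}^\delta\,d\tau,\qquad i=1,\dots,5,
\end{equation*}
since $\delta\mathcal{W}(0)=0$ kills the initial-data contribution; the parasitic term $\beta^*\langle t\rangle^{\beta^*-1}\sqrt{\mathcal{L}_j^\delta}$ produced by commuting the weight with $\partial_t$ is absorbed by the high-frequency dissipation after a routine bootstrap, thanks to the strict inequality $\beta^*<\tfrac12(d/p-\sigma_1+1)$ built into the choice of $\alpha^*$ and $\varepsilon_1$.

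The main obstacle is the second display: the weighted integrals must produce the asymmetric splitting $(1+\mathcal{D}_{p,\sigma_1})\mathcal{D}_{p,\sigma_1}+E_{p,0}\mathcal{D}_p(t)$, and in particular no factor of $\mathcal{D}_{p,\sigma_1}\cdot\mathcal{D}_p(t)$ is allowed. To achieve this I would mimic the low-frequency decomposition \eqref{RiS} and split each commutator/composite piece entering $Z_{i,j}^\delta$ into a purely $V_L$-quadratic part $\mathcal{I}_L^i$ and an error part $\delta\mathcal{I}_1^i+\delta\mathcal{I}_2^i$. The $\mathcal{I}_L^i$ pieces are handled by the hybrid product laws of Corollary \ref{coroApp1} and the commutator estimates of Lemma \ref{lemmacommutator}, together with the linear decay \eqref{preciselinear1} in low frequencies and the exponential decay \eqref{hhhh} in high frequencies for $V_L$; this yields an integrand $\lesssim e^{-R_*\tau/2}\mathcal{D}_{p,\sigma_1}^2$, whose $\langle\tau\rangle^{\beta^*}$-weighted time integral is uniformly bounded by $\mathcal{D}_{p,\sigma_1}^2$. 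For the error pieces, the factor not containing $\delta V$ is absorbed by the $E_{p,0}$-smallness of $\|(V,V_L)\|_{\widetilde{L}^\infty_t(\dot{\mathbb{B}}^{d/p,d/2+1}_{p,2})\cap L^1_t(\dot{\mathbb{B}}^{d/p+1,d/2+1}_{p,2})}$ from \eqref{Vuniform}--\eqref{VLuniform}, while the factor carrying $\delta V$ supplies the weight $\langle\tau\rangle^{-\beta^*}$ through \eqref{Dtdecay}; the convergence of the resulting time integral is again ensured by the defining inequality for $\beta^*$, and the total contribution is $\lesssim E_{p,0}\mathcal{D}_p(t)$. Summing the two contributions yields \eqref{high}.
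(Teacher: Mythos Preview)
Your route differs from the paper's: the paper never forms $\delta\mathcal{W}$ nor an equation for it. It reruns the Lyapunov scheme \eqref{DjU}--\eqref{highe} directly on $\mathcal{W}$, exploiting only the bilinear structure of the $Z_{i,j}$ (one factor carries the weight $\langle s\rangle^{\beta^*}$, the other is bounded by $\|\mathcal{W}\|_{\widetilde{L}^\infty_t(\dot{\mathbb{B}}^{d/p,d/2+1}_{p,2})}\lesssim E_{p,0}$), which yields \eqref{highe1}. The conversion from $\mathcal{W}$ to $\delta V$ and the bound $\|\langle\tau\rangle^{\beta^*}\mathcal{W}\|_{\widetilde{L}^\infty_t(\dot{\mathbb{B}}^{d/p+1,d/2+1}_{p,2})}\lesssim (1+\mathcal{D}_{p,\sigma_1})\mathcal{D}_{p,\sigma_1}+\mathcal{D}_p(t)$ are both done \emph{algebraically} afterward via the Taylor expansion $\mathcal{W}(V)=g_1(0)V+(g_1(V)-g_1(0))V$ (see \eqref{UVde}--\eqref{errorVL}); the pure $V_L$-quadratic piece $(g_1(V_L)-g_1(0))V_L$ is handled by the algebraic decay of $V_L$ in \eqref{preciselinear1}. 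No $\mathcal{I}_L^i/\delta\mathcal{I}^i$ splitting of the $Z_{i,j}$ themselves is used.

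Your version has a concrete gap at the $\mathcal{I}_L^i$ step: the claim that the pure $V_L$-quadratic high-frequency contributions decay like $e^{-R_*\tau/2}\mathcal{D}_{p,\sigma_1}^2$ is false. The hybrid product/composite laws (Lemma \ref{NonClassicalProLaw1}, Lemma \ref{NewSmoothhigh}) bound $\|S^i(V_L)V_L\|^h_{\dot{B}^{d/2+1}_{2,1}}$ by quantities such as $\|V_L\|_{\dot{B}^{d/p}_{p,1}}\|V_L\|_{\dot{\mathbb{B}}^{d/p+1,d/2+1}_{p,2}}$, and the first factor is dominated by the \emph{low}-frequency part of $V_L$, which decays only algebraically at rate $\langle\tau\rangle^{-\frac12(d/p-\sigma_1)}$; the product thus decays like $\langle\tau\rangle^{-(d/p-\sigma_1)-\frac12}$, not exponentially. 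This rate does exceed $\beta^*$, so the contribution $\lesssim\mathcal{D}_{p,\sigma_1}^2$ is recoverable, but not by the mechanism you invoke. A related issue: your asserted equivalence of the high-frequency norms of $\delta V$ and $\delta\mathcal{W}$ via Corollary \ref{corocom} is too strong, since $\mathcal{W}(V)-g_1(0)V$ is quadratic in $V$; correction terms of size $\mathcal{D}_{p,\sigma_1}^2+E_{p,0}\mathcal{D}_p(t)$ appear and must be estimated separately, which is precisely what \eqref{UV10} accomplishes in the paper's argument.
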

\begin{proof}
Following from the same energy argument
as in Section \ref{global existence}, we can
achieve the high frequency estimate \eqref{sqrtL}. Thus, by employing   Gr\"onwall's inequality to  \eqref{sqrtL} and then taking the limit  as $\zeta\rightarrow0$, we arrive at
    \begin{equation}\label{DjU}
    \begin{aligned}
&\|\dot{\Delta}_{j}\mathcal{W}\|_{L^2}\lesssim e^{C\|V\|_{L^1_t(L^{\infty})}}\bigg(e^{-ct}\|\dot{\Delta}_{j}\mathcal{W}_0\|_{L^2}+\sum_{i=1}^{5}\int_{0}^{t}e^{-c(t-\tau)}  Z_{i,j} d\tau \bigg),
    \end{aligned}
    \end{equation}
    where $j\geq J_0-1$ and $Z_{i,j}$ ($i=1,2,3,4,5$) are given by \eqref{Z12345}. It follows from \eqref{r2}, \eqref{Lipschitz} and \eqref{DjU} that
\begin{equation}\label{highe}
    \begin{aligned}
    \|\langle \tau\rangle^{\beta^*}\mathcal{W}\|_{\widetilde{L}^{\infty}_{t}(\dot{B}^{\frac{d}{2}+1}_{2,1})}^{h}&\lesssim \|\mathcal{W}_0\|_{\dot{B}^{\frac{d}{2}+1}_{2,1}}^{h}+\sum_{j\geq J_0-1} 2^{(\frac{d}{2}+1)j} \sum_{i=1}^5 \sup_{\tau\in[0,t]}\langle \tau\rangle^{\beta^*}\int_{0}^{\tau}e^{-c(\tau-s)}  Z_{i,j} \,ds.
    \end{aligned}
    \end{equation}
Those nonlinear terms on the right-hand side of \eqref{highe} can be estimated similarly to the proof of global existence. Precisely, thanks to Lemma \ref{lemmacommutator} and Corollary \ref{corocom} for $2\leq p\leq \min\{4,\frac{2p}{p-2}\}$, we obtain
    \begin{equation*}
    \begin{aligned}
    &\quad \sum_{j\geq J_0-1} 2^{(\frac{d}{2}+1)j} \sup_{\tau\in[0,t]}\langle \tau\rangle^{\beta^*}\int_{0}^{\tau}e^{-c(\tau-s)} ( Z_{1,j}+Z_{2,j}) ds\\
    & \lesssim \sup_{\tau\in[0,t]}\langle \tau\rangle^{\beta^*}\int_{0}^{\tau}e^{-c(\tau-s)}\langle s\rangle^{-\beta^*}\,ds\,\|\mathcal{W}\|_{\widetilde{L}^{\infty}_{t}(\dot{B}^{\frac{d}{p}+1}_{p,1})}  \|\langle \tau\rangle^{\beta^*}\mathcal{W}\|_{\widetilde{L}^{\infty}_{t}(\dot{\mathbb{B}}^{\frac{d}{p}+1,\frac{d}{2}+1}_{p,2})}\\
    &\lesssim  \|\mathcal{W}\|_{\widetilde{L}^{\infty}_{t}(\dot{\mathbb{B}}^{\frac{d}{p}+1,\frac{d}{2}+1}_{p,2})} \|\langle \tau\rangle^{\beta^*}\mathcal{W}\|_{\widetilde{L}^{\infty}_{t}(\dot{\mathbb{B}}^{\frac{d}{p}+1,\frac{d}{2}+1}_{p,2})} .
%&\lesssim\Big(\|\partial_{t}V\|_{\widetilde{L}^{\infty}_{t}(\dot{B}^{\frac{d}{p}-1}_{p,1}\cap \dot{B}^{\frac{d}{p}}_{p,1})}+\|\partial_{t}V\|_{\widetilde{L}^{\infty}_{t}(\dot{\mathbb{B}}^{\frac{d}{p},\frac{d}{2}}_{p,2})}\Big) \|\langle \tau\rangle^{\beta^*}V\|_{L^{\infty}_{t}(\dot{\mathbb{B}}^{\frac{d}{p}-,\frac{d}{2}+1}_{p,2})}.
    \end{aligned}
    \end{equation*}
Moreover, using the composite estimate in Lemma \ref{corocom}, we have
    \begin{equation*}
    \begin{aligned}
     &\quad\sum_{j\geq J_0-1} 2^{(\frac{d}{2}+1)j} \sup_{\tau\in[0,t]}\langle \tau\rangle^{\beta^*}\int_{0}^{\tau}e^{-c(\tau-s)} Z_{3,j} ds\lesssim \|\mathcal{W}\|_{\widetilde{L}^{\infty}_{t}(\dot{\mathbb{B}}^{\frac{d}{p},\frac{d}{2}+1}_{p,2})}  \|\langle \tau\rangle^{\beta^*}\mathcal{W}\|_{\widetilde{L}^{\infty}_{t}(\dot{\mathbb{B}}^{\frac{d}{p}+1,\frac{d}{2}+1}_{p,2})}.
    \end{aligned}
    \end{equation*}
Taking advantage of Corollaries \ref{corocom} and \ref{coroApp1} for $2\leq p\leq \min\{4,\frac{2p}{p-2}\}$ to bound the nonlinear terms concerning $Z_{4,j}$ and $Z_{5,j}$, we get
    \begin{equation*}
    \begin{aligned}
      &\quad\sum_{j\geq J_0-1} 2^{(\frac{d}{2}+1)j} \sup_{\tau\in[0,t]}\langle \tau\rangle^{\beta^*}\int_{0}^{\tau}e^{-c(\tau-s)} (Z_{4,j}+Z_{5,j}) ds\\
 &\lesssim \sum_{i=1}^d\|A^0(\mathcal{W}+\bar{W})^{-1}A^{i}(\mathcal{W}+\bar{W})-A^0(\bar{W})^{-1}A^{i}(\bar{W})\|_{\widetilde{L}^{\infty}_{t}(\dot{\mathbb{B}}^{\frac{d}{p},\frac{d}{2}}_{p,2})}\|\langle \tau\rangle^{\beta^*} \mathcal{W}\|_{\widetilde{L}^{\infty}_{t}(\dot{\mathbb{B}}^{\frac{d}{p}+1,\frac{d}{2}+1}_{p,2})}\\
      &\quad+\big\|A^0(\bar{W})\big( A^0(\mathcal{W}+\bar{W})^{-1}-A^0(\bar{W})^{-1}\big)\big\|_{\widetilde{L}^{\infty}_{t}(\dot{\mathbb{B}}^{\frac{d}{p},\frac{d}{2}+1}_{p,2})}\| \langle \tau\rangle^{\beta^*}\mathcal{W}\|_{\widetilde{L}^{\infty}_{t}(\dot{\mathbb{B}}^{\frac{d}{p}+1,\frac{d}{2}+1}_{p,2})}\\
      &\lesssim (1+\|\mathcal{W}\|_{\widetilde{L}^{\infty}_{t}(\dot{\mathbb{B}}^{\frac{d}{p},\frac{d}{2}+1}_{p,2})}) \|\mathcal{W}\|_{\widetilde{L}^{\infty}_{t}(\dot{\mathbb{B}}^{\frac{d}{p},\frac{d}{2}+1}_{p,2})} \| \langle \tau\rangle^{\beta^*}\mathcal{W}\|_{\widetilde{L}^{\infty}_{t}(\dot{\mathbb{B}}^{\frac{d}{p}+1,\frac{d}{2}+1}_{p,2})}.
        \end{aligned}
    \end{equation*}
Hence, plugging the above estimates into $Z_{i,j}$ ($i=1,2,3,4,5$) in \eqref{highe}, remembering  \eqref{Vuniform} and arguing similarly to \eqref{420}-\eqref{422}, we end up with
    \begin{equation}\label{highe1}
    \begin{aligned}
 \|\langle \tau\rangle^{\beta^*}\mathcal{W}\|_{\widetilde{L}^{\infty}_{t}(\dot{B}^{\frac{d}{2}+1}_{2,1})}^{h} &\lesssim \|\mathcal{W}_0\|_{\dot{B}^{\frac{d}{2}+1}_{2,1}}^{h}+(1+\|\mathcal{W}\|_{\widetilde{L}^{\infty}_{t}(\dot{\mathbb{B}}^{\frac{d}{p},\frac{d}{2}+1}_{p,2})}) \|\mathcal{W}\|_{\widetilde{L}^{\infty}_{t}(\dot{\mathbb{B}}^{\frac{d}{p},\frac{d}{2}+1}_{p,2})}\|\langle \tau\rangle^{\beta^*}\mathcal{W}\|_{\widetilde{L}^{\infty}_{t}(\dot{\mathbb{B}}^{\frac{d}{p}+1,\frac{d}{2}+1}_{p,2})}\\
   &\lesssim E_{p,0}+E_{p,0}\|\langle \tau\rangle^{\beta^*}\mathcal{W}\|_{\widetilde{L}^{\infty}_{t}(\dot{\mathbb{B}}^{\frac{d}{p}+1,\frac{d}{2}+1}_{p,2})}.
    \end{aligned}
    \end{equation}
  In what follows, we establish several calculus inequalities to deduce the desired high-frequency decay estimate \eqref{high}.
  %Making use of \eqref{r2}, \eqref{highe1} and Claim \eqref{UV10} below, we recover the estimate \eqref{high} of $\delta V$ and finish the proof of \eqref{high}.
In fact, we claim that
\begin{equation}\label{UV10}
\left\{
\begin{aligned}
&\|\langle \tau\rangle^{\beta^*}\mathcal{W}\|_{\widetilde{L}^{\infty}_{t}(\dot{\mathbb{B}}^{\frac{d}{p}+1,\frac{d}{2}+1}_{p,2})}\lesssim (1+\mathcal{D}_{p,\sigma_1})\mathcal{D}_{p,\sigma_1}+\mathcal{D}_{p}(t),\\
&\|\langle \tau\rangle^{\beta^*}\delta V\|_{\widetilde{L}^{\infty}_{t}(\dot{B}^{\frac{d}{2}+1}_{2,1})}^{h}\lesssim \|\langle \tau\rangle^{\beta^*}\mathcal{W}\|_{\widetilde{L}^{\infty}_{t}(\dot{B}^{\frac{d}{2}+1}_{2,1})}^{h}+\mathcal{D}_{p,\sigma_1}+E_{p,0}\mathcal{D}_{p}(t),
\end{aligned}
\right.
\end{equation}
which provides the link between the time-weighted norms for $\mathcal{W}$ and $\delta V$.

%Here $E_{p,0}$, $E_{p}(t)$, $\mathcal{D}_{p}(t)$ and $\mathcal{D}_{p,\sigma_1}$ are defined by \eqref{E0}, \eqref{Et}, \eqref{mathcalDt} and \eqref{mathcalD0}, respectively.
To handle the time weight $\langle t\rangle^{\beta^*}$ in \eqref{UV10}, we make the best use of decay rates of $\delta V$ in the norm $\dot{B}^{\frac{d}{p}+1-\alpha^*}_{p,1}$ and  $V_L$ in the norm $\dot{B}^{\frac{d}{p}+1}_{p,1}$. This requires us to decompose $\mathcal{W}(V)$ and perform elaborate estimates in low and high frequencies. We remember that $\mathcal{W}=\mathcal{W}(V)$ and $V=V(\mathcal{W})$ are smooth functions and satisfy $\mathcal{W}(0)=V(0)=0$. By Taylor's expansion for vector-valued functions, there exists a matrix $g_1(V)$ such that $g_{1}(0)=\nabla\mathcal{W}(0)$ is positive definite and $\mathcal{W}(V)=g_1(V)  V$. Precisely, we consider the decomposition
\begin{equation}\label{UVde}
\begin{aligned}
\mathcal{W}(V)&=g_1(0)  V+\big(g_1(V)-g_1(0)\big)   V\\
&=g_1(0)  V_{L}+g_1(0)   \delta V+\big(g_1(V)-g_1(0)\big)   \delta V+\big(g_1(V)-g_1(V_{L})\big)V_L+\big(g_1(V_{L})-g_1(0)\big)  V_{L}.
\end{aligned}
\end{equation}
Note that the norm $\|V_{L}(t)\|_{\dot{\mathbb{B}}^{\frac{d}{p}+1,\frac{d}{2}+1}_{p,2}}$ can be bounded by $\mathcal{D}_{p,\sigma_1}\langle t\rangle^{-\frac{1}{2}(\frac{d}{p}+1-\sigma_1)}$. Owing to \eqref{preciselinear1} and $\frac{1}{2}(\frac{d}{p}+1-\sigma_1)>\beta^*$, we have
\begin{equation}\label{VL}
\begin{aligned}
\|\langle \tau\rangle^{\beta^*} V_{L}\|_{\widetilde{L}^{\infty}_t(\dot{\mathbb{B}}^{\frac{d}{p}+1,\frac{d}{2}+1}_{p,2})}\lesssim \|\langle \tau\rangle^{\frac{1}{2}(\frac{d}{p}+1-\sigma_1-\varepsilon_1)} V_{L}\|_{L^{\infty}_t(\dot{\mathbb{B}}^{\frac{d}{p}+1-\varepsilon_1,\frac{d}{2}+1}_{p,2})}\lesssim \mathcal{D}_{p,\sigma_1},
\end{aligned}
\end{equation}
 where we used the fact that, for any $s\in\mathbb{R}, s'>0$ and $u^{\ell}\in \dot{B}^{s-s'}_{p,\infty}$,
\begin{align}
    \|u^{\ell}\|_{\widetilde{L}^{\infty}_{t}(\dot{B}^{s}_{p,1})}\lesssim \sum_{j\leq J_0} 2^{s'j} \|u^{\ell}\|_{L^{\infty}_{t}(\dot{B}^{s-s'}_{p,\infty})}\lesssim \|u^{\ell}\|_{L^{\infty}_{t}(\dot{B}^{s-s'}_{p,\infty})}.\label{lowll}
\end{align}
Observe that the low-frequency norm $\|\langle \tau\rangle^{\beta^*} \delta V^\ell\|_{\dot{B}^{\frac{d}{p}+1-\alpha^*}_{p,1}}$ decays at the rate $\langle t\rangle^{-\frac{1}{2}(\frac{d}{p}+1-\sigma_1-\frac{1}{2}\alpha^*)}$, which is faster than $\langle t\rangle^{-\beta^*}$. It follows from \eqref{lowll} and the definition of $\mathcal{D}_{p}(t)$  that
\begin{equation}\label{deltaVw}
\begin{aligned}
\|\langle \tau\rangle^{\beta^*} \delta V\|_{\widetilde{L}^{\infty}_t(\dot{\mathbb{B}}^{\frac{d}{p}+1,\frac{d}{2}+1}_{p,2})}\lesssim  \|\langle \tau\rangle^{\beta^*} \delta V^\ell\|_{L^{\infty}_t(\dot{B}^{\frac{d}{p}+1-\alpha^*}_{p,1})}+\|\langle \tau\rangle^{\beta^*} \delta V\|_{\widetilde{L}^{\infty}_t(\dot{B}^{\frac{d}{2}+1}_{2,1})}^h\lesssim \mathcal{D}_{p}(t).
\end{aligned}
\end{equation}
%Note that $\beta^*<\frac{1}{2}(\frac{d}{p}-\sigma_1)$.
Furthermore, we employ \eqref{r2}, \eqref{VLuniform},  \eqref{deltaVw}, \eqref{realmoser2}, \eqref{comhybrid}
and derive that
\begin{equation}\label{554}
\begin{aligned}
&\quad\|\langle \tau\rangle^{\beta^*} \big(g_1(V)-g_1(0)\big)  \delta V\|_{\widetilde{L}^{\infty}_t(\dot{\mathbb{B}}^{\frac{d}{p}+1,\frac{d}{2}+1}_{p,2})}\\
%&\lesssim \|\langle \tau\rangle^{\beta^*} \big(g_1(V)-g_1(0) \big)  \delta V\|_{L^{\infty}_t(\dot{\mathbb{B}}^{\frac{d}{p},\frac{d}{2}+1}_{p,2})}\\
&\lesssim \| \big(g_1(V)-g_1(0) \big)\|_{\widetilde{L}^{\infty}_t(\dot{\mathbb{B}}^{\frac{d}{p},\frac{d}{2}+1}_{p,2})}\|\langle \tau\rangle^{\beta^*}\delta V\|_{\widetilde{L}^{\infty}_{t}(\dot{\mathbb{B}}^{\frac{d}{p}+1,\frac{d}{2}+1}_{p,2})}\\
&\quad+\|\langle \tau\rangle^{\beta^*} \big(g_1(V)-g_1(0) \big)\|_{\widetilde{L}^{\infty}_t(\dot{\mathbb{B}}^{\frac{d}{p}+1,\frac{d}{2}+1}_{p,2})}\|\delta V\|_{\widetilde{L}^{\infty}_{t}(\dot{\mathbb{B}}^{\frac{d}{p},\frac{d}{2}+1}_{p,2})}\\
&\lesssim (1+\|V\|_{\widetilde{L}^{\infty}_t(\dot{\mathbb{B}}^{\frac{d}{p},\frac{d}{2}+1}_{p,2})})\bigg(\|V\|_{\widetilde{L}^{\infty}_t(\dot{\mathbb{B}}^{\frac{d}{p},\frac{d}{2}+1}_{p,2})} \|\langle \tau\rangle^{\beta^*}\delta V\|_{\widetilde{L}^{\infty}_{t}(\dot{\mathbb{B}}^{\frac{d}{p}+1,\frac{d}{2}+1}_{p,2})}\\
&\quad+\|\langle \tau\rangle^{\beta^*}(V_L,\delta V)\|_{\widetilde{L}^{\infty}_t(\dot{\mathbb{B}}^{\frac{d}{p}+1,\frac{d}{2}+1}_{p,2})} \|(V,V_L)\|_{\widetilde{L}^{\infty}_{t}(\dot{\mathbb{B}}^{\frac{d}{p},\frac{d}{2}+1}_{p,2})}\bigg)\lesssim E_{p,0}\mathcal{D}_{p}(t)
\end{aligned}
\end{equation}
and
\begin{equation}\label{55500}
\begin{aligned}
&\quad\|\langle \tau\rangle^{\beta^*} (g_1(V)-g_1(V_{L}))  V_{L}\|_{\widetilde{L}^{\infty}_t(\dot{\mathbb{B}}^{\frac{d}{p}+1,\frac{d}{2}+1}_{p,2})}\\
&\lesssim \|\langle \tau\rangle^{\beta^*}V_{L}\|_{\widetilde{L}^{\infty}_{t}(\dot{\mathbb{B}}^{\frac{d}{p}+1,\frac{d}{2}+1}_{p,2})}\|g_1(V)-g_1(0)\|_{\widetilde{L}^{\infty}_t(\dot{\mathbb{B}}^{\frac{d}{p},\frac{d}{2}+1}_{p,2})}\\
&\quad+\|V_{L}\|_{\widetilde{L}^{\infty}_{t}(\dot{\mathbb{B}}^{\frac{d}{p},\frac{d}{2}+1}_{p,2})}\|\langle \tau\rangle^{\beta^*}\big(g_1(V)-g_1(0) \big)\|_{\widetilde{L}^{\infty}_t(\dot{\mathbb{B}}^{\frac{d}{p}+1,\frac{d}{2}+1}_{p,2})}\lesssim E_{p,0}\mathcal{D}_{p}(t).\\
%&\lesssim (1+\|V\|_{\widetilde{L}^{\infty}_t(\dot{\mathbb{B}}^{\frac{d}{p},\frac{d}{2}+1}_{p,2})}) \|V_{L}\|_{\widetilde{L}^{\infty}_{t}(\dot{\mathbb{B}}^{\frac{d}{p},\frac{d}{2}+1}_{p,2})}\|\langle \tau\rangle^{\beta^*}\delta V\|_{\widetilde{L}^{\infty}_t(\dot{\mathbb{B}}^{\frac{d}{p},\frac{d}{2}+1}_{p,2})}\\
%&\lesssim (1+\|V\|_{\widetilde{L}^{\infty}_t(\dot{\mathbb{B}}^{\frac{d}{p},\frac{d}{2}+1}_{p,2})}) \|V_{L}\|_{\widetilde{L}^{\infty}_{t}(\dot{\mathbb{B}}^{\frac{d}{p},\frac{d}{2}+1}_{p,2})}\mathcal{D}_{p}(t) \lesssim E_{p,0}\mathcal{D}_{p}(t).
\end{aligned}
\end{equation}
Owing to  \eqref{VLuniform}, \eqref{preciselinear1} and $\frac{1}{2}\beta^*\leq \frac{1}{2}(\frac{d}{p}-\sigma_1-\varepsilon_1)$, the term $\big(g_1(V_L)-g_1(0) \big)  V_{L}$ can be similarly estimated by
\begin{equation}\label{errorVL}
\begin{aligned}
&\quad\|\langle \tau\rangle^{\beta^*} \big{(}g_1(V_{L})-g_1(0) \big{)} V_{L}\|_{\widetilde{L}^{\infty}_t(\dot{\mathbb{B}}^{\frac{d}{p}+1,\frac{d}{2}+1}_{p,2})}\\
&\lesssim \|\langle \tau\rangle^{\frac{1}{2}(\frac{d}{p}-\sigma_1-\varepsilon_1)}V_{L}\|_{\widetilde{L}^{\infty}_{t}(\dot{\mathbb{B}}^{\frac{d}{p}+1,\frac{d}{2}+1}_{p,2})}\|\langle \tau\rangle^{\frac{1}{2}(\frac{d}{p}-\sigma_1-\varepsilon_1)}\big(g_1(V_L)-g_1(0) \big)\|_{\widetilde{L}^{\infty}_t(\dot{\mathbb{B}}^{\frac{d}{p},\frac{d}{2}+1}_{p,2})}\\
&\quad+\|\langle \tau\rangle^{\frac{1}{2}(\frac{d}{p}-\sigma_1-\varepsilon_1)}V_{L}\|_{\widetilde{L}^{\infty}_{t}(\dot{\mathbb{B}}^{\frac{d}{p},\frac{d}{2}+1}_{p,2})}\|\langle \tau\rangle^{\frac{1}{2}(\frac{d}{p}-\sigma_1-\varepsilon_1)}\big(g_1(V_L)-g_1(0) \big)\|_{\widetilde{L}^{\infty}_t(\dot{\mathbb{B}}^{\frac{d}{p}+1,\frac{d}{2}+1}_{p,2})}\\
%&\lesssim \|\langle \tau\rangle^{\beta^*} \big(g_1(V_{L})-g_1(0) \big)  V_{L}\|_{L^{\infty}_t(\dot{\mathbb{B}}^{\frac{d}{p},\frac{d}{2}+1}_{p,2})}\\
&\lesssim (1+\|V_{L}\|_{\widetilde{L}^{\infty}_t(\dot{\mathbb{B}}^{\frac{d}{p},\frac{d}{2}+1}_{p,2})}) \|\langle \tau\rangle^{\frac{1}{2}(\frac{d}{p}-\sigma_1-\varepsilon_1)}V_{L}\|_{\widetilde{L}^{\infty}_{t}(\dot{B}^{\frac{d}{p}}_{p,1})}\|\langle \tau\rangle^{\frac{1}{2}(\frac{d}{p}-\sigma_1-\varepsilon_1)}V_{L}\|_{\widetilde{L}^{\infty}_t(\dot{\mathbb{B}}^{\frac{d}{p},\frac{d}{2}+1}_{p,2})}\lesssim \mathcal{D}_{p,\sigma_1}^2.
\end{aligned}
\end{equation}
Therefore, adding up \eqref{VL}-\eqref{errorVL} yields the first inequality in \eqref{UV10}.
On the other hand, we perform the small modification on  \eqref{VL}-\eqref{errorVL} in high frequencies only and obtain
\begin{equation}
\begin{aligned}
\|\langle \tau\rangle^{\beta^*} \delta V\|_{\widetilde{L}^{\infty}_{t}(\dot{B}^{\frac{d}{2}+1}_{2,1})}^{h}%&\lesssim \|\langle \tau\rangle^{\beta^*} \mathcal{W}\|_{\widetilde{L}^{\infty}_{t}(\dot{B}^{\frac{d}{2}+1}_{2,1})}^{h}+\|\langle \tau\rangle^{\beta^*} V_L\|_{\widetilde{L}^{\infty}_{t}(\dot{B}^{\frac{d}{2}+1}_{2,1})}^{h}\\&\quad+\|\big(g_1(V)-g_1(0) \big)   \delta V\|_{\widetilde{L}^{\infty}_{t}(\dot{B}^{\frac{d}{2}+1}_{2,1})}^{h}+\|g_1(V_{L})-g_1(0))  V_{L}\|_{\widetilde{L}^{\infty}_{t}(\dot{B}^{\frac{d}{2}+1}_{2,1})}^{h}\\
&\lesssim \|\langle \tau\rangle^{\beta^*}\mathcal{W}\|_{\widetilde{L}^{\infty}_{t}(\dot{B}^{\frac{d}{2}+1}_{2,1})}^{h}+\mathcal{D}_{p,\sigma_1}+\mathcal{D}_{p,\sigma_1}^2+E_{p,0}\mathcal{D}_{p}(t),
\end{aligned}
\end{equation}
which is exactly the second inequality in
\eqref{UV10}. Hence, the proof of Lemma \ref{lemmahigh} is eventually complete.
\end{proof}

\noindent
\textbf{Proof of Proposition \ref{properror1}.}
Having Lemmas \ref{lemmalow}-\ref{lemmahigh} in hand, we shall finish the proof of
Proposition \ref{properror1}. It follows from \eqref{PV1}, \eqref{PV2} and \eqref{high} that
    \begin{equation}\label{Derror}
    \begin{aligned}
\mathcal{D}_{p}(t)&\lesssim \Big(\mathcal{D}_{p,\sigma_1+\frac{1}{2}\alpha^*}+\|(\mathbf{P} V,\mathbf{P}\delta V)\|_{L^{\infty}_{t}(\dot{B}^{\sigma_1+\frac{1}{2}\alpha^*}_{p,\infty})}+\| \delta V\|_{L^{\infty}_{t}(\dot{B}^{\sigma_1+1}_{p,\infty})}\Big)\mathcal{D}_{p}(t)\\
&\quad +(1+\| \delta V\|_{L^{\infty}_{t}(\dot{B}^{\sigma_1+1}_{p,\infty})})\mathcal{D}_{p,\sigma_1}+\mathcal{D}_{p,\sigma_1}^2,
    \end{aligned}
    \end{equation}
where we used the smallness of $E_{p,0}$. The norm notation $\mathcal{D}_{p,\sigma}$ and $\alpha^*\in (0,1]$ are given by \eqref{varepsilonLsigma'} and \eqref{5.4}, respectively. To get the time-weighted energy estimate \eqref{errores},
it suffices to show that   $\mathcal{D}_{p,\sigma_1+\frac{1}{2}\alpha^*}+\|\mathbf{P}V\|_{L^{\infty}_{t}(\dot{B}^{\sigma_1+\frac{1}{2}\alpha^*}_{p,\infty})}+\| \delta V\|_{L^{\infty}_{t}(\dot{B}^{\sigma_1+1}_{p,\infty})}$ is suitably small.
 %the first term on the right-hand side of \eqref{Derror} related to $\mathcal{D}_{p}(t)$ can be absorbed by the term on the left-hand side if
In fact, combining with the interpolation inequality \eqref{inter}, $\sigma_1<\sigma_1+\frac{1}{2}\alpha^*<\frac{d}{p}$ and the smallness of $E_{p,0}$, one can see that
$$
    \mathcal{D}_{p,\sigma_1+\frac{1}{2}\alpha^*} \lesssim \mathcal{D}_{p,\sigma_1}^{\theta_1}E_{p,0}^{1-\theta_1}\ll1,
   $$
   where $\theta_1\in(0,1)$ is given by $\sigma_1+\frac{1}{2}\alpha^*=\sigma_1 \theta_1+\frac{d}{p}(1-\theta_1)$.
It follows from \eqref{Vuniform} and \eqref{Etlinear}  that
\begin{align*}
\|\mathbf{P}V\|_{L^{\infty}_{t}(\dot{B}^{\sigma_1+\frac{1}{2}\alpha^*}_{p,\infty})}^h&\lesssim \|V\|_{L^{\infty}_{t}(\dot{B}^{\frac{d}{2}+1}_{2,1})}^h\lesssim E_{p,0}\ll1,\\
\|\mathbf{P}\delta V\|_{\dot{B}^{\sigma_1+\frac{1}{2}\alpha^*}_{p,\infty}}^h+\| \delta V\|_{L^{\infty}_{t}(\dot{B}^{\sigma_1+1}_{p,\infty})}^h&\lesssim \|(V,V_L)\|_{L^{\infty}_{t}(\dot{B}^{\frac{d}{2}+1}_{2,1})}^h\lesssim E_{p,0}\ll1.
\end{align*}
%If $\sigma_1+1>\frac{d}{p}-1$, one directly has $\|\mathbf{P}\delta V^\ell\|_{L^{\infty}_{t}(\dot{B}^{\sigma_1+1}_{p,\infty})}\lesssim \|(\mathbf{P}V^\ell, \mathbf{P}V_L^\ell)\|_{L^{\infty}_{t}(\dot{B}^{\frac{d}{p}-1}_{p,1})}\lesssim \mathcal{X}_{p,0}\ll1$. In the case  $\sigma_1+1<\frac{d}{p}-1$, we let $\theta_2\in(0,1)$ be such that $\sigma_1+1=(\sigma_1+1)\theta_2+\frac{d}{p}(1-\theta_2)$. Then \eqref{Etlinear} and \eqref{preciselinear1} ensure that
%\begin{align*}
%\|\mathbf{P}\delta V^\ell\|_{L^{\infty}_{t}(\dot{B}^{\sigma_1+1}_{p,\infty})}&\lesssim \|(\mathbf{P} V)^\ell\|_{L^{\infty}_{t}(\dot{B}^{\sigma_1+1}_{p,\infty})}+\|\mathbf{P}V_L^\ell\|_{L^{\infty}_{t}(\dot{B}^{\sigma_1+\frac{1}{2}}_{p,\infty})}^{\theta_2}\|\mathbf{P}V_L^\ell\|_{L^{\infty}_{t}(\dot{B}^{\frac{d}{p}-1}_{p,1})}^{1-\theta_2}\\
%&\lesssim \|(\mathbf{P} V)^\ell\|_{L^{\infty}_{t}(\dot{B}^{\sigma_1+\frac{1}{2}\alpha^*}_{p,\infty})}+\mathcal{D}_{p,\sigma_1}^{\theta_2}\mathcal{X}_{p,0}^{1-\theta_2}.
%\end{align*}

Now, we  {\textbf{claim}} that
    \begin{equation}\label{lowevolution}
    \begin{aligned}
&\|(\mathbf{P}V,\mathbf{P}\delta V )^\ell\|_{L^{\infty}_{t}(\dot{B}^{\sigma_1}_{p,\infty})}+\| (\{\mathbf{I} - \mathbf{P}\}\delta V)^\ell\|_{L^{\infty}_{t}(\dot{B}^{\sigma_1+1-\alpha^*}_{p,\infty})}\lesssim \mathcal{D}_{p,\sigma_1}
    \end{aligned}
    \end{equation}
for all $t>0$. The proof is left to the next section for clarity. If so, let $\theta_2\in(0,1)$ be given by $\sigma_1+1=(\sigma_1+1-\alpha^*)\theta_2+\frac{d}{p}(1-\theta_2)$. Then, employing the interpolation inequality \eqref{inter} once again, we have
       \begin{equation}\nonumber
    \begin{aligned}
    \|(\mathbf{P} V,\mathbf{P}\delta V)^\ell\|_{L^{\infty}_{t}(\dot{B}^{\sigma_1+\frac{1}{2}\alpha^*}_{p,\infty})}&\lesssim \|(\mathbf{P} V,\mathbf{P}\delta V)^\ell\|_{L^{\infty}_{t}(\dot{B}^{\sigma_1}_{p,\infty})}^{\theta_1} \|(\mathbf{P}V,\mathbf{P}\delta V)^\ell\|_{L^{\infty}_{t}(\dot{B}^{\frac{d}{p}-1}_{p,1})}^{1-\theta_1}\lesssim  \mathcal{D}_{p,\sigma_1}^{\theta_1}E_{p,0}^{1-\theta_1}\ll1,\\
   \|(\{\mathbf{I} - \mathbf{P}\}\delta V)^\ell\|_{L^{\infty}_{t}(\dot{B}^{\sigma_1+1}_{p,\infty})}&\lesssim \|(\{\mathbf{I} - \mathbf{P}\}\delta V)^\ell\|_{L^{\infty}_{t}(\dot{B}^{\sigma_1+1-\alpha^*}_{p,\infty})}^{\theta_2}\|\{\mathbf{I} - \mathbf{P}\}(V_L^\ell,V^\ell)\|_{L^{\infty}_{t}(\dot{B}^{\frac{d}{p}}_{p,1})}^{1-\theta_2}
\lesssim \mathcal{D}_{p,\sigma_1}^{\theta_{2}} E_{p,0}^{1-\theta_{2}}\ll1,
    \end{aligned}
    \end{equation}
owing to \eqref{Vuniform}, \eqref{VLuniform} and \eqref{lowevolution}.
Consequently,  one can conclude that \eqref{errores} for all time since $E_{p,0}$
is small enough. The proof of Proposition \ref{properror1} is complete.\hfill $\Box$

\subsubsection{The regularity evolution of low-frequency norm}

The remainder is to prove {\textbf{Claim}} \eqref{lowevolution},
which indicates the regularity evolution of low frequencies of solutions to \eqref{m1}-\eqref{m1d}. Clearly, this fact plays a key role in closing \eqref{errores} in the last section.

\begin{prop}\label{propevolution}
Let $V$ be the global solution to \eqref{m1}-\eqref{m1d} given in Theorem \ref{theorem0}. Under the additional assumptions $(\mathbf{P}V_0)^{\ell}\in\dot{B}^{\sigma_{1}}_{p,\infty}$ and $(\{\mathbf{I} - \mathbf{P}\}V_0)^{\ell}\in\dot{B}^{\sigma_{1}+1}_{p,\infty}$ with  $-\frac{d}{p} \leq \sigma_{1}<\frac{d}{p}-1$, it holds that
\begin{equation}
\begin{aligned}
\|(\mathbf{P}V)^{\ell}\|_{L^{\infty}_{t}(\dot{B}^{\sigma_{1}}_{p,\infty})}+\|(\{\mathbf{I} - \mathbf{P}\}V)^{\ell}\|_{L^{\infty}_{t}(\dot{B}^{\sigma_{1}+1}_{p,\infty})}\lesssim \mathcal{D}_{p,\sigma_1}\label{evolution1}
\end{aligned}
\end{equation}
and
\begin{align}
\| (\mathbf{P}\delta V)^\ell\|_{L^{\infty}_{t}(\dot{B}^{\sigma_1}_{p,\infty})}+\| (\{\mathbf{I} - \mathbf{P}\}\delta V)^\ell\|_{L^{\infty}_{t}(\dot{B}^{\sigma_1+1-\alpha^*}_{p,\infty})}\lesssim \mathcal{D}_{p,\sigma_1}\label{evolution2}
\end{align}
for all $t>0$ where $\alpha^*\in(0,1]$ is given by \eqref{5.4}.
\end{prop}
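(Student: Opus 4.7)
\medskip

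\noindent\textbf{Proof plan for Proposition \ref{propevolution}.}
The plan is to revisit the frequency-localized Duhamel strategy used in Step~1 of Proposition \ref{propapriori}, but now track the low regularity index $\sigma_1$ instead of the critical one $\frac{d}{p}-1$. First, applying $\dot{\Delta}_j \dot{S}_{J_0}$ to \eqref{m1} and using the spectral estimates of Proposition \ref{LemmaspectrallocalHp} together with the key identity $\mathbf{P}R^{0}=0$ (which produces a factor $2^j$ rather than a zeroth-order term), I would obtain, in analogy with \eqref{PVj}, the pointwise inequality
\begin{equation*}
\|\dot{\Delta}_{j}(\mathbf{P}V)^{\ell}\|_{L^p}\lesssim e^{-R_{1}2^{2j}t}\Big(\|\dot{\Delta}_{j}(\mathbf{P}V_0)^{\ell}\|_{L^p}+2^{j}\|\dot{\Delta}_{j}(\{\mathbf{I}-\mathbf{P}\}V_0)^{\ell}\|_{L^p}\Big)+2^{j}\int_{0}^{t}e^{-R_{1}2^{2j}(t-\tau)}\sum_{i=0}^{d}\|\dot{\Delta}_{j}(R^{i})^{\ell}\|_{L^p}\,d\tau,
\end{equation*}
and an analogous one for $(\{\mathbf{I}-\mathbf{P}\}V)^{\ell}$ involving an extra $2^j$-gain from the parabolic part and an unweighted $e^{-\kappa_{1}t}$ tail. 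Multiplying by $2^{\sigma_{1}j}$ (resp.\ $2^{(\sigma_{1}+1)j}$) and taking $\sup_{j\leq J_0}$, the linear contribution is immediately bounded by $\mathcal{D}_{p,\sigma_1}$.

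Next, to control the nonlinear time-integral term, I would use that on the low-frequency block $j\leq J_0$ the weight $2^{j}$ is universally bounded by $2^{J_0}$, so that
\begin{equation*}
\sup_{j\leq J_0}2^{(\sigma_{1}+1)j}\int_{0}^{t}e^{-R_{1}2^{2j}(t-\tau)}\|\dot{\Delta}_{j}(R^{i})^{\ell}\|_{L^p}\,d\tau\lesssim \int_{0}^{t}\|(R^{i})\|_{\dot{B}^{\sigma_{1}+1}_{p,\infty}}^{\ell}\,d\tau\lesssim \|R^i\|_{L^{1}_{t}(\dot{B}^{\frac{d}{p}}_{p,1})}^{\ell},
\end{equation*}
where the last step uses the low-frequency embedding $\dot{B}^{\frac{d}{p}}_{p,1}\hookrightarrow \dot{B}^{\sigma_{1}+1}_{p,\infty}$ on $\{j\leq J_0\}$ (available because $\sigma_{1}+1\leq d/p$) with constant depending only on $J_0$. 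Combining with the nonlinear product bound of the proof of Proposition \ref{propapriori}, together with the interpolation \eqref{VL2}, yields $\|R^i\|_{L^{1}_{t}(\dot{B}^{\frac{d}{p}}_{p,1})}\lesssim E_{p,0}^{2}$. Finally, the very same low-frequency embedding gives $\|(\mathbf{P}V_0)^{\ell}\|_{\dot{B}^{\frac{d}{p}-1}_{p,1}}+\|(\{\mathbf{I}-\mathbf{P}\}V_0)^{\ell}\|_{\dot{B}^{\frac{d}{p}}_{p,1}}\lesssim \mathcal{D}_{p,\sigma_{1}}$, hence $E_{p,0}\lesssim \mathcal{D}_{p,\sigma_{1}}$; combined with the smallness $E_{p,0}\leq \varepsilon_{0}$ from Theorem \ref{theorem0}, one gets $E_{p,0}^{2}\lesssim \varepsilon_{0}\mathcal{D}_{p,\sigma_{1}}$, which closes \eqref{evolution1}.

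For the error $\delta V$, the same Duhamel representation \eqref{LocalLDuhamel} applies, but now there is \emph{no} initial-data term since $\delta V(0)=0$; therefore the bound on $(\mathbf{P}\delta V)^{\ell}$ in $\dot{B}^{\sigma_{1}}_{p,\infty}$ reduces to the nonlinear contribution already controlled above, giving \eqref{evolution2} for its conservative component. The delicate point is the dissipative component $(\{\mathbf{I}-\mathbf{P}\}\delta V)^{\ell}$: the second spectral estimate of Proposition \ref{LemmaspectrallocalHp} contributes an exponential tail $e^{-\kappa_{1}(t-\tau)}\|\dot{\Delta}_{j}\{\mathbf{I}-\mathbf{P}\}(R^{0}+\partial_{x_i}R^i)\|_{L^p}$ that offers \emph{no} frequency-dependent gain; plainly estimating it at index $\sigma_{1}+1$ would require bounding $R^0$ in $\dot{B}^{\sigma_{1}+1}_{p,\infty}$ uniformly in time, which is not available. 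I would overcome this by lowering the index to $\sigma_{1}+1-\alpha^{*}$: the reduction by $\alpha^{*}$ is exactly what is needed to absorb either the full order-one regularity loss (when $\sigma_{1}<d/p-2$, $\alpha^{*}=1$) or a fractional loss $\frac{d}{p}-1-\sigma_{1}-\varepsilon_{1}$ (when $\sigma_{1}$ approaches $d/p-1$), bringing us back to regularity at most $d/p$ where the low-frequency control of $R^i$ by $E_{p,0}^{2}$ is at hand.

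The main obstacle will be precisely this interplay between the exponentially decaying but frequency-insensitive tail and the sharp regularity index one can afford for the error $(\{\mathbf{I}-\mathbf{P}\}\delta V)^\ell$; the choice \eqref{5.4} of $\alpha^{*}$ is tailored to make the two contributions compatible. The rest is routine product and composition estimates in $L^{p}$-type Besov spaces (Corollaries \ref{coroApp1}-\ref{corocom}) and bookkeeping of the embedding $\dot{B}^{\sigma_{1}+s'}_{p,\infty}\hookrightarrow \dot{B}^{\sigma_{1}}_{p,\infty}$ on the low-frequency block $\{j\leq J_{0}\}$ for $s'\geq 0$, already used implicitly in Lemmas \ref{lemmalow}--\ref{lemmahigh}.
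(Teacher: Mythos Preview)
Your central low-frequency embedding claim is backwards, and this breaks the argument for \eqref{evolution1}. On the block $\{j\leq J_0\}$ one has $2^{(\sigma_1+1)j}=2^{(\sigma_1+1-\frac{d}{p})j}\,2^{\frac{d}{p}j}$ with $\sigma_1+1-\frac{d}{p}<0$, so the factor $2^{(\sigma_1+1-\frac{d}{p})j}$ blows up as $j\to-\infty$; hence $\|f\|_{\dot{B}^{\sigma_1+1}_{p,\infty}}^{\ell}$ is \emph{not} controlled by $\|f\|_{\dot{B}^{d/p}_{p,1}}^{\ell}$. (The paper's own rule, stated just before Definition \ref{DB2}, is $\|u\|_{\dot{B}^{s}_{p,r}}^{\ell}\lesssim\|u\|_{\dot{B}^{s-s'}_{p,\infty}}^{\ell}$ for $s'>0$: on low frequencies, \emph{lower} regularity controls \emph{higher}, not the other way around.) The same reversal appears in your last sentence, where ``$\dot{B}^{\sigma_1+s'}_{p,\infty}\hookrightarrow\dot{B}^{\sigma_1}_{p,\infty}$ on $\{j\leq J_0\}$'' is again the wrong direction. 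Consequently you cannot reduce $\|R^i\|_{L^1_t(\dot{B}^{\sigma_1+1}_{p,\infty})}^{\ell}$ to $\|R^i\|_{L^1_t(\dot{B}^{d/p}_{p,1})}^{\ell}$ and invoke the bound $E_{p,0}^2$ from Theorem \ref{theorem0}.

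What the paper actually does is estimate the nonlinear term \emph{directly} at the low index via \eqref{product3} and \eqref{F11}: $\|R^i\|_{\widetilde{L}^1_t(\dot{B}^{\sigma_1+1}_{p,\infty})}\lesssim\|V\|_{\widetilde{L}^2_t(\dot{B}^{d/p}_{p,1})}\|V\|_{\widetilde{L}^2_t(\dot{B}^{\sigma_1+1}_{p,\infty})}$. The first factor is small ($\lesssim E_{p,0}$) by \eqref{VL2}; the second is bounded, via complex interpolation \eqref{inter1}, by the very quantity $\|(\mathbf{P}V)^{\ell}\|_{\widetilde{L}^{\infty}_{t}(\dot{B}^{\sigma_{1}}_{p,\infty})\cap \widetilde{L}^{1}_{t}(\dot{B}^{\sigma_{1}+2}_{p,\infty})}+\|(\{\mathbf{I} - \mathbf{P}\}V)^{\ell}\|_{\widetilde{L}^{\infty}_{t}(\dot{B}^{\sigma_{1}+1}_{p,\infty})\cap \widetilde{L}^{1}_{t}(\dot{B}^{\sigma_{1}+1}_{p,\infty})}$ on the left-hand side, so the estimate \emph{closes} by absorption. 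This closing step is the missing idea in your plan. Likewise, for the damped tail in \eqref{evolution2} the paper bounds $\|R^i\|_{L^\infty_t(\dot{B}^{\sigma_1+1-\alpha^*}_{p,\infty})}^{\ell}$ directly by a product estimate and then feeds in the just-proven \eqref{evolution1} (together with the correct low-frequency embedding $\|V^\ell\|_{\dot{B}^{d/p-\alpha^*}_{p,1}}\lesssim\|V^\ell\|_{\dot{B}^{\sigma_1+1}_{p,\infty}}$) to obtain $\lesssim\mathcal{D}_{p,\sigma_1}^2$; there is no shortcut via $\|R^i\|_{\dot{B}^{d/p}_{p,1}}$.
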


%\begin{remark}
%about $\sigma_{1}$
%\end{remark}

\begin{proof}
The strategy is inspired by the work \cite{xin1} of Xin and the second author. Set $V^{\ell}=\dot{S}_{J_0-1}V$ with the threshold $J_0$ satisfying $2^{J_0}\leq \lambda_{0}$ for a small constant $\lambda_{0}$ given by Proposition \ref{LemmaspectrallocalHp}.  Performing the same procedure leading to \eqref{localPV} and \eqref{localIPV}, we
obtain
\begin{equation}\label{5311}
\begin{aligned}
&\quad\|(\mathbf{P}V)^{\ell}\|_{\widetilde{L}^{\infty}_{t}(\dot{B}^{\sigma_{1}}_{p,\infty})\cap \widetilde{L}^{1}_{t}(\dot{B}^{\sigma_{1}+2}_{p,\infty})}+\|(\{\mathbf{I} - \mathbf{P}\}V)^{\ell}\|_{\widetilde{L}^{\infty}_{t}(\dot{B}^{\sigma_{1}+1}_{p,\infty})\cap \widetilde{L}^{1}_{t}(\dot{B}^{\sigma_{1}+1}_{p,\infty})}\\
& \lesssim \|(\mathbf{P}V_0)^{\ell}\|_{\dot{B}^{\sigma_1}_{p,\infty}}+\|(\{\mathbf{I} - \mathbf{P}\}V_0)^{\ell}\|_{\dot{B}^{\sigma_1+1}_{p,\infty}}+\|R^0\|_{\widetilde{L}^1_{t}(\dot{B}^{\sigma_1+1}_{p,\infty})}^{\ell}+\sum_{i=1}^{d}\|R^i\|_{\widetilde{L}^1_{t}(\dot{B}^{\sigma_1+1}_{p,\infty})}^{\ell}.
\end{aligned}
\end{equation}
Remembering that $R^i=S^i(V)V$ ($i=0,1,\dots,d$) with  smooth functions $S^i(0)=0$ and combining with \eqref{product3}, \eqref{F11} and  $-\frac{d}{p}\leq \sigma_1+1<\frac{d}{p}$ yields
\begin{equation}\label{Ripinfty}
\begin{aligned}
\sum_{i=0}^{d}\|R^i\|_{\widetilde{L}^1_{t}(\dot{B}^{\sigma_1+1}_{p,\infty})}^{\ell}%&\lesssim  \sum_{i=0}^{d}\int_0^t \|R^i\|_{\dot{B}^{\sigma_1+1}_{p,\infty}}^{\ell}\,d\tau\\
%&\lesssim \int_0^t \|V\|_{\dot{B}^{\frac{d}{p}}_{p,1}}\|V\|_{\dot{B}^{\sigma_1+1}_{p,\infty}}\,d\tau\\
%&\lesssim \int_0^t  \|V\|_{\dot{B}^{\frac{d}{p}}_{p,1}} (\|(\mathbf{P}V)^\ell\|_{\dot{B}^{\sigma_1}}_{p,\infty}^{\frac{1}{2}} \|(\mathbf{P}V)^\ell\|_{\dot{B}^{\frac{d}{p}}}_{p,\infty}^{\frac{1}{2}}+\|(\{\mathbf{I} - \mathbf{P}\}V)^\ell\|_{\dot{B}^{\sigma_1+1}_{p,\infty}}+\|V\|_{\dot{B}^{\frac{d}{2}+1}_{2,1}}^h) \,d\tau\\
%&\lesssim \eta_0\int_0^t
&\lesssim
\|V\|_{\widetilde{L}^2_{t}(\dot{B}^{\frac{d}{p}}_{p,1})}\|V\|_{\widetilde{L}^2_{t}(\dot{B}^{\sigma_1+1}_{p,\infty})}\\
&\lesssim \|V\|_{\widetilde{L}^2_{t}(\dot{B}^{\frac{d}{p}}_{p,1})}\Big(\|(\mathbf{P}V)^{\ell}\|_{\widetilde{L}^2_{t}(\dot{B}^{\sigma_1+1}_{p,\infty})}+\|(\{\mathbf{I} - \mathbf{P}\}V)^{\ell}\|_{\widetilde{L}^2_{t}(\dot{B}^{\sigma_1+1}_{p,\infty})}+\|V\|_{\widetilde{L}^2_{t}(\dot{B}^{\frac{d}{2}+1}_{2,1})}^{h}\Big).
\end{aligned}
\end{equation}
The interpolation \eqref{inter} ensures that
\begin{equation}\label{L2small}
\begin{aligned}
&\quad\|(\mathbf{P}V)^{\ell}\|_{\widetilde{L}^2_{t}(\dot{B}^{\sigma_1+1}_{p,\infty})}+\|(\{\mathbf{I} - \mathbf{P}\}V)^{\ell}\|_{\widetilde{L}^2_{t}(\dot{B}^{\sigma_1+1}_{p,\infty})}\\
&\lesssim \|(\mathbf{P}V)^{\ell}\|_{\widetilde{L}^{\infty}_{t}(\dot{B}^{\sigma_{1}}_{p,\infty})\cap \widetilde{L}^{1}_{t}(\dot{B}^{\sigma_{1}+2}_{p,\infty})}+\|(\{\mathbf{I} - \mathbf{P}\}V)^{\ell}\|_{\widetilde{L}^{\infty}_{t}(\dot{B}^{\sigma_{1}+1}_{p,\infty})\cap \widetilde{L}^{1}_{t}(\dot{B}^{\sigma_{1}+1}_{p,\infty})}.
\end{aligned}
\end{equation}
Recall that $V$ satisfies the uniform estimate \eqref{Vuniform}. It follows from H\"older's inequality that
$$
\|V\|_{\widetilde{L}^2_{t}(\dot{B}^{\frac{d}{2}+1}_{2,1})}^{h}\lesssim \bigg( \|V\|_{\widetilde{L}^{\infty}_{t}(\dot{B}^{\frac{d}{2}+1}_{2,1})}^{h}\bigg)^{\frac{1}{2}} \bigg( \|V\|_{L^{1}_{t}(\dot{B}^{\frac{d}{2}+1}_{2,1})}^{h}\bigg)^{\frac{1}{2}} \lesssim E_{p,0}.
$$
As in \eqref{VL2}, we also get
\begin{equation}\label{L2small0}
\begin{aligned}
\|V\|_{\widetilde{L}^2_{t}(\dot{B}^{\frac{d}{p}}_{p,1})}&\lesssim E_{p,0}.
\end{aligned}
\end{equation}
Inserting \eqref{L2small} and \eqref{L2small0} into \eqref{5311}, one can conclude \eqref{evolution1} immediately by the smallness of $E_{p,0}$ and the fact $E_{p,0}\lesssim \mathcal{D}_{p,\sigma_1}$.

Next, let us show \eqref{evolution2} for the regularity evolution of $\delta V$. The regularity estimate of $\mathbf{P}\delta V$ can be derived by similar computations on \eqref{555} as before. Taking the supremum of \eqref{deltaJdeltaIPV} on $[0,t]$ leads to
\begin{equation}\nonumber
\begin{aligned}
\|\dot{\Delta}_{j}(\{\mathbf{I} - \mathbf{P}\}\delta V)^{\ell}\|_{L^{\infty}_t(L^p)}&\lesssim \int_{0}^{t} \Big(e^{-R_{1}2^{2j}(t-\tau)} 2^{2j}+e^{-\kappa_1 (t-\tau)}\Big) \Big(\|\dot{\Delta}_{j} (R^0)^{\ell}\|_{L^{p}}+\sum_{i=1}^{d}\|\dot{\Delta}_{j}(R^{i})^{\ell}\|_{L^{p}}\Big)\,d\tau\\
&\lesssim 2^{2j}\sum_{i=0}^{d}\|\dot{\Delta}_{j}(R^{i})^{\ell}\|_{L^1_t(L^{p})}+\sum_{i=0}^{d}\|\dot{\Delta}_{j}(R^{i})^{\ell}\|_{L^{\infty}_t(L^{p})},
\end{aligned}
\end{equation}
from which we infer that
\begin{equation}\label{ddgggh}
\begin{aligned}
&\|(\{\mathbf{I} - \mathbf{P}\}\delta V)^{\ell}\|_{\widetilde{L}^{\infty}_{t}(\dot{B}^{\sigma_{1}+1-\alpha^*}_{p,\infty})}\lesssim \sum_{i=0}^{d}\|R^i\|_{\widetilde{L}^1_{t}(\dot{B}^{\sigma_1+3-\alpha^*}_{p,\infty})}^{\ell}+\sum_{i=0}^{d}\|R^i\|_{\widetilde{L}^{\infty}_{t}(\dot{B}^{\sigma_1+1-\alpha^*}_{p,\infty})}^{\ell} .
\end{aligned}
\end{equation}
The first term on the right-hand side of \eqref{ddgggh} can be dealt with by \eqref{Ripinfty} in low frequencies. As $-\frac{d}{p}\leq\sigma_1<\frac{d}{p}-2$, i.e. $\alpha^*=1$, it follows from \eqref{product3} and \eqref{F11} that
\begin{equation}\nonumber
\begin{aligned}
\sum_{i=0}^{d}\|R^i\|_{L^{\infty}_{t}(\dot{B}^{\sigma_1}_{p,\infty})}^{\ell}&\lesssim \|V\|_{L^{\infty}_t(\dot{B}^{\frac{d}{p}-1}_{p,1})}\|V\|_{L^{\infty}_t(\dot{B}^{\sigma_1+1}_{p,\infty})}\\
&\lesssim \Big(\|V^\ell\|_{L^{\infty}_t(\dot{B}^{\frac{d}{p}-1}_{p,1})}+ \|V\|_{L^{\infty}_t(\dot{B}^{\frac{d}{2}+1}_{2,1})}^h\Big) \Big(\|V^\ell\|_{L^{\infty}_t(\dot{B}^{\sigma_1+1}_{p,\infty})}+\|V\|_{L^{\infty}_t(\dot{B}^{\frac{d}{2}+1}_{2,1})}^h\Big)\\
&\lesssim \Big(\|V^\ell\|_{L^{\infty}_t(\dot{B}^{\sigma_1+1}_{p,\infty})} +\|V^h\|_{L^{\infty}_t(\dot{B}^{\frac{d}{2}+1}_{2,1})}\Big)^2\lesssim \mathcal{D}_{p,\sigma_1}^2.
\end{aligned}
\end{equation}
As $\frac{d}{p}-2\leq \sigma_1<\frac{d}{p}-1$, i.e. $\alpha^*=\frac{d}{p}-1-\sigma_1-\varepsilon_1$ with $0<\varepsilon_1<\frac{d}{p}-1-\sigma_1$, we have $\frac{d}{p}-\alpha^*=\sigma_1+1+\varepsilon_1>\sigma_1+1$. Thus, it holds that
\begin{equation}\nonumber
\begin{aligned}
\sum_{i=0}^{d}\|R^i\|_{L^{\infty}_{t}(\dot{B}^{\sigma_1+1-\alpha^*}_{p,\infty})}^{\ell}&\lesssim \|V\|_{L^{\infty}_t(\dot{B}^{\frac{d}{p}-\alpha^*}_{p,1})}\|V\|_{L^{\infty}_t(\dot{B}^{\sigma_1+1}_{p,\infty})}\\
&\lesssim \Big(\|V^\ell\|_{L^{\infty}_t(\dot{B}^{\sigma_1+1}_{p,\infty})}^2 +\|V\|_{L^{\infty}_t(\dot{B}^{\frac{d}{2}+1}_{2,1})}^h\Big)^2\lesssim \mathcal{D}_{p,\sigma_1}^2.
\end{aligned}
\end{equation}
%$\frac{d}{p}-1+\sigma_*= \frac{d}{p}-1< \sigma_1+1$ for $-\frac{d}{p}\leq \sigma_1<\frac{d}{p}-2$ and $\frac{d}{p}-1+\sigma_*=\sigma_1+1$ for $\frac{d}{p}-2\leq \sigma_1<\frac{d}{p}-1$.
% On the other hand, the $\dot{B}^{\sigma_{1}}_{p,\infty}$-evolution of $\mathbf{P}\delta V$ can be obtained in terms of $\eqref{evolution1}$ and the property of $(\mathbf{P}V_{L})^{\ell}$:
%\begin{equation}\nonumber
%\begin{aligned}
%\|(\mathbf{P}\delta V)^{\ell}\|_{\widetilde{L}^{\infty}_{t}(\dot{B}^{\sigma_{1}}_{p,\infty})}&\lesssim \|(\mathbf{P}V)^{\ell}\|_{\widetilde{L}^{\infty}_{t}(\dot{B}^{\sigma_{1}}_{p,\infty})}+\|(\mathbf{P}V_{L})^{\ell}\|_{\widetilde{L}^{\infty}_{t}(\dot{B}^{\sigma_{1}}_{p,\infty})} \lesssim  \mathcal{D}_{p,\sigma_1}.
%\end{aligned}
%\end{equation}
Thus, we get \eqref{evolution2} and finish the proof of Proposition \ref{propevolution}.
\end{proof}

\subsubsection{Lower and upper bounds of decay}

The subsection is devoted to the proof
of “if" parts in Theorems \ref{thm1}, \ref{thm2} and \ref{thm3}. Besides
 \eqref{a1}, we additionally assume $(\mathbf{P}V_{0})^{\ell}\in\dot{B}^{\sigma_{1}}_{p,\infty}$ and $(\{\mathbf{I}-\mathbf{P}\}V_{0})^{\ell}\in\dot{B}^{\sigma_{1}+1}_{p,\infty}$ with $-\frac{d}{p}\leq \sigma_{1}<\frac{d}{p}-1$.

\vspace{2mm}

\noindent
\textbf{Proof of “if" part in Theorem \ref{thm1}}.
%First, we prove the upper bounds \eqref{decayupper} in Theorem \ref{thm2}.
Recall that the solution $V_{L}$ to the linear problem \eqref{hplinear} has decay estimates \eqref{upperlinear} %in Proposition \ref{proplinear},
and the difference $\delta V=V-V_{L}$ decays at  faster rates (see \eqref{nonlinearfaster}). %in Corollary \ref{properror}.
It follows that
\begin{equation}\label{dxq1}
\begin{aligned}
\|\mathbf{P}V(t)\|_{\dot{\mathbb{B}}^{\sigma,\frac{d}{2}+1}_{p,2}}&\lesssim \|\mathbf{P}V_{L}(t)\|_{\dot{\mathbb{B}}^{\sigma,\frac{d}{2}+1}_{p,2}}+\|(\mathbf{P}\delta V)^{\ell}(t)\|_{\dot{B}^{\sigma}_{p,1}}+\|\delta V(t)\|_{\dot{B}^{\frac{d}{2}+1}_{2,1}}^{h}\\
&\lesssim \langle t\rangle^{-\frac{1}{2}(\sigma-\sigma_1)}+\langle t\rangle^{-\frac{1}{2}(\sigma-\sigma_1+\sigma_1^*)}+\langle t\rangle^{-\beta^*}\\
&\lesssim \langle t\rangle^{-\frac{1}{2}(\sigma-\sigma_1)},\quad \sigma_1<\sigma\leq \frac{d}{p}+1-\alpha^*,
\end{aligned}
\end{equation}
and
\begin{equation}\label{dxq2}
\begin{aligned}
\|\{\mathbf{I} - \mathbf{P}\}V(t)\|_{\dot{\mathbb{B}}^{\sigma',\frac{d}{2}+1}_{p,2}}&\lesssim \|\{\mathbf{I} - \mathbf{P}\}V_{L}(t)\|_{\dot{\mathbb{B}}^{\sigma',\frac{d}{2}+1}_{p,2}}+\|(\{\mathbf{I} - \mathbf{P}\}\delta V)^{\ell}(t)\|_{\dot{B}^{\sigma'}_{p,1}}+ \|\delta V(t)\|_{\dot{B}^{\frac{d}{2}+1}_{2,1}}^{h}\\
&\lesssim \langle t\rangle^{-\frac{1}{2}(\sigma'-\sigma_1+1)}+\langle t\rangle^{-\frac{1}{2}(\sigma'-\sigma_1+\sigma_2^*+1)}+\langle t\rangle^{-\beta^*}\\
&\lesssim \langle t\rangle^{-\frac{1}{2}(\sigma'-\sigma_1+1)},\quad \sigma_1+1<\sigma'\leq \frac{d}{p}-\alpha^*.
\end{aligned}
\end{equation}
%In addition, we deduce from \eqref{dxq1}-\eqref{dxq2} that
%\begin{align}
%\|\mathbf{P}V(t)\|_{\dot{\mathbb{B}}^{\sigma,\frac{d}{2}+1}_{p,2}}&\lesssim \langle t\rangle^{-\frac{1}{2}(\sigma-\sigma_1)}\label{dxqV}
%\end{align}
%for $\sigma_1+1<\sigma< \frac{d}{p}+1-\alpha^*$.
%The endpoint cases,  i.e. $\sigma=\frac{d}{p}$ in \eqref{dxq1} and $\sigma=\frac{d}{p}-1$ in \eqref{dxq2} can be handled similarly due to the faster decay rates of difference $\delta V$.
Furthermore, one can
improve the above decay estimate  \eqref{dxq1} to remain true for $\frac{d}{p}+1-\alpha^*< \sigma\leq \frac{d}{p}+1$ and \eqref{dxq2} for
 $\frac{d}{p}-\alpha^*\leq \sigma'\leq \frac{d}{p}$. %which in particular leads to the sharp decay for the Lipschitz (critical) norm for $V$.
We recall
\eqref{PVj}:  \begin{equation}\nonumber
\begin{aligned}
\|\dot{\Delta}_{j}(\mathbf{P} V)^\ell\|_{L^p}&\lesssim e^{-R_{1}2^{2j}t}\Big(\|\dot{\Delta}_{j}(\mathbf{P} V)^\ell\|_{L^p}+2^j\|\dot{\Delta}_{j}(\{\mathbf{I} - \mathbf{P}\} V_0)^\ell\|_{L^p}\Big)\\
&\quad+\int_{0}^{t} e^{-R_{1}2^{2j}(t-\tau)} 2^{j} \sum_{i=0}^{d}\|\dot{\Delta}_{j}R^{i}\|_{L^{p}}\,d\tau,
\end{aligned}
\end{equation}
which leads to
\begin{equation}\nonumber
\begin{aligned}
\|(\mathbf{P} V)^\ell(t)\|_{\dot{B}^{\sigma}_{p,1}}&\lesssim \langle t\rangle^{-\frac{1}{2}(\sigma-\sigma_1)}\Big(\|\mathbf{P}V_0^\ell\|_{\dot{B}^{\sigma_1}_{p,\infty}}+\|\{\mathbf{I} - \mathbf{P}\}V_0^\ell\|_{\dot{B}^{\sigma_1+1}_{p,\infty}}\Big)\\
&\quad+\int_{0}^{t}\langle t-\tau\rangle^{-\frac{1}{2}(\sigma-\sigma_1)} \sum_{i=0}^{d}\|R^i\|_{\dot{B}^{\sigma_1+1}_{p,\infty}}^{\ell}\,d\tau,\quad \sigma_1<\sigma\leq \frac{d}{p}+1.
\end{aligned}
\end{equation}
Note that $R^i=S^i(V)V$ with $S^i(0)=0$ and $V$ decays in $\dot{B}^{\sigma}_{p,1}$ for $\sigma\in (\sigma_1+1,\frac{d}{p}+1-\alpha^*)$ at the rate $\langle t\rangle^{-\frac{1}{2}(\sigma-\sigma_1)}$ due to \eqref{dxq1}-\eqref{dxq2}. Together with \eqref{product3} and  \eqref{F1} for $S^i(V)$, we deduce that
\begin{equation}\nonumber
\begin{aligned}
\sum_{i=0}^{d}\|R^i\|_{\dot{B}^{\sigma_1+1}_{p,\infty}}^{\ell}&\lesssim  \| S^i(V)V \|_{\dot{B}^{\sigma_1+1}_{p,\infty}}\lesssim  \|V\|_{\dot{B}^{\frac{d}{p}-\varepsilon_1}_{p,1}}\|V\|_{\dot{B}^{\sigma_1+1+\varepsilon_1}_{p,\infty}}\lesssim \langle t\rangle^{-\frac{1}{2}(\frac{d}{p}+1-\sigma_1)}
\end{aligned}
\end{equation}
for $0<\varepsilon_1<\frac{d}{p}-1-\sigma_1$.

Consequently, for $\sigma_1<\sigma\leq \frac{d}{p}+1$, owing to $\frac{1}{2}(\sigma-\sigma_1)\leq \frac{1}{2}(\frac{d}{p}+1-\sigma_1)$ and $\frac{1}{2}(\frac{d}{p}+1-\sigma_1)>1$, it follows from  \eqref{ineq} that
\begin{equation}\label{dxq3}
\begin{aligned}
\|(\mathbf{P}V)^{\ell}(t)\|_{\dot{B}^{\sigma}_{p,1}}&\lesssim \langle t\rangle^{-\frac{1}{2}(\sigma-\sigma_1)}+\int_{0}^{t} \langle t-\tau\rangle^{-\frac{1}{2}(\sigma-\sigma_1)} \langle \tau\rangle^{-\frac{1}{2}(\frac{d}{p}+1-\sigma_1)}\,d\tau\lesssim \langle t\rangle^{-\frac{1}{2}(\sigma-\sigma_1)}.%\quad \sigma_1<\sigma\leq \frac{d}{p}+1.
\end{aligned}
\end{equation}
For $\sigma_1+1<\sigma'\leq \frac{d}{p}$, a similar argument enables us to get
\begin{equation}\label{dxq4}
\begin{aligned}
\|(\{\mathbf{I} - \mathbf{P}\}V)^{\ell}(t)\|_{\dot{B}^{\sigma'}_{p,1}}%&\lesssim \langle t\rangle^{-\frac{1}{2}(\sigma'-\sigma_1+1)}+\int_{0}^{t} \langle t-\tau\rangle^{-\frac{1}{2}(\sigma'-\sigma_1+1)} \langle \tau\rangle^{-\frac{1}{2}(\frac{d}{p}+1-\sigma_1)}\,d\tau
\lesssim \langle t\rangle^{-\frac{1}{2}(\sigma'-\sigma_1+1)}.%,\quad \sigma_1+1<\sigma'\leq \frac{d}{p}.
\end{aligned}
\end{equation}

Concerning the decay of the high-frequency norm, following similar computations leading to \eqref{high}, we can obtain
    \begin{equation*}
    \begin{aligned}
\|\langle \tau\rangle^{2\beta_{**}}\mathcal{W}\|_{\widetilde{L}^{\infty}_{t}(\dot{B}^{\frac{d}{2}+1}_{2,1})}^{h}    &\lesssim \|\mathcal{W}_0\|_{\dot{B}^{\frac{d}{2}+1}_{2,1}}^{h}+\Big(1+\|\mathcal{W}\|_{\widetilde{L}^{\infty}_{t}(\dot{\mathbb{B}}^{\frac{d}{p},\frac{d}{2}+1}_{p,2})}\Big) \|\langle \tau\rangle^{\beta_{**}}\mathcal{W}\|_{\widetilde{L}^{\infty}_{t}(\dot{\mathbb{B}}^{\frac{d}{p},\frac{d}{2}+1}_{p,2})}\|\langle \tau\rangle^{\beta_{**}}\mathcal{W}\|_{\widetilde{L}^{\infty}_{t}(\dot{\mathbb{B}}^{\frac{d}{p}+1,\frac{d}{2}+1}_{p,2})}
    \end{aligned}
    \end{equation*}
with $\beta_{**}=\frac{1}{2}(\frac{d}{p}-\sigma_1)-\varepsilon_1$. From \eqref{r2}, \eqref{421} and \eqref{errores}, it holds that
    \begin{equation}\nonumber
    \begin{aligned}
    \|\mathcal{W}\|_{\widetilde{L}^{\infty}_{t}(\dot{\mathbb{B}}^{\frac{d}{p},\frac{d}{2}+1}_{p,2})}\lesssim 1,\quad \|\langle \tau\rangle^{\beta_{**}}\mathcal{W}\|_{\widetilde{L}^{\infty}_{t}(\dot{\mathbb{B}}^{\frac{d}{p}+1,\frac{d}{2}+1}_{p,2})}\lesssim \|\langle \tau\rangle^{\beta_{**}}\mathcal{W}\|_{\widetilde{L}^{\infty}_{t}(\dot{\mathbb{B}}^{\frac{d}{p},\frac{d}{2}+1}_{p,2})}&\lesssim 1.
    \end{aligned}
    \end{equation}
We thus conclude $\|\langle \tau\rangle^{2\beta_{**}}\mathcal{W}\|_{\widetilde{L}^{\infty}_{t}(\dot{B}^{\frac{d}{2}+1}_{2,1})}^{h} \lesssim 1$. Following the same line in \eqref{UV10}, one discovers that $\|\langle \tau\rangle^{2\beta_{**}}V\|_{\widetilde{L}^{\infty}_{t}(\dot{B}^{\frac{d}{2}+1}_{2,1})}^{h}$ is uniformly bounded. Thus, we arrive at
    \begin{equation}\label{dxq5}
    \begin{aligned}
    &\|V(t)\|_{\dot{B}^{\frac{d}{2}+1}_{2,1}}^h\lesssim \langle t\rangle^{-2\beta_{**}}.
   \end{aligned}
\end{equation}
Since $2\beta_{**}=\frac{d}{p}-\sigma_1-2\varepsilon_1>\frac{1}{2}(\frac{d}{p}+1-\sigma_1)$ due to $\sigma_1<\frac{d}{p}-1$ when $0<\varepsilon_1\ll1$, combining this with \eqref{dxq3}-\eqref{dxq5}, we eventually obtain \eqref{decayupper}. \hfill $\Box$

\vspace{2mm}

\noindent
\textbf{Proof of Theorem \ref{thm2}}. Let $t>t_0$ with a fixed time $t_0>0$. Recalling the profile $V^*$ as in \eqref{Vp}, we take advantage of \eqref{asy:linear} and \eqref{nonlinearfaster} to get
\begin{equation*}
\begin{aligned}
\|\mathbf{P}(V-V^*)(t)\|_{\dot{\mathbb{B}}^{\sigma,\frac{d}{2}+1}_{p,2}}&\lesssim \|\mathbf{P}(V_L-V^*)(t)\|_{\dot{\mathbb{B}}^{\sigma,\frac{d}{2}+1}_{p,2}}+\|\mathbf{P}\delta V(t)\|_{\dot{\mathbb{B}}^{\sigma,\frac{d}{2}+1}_{p,2}}
%&\lesssim \langle t\rangle^{-\frac{1}{2}(\sigma-\sigma_1+1)}+\langle t\rangle^{-\frac{1}{2}(\sigma-\sigma_1+\sigma_1^*)}\\
\lesssim \langle t\rangle^{-\frac{1}{2}(\sigma-\sigma_1+\sigma_1^*)}
\end{aligned}
\end{equation*}
for $\sigma_1<\sigma\leq \frac{d}{p}+1-\alpha^*$ and
\begin{equation*}
\begin{aligned}
\|\{\mathbf{I} - \mathbf{P}\}(V-V^*)(t)\|_{\dot{\mathbb{B}}^{\sigma',\frac{d}{2}+1}_{p,2}}&\lesssim \|\{\mathbf{I} - \mathbf{P}\}(V_L-V^*)(t)\|_{\dot{\mathbb{B}}^{\sigma',\frac{d}{2}+1}_{p,2}}+\|\{\mathbf{I} - \mathbf{P}\}\delta V(t)\|_{\dot{\mathbb{B}}^{\sigma',\frac{d}{2}+1}_{p,2}}
%&\lesssim \langle t\rangle^{-\frac{1}{2}(\sigma'-\sigma_1+2)}+\langle t\rangle^{-\frac{1}{2}(\sigma'-\sigma_1+1+\sigma_1^*)}
\lesssim \langle t\rangle^{-\frac{1}{2}(\sigma'-\sigma_1+1+\sigma_1^*)}
\end{aligned}
\end{equation*}
for $\sigma_1+1<\sigma'\leq \frac{d}{p}-\alpha^*$, which are \eqref{asy:stab} and \eqref{asy:stab1} exactly.
This completes the proof of Theorem \ref{thm2}.\hfill $\Box$

\vspace{2mm}

\noindent
\textbf{Proof of “if" part in Theorem \ref{thm3}}.
With the assumption that $\Psi_0\in\dot{\mathcal{B}}^{\sigma_{1}}_{p,\infty}$ and $(\{\mathbf{I}-\mathbf{P}\}V_{0})^{\ell}\in\dot{B}^{\sigma_{1}+1}_{p,\infty}$, we establish the two-sided decay estimates \eqref{decaytwoside}.
Indeed, we are able to infer that $(\mathbf{P}V_{0})^{\ell}\in \dot{B}^{\sigma_{1}}_{p,\infty}$, since $\Psi_0\in \dot{\mathcal{B}}^{\sigma_{1}}_{p,\infty} $(which is the subset of $\dot{B}^{\sigma_{1}}_{p,\infty}$) and $(\{\mathbf{I}-\mathbf{P}\}V_{0})^{\ell}\in\dot{B}^{\sigma_{1}+1}_{p,\infty}$. Consequently,  the upper bound of decay estimates in \eqref{decaytwoside} follows. To show the lower bound, it follows from  Propositions \ref{propgeneral2} and \ref{properror1} that
\begin{equation}
    \begin{aligned}
    \|(\mathbf{P}V)^{\ell}(t)\|_{\dot{B}^{\sigma}_{p,1}}&\geq \|(\mathbf{P}V_L)^{\ell}(t)\|_{\dot{B}^{\sigma}_{p,1}}-\|\mathbf{P}\delta V(t)\|_{\dot{B}^{\sigma}_{p,1}}^{\ell}\\
    &\geq \frac{1}{C_{0}}\langle t\rangle^{-\frac{1}{2}(\sigma-\sigma_{1})}-C_{0}\langle t\rangle^{-\frac{1}{2}(\sigma-\sigma_{1}+\sigma_1^{*})}\geq \frac{1}{2C_{0}}\langle t\rangle^{-\frac{1}{2}(\sigma-\sigma_{1})}\label{sam2}
    \end{aligned}
\end{equation}
for $\sigma_{1}<\sigma\leq \frac{d}{p}+1-\alpha^*$ and suitably large time  $t> t_{1}$, where $C_{0}>1$ is chosen to be a greater constant if necessary. For the case $\frac{d}{p}+1-\alpha^*\leq \sigma\leq \frac{d}{p}+1$, it follows from the interpolation inequality \eqref{inter} that
\begin{equation}\nonumber
    \begin{aligned}
    \|(\mathbf{P}V)^{\ell}(t)\|_{\dot{B}^{\frac{d}{p}}_{p,1}}\lesssim \|(\mathbf{P}V)^{\ell}(t)\|_{\dot{B}^{\sigma_1+1}_{p,1}}^{\theta}\|(\mathbf{P}V)^{\ell}(t)\|_{\dot{B}^{\sigma}_{p,1}}^{1-\theta}\quad\text{with}\quad \frac{d}{p}=(\sigma_1+1)\theta+\sigma(1-\theta),
    \end{aligned}
\end{equation}
which, together with \eqref{decayupper} and \eqref{sam2}, gives rise to
\begin{equation}\nonumber
    \begin{aligned}
    \|(\mathbf{P}V)^{\ell}(t)\|_{\dot{B}^{\sigma}_{p,1}}&\gtrsim  \|(\mathbf{P}V)^{\ell}(t)\|_{\dot{B}^{\frac{d}{p}}_{p,1}}^{\frac{1}{1-\theta}} \|(\mathbf{P}V)^{\ell}(t)\|_{\dot{B}^{\sigma_1+1}_{p,1}}^{-\frac{\theta}{1-\theta}}\gtrsim  \Big( \langle t\rangle^{-\frac{1}{2}(\frac{d}{p}-\sigma_1)}\Big)^{\frac{1}{1-\theta}} \Big( \langle t\rangle^{\frac{1}{2}}\Big)^{-\frac{\theta}{1-\theta}}\gtrsim \langle t\rangle^{-\frac{1}{2}(\sigma-\sigma_1)}
    \end{aligned}
\end{equation}
  for all $t>t_1$ and $\frac{d}{p}+1-\alpha^*<\sigma\leq \frac{d}{p}+1$. This gives the lower bound of decay estimates for $\mathbf{P}V$.

Due to Propositions \ref{proplinear} and \ref{properror}, one has the lower bounds for $\{\mathbf{I} - \mathbf{P}\}V$. In fact, it follows from \eqref{nonlinearfaster} that
\begin{equation}\label{I-Pfinal}
\begin{aligned}
\|\{\mathbf{I} - \mathbf{P}\}\delta V^{\ell}(t)\|_{\dot{B}^{\sigma'}_{p,1}}&\lesssim \langle t\rangle^{-\frac{1}{2}(\sigma'-\sigma_1+1+\sigma_2^*)},\quad \sigma_{1}+1<\sigma
'\leq \frac{d}{p}-\alpha^*.
%\begin{cases}
%\langle t\rangle^{-\frac{1}{2}(\sigma'-%& \mbox{\quad if  ~~$\sigma_{1}<\sigma
%'\leq \frac{d}{p}-\alpha^*$},\\
%\langle t\rangle^{-\frac{1}{2}(\frac{d}{p}-%\sigma_1+1-\frac{1}{2}\alpha^*)},
%& \mbox{\quad if  ~~$\frac{d}{p}-\alpha^*<\sigma'\leq \frac{d}{p}$}.
%\end{cases}
\end{aligned}
\end{equation}
Note that the decay rates of $(\{\mathbf{I} - \mathbf{P}\}\delta V)^\ell$ in $\dot{B}^{\sigma'}_{p,1}$ are always faster than $\langle t\rangle^{-\frac{1}{2}(\sigma'-\sigma_1+1)}$ for $\sigma_1+1<\sigma'\leq \frac{d}{p}-\alpha^*$, where the interval $(\sigma_1+1,\frac{d}{p}-\alpha^*)$ is well-defined due to $0<\alpha^*<\frac{d}{p}-1-\sigma_1$. Thus, the combination of \eqref{twosidelinear} and \eqref{I-Pfinal} ensures that
  \begin{equation}
  \begin{aligned}
  \|(\{\mathbf{I} - \mathbf{P}\} V)^\ell(t)\|_{\dot{B}^{\sigma'}_{p,1}}&\geq \frac{1}{C_0} \|(\{\mathbf{I} - \mathbf{P}\}V_L)^\ell(t)\|_{\dot{B}^{\sigma'}_{p,1}}-C_0\|\{\mathbf{I} - \mathbf{P}\}\delta V(t)\|_{\dot{B}^{\sigma'}_{p,1}}^{\ell}\geq \frac{1}{2C_0} \langle t\rangle^{-\frac{1}{2}(\sigma'-\sigma_1+1)}
  \end{aligned}
  \end{equation}
for all $\sigma_1+1<\sigma'<\frac{d}{p}-\alpha^*$ and  suitably large time $t>t_1$.  In the case $\frac{d}{p}-\alpha^*\leq \sigma'\leq \frac{d}{p}$, one can carry out a similar interpolation argument. This completes the proof of the two-sided time-decay bounds \eqref{decaytwoside}. \hfill $\Box$

\subsection{Necessary conditions for decay}\label{sectionnecessary}
The section is devoted to the proof of the ``only if'' part of Theorems \ref{thm1} and \ref{thm2}. That is, if we assume $(\{\mathbf{I} - \mathbf{P}\}V_0)^{\ell}\in \dot{B}^{\sigma_1+1}_{p,\infty}$ with $-\frac{d}{p}\leq \sigma_1<\frac{d}{p}-1$ and \eqref{decayupper} (resp., two-sided bounds \eqref{decaytwoside}), then $(\mathbf{P}V_0)^{\ell}\in  \dot{B}^{\sigma_1}_{p,\infty}$ (resp., $\Psi_0^{\ell}\in\dot{\mathcal{B}}^{\sigma_{1}}_{p,\infty}$) holds true. To achieve this, the crucial ingredient  is to develop the inverse Wiegner’s argument (see \cite{skalak1}) from incompressible Navier-Stokes equations to partially dissipative hyperbolic systems.
%establish the faster decay estimates of the error between solutions of linear and nonlinear problems under the condition \eqref{decayupper}.

\begin{prop}\label{properrorin}
Let $-\frac{d}{p}\leq \sigma_{1}<\frac{d}{p}-1$. If %the initial datum $V_0$ satisfies \eqref{a1} and $(\{\mathbf{I} - \mathbf{P}\}V_0)^{\ell}\in \dot{B}^{\sigma_1+1}_{p,\infty}$. and
the solution $V$ to the Cauchy problem \eqref{m1}-\eqref{m1d} %constructed in Theorem \ref{theorem0}
satisfies \eqref{decayupper}. Then, for all $t>0$, $\delta V\coloneqq   V-V_{L}$ has faster decay rates:
\begin{equation}\label{Nimprovednonlinear}
\begin{aligned}
\|\mathbf{P}\delta V(t)\|_{\dot{\mathbb{B}}^{\sigma,\frac{d}{2}+1}_{p,2}}&\leq C\langle t\rangle^{-\frac{1}{2}(\sigma-\sigma_{1}+\sigma_1^*)}
\end{aligned}
\end{equation}
for $\sigma_1<\sigma\leq \frac{d}{p}+1-\alpha^*$ and
\begin{equation}\label{Nimprovednonlinear1}
\begin{aligned}
&\|\{\mathbf{I} - \mathbf{P}\}\delta V(t)\|_{\dot{\mathbb{B}}^{\sigma',\frac{d}{2}+1}_{p,2}}\leq C\langle t\rangle^{-\frac{1}{2}(\sigma'-\sigma_{1}+1+\sigma_2^*)}
\end{aligned}
\end{equation}
for $\sigma_1<\sigma'\leq \frac{d}{p}-\alpha^*$. Here, $\sigma_1^*, \sigma_2^*, \alpha^*\in(0,1]$ are defined in \eqref{5.4}-\eqref{sigma222}.
\end{prop}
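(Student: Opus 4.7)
The plan is to mirror the three-step architecture already deployed for Proposition \ref{properror1} (low-frequency Duhamel analysis, high-frequency energy estimates via the symmetric form, and closure via a time-weighted functional), but this time feeding the \emph{assumed} nonlinear decay \eqref{decayupper} into the right-hand side of the difference equation \eqref{deltav}, rather than deducing it from $(\mathbf{P}V_{0})^{\ell}\in\dot{B}^{\sigma_{1}}_{p,\infty}$. In other words, we invert the implication of Proposition \ref{properror1}: taking the decay of $V$ as input, we show that $\delta V=V-V_{L}$ obeys a rate faster by $\langle t\rangle^{-\sigma_{1}^{*}/2}$ (respectively $\langle t\rangle^{-\sigma_{2}^{*}/2}$ for the dissipative block).

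For the low-frequency part, I would apply Duhamel's formula \eqref{duhamel} together with the frequency-localized bounds \eqref{555} (which uses $\mathbf{P}R^{0}=0$) and \eqref{deltaJdeltaIPV}. This reduces the task to estimating $\sum_{i}\|R^{i}\|^{\ell}_{\dot{B}^{\sigma_{1}+1-\sigma_{1}^{*}}_{p,\infty}}$ (and the analogous norm with $\sigma_{2}^{*}$). Writing $R^{i}=S^{i}(V)V$ with $S^{i}(0)=0$ and invoking the product laws of Corollary \ref{coroApp1} together with the composite estimate of Lemma \ref{lemma64}, the problem reduces to bounding $V$ at various regularities; these are furnished directly by \eqref{decayupper} in low frequencies and by the [SK]-driven exponential decay in high frequencies. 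Since we cannot appeal to independently known decay of $V_{L}$, I would use the identity $V_{L}=V-\delta V$ to rewrite the decomposition
\begin{equation*}
R^{i}=\mathcal{I}^{i}_{L}+\delta\mathcal{I}^{i}_{1}+\delta\mathcal{I}^{i}_{2}
\end{equation*}
of the proof of Lemma \ref{lemmalow} entirely in terms of $V$ and $\delta V$. Splitting into the same three regimes $\sigma_{1}<\sigma\leq\tfrac{d}{p}-1$ with $\sigma_{1}<\tfrac{d}{p}-2$, with $\sigma_{1}\geq\tfrac{d}{p}-2$, and $\sigma\in(\tfrac{d}{p}-1,\tfrac{d}{p}+1-\alpha^{*}]$, and choosing $\sigma_{1}^{*}$ accordingly as in \eqref{sigma222}, the extra $\langle t\rangle^{-\sigma_{1}^{*}/2}$ gain stems from pairing the quadratic source with the decay of the dissipative block $\{\mathbf{I}-\mathbf{P}\}V$ which is faster by one half-power than that of $\mathbf{P}V$. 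The time-integral inequality \eqref{ineq} closes the resulting estimate.

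For the high-frequency part, the strategy of Lemma \ref{lemmahigh} applies verbatim: pass to the symmetric formulation \eqref{normale} in the entropic variable $\mathcal{W}=W(V)-\bar W$ to avoid the loss of one derivative, run the Lyapunov estimate based on the [SK] compensating matrix $K(\omega)$ to derive \eqref{sqrtL}, and then transfer back to $\delta V$ via the polynomial control \eqref{UV10}. The smallness of $E_{p,0}$ supplied by Theorem \ref{theorem0} together with the assumed decay of $V$ provides the required smallness of the bootstrap prefactors. Collecting all low- and high-frequency bounds in a functional of the form
\begin{equation*}
\mathcal{D}_{p}^{\mathrm{inv}}(t)\coloneqq\sup_{\sigma}\|\langle t\rangle^{\frac{1}{2}(\sigma-\sigma_{1}+\sigma_{1}^{*})}\mathbf{P}\delta V\|^{\ell}_{L^{\infty}_{t}(\dot{B}^{\sigma}_{p,1})}+\sup_{\sigma'}\|\langle t\rangle^{\frac{1}{2}(\sigma'-\sigma_{1}+1+\sigma_{2}^{*})}\{\mathbf{I}-\mathbf{P}\}\delta V\|^{\ell}_{L^{\infty}_{t}(\dot{B}^{\sigma'}_{p,1})}+\|\langle t\rangle^{\beta^{*}}\delta V\|^{h}_{\widetilde{L}^{\infty}_{t}(\dot{B}^{\frac{d}{2}+1}_{2,1})}
\end{equation*}
yields an inequality of the form $\mathcal{D}_{p}^{\mathrm{inv}}(t)\leq C(1+E_{p,0})E_{p,0}+CE_{p,0}\,\mathcal{D}_{p}^{\mathrm{inv}}(t)$, from which \eqref{Nimprovednonlinear}--\eqref{Nimprovednonlinear1} follow by absorption.

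The main obstacle, as opposed to Proposition \ref{properror1}, is the absence of independent decay information on $V_{L}$: all quadratic bounds must be routed solely through \eqref{decayupper}. This forces a careful re-expansion of the three pieces $\mathcal{I}^{i}_{L},\delta\mathcal{I}^{i}_{1},\delta\mathcal{I}^{i}_{2}$ using $V_{L}=V-\delta V$, and then a self-consistent bootstrap on $\mathcal{D}_{p}^{\mathrm{inv}}(t)$. The delicate accounting ensures that whenever a factor of $\delta V$ appears in a product, it is paired with a factor enjoying at least the decay rate of $\mathbf{P}V$ (so that one can absorb $\mathcal{D}_{p}^{\mathrm{inv}}(t)$), and whenever two factors of $V$ appear they are placed at regularities where \eqref{decayupper} gives an integrable-in-time decay, so that \eqref{ineq} applies. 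As in the proof of Proposition \ref{properror1}, the regularity evolution of $(\mathbf{P}V)^{\ell}$ and $(\{\mathbf{I}-\mathbf{P}\}\delta V)^{\ell}$ at the threshold Besov index (Proposition \ref{propevolution}) is used to guarantee that the low-frequency prefactors appearing in the bootstrap remain small.
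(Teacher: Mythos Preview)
Your architecture is far more elaborate than what the paper actually does, and the extra machinery creates a real difficulty rather than solving one.

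The key observation you miss is that the nonlinear source $R^{i}=S^{i}(V)V$ depends \emph{only on $V$}, not on $V_{L}$ or $\delta V$. In Proposition~\ref{properror1} the decomposition $R^{i}=\mathcal{I}^{i}_{L}+\delta\mathcal{I}^{i}_{1}+\delta\mathcal{I}^{i}_{2}$ and the bootstrap were forced because there the decay of $V$ was the \emph{unknown}: only $V_{L}$ had known rates, so $R^{i}$ had to be rewritten in terms of $V_{L}$ and $\delta V$. Here the logic is reversed: the decay of $V$ is \emph{assumed} via \eqref{decayupper}, so one can bound $\|S^{i}(V)V\|^{\ell}_{\dot B^{\sigma_{1}+1-\sigma_{1}^{*}}_{p,\infty}}$ directly by product laws. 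For instance, in the regime $\sigma_{1}<\tfrac{d}{p}-2$, $\sigma_{1}^{*}=1$, the paper simply writes
\[
\|S^{i}(V)V\|_{\dot B^{\sigma_{1}}_{p,\infty}}\lesssim\|V\|_{\dot B^{\frac{d}{p}-1-\varepsilon}_{p,1}}\|V\|_{\dot B^{\sigma_{1}+1+\varepsilon}_{p,1}}\lesssim\langle t\rangle^{-\frac{1}{2}(\frac{d}{p}-\sigma_{1})},
\]
and then \eqref{ineq} closes the Duhamel integral \eqref{PwideV1} with no reference to $\delta V$ on the right-hand side. The other two regimes are handled by the same one-line product estimate with shifted indices. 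No functional $\mathcal{D}_{p}^{\mathrm{inv}}$, no absorption step, no regularity evolution is needed.

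Your invocation of Proposition~\ref{propevolution} is in fact problematic: that proposition assumes $(\mathbf{P}V_{0})^{\ell}\in\dot B^{\sigma_{1}}_{p,\infty}$, which is precisely what is \emph{not} hypothesised here (indeed it is what the inverse Wiegner argument ultimately aims to deduce). Without it, the prefactors $\|(\mathbf{P}V,\mathbf{P}\delta V)\|_{L^{\infty}_{t}(\dot B^{\sigma_{1}+\frac{1}{2}\alpha^{*}}_{p,\infty})}$ appearing in your bootstrap cannot be made small by interpolation against $\mathcal{D}_{p,\sigma_{1}}$, so the absorption $\mathcal{D}_{p}^{\mathrm{inv}}(t)\leq C+CE_{p,0}\mathcal{D}_{p}^{\mathrm{inv}}(t)$ is not justified as written. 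The high-frequency part is also simpler than you propose: since \eqref{decayupper} already controls $\|V\|^{h}_{\dot B^{\frac{d}{2}+1}_{2,1}}$ at rate $\langle t\rangle^{-\frac{1}{2}(\frac{d}{p}+1-\sigma_{1})}$ and $V_{L}$ decays exponentially there by \eqref{hhhh}, the triangle inequality handles $\|\delta V\|^{h}$ with no energy argument; this is why the paper says ``it suffices to estimate the low-frequency part''.
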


\begin{proof}
%Since the high-frequency $\dot{B}^{\frac{d}{2}+1}_{2,1}$ -norm  $V$ and $V_L$ decays at the faster rate $\langle t\rangle^{-\frac{1}{2}(\frac{d}{p}+1-\sigma_1)}$ in \eqref{decayupper},
It suffices to estimate the low-frequency part of $\delta V$ and
the proof can be done by a similar procedure leading to Lemma \ref{lemmalow}.
It follows from
\eqref{decayupper}
%\eqref{Nimprovednonlinear}-\eqref{Nimprovednonlinear1}
that
\begin{align}
\|V(t)\|_{\dot{B}^{\sigma}_{p,1}}\lesssim \langle t\rangle^{-\frac{1}{2}(\sigma-\sigma_1)},\quad \sigma_1+1<\sigma\leq \frac{d}{p}.\label{decayupper1}
\end{align}
Taking advantage of \eqref{duhamel} and \eqref{RiS}, for $\sigma>\sigma_{1}$ we have
\begin{equation}\label{PwideV1}
\begin{aligned}
\|\mathbf{P}\delta V(t)\|_{\dot{B}^{\sigma}_{p,1}}^{\ell}\lesssim\int_{0}^{t} \langle t-\tau\rangle^{-\frac{1}{2}(\sigma-\sigma_{1}+\sigma_1^*)} \sum_{i=0}^{d}\| S^i(V)V \|_{\dot{B}^{\sigma_{1}+1-\sigma_1^*}_{p,\infty}}^{\ell} \,d\tau.
\end{aligned}
\end{equation}

To handle the integral on the right-hand side of \eqref{PwideV1}, we divide it into several cases.

\vspace{1mm}

\begin{itemize}
\item \emph{Case 1: $\sigma_1<\sigma\leq \frac{d}{p}-1$ and $-\frac{d}{p}\leq \sigma_1<\frac{d}{p}-2$.}
\end{itemize}

%\begin{equation}\nonumber
%\begin{aligned}
%\|V^{\top} S^{i}(V) V\|_{\dot{B}^{\sigma_{1}+1-\sigma_*}_{p,\infty}}^{\ell}&\lesssim (1+\|V\|_{\dot{B}^{\frac{d}{p}}_{p,1}}) \| |V^{\top}|^2\|_{\dot{B}^{\sigma_{1}+1-\sigma_*}_{p,\infty}}.
%\end{aligned}
%\end{equation}

In that case, we choose $\sigma_1^*=\alpha^*=1$.  Taking advantage of  \eqref{decayupper1} and \eqref{product3} and remembering $\sigma_1+1<\frac{d}{p}-1$, we let $0<\varepsilon\ll\frac{d}{p}-2-\sigma_1$ and have
\begin{equation}\nonumber
\begin{aligned}
 \| S^i(V)V\|_{\dot{B}^{\sigma_{1}}_{p,\infty}}&\lesssim \|V\|_{\dot{B}^{\frac{d}{p}-1-\varepsilon}_{p,1}}\|V\|_{\dot{B}^{\sigma_1+1+\varepsilon}_{p,1}}\lesssim \langle t\rangle^{-\frac{1}{2}(\frac{d}{p}-\sigma_1)}.
\end{aligned}
\end{equation}
Putting the above estimate into \eqref{PwideV1} leads to
\begin{equation}\nonumber
\begin{aligned}
\|\mathbf{P}\delta V(t)\|_{\dot{B}^{\sigma}_{p,1}}^{\ell}&\lesssim\int_{0}^{t} \langle t-\tau\rangle^{-\frac{1}{2}(\sigma-\sigma_{1}+1)} \langle \tau \rangle^{-\frac{1}{2}\left(\frac{d}{p}-\sigma_{1}\right)}\,d\tau\lesssim
\langle t\rangle^{-\frac{1}{2}(\sigma-\sigma_{1}+1)}.
\end{aligned}
\end{equation}
%for $\sigma_1-1<\sigma\leq \frac{d}{p}-1$.

\begin{itemize}
\item \emph{Case 2: $\sigma_1<\sigma\leq \frac{d}{p}-1$ and $\frac{d}{p}-2\leq \sigma_1<\frac{d}{p}-1$.}
\end{itemize}
We take $\sigma_1^*=\alpha^*=\frac{d}{p}-1-\sigma_1-\varepsilon_1$ for  $0<\varepsilon_1<\frac{d}{p}-\sigma_1-1$. As $\frac{d}{p}-\alpha^*-\frac{\varepsilon_1}{2}=\sigma_1+1+\frac{\varepsilon_1}{2}>1+\sigma_1$, %, which implies that $\frac{d}{p}-\sigma_1^*-\frac{1}{2}\alpha^*=\sigma_1+1+\frac{\varepsilon_1}{2}>\sigma_1+1$. Thus,
it follows from \eqref{decayupper1} and \eqref{product2} that
\begin{equation}\nonumber
\begin{aligned}
\|S^i(V)V\|_{\dot{B}^{\sigma_{1}+1-\sigma_1^*}_{p,\infty}}&\lesssim \|V\|_{\dot{B}^{\frac{d}{p}-\alpha^*-\frac{\varepsilon_1}{2}}_{p,1}}\|V\|_{\dot{B}^{\sigma_1+1+\frac{\varepsilon_1}{2}}_{p,\infty}} \lesssim \langle t\rangle^{-\frac{1}{2}\left(\frac{d}{p}-\sigma_1+1-\alpha^*\right)}.
\end{aligned}
\end{equation}
Hence, we infer that for $\sigma_1<\sigma\leq \frac{d}{p}-1$,
\begin{equation}\nonumber
\begin{aligned}
\|\mathbf{P}\delta V(t)\|_{\dot{B}^{\sigma}_{p,1}}^{\ell}&\lesssim\int_{0}^{t} \langle t-\tau\rangle^{-\frac{1}{2}(\sigma-\sigma_{1}+\alpha^*)} \langle \tau \rangle^{-\frac{1}{2}(\frac{d}{p}-\sigma_{1}+1-\alpha^*)}\,d\tau\lesssim
\langle t\rangle^{-\frac{1}{2}(\sigma-\sigma_{1}+\alpha^*)},
\end{aligned}
\end{equation}
where we noticed  the fact that $\sigma-\sigma_1+\alpha^*< \frac{d}{p}-\sigma_1+1-\alpha^*$ and $\frac{1}{2}\left(\frac{d}{p}-\sigma_1+1-\alpha^*\right)=1+\frac{\varepsilon_1}{2}>1$.

\begin{itemize}
\item \emph{Case 3: $\frac{d}{p}-1<\sigma\leq \frac{d}{p}+1-\alpha^*$.}
\end{itemize}
We set $\sigma_1^*=\frac{1}{2}\alpha^*=\frac{1}{2}\min\Big\{1,\Big(\frac{d}{p}-1-\sigma_{1}-\varepsilon_1\Big)\Big\}$ for $0<\varepsilon_1<\frac{d}{p}-1-\sigma_{1}$. Similarly, owing to $\frac{d}{p}-\sigma_1^*-\frac{1}{2}\alpha^*>\sigma_1+1$, we have
\begin{equation}\nonumber
\begin{aligned}
\|S^i(V)V\|_{\dot{B}^{\sigma_{1}+1-\sigma_1^*}_{p,\infty}}&\lesssim
\|V\|_{\dot{B}^{\frac{d}{p}-\sigma_1^*-\frac{1}{2}\alpha^*}_{p,1}}\|V\|_{\dot{B}^{\sigma_1+1+\frac{1}{2}\alpha^*}_{p,1}}\lesssim \langle t\rangle^{-\frac{1}{2}(\frac{d}{p}-\sigma_1+1-\sigma_1^*)}.
\end{aligned}
\end{equation}
The choice of $\alpha^*$ indicates that
$\sigma-\sigma_{1}+\frac{1}{2}\alpha^*\leq \frac{d}{p}-\sigma_{1}+1-\frac{1}{2}\alpha^*$ with $\frac{d}{p}-1<\sigma\leq \frac{d}{p}+1-\alpha^*$ and $\frac{1}{2}\left(\frac{d}{p}-\sigma_{1}+1-\frac{1}{2}\alpha^*\right)>1$. Thus, according to \eqref{ineq} and the definition of $\sigma_1^*$, it follows that
\begin{equation}\nonumber
\begin{aligned}
\|\mathbf{P}\delta V(t)\|_{\dot{B}^{\sigma}_{p,1}}^{\ell}&\lesssim\int_{0}^{t} \langle t-\tau\rangle^{-\frac{1}{2}\left(\sigma-\sigma_{1}+\frac{1}{2}\alpha^*\right)} \langle \tau \rangle^{-\frac{1}{2}\left(\frac{d}{p}-\sigma_{1}+1-\frac{1}{2}\alpha^*\right)}\,d\tau\lesssim \langle t\rangle^{-\frac{1}{2}(\sigma-\sigma_1+\frac{1}{2}\alpha^*)}
\end{aligned}
\end{equation}
for $\frac{d}{p}-1<\sigma\leq \frac{d}{p}+1-\alpha^*$.

Thus,  \eqref{Nimprovednonlinear} follows. Similarly, the estimate \eqref{Nimprovednonlinear1} for the dissipative part is valid provided that $\sigma_1^*$ is replaced by $\sigma_2^*$. Indeed,
in light of \eqref{duhamel} and \eqref{RiS}, $\{\mathbf{I} - \mathbf{P}\}\delta V$ satisfies
\begin{equation}\label{PwideV2}
\begin{aligned}
\|\{\mathbf{I}-\mathbf{P}\}\delta V(t)\|_{\dot{B}^{\sigma'}_{p,1}}^{\ell}&\lesssim
\int_{0}^{t} \langle t-\tau\rangle^{-\frac{1}{2}(\sigma'-\sigma_{1}+1+\sigma_2^*)}\| S^i(V)V\|^{\ell}_{\dot{B}^{\sigma_{1}+1-\sigma_2^*}_{p,\infty}} \,d\tau\\
&\quad+\int_0^t e^{-\kappa_1 (t-\tau)}\| S^i(V)V\|^{\ell}_{\dot{B}^{\sigma'}_{p,1}} \,d\tau,\quad \sigma'>\sigma_1+1.
\end{aligned}
\end{equation}
The first integral in \eqref{PwideV2}
stands for the diffusion effect of solutions. Performing similar computations as in Cases $1-3$ gives
\begin{equation}\nonumber
\begin{aligned}
&\| S^i(V)V\|_{\dot{B}^{\sigma_{1}+1-\sigma_2^*}_{p,\infty}} \,\lesssim
\begin{cases}
\langle t\rangle^{-\frac{1}{2}\left(\frac{d}{p}-\sigma_{1}\right)},
& \mbox{\quad if ~~$\sigma_1+1<\sigma'\leq \frac{d}{p}-2$},\\
\langle t\rangle^{-\frac{1}{2}\left(\frac{d}{p}-\sigma_{1}+1-\frac{1}{2}\alpha^*\right)},
& \mbox{\quad if ~~$\frac{d}{p}-2<\sigma'\leq \frac{d}{p}-\alpha^*$}.
\end{cases}
\end{aligned}
\end{equation}
This yields
\begin{equation}\nonumber
\begin{aligned}
&\int_{0}^{t} \langle t-\tau\rangle^{-\frac{1}{2}(\sigma'-\sigma_{1}+1+\sigma_2^*)}\sum_{i=1}^d\| S^i(V)V\|_{\dot{B}^{\sigma_{1}+1-\sigma_2^*}_{p,\infty}} \,d\tau\lesssim \langle t-\tau\rangle^{-\frac{1}{2}(\sigma'-\sigma_{1}+1+\sigma_2^*)}.
\end{aligned}
\end{equation}
The second integral
in \eqref{PwideV2}
characterizes the damping effect. Using the frequency cutoff property, we arrive at
\begin{equation}\nonumber
\begin{aligned}
\int_0^t e^{-\kappa_1 (t-\tau)}\sum_{i=1}^d\| S^i(V)V\|^{\ell}_{\dot{B}^{\sigma'}_{p,1}}\,d\tau&\lesssim \int_0^t e^{-\kappa_1 (t-\tau)}\sum_{i=1}^d\| S^i(V)V\|_{\dot{B}^{\sigma_1+1-\sigma_2^*}_{p,1}}\,d\tau\\
&\lesssim
\begin{cases}
\langle t\rangle^{-\frac{1}{2}\left(\frac{d}{p}-\sigma_{1}\right)},
& \mbox{\quad if ~~$\sigma_1+1<\sigma'\leq \frac{d}{p}-2$},\\
\langle t\rangle^{-\frac{1}{2}\left(\frac{d}{p}-\sigma_{1}+1-\frac{1}{2}\alpha^*\right)},
& \mbox{\quad if ~~$\frac{d}{p}-2<\sigma'\leq \frac{d}{p}-\alpha^*$},
\end{cases}
\end{aligned}
\end{equation}
which is faster than the rates $\langle t\rangle^{-\frac{1}{2}(\sigma'-\sigma_1+1+\frac{1}{2}\alpha^*)}$ for $\sigma_1+1<\sigma'\leq \frac{d}{p}-\alpha^*$. Therefore,  the proof of Proposition \ref{properrorin} is finished.
\end{proof}
%\subsection{The implication of low-frequency assumptions}
Finally, one can show that the linear solution $V_{L}$ to
\eqref{hplinear} has the same decay rates as the global-in-time
solution $V$ to the Cauchy problem \eqref{m1}-\eqref{m1d} constructed in Theorem \ref{theorem0}, which allows us to complete the “only if" part of Theorems \ref{thm1} and \ref{thm3}.

\vspace{2ex}
\noindent
\emph{Proof of “only if" part in Theorem \ref{thm1}}.
%First, assume that $V$ satisfies the upper bounds \eqref{decayupper}.
According to Proposition \ref{properrorin}, we find that $\delta V$  satisfies the faster decay \eqref{Nimprovednonlinear}-\eqref{Nimprovednonlinear1}. It follows from \eqref{decayupper}
and \eqref{Nimprovednonlinear} that
\begin{equation}\label{sdgbvb}
\begin{aligned}
\|\mathbf{P}V_{L}(t)\|_{\dot{\mathbb{B}}^{\sigma,\frac{d}{2}+1}_{p,2}}&\lesssim \|\mathbf{P}V(t)\|_{\dot{\mathbb{B}}^{\sigma,\frac{d}{2}+1}_{p,2}}+\|\mathbf{P}\delta V(t)\|_{\dot{\mathbb{B}}^{\sigma,\frac{d}{2}+1}_{p,2}}\lesssim  \langle t\rangle^{-\frac{1}{2}(\sigma-\sigma_{1})}
\end{aligned}
\end{equation}
 for all $t>0$ and some fixed $\sigma_{1}<\sigma\leq \frac{d}{p}+1-\alpha^*$. Hence, the upper bound of decay estimates of $\mathbf{P}V_{L}$ implies that $(\mathbf{P}V_0)^{\ell}\in \dot{B}^{\sigma_1}_{p,\infty}$ for $-\frac{d}{p}\leq \sigma_{1}<\frac{d}{p}-1$ with the aid of
Proposition \ref{proplinear}. This finishes the proof of Theorem \ref{thm1}. \hfill $\Box$

\noindent
\emph{Proof of “only if" part in Theorem \ref{thm3}}.
Suppose that \eqref{decaytwoside} holds for $t>t_1$ with some time $t_1>0$. Hence, we have \eqref{sdgbvb} for $t>t_1$. By using the uniform estimate \eqref{r2} in Theorem \ref{theorem0}, we deduce that the upper bound in \eqref{decaytwoside} holds for $0<t\leq t_1$ when $\sigma=\frac{d}{p}-1$. On the other hand, it follows from \eqref{decaytwoside} and \eqref{Nimprovednonlinear} that
\begin{equation}\nonumber
\begin{aligned}
\|\mathbf{P}V_{L}(t)\|_{\dot{\mathbb{B}}^{\sigma,\frac{d}{2}+1}_{p,2}}&\geq   \|\mathbf{P}V(t)\|_{\dot{\mathbb{B}}^{\sigma,\frac{d}{2}+1}_{p,2}}-\|\mathbf{P}\delta V(t)\|_{\dot{\mathbb{B}}^{\sigma,\frac{d}{2}+1}_{p,2}}\geq \frac{1}{C_0} \langle t\rangle^{-\frac{1}{2}(\sigma-\sigma_{1})}-C_0\langle t\rangle^{-\frac{1}{2}(\sigma-\sigma_{1}+\sigma_1^*)}\geq \frac{1}{2C_0}  \langle t\rangle^{-\frac{1}{2}(\sigma-\sigma_{1})}
\end{aligned}
\end{equation}
for suitably large time $t$ and some fixed $\sigma_{1}<\sigma\leq \frac{d}{p}+1-\alpha^*$, where  $C_0>1$ is a constant.  Therefore, applying Proposition \ref{proplinear} once again, we have $\Psi_0^{\ell}\in \dot{\mathcal{B}}^{\sigma_{1}}_{p,\infty}$.  This concludes the proof of Theorem \ref{thm3}.\hfill $\Box$

\appendix

\section{Analysis tools in Besov spaces}\label{appendixA}

For the convenience of  readers, we briefly recall the Littlewood–Paley theory and homogeneous Besov spaces.
We refer to \cite{bahouri1} for the complete explanation.

Let $\chi$ be a smooth radial function supported in $B(0,\frac{4}{3})$
and equal to $1$ on $B(0,\frac{3}{4})$, and set
$\varphi(\xi)=\chi(\xi/2)-\chi(\xi)$ such that $\sum_{j\in\mathbb{Z}}\varphi(2^{-j}\xi)=1$ ($\xi\neq 0$) and  $\operatorname{Supp}\varphi\subset
\bigl\{\tfrac{3}{4}\le|\xi|\le\tfrac{8}{3}\bigr\}.$ The homogeneous dyadic blocks are defined by $\dot{\Delta}_j u
:=\mathcal{F}^{-1}\!\bigl(\varphi(2^{-j}\cdot)\widehat u\bigr)$ and $\dot{S}_j u:=\sum_{k\le j-1}\dot{\Delta}_k u.$ Let $\mathcal{S}'_{h}$ denote the set of tempered distributions $z$ on
$\mathbb{R}^{d}$ such that $\dot{S}_{j}z\to0$ in $\mathcal{S}'$ as $j\to\infty$.
For any $u\in\mathcal{S}'_h$, it is classical to introduce the Littlewood--Paley decomposition $u=\sum_{j\in\mathbb{Z}}\dot{\Delta}_j u$ in $\mathcal{S}'_h$. Then, the homogeneous Besov norm is defined by
\[
\|u\|_{\dot B^s_{p,r}}
:=\bigl\|\{2^{js}\|\dot{\Delta}_j u\|_{L^p}\}_{j\in\mathbb{Z}}\bigr\|_{\ell^r}.
\]
The Besov space $\dot B^s_{p,r}$ consists of all $u\in\mathcal{S}'_h$
 for which this norm is finite. Here and below, all functional spaces are considered on
$\mathbb{R}^{d}$ and we omit the space dependence for simplicity.

\medskip
There are well-known embedding properties in Besov spaces. For example, for any
$s\in\mathbb{R}$, $1\le p_{1}\le p_{2}\le \infty$ and $1\le r_{1}\le r_{2}\le \infty$,
\begin{equation*}
\dot{B}^{s}_{p_{1},r_{1}}
\hookrightarrow
\dot{B}^{s-d\left(\frac{1}{p_{1}}-\frac{1}{p_{2}}\right)}_{p_{2},r_{2}}.
\end{equation*}
Moreover, for any $1\le p\le q\le\infty$,
\begin{equation*}
\dot{B}^{0}_{p,1}\hookrightarrow L^{p}\hookrightarrow \dot{B}^{0}_{p,\infty}
\hookrightarrow \dot{B}^{\sigma}_{q,\infty},
\qquad
\sigma=-d\Bigl(\frac{1}{p}-\frac{1}{q}\Bigr)<0.
\end{equation*}
For any $\sigma\in \mathbb{R}$, the Fourier multiplier $\Lambda^{\sigma}$
with symbol $|\xi|^{\sigma}$ is an isomorphism from $\dot{B}^{s}_{p,r}$ to
$\dot{B}^{s-\sigma}_{p,r}$.

\medskip
To study the dissipative structure in different frequency regimes, we split
Besov norms into low- and high-frequency contributions. For any $s\in\mathbb{R}$
and $1\le p,r\le \infty$, we set
\begin{equation*}
\|u\|_{\dot{B}^{s}_{p,r}}^{\ell}
:= \Big\|\{2^{js}\|\dot{\Delta}_{j}u\|_{L^{p}}\}_{j\leq J_0}\Big\|_{\ell^r},
\qquad
\|u\|_{\dot{B}^{s}_{p,r}}^{h}
:= \Big\|\{2^{js}\|\dot{\Delta}_{j}u\|_{L^{p}}\}_{j\geq J_0-1}\Big\|_{\ell^r},
\end{equation*}
where the integer $J_0$ is the \emph{threshold} between low and high frequencies,
to be chosen in Appendix~\ref{appendixA}.
We also denote the low- and high-frequency parts of $u\in\mathcal{S}'_h$ by
\[
u^{\ell}:=\sum_{j\leq J_0-1}\dot{\Delta}_{j}u,
\qquad
u^{h}:=u-u^{\ell}=\sum_{j\geq J_0 }\dot{\Delta}_{j}u.
\]
Then for any $s'>0$, $
\|u^{\ell}\|_{\dot{B}^{s}_{p,r}}
\lesssim \|u\|_{\dot{B}^{s}_{p,r}}^{\ell}
\lesssim \|u\|_{\dot{B}^{s-s'}_{p,\infty}}^{\ell}$ and
$\|u^{h}\|_{\dot{B}^{s}_{p,1}}
\lesssim \|u\|_{\dot{B}^{s}_{p,r}}^{h}
\lesssim \|u\|_{\dot{B}^{s+s'}_{p,r}}^{h}$.

\medskip
We now introduce a hybrid Besov space with different integrability indices in
the low and high frequency ranges. For $p\ge2$ and $s_1,s_2\in\mathbb{R}$, define
\[
\|u\|_{\dot{\mathbb{B}}^{s_1,s_2}_{p,2}}
:= \|u^{\ell}\|_{\dot{B}^{s_1}_{p,1}}
   + \|u\|_{\dot{B}^{s_2}_{2,1}}^{h},
\]
and let $\dot{\mathbb{B}}^{s_1,s_2}_{p,2}$ be the set of $u\in\mathcal{S}'_h$
such that this norm is finite.

\medskip
We recall the classical complex interpolation inequality: for all
$1\le p,r\le\infty$, $s_1<s_2$, $\theta\in(0,1)$ and
$s=\theta s_1+(1-\theta)s_2$, one has
\begin{equation}\label{inter}
\|u\|_{\dot{B}^{s}_{p,r}}
\lesssim
\|u\|_{\dot{B}^{ s_{1}}_{p,1}}^{\theta}\,
\|u\|_{\dot{B}^{s_{2}}_{p,r}}^{1-\theta}.
\end{equation}

\medskip
The study of non-stationary PDEs requires function spaces of the type
$L^{\varrho}(0,T;X)$ for some Banach space $X$. Since we frequently localize the
equations by Littlewood--Paley decomposition, we use the Chemin--Lerner spaces
$\widetilde{L}^{\varrho}(0,T;X)$ (see \cite{chemin1}).
For $T>0$, $s\in\mathbb{R}$ and $1\le \varrho,p,r\le\infty$, we define
$\widetilde{L}^{\varrho}(0,T;\dot{B}^{s}_{p,r})$ as the subset of
$L^1_{\rm{loc}}(0,T;\mathcal{S}_h')$ under the norm
\[
\|u\|_{\widetilde{L}^{\varrho}_{T}(\dot{B}^{s}_{p,r})}
:= \Big\|\{2^{js}\|\dot{\Delta}_{j}u\|_{L^{\varrho}_{T}(L^{p})}\}_{j\in\mathbb{Z}}\Big\|_{\ell^{r}}.
\]
Furthermore, for $T>0$, $s_1,s_2\in\mathbb{R}$ and $p\ge 2$, we define
$\widetilde{L}^{\varrho}(0,T;\dot{\mathbb{B}}^{s_1,s_2}_{p,2})$ as the completion
of $C([0,T];\mathcal{S})$ under the norm
\[
\|u\|_{\widetilde{L}^{\varrho}_{T}(\dot{\mathbb{B}}^{s_1,s_2}_{p,2})}
:= \|u^{\ell}\|_{\widetilde{L}^{\varrho}_{T}(\dot{B}^{s_1}_{p,1})}
 + \|u\|_{\widetilde{L}^{\varrho}_{T}(\dot{B}^{s_2}_{2,1})}^{h},
\]
where $\|u\|_{\widetilde{L}^{\varrho}_{T}(\dot{B}^{s_2}_{2,1})}^{h}
:= \sum_{j\geq J_0-1} 2^{js_2}\|\dot{\Delta}_j u\|_{L^{\varrho}_{T}(L^2)}.$ Finally, the Chemin--Lerner norm $\widetilde{L}^{\infty}_{T}(\dot{\mathbb{B}}^{s_1,s_2}_{p,2})$
is stronger than the usual Lebesgue--Besov norm
$L^{\infty}_{T}(\dot{\mathbb{B}}^{s_1,s_2}_{p,2})$.

We recall the following classical Bernstein inequality:
\begin{equation}\label{lemma21}
\begin{aligned}
&\operatorname{Supp} \widehat{u} \subset \left\{\xi\in\mathbb{R}^{d}\mid |\xi|\leq \lambda R \right\}
\Longrightarrow
\sum_{|\alpha|=k}\| \partial^\alpha u\|_{L^q}
\lesssim \lambda^{k+d\left(\frac{1}{p}-\frac{1}{q}\right)}\|u\|_{L^p}.
\end{aligned}
\end{equation}
This holds for any tempered distribution $u$, any $k\in\mathbb{N}$, and any
$1\leq p\leq q\leq \infty$.

%Below, we recall some useful lemmas that are frequently used in our analysis. The first lemma concerns Bernstein's lemma.
%\begin{equation}\nonumber
%\begin{aligned}
%&\operatorname{Supp} \widehat{u} \subset \left\{\xi\in\mathbb{R}^{d}~| ~|\xi|\leq \lambda R \right\}\Longrightarrow \sum_{|\alpha|=k}\| \partial^\alpha u\|_{L^q}\lesssim\lambda^{k+d\left(\frac{1}{p}-\frac{1}{q}\right)}\|u\|_{L^p},
%&\operatorname{Supp} \widehat{u} \subset \left\{\xi\in\mathbb{R}^{d}~|~ \lambda r\leq |\xi|\leq \lambda R\right\}\Longrightarrow \sum_{|\alpha|=k} \|\partial^\alpha u\|_{L^{p}}\sim\lambda^{k}\|u\|_{L^{p}}.
%\end{aligned}
%\end{equation}

%Let $0<r<R, 1\leq p\leq q\leq \infty$ and $k\in \mathbb{N}$. Then for any function $u\in L^p$ and $\lambda>0$, it holds that
%\begin{equation}\nonumber
%\begin{aligned}
%&\operatorname{Supp} \widehat{u} \subset \left\{\xi\in\mathbb{R}^{d}~| ~|\xi|\leq \lambda R \right\}\Longrightarrow \sum_{|\alpha|=k}\| \partial^\alpha u\|_{L^q}\lesssim\lambda^{k+d\left(\frac{1}{p}-\frac{1}{q}\right)}\|u\|_{L^p};\\
%&\operatorname{Supp} \widehat{u} \subset \left\{\xi\in\mathbb{R}^{d}~|~ \lambda r\leq |\xi|\leq \lambda R\right\}\Longrightarrow \sum_{|\alpha|=k} \|\partial^\alpha u\|_{L^{p}}\sim\lambda^{k}\|u\|_{L^{p}}.
%\end{aligned}
%\end{equation}
%\end{lemma}

Below, we recall some useful lemmas that are frequently used in our analysis. First, we need the nonlinear Bernstein's type lemma, which can be shown using a similar argument as in \cite[Lemma 8]{7}.

\begin{lemma}\label{lemmaA2}
Let $-A$ be a strongly elliptic operator of order $2$. If the Fourier transform of $u$ is supported in $\{\xi\in\mathbb{R}^{d}~|~ \lambda r\leq |\xi|\leq \lambda R\}$, then for $1<p<\infty$, there exists a constant $c_p^*>0$ depending only on $d$, $r$, $p$ and $R$ such that
\begin{align*}
-\int_{\mathbb{R}^d}Au |u|^{p-2} u\,dx\geq c_p^* \lambda^2\int_{\mathbb{R}^d}|u|^{p}\,dx.
\end{align*}
\end{lemma}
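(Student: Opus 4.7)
The proof follows a standard scaling + integration-by-parts + semigroup contraction scheme.

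First, a dilation $v(x):=u(x/\lambda)$ reduces matters to the unit scale $\lambda=1$. Indeed, $\operatorname{Supp}\widehat v\subset\{r\leq|\xi|\leq R\}$, while $\|v\|_{L^p}^p=\lambda^d\|u\|_{L^p}^p$ and $\int_{\mathbb{R}^d}Av\,|v|^{p-2}v\,dx=\lambda^{d-2}\int_{\mathbb{R}^d}Au\,|u|^{p-2}u\,dx$, so an inequality at the unit scale of the form $-\int Av\,|v|^{p-2}v\,dx\geq c_p^*\|v\|_{L^p}^p$ produces the desired $\lambda^2$ factor on the level of $u$. It therefore suffices to treat functions whose Fourier support lies in the fixed annulus $\{r\leq|\xi|\leq R\}$.

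Second, writing $-A=\sum_{i,j}a_{ij}\partial_i\partial_j$ and symmetrizing so that $\sum_{i,j}a_{ij}\xi_i\xi_j\geq c_0|\xi|^2$, I would integrate by parts (when $1<p<2$ one regularizes by replacing $|u|^{p-2}u$ with $(|u|^2+\varepsilon)^{(p-2)/2}u$ and sends $\varepsilon\to 0^+$ after the computation) to obtain the identity
\begin{equation*}
-\int_{\mathbb{R}^d}Au\,|u|^{p-2}u\,dx=(p-1)\sum_{i,j}a_{ij}\int_{\mathbb{R}^d}|u|^{p-2}\partial_iu\,\partial_ju\,dx\geq c_0(p-1)\int_{\mathbb{R}^d}|u|^{p-2}|\nabla u|^2\,dx.
\end{equation*}
In the easy case $p=2$, Plancherel together with $|\xi|\geq r$ on $\operatorname{Supp}\widehat u$ immediately yields the lower bound $c_0r^2\|u\|_{L^2}^2$, completing the proof with $c_2^*=c_0r^2$.

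The main obstacle is the nonlinear Bernstein lower bound
\begin{equation*}
\int_{\mathbb{R}^d}|u|^{p-2}|\nabla u|^2\,dx\gtrsim\|u\|_{L^p}^p\qquad(1<p<\infty,\ p\neq 2),
\end{equation*}
for spectrally localized $u$. Indeed, setting $v:=|u|^{p/2-1}u$ one has $|v|^2=|u|^p$ and $(2/p)^2|\nabla v|^2=|u|^{p-2}|\nabla u|^2$, so the task reduces to $\|\nabla v\|_{L^2}^2\gtrsim\|v\|_{L^2}^2$; the trouble is that $v$ is no longer spectrally localized, so classical Bernstein cannot be applied directly. To bypass this one would use the heat semigroup $\{e^{tA}\}_{t\geq 0}$: writing $e^{tA}u=K_t\star u$ with $K_t=\mathcal{F}^{-1}(e^{-ta(\xi)}\phi(\xi))$ for a smooth bump $\phi$ equal to $1$ on $\{r\leq|\xi|\leq R\}$ and supported in $\{r/2\leq|\xi|\leq 2R\}$, the same integration-by-parts argument in $\xi$ used in the proof of Lemma~\ref{lemma22} yields $\|K_t\|_{L^1}\leq C\langle t\rangle^{2d}e^{-c_0 r^2 t/2}$. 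Iterating this estimate via the semigroup property on long time intervals upgrades it to the sharp exponential contraction $\|e^{tA}u\|_{L^p}\leq e^{-c_p^*t}\|u\|_{L^p}$ on spectrally localized functions; differentiating $F(t):=\|e^{tA}u\|_{L^p}^p$ at $t=0$ then gives
\begin{equation*}
F'(0)=p\int_{\mathbb{R}^d}Au\,|u|^{p-2}u\,dx\leq-c_p^*p\,F(0),
\end{equation*}
which is precisely the required bound. The truly subtle point is the last step: upgrading the kernel estimate $\|K_t\|_{L^1}\leq Ce^{-ct}$ (with $C>1$) to the sharp contraction with constant equal to $1$ on the subspace of spectrally localized $L^p$ functions, which is achieved by iterating on time intervals long enough that $Ce^{-cT}<1$, and then transferring the resulting discrete decay back to an infinitesimal derivative bound at $t=0$.
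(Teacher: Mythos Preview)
The gap is in your last paragraph. Iterating the bound $\|e^{tA}u\|_{L^p}\le Ce^{-ct}\|u\|_{L^p}$ at multiples of a fixed $T$ with $Ce^{-cT}<1$ does give $\|e^{nTA}u\|_{L^p}\le(Ce^{-cT})^{n}\|u\|_{L^p}$, but for $t\in(0,T)$ the only information available is the non-expansion $\|e^{tA}u\|_{L^p}\le\|u\|_{L^p}$ coming from your integration-by-parts identity. These two pieces never force the prefactor down to~$1$: on a general Banach space a $C_0$-semigroup can be contractive for all $t$, satisfy $\|e^{tA}\|\le Ce^{-ct}$ with $C>1$, and still fail $\|e^{tA}\|\le e^{-c't}$ for every $c'>0$ (equivalently, its generator need not be \emph{strictly} dissipative). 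Hence differentiating $F(t)=\|e^{tA}u\|_{L^p}^p$ at $t=0$ gives only $F'(0)\le 0$, which is the trivial inequality you already had. The ``transfer of discrete decay to an infinitesimal derivative bound at $t=0$'' is precisely the step that cannot be justified.

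The paper does not give a proof but points to Danchin's argument, which handles the nonlinear Bernstein step \emph{directly} rather than through the semigroup. After your scaling and integration-by-parts reductions, the remaining task is $\int|u|^{p-2}|\nabla u|^2\,dx\gtrsim\|u\|_{L^p}^p$ for $u$ with $\operatorname{Supp}\widehat{u}\subset\{r\le|\xi|\le R\}$. One uses the spectral localization once more, but on $u$ itself: since $\widehat{u}$ vanishes near the origin one may write $u=\sum_k h_k\ast\partial_k u$ with $h_k\in\mathcal{S}$ (take $\widehat{h_k}(\xi)=-i\xi_k|\xi|^{-2}\phi(\xi)$, $\phi\equiv 1$ on the annulus). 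Inserting this into $\|u\|_{L^p}^p=\int u\,|u|^{p-2}u\,dx$, integrating by parts, and applying Cauchy--Schwarz together with H\"older (for $p\ge 2$) yields
\[
\|u\|_{L^p}^p\ \lesssim\ \|u\|_{L^p}^{p/2}\Bigl(\int|u|^{p-2}|\nabla u|^2\,dx\Bigr)^{1/2},
\]
which is exactly the missing inequality; the range $1<p<2$ is treated by a regularization/duality variant. This direct use of the annular support is the mechanism your semigroup shortcut was trying to replace, and it cannot be bypassed along the lines you sketched.
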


Furthermore, there are classical product laws that play a fundamental role in handling bilinear terms.
\begin{lemma}\label{prop3.2}
Let $1\leq p,r\leq\infty$. Then
\begin{itemize}
\item For $s>0$, we have
\begin{align}
\|uv\|_{\dot{B}^{s}_{p,r}}&\lesssim \|u\|_{L^{\infty}} \|v\|_{\dot{B}^{s}_{p,r}}+\|u\|_{\dot{B}^{s}_{p,r}}\|v\|_{L^{\infty}}.\label{product1}
\end{align}
\item For  $p\geq 2$ and $s_1,s_2\in \mathbb{R}$ satisfying $ -\frac{d}{p}<s_1, s_2\leq \frac dp$ and $s_{1}+s_{2}>0$, we have
\begin{align}
\|uv\|_{\dot{B}^{s_{1}+s_{2}-\frac dp}_{p,1}}&\lesssim \|u\|_{\dot{B}^{s_1}_{p,1}}\|v\|_{\dot{B}^{s_2}_{p,1}}.\label{product2}
\end{align}
\item For  $p\geq 2$ and $s_1,s_2\in \mathbb{R}$ satisfying $ -\frac{d}{p}<s_1\leq \frac{d}{p}$, $-\frac{d}{p}\leq s_2<\frac dp$ and $s_{1}+s_{2}\geq 0$, we have
\begin{align}
\|uv\|_{\dot{B}^{s_{1}+s_{2}-\frac dp}_{p,\infty}}&\lesssim \|u\|_{\dot{B}^{s_1}_{p,1}}\|v\|_{\dot{B}^{s_2}_{p,\infty}}.\label{product3}
\end{align}
\end{itemize}
\end{lemma}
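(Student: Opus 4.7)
The plan is to prove all three product estimates via Bony's paraproduct decomposition
\[
uv = T_u v + T_v u + R(u,v), \qquad T_u v \coloneqq \sum_{j\in\mathbb Z} \dot S_{j-1}u\,\dot\Delta_j v,\quad R(u,v)\coloneqq \sum_{|j-k|\le 1}\dot\Delta_j u\,\dot\Delta_k v,
\]
together with the spectral support properties $\operatorname{Supp}\mathcal F(\dot S_{j-1}u\,\dot\Delta_j v)\subset 2^j\mathcal C$ and $\operatorname{Supp}\mathcal F(\dot\Delta_j u\,\dot\Delta_k v)\subset 2^{\max\{j,k\}}B(0,C)$. The estimates for the dyadic blocks will follow from H\"older's inequality, Bernstein's lemma (Lemma~\ref{lemma21}), and the definition of the $\dot B^{s}_{p,r}$ norm. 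First I will carry out the case $p\ge 2$ (assertions (2) and (3)); assertion (1) is then a simpler variant.

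For \eqref{product2}, I decompose $uv = T_u v + T_v u + R(u,v)$. The paraproduct $T_u v$ localizes around $2^j\mathcal C$, so for each $j$ one has, by H\"older and Bernstein,
\[
\|\dot\Delta_j T_u v\|_{L^p}\lesssim \|\dot S_{j-1}u\|_{L^\infty}\|\dot\Delta_j v\|_{L^p}\lesssim \sum_{k\le j-2}2^{k d/p}\|\dot\Delta_k u\|_{L^p}\,\|\dot\Delta_j v\|_{L^p},
\]
where I used $\dot B^{d/p}_{p,1}\hookrightarrow L^\infty$ at the block level. Multiplying by $2^{j(s_1+s_2-d/p)}$ and summing over $j$ yields, via convolution in $\ell^1$ (valid since $s_1\le d/p$ ensures $2^{k(s_1-d/p)}\in \ell^1(k\le j)$ after redistributing exponents), the bound $\|T_u v\|_{\dot B^{s_1+s_2-d/p}_{p,1}}\lesssim \|u\|_{\dot B^{s_1}_{p,1}}\|v\|_{\dot B^{s_2}_{p,1}}$. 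The symmetric term $T_v u$ is treated identically, exchanging the roles of $s_1$ and $s_2$ (both indices are $\le d/p$). For the remainder $R(u,v)$, Bernstein's inequality and $p\ge 2$ give
\[
\|\dot\Delta_j(\dot\Delta_k u\,\dot\Delta_{k'}v)\|_{L^p}\lesssim 2^{j d/p}\|\dot\Delta_k u\,\dot\Delta_{k'}v\|_{L^{p/2}}\lesssim 2^{j d/p}\|\dot\Delta_k u\|_{L^p}\|\dot\Delta_{k'}v\|_{L^p},
\]
so that
\[
2^{j(s_1+s_2-d/p)}\|\dot\Delta_j R(u,v)\|_{L^p}\lesssim \sum_{k\ge j-N}2^{(j-k)(s_1+s_2)}\bigl(2^{k s_1}\|\dot\Delta_k u\|_{L^p}\bigr)\bigl(2^{k s_2}\|\dot\Delta_{k'} v\|_{L^p}\bigr),
\]
and Young's inequality in $\ell^1$ closes the bound, crucially using $s_1+s_2>0$. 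This gives \eqref{product2}. Estimate \eqref{product3} follows from exactly the same decomposition with one factor measured in $\dot B^{s_2}_{p,\infty}$: the paraproducts use $s_1\le d/p$ and $s_2<d/p$ (with $\ell^\infty$-summability for the latter), the remainder uses $s_1+s_2\ge 0$ which still allows Young in $\ell^\infty*\ell^1\to\ell^\infty$.

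For \eqref{product1}, one uses the same Bony decomposition but notes $\|\dot\Delta_j T_u v\|_{L^p}\lesssim \|u\|_{L^\infty}\|\dot\Delta_j v\|_{L^p}$, so weighting by $2^{js}$ and summing in $\ell^r$ gives $\|T_u v\|_{\dot B^s_{p,r}}\lesssim \|u\|_{L^\infty}\|v\|_{\dot B^s_{p,r}}$; the symmetric term gives the other half of the right-hand side. For the remainder, H\"older $L^\infty\cdot L^p\to L^p$ applies, and summability now uses $s>0$ to carry out the $\ell^1$-convolution without any $p\ge 2$ hypothesis.

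I expect the main obstacle to be the bookkeeping in the remainder term for \eqref{product2} and \eqref{product3}: the hypothesis $p\ge 2$ is essential there (so that $L^{p/2}\subset$ can be used before applying Bernstein), and the borderline indices $s_1,s_2\in\{-d/p,d/p\}$ together with the endpoint case $s_1+s_2=0$ in \eqref{product3} must be handled by choosing the correct summability indices in the convolution inequality. Once these cases are verified (by a routine but careful dyadic summation), the three inequalities drop out immediately. Full details can be found in Chapter~2 of \cite{bahouri1}; our conditions on $(s_1,s_2)$ are tailored precisely to make the three sums converge.
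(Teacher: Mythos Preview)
The paper does not prove this lemma at all: it is stated in Appendix~\ref{appendixA} as a ``classical product law'' and left without proof, implicitly referring the reader to \cite{bahouri1}. Your approach via Bony's paraproduct decomposition is exactly the standard one used to establish such estimates (see e.g.\ Chapter~2 of \cite{bahouri1}), and the outline you give is correct, including the role of $p\ge 2$ in the remainder estimate and the endpoint bookkeeping for the summability indices.
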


To analyze those nonlinear terms that arise from the matrices $F^i(U)$ and $G(U)$,  the continuity for the composition of vector-valued functions plays a key role.
\begin{lemma}\label{lemma64}
Let $1\leq p,r\leq \infty$, $s>0$, and $m,n\geq1$.  Assume that $F$: $\mathbb{R}^n\rightarrow \mathbb{R}^m$ is a smooth function with $F(0)=0$. Then for any vector-valued function  $U: \mathbb{R}^d \to \mathbb{R}^n$ with $n\geq1$, if $U: \mathbb{R}^{d}\rightarrow \mathbb{R}^n$ belongs to $\dot{B}^{s}_{p,r}\cap L^{\infty}$, there exists a constant $C_{U}>0$ depending only on $\|U\|_{L^{\infty}}$, $F$, $s$, $p$, $d$, $m$ and $n$ such that
\begin{equation}
\begin{aligned}
\|F(U)\|_{\dot{B}^{s}_{p,r}}\leq C_{U}\|U\|_{\dot{B}^{s}_{p,r}}.\label{F1}
\end{aligned}
\end{equation}
If additionally, $\nabla_{U}F(0)=0$ holds, then we have
\begin{equation}
\begin{aligned}
\|F(U)\|_{\dot{B}^{s}_{p,r}}\leq C_{U}\|U\|_{\dot{B}^{\frac{d}{p}}_{p,1}}\|U\|_{\dot{B}^{s}_{p,r}}.\label{F11}
\end{aligned}
\end{equation}
Furthermore, if $p\geq 2$, $-\frac{d}{p}<s\leq \frac{d}{p}$ and assume the vector-valued functions $U_{1}, U_{2}\in\dot{B}^{s}_{p,r}\cap \dot{B}^{\frac{d}{p}}_{p,1}$, then we have
\begin{equation}
\begin{aligned}
\|F(U_{1})-F(U_{2})\|_{\dot{B}^{s}_{p,1}}\leq C_{U_{1},U_{2}}(1+\|(U_{1},U_{2})\|_{\dot{B}^{\frac{d}{p}}_{p,1}})\|U_{1}-U_{2}\|_{ \dot{B}^{s}_{p,1}},\label{F3}
\end{aligned}
\end{equation}
where the constant $C_{U_{1},U_{2}}>0$ depends only on $\|(U_{1},U_{2})\|_{L^{\infty}}$, $F$, $s$, $p$, $d$, $m$ and $n$.
\end{lemma}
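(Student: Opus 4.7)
\noindent\textbf{Proof proposal for Lemma \ref{lemma64}.}
The plan is to follow the classical Meyer/Bony paradifferential approach, adapted to vector-valued $F$, and then extract the two refined statements \eqref{F11} and \eqref{F3} from the basic estimate \eqref{F1}. Since $F(0)=0$ and $U\in\mathcal{S}'_h$, the low-frequency truncations satisfy $\dot S_{j}U\to 0$ in $L^\infty$ as $j\to-\infty$, so the telescoping identity
\begin{equation*}
F(U)=\sum_{j\in\mathbb{Z}}\bigl(F(\dot S_{j+1}U)-F(\dot S_{j}U)\bigr)
\end{equation*}
holds in $\mathcal{S}'_h$. Applying the mean-value theorem componentwise gives
\begin{equation*}
F(\dot S_{j+1}U)-F(\dot S_{j}U)=M_{j}(U)\,\dot\Delta_{j}U,\qquad M_{j}(U)\coloneqq\int_{0}^{1}\nabla_{U}F\bigl(\dot S_{j}U+\tau\dot\Delta_{j}U\bigr)\,d\tau,
\end{equation*}
where $\|M_{j}(U)\|_{L^\infty}$ is bounded by a constant $C_U$ depending only on $\|U\|_{L^\infty}$ and on $F$ (through $\|\nabla F\|_{L^\infty(B(0,\|U\|_{L^\infty}))}$). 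First I would prove \eqref{F1} by applying $\dot\Delta_{j'}$ to the telescoping identity and using that the summand has Fourier support in a ball of radius $\lesssim 2^{j}$: only terms with $j\geq j'-N_{0}$ contribute (for a universal $N_{0}$), and Bernstein plus H\"older yields $\|\dot\Delta_{j'}\bigl(M_{j}\,\dot\Delta_{j}U\bigr)\|_{L^p}\leq C_U\|\dot\Delta_{j}U\|_{L^p}$. A standard $\ell^{r}$ convolution argument and the fact $s>0$ then close the bound in $\dot B^{s}_{p,r}$.

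Next, \eqref{F11} will be deduced by a second-order Taylor expansion: writing $F=(F^{1},\ldots,F^{m})$ componentwise and using $F(0)=0$, $\nabla_{U}F(0)=0$, one has
\begin{equation*}
F^{l}(U)=\sum_{k,i=1}^{n}U^{k}U^{i}\,\Phi^{l}_{k,i}(U),\qquad \Phi^{l}_{k,i}(U)\coloneqq\int_{0}^{1}(1-\tau)\,\partial^{2}_{U^{k}U^{i}}F^{l}(\tau U)\,d\tau,
\end{equation*}
with $\Phi^{l}_{k,i}$ smooth and $\Phi^{l}_{k,i}(0)=\tfrac12\partial^{2}_{U^{k}U^{i}}F^{l}(0)$. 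The product law \eqref{product1} and the embedding $\dot B^{d/p}_{p,1}\hookrightarrow L^\infty$ yield $\|U\Phi(U)\|_{\dot B^{s}_{p,r}}\lesssim \|U\|_{\dot B^{s}_{p,r}}\|\Phi(U)\|_{L^\infty}+\|U\|_{L^\infty}\|\Phi(U)\|_{\dot B^{s}_{p,r}}$; one more application of \eqref{product1} on the quadratic factor $U^{k}U^{i}\Phi(U)$ together with $\|U\Phi(U)\|_{L^\infty}\leq C_U\|U\|_{L^\infty}\leq C_U\|U\|_{\dot B^{d/p}_{p,1}}$ and \eqref{F1} applied to $\Phi^{l}_{k,i}(U)$ produces the quadratic control \eqref{F11}.

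Finally, \eqref{F3} rests on the parametric identity
\begin{equation*}
F(U_{1})-F(U_{2})=\bigl(U_{1}-U_{2}\bigr)\cdot G(U_{1},U_{2}),\qquad G(U_{1},U_{2})\coloneqq\int_{0}^{1}\nabla_{U}F\bigl(U_{2}+t(U_{1}-U_{2})\bigr)\,dt.
\end{equation*}
Under the assumptions $p\geq 2$ and $-d/p<s\leq d/p$, the non-standard product law \eqref{product1} (or \eqref{product2} when $s<d/p$, and a direct $L^\infty$ bound when $s=d/p$) controls the product by $\|U_{1}-U_{2}\|_{\dot B^{s}_{p,1}}\bigl(\|G\|_{L^\infty}+\|G-\nabla F(0)\|_{\dot B^{d/p}_{p,1}}\bigr)$; applying \eqref{F1} to the composite smooth function $G(U_{1},U_{2})-\nabla F(0)$ (which vanishes at $(0,0)$) with $\|(U_{1},U_{2})\|_{\dot B^{d/p}_{p,1}}$ in the right-hand side yields \eqref{F3}.

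The main technical obstacle lies in the frequency-localization step for \eqref{F1}: since $M_{j}(U)$ is only an $L^\infty$ multiplier, not a Fourier-localized quantity, one must carefully split the sum according to whether $j\geq j'$ or $j<j'$ and use Bernstein's inequality together with the spectral support of $\dot\Delta_{j}U$ to obtain a geometric cascade that is summable precisely because $s>0$. The vector-valued nature of $F$ does not introduce essential new difficulties beyond componentwise bookkeeping, but it does force the constant $C_{U}$ to be taken as a uniform bound of a finite collection of derivatives of $F$ on the ball $B(0,\|U\|_{L^\infty})$, as claimed.

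\end{appendices}
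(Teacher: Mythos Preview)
Your proposal is correct and largely mirrors the paper's approach: for \eqref{F1} the paper simply cites the standard Meyer telescoping argument in \cite{bahouri1} (which is exactly what you spell out), and for \eqref{F3} both you and the paper use the same parametric identity $F(U_1)-F(U_2)=G(U_1,U_2)(U_1-U_2)$ followed by \eqref{product2} and \eqref{F1} applied to $G-\nabla_U F(0)$. The only notable difference is in \eqref{F11}: you go through a second-order Taylor expansion $F^l(U)=\sum_{k,i}U^kU^i\Phi^l_{ki}(U)$, whereas the paper stays at first order, writing $F(U)=\tilde F(U)\,U$ with $\tilde F(U)=\int_0^1\nabla_U F(\theta U)\,d\theta$ and observing that the hypothesis $\nabla_U F(0)=0$ gives $\tilde F(0)=0$ directly, so \eqref{F1} applies to $\tilde F$ and a single use of \eqref{product1} together with $\dot B^{d/p}_{p,1}\hookrightarrow L^\infty$ finishes. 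This avoids the extra product step and the (harmless but worth flagging) subtlety that your $\Phi^l_{ki}(0)=\tfrac12\partial^2_{U^kU^i}F^l(0)$ need not vanish, so \eqref{F1} only applies to $\Phi^l_{ki}(U)-\Phi^l_{ki}(0)$. Both routes are valid; the paper's is just a bit shorter.
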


\iffalse
\begin{proof}
The first estimate \eqref{F1} can be proved similarly as in \cite{bahouri1}[Page 94](or see \cite{RS1}[Page 387]). Note that
\begin{equation}\label{B9}
\begin{aligned}
F(U)-F(V)=( \tilde{F}(U)+D_UF(0))\,   (U-V)
%F(U)-F(0)-D_{U}F(0)^{\top}U=  U^{\top}\tilde{F}(U) U.
\end{aligned}
\end{equation}
with the matrix
$$
 \tilde{F}(U)=\int_0^1 \nabla_{U}F(\theta U+(1-\theta)V)d\theta-D_UF(0).
$$
Then, due to $F(0)=0$ and $D_UF(0)=0$ which leads to $\tilde{F}(0)=0$, we can set $V=0$ in \eqref{B9} and deduce the second estimate \eqref{F11} by using \eqref{product1}, \eqref{F1} and $\dot{B}^{\frac{d}{p}}_{p,1}\hookrightarrow L^{\infty}$. For $U=U_1$ and $V=U_2$, employing \eqref{product2} and \eqref{F1}, we obtain \eqref{F3}.
\end{proof}
\fi

Also, we will employ key estimates for vector-valued composition functions in hybrid spaces. The following lemma can be proved similarly to \cite{xu-zhang} designed for vector-valued functions, combined with Lemma \ref{lemma64}.

%, for Since the case of scalar functions has been proven in \cite{xu-zhang}, one can obtain similar estimates for vector-valued functions. The details can be omitted for brevity.

\iffalse
\begin{lemma}\label{NewSmoothhigh}
Let $d, m,n\geq1$. Assume that $s$ satisfies $s>1$ and $s\geq \frac{d}{2}$, and let $p$ fulfill $2\leq p\leq 4$ for $d=1$ and $2\leq p\leq \min\{4,\frac{2d}{d-2}\}$ for $d\geq 2$.  For any smooth functions $F: \mathbb{R}^n\rightarrow \mathbb{R}^m$ satisfying $F(0)=0$ and $U: \mathbb{R}^{d}\rightarrow \mathbb{R}^n$, there exists a constant $C_{U}>0$ depending only on $\|U\|_{L^{\infty}}$, $s$, $d$, $m$ and $n$ such that
\begin{equation}\label{LemNewSmoothhigh2}
\begin{aligned}
\|F(U)\|_{\dot{B}^{s}_{2,1}}^{h}
&\leq C_{U}\Big(1+\|U^{\ell}\|_{\dot{B}^{\frac{d}{p}}_{p,1}}+2^{(\frac{d}{2}-s)J_0}\|U\|_{\dot{B}^{s}_{2,1}}^{h}\Big) \|U\|_{\dot{\mathbb{B}}^{s+\frac{d}{p}-\frac{d}{2},s}_{p,2}}.
\end{aligned}
\end{equation}
Furthermore, if, in addition, $D_{U}F(0)=0$, it holds
\begin{equation}\label{LemNewSmoothhigh21}
\begin{aligned}
\|F(U)\|_{\dot{B}^{s}_{2,1}}^{h}
&\leq C_{U}\Big(\|U^{\ell}\|_{\dot{B}^{\frac{d}{p}}_{p,1}}+2^{(\frac{d}{2}-s)J_0}\|U\|_{\dot{B}^{s}_{2,1}}^{h}\Big) \|U\|_{\dot{\mathbb{B}}^{s+\frac{d}{p}-\frac{d}{2},s}_{p,2}}.
\end{aligned}
\end{equation}
\end{lemma}

By \eqref{F1} and \eqref{LemNewSmoothhigh2}, we obtain the estimate for composite functions in hybrid spaces.
\fi

\begin{lemma}\label{corocom}
Let $2\leq p\leq \infty$, $0<s\leq \frac{d}{p}+1$, and $m,n\geq1$.  Assume that $F$: $\mathbb{R}^n\rightarrow \mathbb{R}^m$ is a smooth function with $F(0)=0$. Then for any vector-valued function $U\in \mathbb{R}^n$ with $n\geq1$, if $U: \mathbb{R}^{d}\rightarrow \mathbb{R}^n$ satisfies $U\in \dot{\mathbb{B}}^{s,\frac{d}{2}+1}_{p,2}\cap \dot{\mathbb{B}}^{\frac{d}{p},\frac{d}{2}+1}_{p,2}$, there exists a constant $C_{U,J_0}>0$ depending only on $\|U\|_{L^{\infty}}$, $F$, $s$, $p$, $d$, $m$, $n$ and $J_0$ such that
\begin{align}
\|F(U)\|_{\dot{\mathbb{B}}^{s,\frac{d}{2}+1}_{p,2}}\leq C_{U,J_0} (1+\|U\|_{\dot{\mathbb{B}}^{\frac{d}{p},\frac{d}{2}+1}_{p,2}}) \|U\|_{\dot{\mathbb{B}}^{s,\frac{d}{2}+1}_{p,2}}.\label{comhybrid}
\end{align}
Furthermore, if, in addition, $D_{U}F(0)=0$, it  holds
\begin{equation}\label{LemNewSmoothhigh21}
\begin{aligned}
\|F(U)\|_{\dot{\mathbb{B}}^{s,\frac{d}{2}+1}_{p,2}}
&\leq C_{U,J_0} \|U\|_{\dot{\mathbb{B}}^{\frac{d}{p},\frac{d}{2}+1}_{p,2}} \|U\|_{\dot{\mathbb{B}}^{s,\frac{d}{2}+1}_{p,2}}.
\end{aligned}
\end{equation}
\end{lemma}

Since our functional setting is somewhat different compared to \cite{c3}, we need non-standard product laws and commutator estimates to handle nonlinear terms
in hybrid spaces of $L^p$-$L^2$ type. %The proof is similar to that in \cite{c3} with some modifications. Compared with \cite{c3},
%For instance, we adjust the low-frequency norm $\|\cdot\|_{\dot{B}^{s}_{p,1}}^\ell$ to be $\|\cdot^\ell\|_{\dot{B}^{s}_{p,1}}$ and remove the additional regularity requirement involving $\dot{B}^{\frac{d}{p}-\frac{d(p-2)}{2p}}_{p,1}$ in the product laws.

\begin{lemma}\label{NonClassicalProLaw1}
Let $s>0$, $2\leq p\leq 4$ for $d=1$ and $2\leq p\leq \min\{4,\frac{2d}{d-2}\}$ for $d\geq 2$. Then, there exists a constant $C$ independent of $J_0$ such that
\begin{align}\label{newproduct}
\|uv\|_{\dot{B}_{2,1}^{s}}^{h}
&\leq C\Big(
\|v\|_{\dot{B}_{p,1}^{\frac{d}{p}}}\|u\|_{\dot{\mathbb{B}}^{s+\frac{d}{p}-\frac{d}{2},s}_{p,2}}
+\|u\|_{\dot{B}_{p,1}^{\frac{d}{p}}}\|v\|_{\dot{\mathbb{B}}^{s+\frac{d}{p}-\frac{d}{2},s}_{p,2}} \Big).
\end{align}
\end{lemma}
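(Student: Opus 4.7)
\vspace{2mm}

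\noindent
\textbf{Proof plan.} The plan is to expand $uv$ via Bony's paraproduct decomposition $uv=T_u v+T_v u+R(u,v)$, with $T_u v=\sum_{j'}\dot{S}_{j'-1}u\,\dot{\Delta}_{j'}v$ and $R(u,v)=\sum_{j'}\dot{\Delta}_{j'}u\,\widetilde{\dot{\Delta}}_{j'}v$, and then estimate each piece in the hybrid norm $\|\cdot\|_{\dot{B}^{s}_{2,1}}^{h}$. The unifying idea is to couple an $L^{\infty}\cdot L^{2}$ H\"older bound with Bernstein's inequality $\|\dot{\Delta}_j w\|_{L^2}\lesssim 2^{jd(1/p-1/2)}\|\dot{\Delta}_j w\|_{L^p}$ (valid since $p\geq 2$) in order to convert $L^{p}$ low-frequency content into $L^{2}$ high-frequency content on a bounded number of ``boundary'' dyadic blocks near the threshold $J_0$. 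Summing the three contributions will yield the announced bound.

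For the first paraproduct $T_u v$, I would exploit the spectral localization $\operatorname{Supp}\mathcal{F}(\dot{S}_{j'-1}u\cdot\dot{\Delta}_{j'}v)\subset 2^{j'}\mathcal{C}$: for $j\geq J_0-1$, only indices $j'$ with $|j-j'|\leq N_0$ (some absolute constant) contribute to $\dot{\Delta}_j T_u v$. Bounding $\|\dot{S}_{j'-1}u\|_{L^\infty}\lesssim \|u\|_{\dot{B}^{d/p}_{p,1}}$ via the critical embedding, the estimate reduces to summing $2^{js}\|\dot{\Delta}_{j'}v\|_{L^2}$. Those indices with $j'\geq J_0-1$ feed directly into $\|v\|_{\dot{B}^{s}_{2,1}}^{h}$, while the $O(1)$ remaining indices are handled via Bernstein, yielding $2^{j's}\|\dot{\Delta}_{j'}v\|_{L^2}\lesssim 2^{j'(s+d/p-d/2)}\|\dot{\Delta}_{j'}v\|_{L^p}$, which is absorbed into $\|v^{\ell}\|_{\dot{B}^{s+d/p-d/2}_{p,1}}$. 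This produces the first term on the right-hand side of \eqref{newproduct}; the symmetric argument applied to $T_v u$ produces the second.

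For the remainder $R(u,v)$, since $\dot{\Delta}_j R(u,v)$ receives contributions only from $j'\geq j-N_0$, I would apply H\"older through $\|\dot{\Delta}_{j'}u\,\widetilde{\dot{\Delta}}_{j'}v\|_{L^2}\leq\|\dot{\Delta}_{j'}u\|_{L^\infty}\|\widetilde{\dot{\Delta}}_{j'}v\|_{L^2}$ together with $\|\dot{\Delta}_{j'}u\|_{L^\infty}\lesssim\|u\|_{\dot{B}^{d/p}_{p,1}}$, then exchange the order of summation. Here the hypothesis $s>0$ is crucial: it makes the inner geometric sum $\sum_{J_0-1\leq j\leq j'+N_0}2^{js}$ bounded by $C 2^{j's}$. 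Splitting the outer sum at $j'=J_0-1$ and applying Bernstein to the finite boundary layer $J_0-1-N_0\leq j'<J_0-1$ as above closes the remainder estimate; performing the symmetric estimate with $\|v\|_{\dot{B}^{d/p}_{p,1}}$ and the hybrid norm of $u$ yields the complementary piece.

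The main obstacle is to ensure that the overall constant $C$ is independent of $J_0$. This is guaranteed because every $J_0$-dependent correction arises from at most $N_0+1$ boundary dyadic blocks, whose individual contributions are absorbed by the $L^p$-based low-frequency part of $\dot{\mathbb{B}}^{s+d/p-d/2,s}_{p,2}$ through a single application of Bernstein with a constant depending only on $p$, $s$, $d$. The restrictions $p\geq 2$ and $p\leq 4$ (with the additional upper bound $p\leq 2d/(d-2)$ in dimensions $d\geq 5$) are the natural range in which both Bernstein's inequality $L^{p}\to L^{2}$ on individual blocks and the underlying critical embedding $\dot{B}^{d/p}_{p,1}\hookrightarrow L^\infty$ operate with the right signs; these match the regularity structure of the hybrid space $\dot{\mathbb{B}}^{s+d/p-d/2,s}_{p,2}$.
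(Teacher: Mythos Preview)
Your overall architecture (Bony decomposition, then split each piece into a bulk high-frequency part and a finite boundary layer near $j'\approx J_0$) is exactly what the paper does. The gap is in your treatment of the boundary layer. You invoke ``Bernstein's inequality $\|\dot{\Delta}_j w\|_{L^2}\lesssim 2^{jd(1/p-1/2)}\|\dot{\Delta}_j w\|_{L^p}$ (valid since $p\geq 2$)'' to convert the $L^p$ low-frequency information on $v$ into $L^2$. That inequality is false for $p>2$: Bernstein only gives $\|\dot{\Delta}_j w\|_{L^q}\lesssim 2^{jd(1/p-1/q)}\|\dot{\Delta}_j w\|_{L^p}$ when $p\le q$. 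Since your $L^\infty\times L^2\to L^2$ H\"older forces $v$ into $L^2$ on the boundary blocks, and you only control $v^\ell$ in $L^p$ with $p\ge 2$, the step simply does not close.

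The paper resolves this with a different H\"older pairing on the boundary layer: $L^p\times L^{2p/(p-2)}\to L^2$. The factor carrying the low-frequency information ($v$ in the paraproduct, $u^\ell$ in the remainder) stays in $L^p$, while the \emph{other} factor is lifted to $L^{2p/(p-2)}$ via Bernstein, which is legitimate precisely when $2p/(p-2)\ge p$, i.e.\ $p\le 4$. For the paraproduct the paper in addition rewrites $\dot{\Delta}_j(\dot{S}_{j'-1}u\,\dot{\Delta}_{j'}v)$ using the commutator $[\dot{\Delta}_j,\dot{S}_{j'-1}u]$, so that the $L^{2p/(p-2)}$ norm lands on $\nabla \dot{S}_{j'-1}u$; the embedding $\dot{B}^{d/p}_{p,1}\hookrightarrow \dot{B}^{d/2-d/p-1}_{2p/(p-2),1}$ then requires $d/2-d/p-1<0$, i.e.\ $p<2d/(d-2)$. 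This is where the two upper bounds on $p$ actually enter---not, as you suggest, through an $L^p\to L^2$ Bernstein step.
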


\begin{proof}
We recall Bony's paraproduct decomposition (see \cite{bahouri1}):
\begin{equation}\nonumber
\begin{aligned}
uv=\dot{T}_{u}v+\dot{R}[u,v]+\dot{T}_{v}u\quad \mbox{with}\quad
\dot{T}_{u}v\coloneqq  \sum_{j'\in \mathbb{Z}} \dot S_{j'-1}u \dot{\Delta}_{j'} v\quad\mbox{and}\quad
\dot{R}[u,v]\coloneqq  \sum_{|j'-j''|\leq 1} \dot\Delta_{j''} u \dot{\Delta}_{j'} v.
\end{aligned}
\end{equation}
We first deal with $\dot{T}_{u}v$ as follows:
\begin{equation}\nonumber
\begin{aligned}
\|\dot{T}_{u} v\|_{\dot{B}_{2,1}^{s}}^{h}
\leq
\sum_{\substack{j\geq J_0-1 \\ |j-j'|\leq 1}}2^{s j} \|\dot S_{j'-1} u \dot{\Delta}_{j} \dot{\Delta}_{j'} v\|_{L^2}
+\sum_{\substack{j\geq J_0-1 \\ |j-j'|\leq 4}} 2^{s j}\|[\dot{\Delta}_{j}, \dot S_{j'-1} u] \dot{\Delta}_{j'} v\|_{L^2}.
\end{aligned}
\end{equation}
It is straightforward to see that
\begin{equation}\nonumber
\begin{aligned}
\sum_{\genfrac{}{}{0pt}{}{j\geq J_0-1}{|j-j'|\leq 1}} 2^{sj}\|\dot S_{j'-1} u \dot{\Delta}_{j} \dot{\Delta}_{j'} v\|_{L^2}
\lesssim
\|u\|_{L^\infty}\sum_{j\geq J_0-1} 2^{sj} \|\dot{\Delta}_{j} v\|_{L^2}
\lesssim
\|u\|_{\dot{B}_{p,1}^{\frac{d}{p}}}\|v\|_{\dot{B}_{2,1}^{s}}^{h}.
\end{aligned}
\end{equation}
The commutator term is decomposed as follows:
\begin{equation}\nonumber
\begin{aligned}
\sum_{\genfrac{}{}{0pt}{}{j\geq J_0-1}{|j-j'|\leq 4}} 2^{s j}\|[\dot{\Delta}_{j}, \dot S_{j'-1} u] \dot{\Delta}_{j'} v\|_{L^2}\lesssim\bigg(\sum\limits_{\genfrac{}{}{0pt}{}{j'\geq J_0-1}{|j-j'|\leq 4}}
+ \sum\limits_{\genfrac{}{}{0pt}{}{J_0 -5 \leq j'\leq J_0-2}{|j-j'|\leq 4}} \bigg)2^{sj}
\|[\dot{\Delta}_{j}, \dot{S}_{j'-1} u] \Delta_{j'} v\|_{L^2}.
\end{aligned}
\end{equation}
It follows from the classical commutator estimate in \cite{bahouri1}[Proposition 2.97] Bernstein's lemma that
\begin{equation}\nonumber
\begin{aligned}
&\quad\sum_{\genfrac{}{}{0pt}{}{j'\geq J_0-1}{|j-j'|\leq 4}}  2^{sj}
\|[\dot{\Delta}_{j}, \dot{S}_{j'-1} u] \Delta_{j'} v\|_{L^2}\\
%&\quad \lesssim \sum_{\genfrac{}{}{0pt}{}{j'\geq J_0-1}{|j-j'|\leq 4}} 2^{sj}\|\Delta_{j'} v\|_{L^2} 2^{-j}\|\nabla S_{j'-1} u\|_{L^{\infty}}\\
&\lesssim \sum_{\genfrac{}{}{0pt}{}{j'\geq J_0-1}{|j-j'|\leq 4}} 2^{(s-1)(j-j')}(2^{sj'}\|\Delta_{j'} v\|_{L^2})
(2^{-j'}\|\nabla \dot{S}_{j'-1} u\|_{L^{\infty}})\lesssim \|\nabla u\|_{\dot{B}_{\infty,1}^{-1}}
\|v\|_{\dot{B}_{2,1}^{s}}^{h}
\lesssim
\|u\|_{\dot{B}_{p,1}^{\frac{d}{p}}}
\|v\|_{\dot{B}_{2,1}^{s}}^{h}.
\end{aligned}
\end{equation}
On the other hand, since $\frac{2p}{p-2}\geq p$ and $\frac{d}{2}-\frac{d}{p}-1< 0$ due to $2\leq p\leq \min\{4,\frac{d}{d-2}\}$ for $d\geq 2$ and $2\leq p\leq 4$ for $d=1$, the commutator estimate, as well as $\dot{\Delta}_{j}u=\dot{\Delta}_{j}u^{\ell}$ for all $j\leq J_0-2$, also leads to
\begin{equation}\nonumber
\begin{aligned}
&\quad\sum\limits_{\genfrac{}{}{0pt}{}{J_0 -5 \leq j'\leq J_0-2}{|j-j'|\leq 4}} 2^{sj}
\|[\dot{\Delta}_{j}, \dot{S}_{j'-1} u] \Delta_{j'} v\|_{L^2}\\
%&\quad \lesssim \sum_{\substack{J_0 -5 \leq j'\leq J_0-2 \\ |j-j'|\leq 4}} 2^{sj}\|\Delta_{j'} v\|_{L^p} 2^{-j}\|\nabla \dot{S}_{j'-1} u\|_{L^{\frac{2p}{p-2}}}\\
&\lesssim
\sum\limits_{J_0 -5 \leq j'\leq J_0-2}\Big(2^{(s+\frac{d}{p}-\frac{d}{2}) j'}\|\Delta_{j'} v\|_{L^p}\Big)
\Big(2^{(\frac{d}{2}-\frac{d}{p}-1) j'}\|\nabla \dot{S}_{j'-1} u\|_{L^{\frac{2p}{p-2}}}\Big) \sum_{ |j-j'|\leq 4} 2^{(s-1)(j-j')}\\
&\lesssim
\|v^{\ell}\|_{\dot{B}_{p,1}^{s+\frac{d}{p}-\frac{d}{2}}} \|\nabla u^{\ell}\|_{\dot{B}_{\frac{2p}{p-2},1}^{\frac{d}{2}-\frac{d}{p}-1}}\lesssim
\|u^{\ell}\|_{\dot{B}_{p,1}^{\frac{d}{p}}}\|v^{\ell}\|_{\dot{B}_{p,1}^{s+\frac{d}{p}-\frac{d}{2}}}.
\end{aligned}
\end{equation}
%where we used the fact that $\dot{\Delta}_{j}u=\dot{\Delta}_{j}u^{\ell}$ for all $j\leq J_0-2$.
%The estimate of $T_{b} a$ can be achieved similarly.
Hence, it follows that
\begin{equation}\label{Tba}
\begin{aligned}
\|\dot{T}_{u} v\|_{\dot{B}_{2,1}^{s}}^{h} \lesssim \|u\|_{\dot{B}_{p,1}^{\frac{d}{p}}}(\|v\|_{\dot{B}_{2,1}^{s}}^{h}+ \|v^{\ell}\|_{\dot{B}_{p,1}^{s+\frac{d}{p}-\frac{d}{2}}}).
\end{aligned}
\end{equation}
The term $\|T_{v} u\|_{\dot{B}_{2,1}^{s}}^{h}$ can be treated in a similar line. Finally, as $s>0$, the classical remainder estimate (cf. \cite[Theorem 2.85]{bahouri1}) gives rise to
\begin{equation}\nonumber
\begin{aligned}
\|\dot{R}[u,v]\|_{\dot{B}_{2,1}^{s}}^{h}
&\leq
\|\dot{R}[u^h,v]\|_{\dot{B}_{2,1}^{s}}^{h}
+\|\dot{R}[u^\ell,v]\|_{\dot{B}_{2,1}^{s}}^{h}
%&\lesssim \|u^h\|_{\dot{B}_{2,1}^{s}} \|v\|_{\dot{B}_{\infty,\infty}^{0}}+\|u^{\ell}\|_{\dot{B}_{p,1}^{s-\frac{d}{2}+\frac{d}{p}}} \|v\|_{\dot{B}_{\frac{2p}{p-2},\infty}^{\frac{d}{2}-\frac{d}{p}}}\\
\lesssim
\|u\|_{\dot{B}_{2,1}^{s}}^h \|v\|_{\dot{B}_{p,1}^{\frac{d}{p}}}
+
\|u^{\ell}\|_{\dot{B}_{p,1}^{s-\frac{d}{2}+\frac{d}{p}}}\|v\|_{\dot{B}_{p,1}^{\frac{d}{p}}} .
\end{aligned}
\end{equation}
This completes the proof of Lemma \ref{NonClassicalProLaw1}.
\end{proof}

As a consequence of \eqref{product2}, \eqref{newproduct} and Bernstein's lemma, we have the following product law in hybrid spaces.

\begin{coro}\label{coroApp1}
Let $s>0$, $2\leq p\leq 4$ for $d=1$ and $2\leq p\leq \min\{4,\frac{2d}{d-2}\}$ for $d\geq 2$. Then, there exists a constant $C$ independent of $J_0$ such that
\begin{align}
&\|uv\|_{\dot{\mathbb{B}}^{\frac{d}{p},\frac{d}{2}}_{p,2}}\leq C \|u\|_{\dot{\mathbb{B}}^{\frac{d}{p},\frac{d}{2}}_{p,2}} \|v\|_{\dot{\mathbb{B}}^{\frac{d}{p},\frac{d}{2}}_{p,2}},\label{realmoser}\\
&\|uv\|_{\dot{\mathbb{B}}^{\frac{d}{p},\frac{d}{2}+1}_{p,2}}\leq C \|u\|_{\dot{\mathbb{B}}^{\frac{d}{p},\frac{d}{2}+1}_{p,2}} \|v\|_{\dot{\mathbb{B}}^{\frac{d}{p},\frac{d}{2}+1}_{p,2}},\label{realmoser1}\\
&\|uv\|_{\dot{\mathbb{B}}^{\frac{d}{p}+1,\frac{d}{2}+1}_{p,2}}\leq C \|u\|_{\dot{\mathbb{B}}^{\frac{d}{p},\frac{d}{2}}_{p,2}} \|v\|_{\dot{\mathbb{B}}^{\frac{d}{p}+1,\frac{d}{2}+1}_{p,2}}+C \|v\|_{\dot{\mathbb{B}}^{\frac{d}{p},\frac{d}{2}}_{p,2}} \|u\|_{\dot{\mathbb{B}}^{\frac{d}{p}+1,\frac{d}{2}+1}_{p,2}}.\label{realmoser2}
\end{align}
\end{coro}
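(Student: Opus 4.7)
The plan is to treat the low-frequency and high-frequency parts of $uv$ separately in each of the three estimates, combining the classical product law \eqref{product2} (together with \eqref{product1} when it fails at the endpoint) at low frequencies with the hybrid product law \eqref{newproduct} at high frequencies, and using Bernstein's inequalities from Lemma \ref{lemma21} to reconcile the $L^p$-based and $L^2$-based norms.

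For the estimate \eqref{realmoser}, I first bound the low-frequency part by $\|(uv)^\ell\|_{\dot{B}^{d/p}_{p,1}}\leq \|uv\|_{\dot{B}^{d/p}_{p,1}}\lesssim \|u\|_{\dot{B}^{d/p}_{p,1}}\|v\|_{\dot{B}^{d/p}_{p,1}}$ using \eqref{product2} with $s_1=s_2=d/p$ (the hypothesis $s_1+s_2>0$ holds). Next, I split $u=u^\ell+u^h$ and apply Bernstein's lemma to get $2^{jd/p}\|\dot{\Delta}_j u^h\|_{L^p}\lesssim 2^{jd/2}\|\dot{\Delta}_j u^h\|_{L^2}$ for $p\geq 2$, which yields $\|u\|_{\dot{B}^{d/p}_{p,1}}\lesssim \|u\|_{\dot{\mathbb{B}}^{d/p,d/2}_{p,2}}$. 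For the high-frequency part, \eqref{newproduct} with $s=d/2$ gives exactly $\|uv\|_{\dot{B}^{d/2}_{2,1}}^h\lesssim \|v\|_{\dot{B}^{d/p}_{p,1}}\|u\|_{\dot{\mathbb{B}}^{d/p,d/2}_{p,2}}+\|u\|_{\dot{B}^{d/p}_{p,1}}\|v\|_{\dot{\mathbb{B}}^{d/p,d/2}_{p,2}}$, and we conclude by the previous embedding.

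For \eqref{realmoser1}, the low-frequency part is identical to the argument above (the target regularity index is still $d/p$), with $\|u\|_{\dot{B}^{d/p}_{p,1}}\lesssim \|u\|_{\dot{\mathbb{B}}^{d/p,d/2+1}_{p,2}}$ by the same Bernstein argument. For the high-frequency part, I apply \eqref{newproduct} with $s=d/2+1$, which gives control by $\|u\|_{\dot{\mathbb{B}}^{d/p+1,d/2+1}_{p,2}}$ in the hybrid norm. The key reduction is the bound $\|u^\ell\|_{\dot{B}^{d/p+1}_{p,1}}=\sum_{j\leq J_0-1}2^j\,2^{jd/p}\|\dot{\Delta}_j u\|_{L^p}\leq 2^{J_0-1}\|u^\ell\|_{\dot{B}^{d/p}_{p,1}}$; since the threshold $J_0$ is chosen so that $2^{J_0}\leq \lambda_0$ is uniformly controlled, this yields $\|u\|_{\dot{\mathbb{B}}^{d/p+1,d/2+1}_{p,2}}\lesssim \|u\|_{\dot{\mathbb{B}}^{d/p,d/2+1}_{p,2}}$, which closes \eqref{realmoser1}.

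For \eqref{realmoser2}, the low-frequency target regularity is $d/p+1$, which exceeds $d/p$, so \eqref{product2} no longer applies at its endpoint. Instead, I invoke \eqref{product1} with $s=d/p+1>0$, obtaining $\|uv\|_{\dot{B}^{d/p+1}_{p,1}}\lesssim \|u\|_{L^\infty}\|v\|_{\dot{B}^{d/p+1}_{p,1}}+\|v\|_{L^\infty}\|u\|_{\dot{B}^{d/p+1}_{p,1}}$; the embedding $\dot{B}^{d/p}_{p,1}\hookrightarrow L^\infty$ controls $\|u\|_{L^\infty}$ by $\|u\|_{\dot{\mathbb{B}}^{d/p,d/2}_{p,2}}$, and splitting $u=u^\ell+u^h$ together with Bernstein controls $\|u\|_{\dot{B}^{d/p+1}_{p,1}}$ by $\|u\|_{\dot{\mathbb{B}}^{d/p+1,d/2+1}_{p,2}}$. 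The high-frequency part follows from \eqref{newproduct} with $s=d/2+1$ exactly as in \eqref{realmoser1}, using $\|u\|_{\dot{B}^{d/p}_{p,1}}\lesssim \|u\|_{\dot{\mathbb{B}}^{d/p,d/2}_{p,2}}$ on the prefactors. The main obstacle in the whole scheme is this last estimate: the mismatch between the low-frequency regularity indices $d/p$ and $d/p+1$ forces us to switch from \eqref{product2} to \eqref{product1}, and to carefully verify that the resulting $L^\infty$ norms and the appearance of the hybrid norm $\dot{\mathbb{B}}^{d/p+1,d/2+1}_{p,2}$ are consistent with the claimed bilinear structure on the right-hand side.
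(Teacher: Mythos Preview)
Your proof is correct and follows the paper's intended approach. The paper only states that the corollary is ``a consequence of \eqref{product2}, \eqref{newproduct} and Bernstein's lemma'' without spelling out details; your expansion is a faithful implementation of this outline. One small difference: for the low-frequency part of \eqref{realmoser2} you (correctly) invoke \eqref{product1} rather than \eqref{product2}, since the target index $d/p+1$ exceeds the endpoint $d/p$ allowed in \eqref{product2}; this is a natural and standard addition that the paper's one-line sketch glosses over. Your observation that the step $\|u^\ell\|_{\dot B^{d/p+1}_{p,1}}\le 2^{J_0-1}\|u^\ell\|_{\dot B^{d/p}_{p,1}}$ in the proof of \eqref{realmoser1} brings in a factor $2^{J_0}$ is accurate; as you note, in the paper's setting $2^{J_0}\le\lambda_0$ so this factor is uniformly bounded, which is all that is needed in the applications.
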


Finally, we prove new commutator estimates to handle the spatial derivatives of $A^i(W)$ in \eqref{entropeq} for high frequencies. %Such commutator estimates are firstly proved in \cite{c3} where the stronger low-frequency $\dot{B}^{\frac{2d}{p}-\frac{d}{2}}_{p,1}$-norm is required to control composite functions.

\begin{lemma}\label{lemmacommutator}
Let $2\leq p\leq \min\{4,\frac{d}{d-2}\}$ and $s > 0$. There exists a constant $C$ independent of $J_0$ such that
%\begin{equation}\label{mmm}
%\begin{aligned}
%&\quad \sum\limits_{j\geq J_0-1} 2^{sj}\|[w,\dot{\Delta}_{j}]\partial_{x_k}z\|_{L^2} \\
%&\leq    C\Big(\|w\|_{\dot{B}^{\frac{d}{p}+1}_{p,1}}\|z\|_{\dot{\mathbb{B}}^{s+\frac{d}{p}-\frac{d}{2},s}_{p,2}}+\|z\|_{\dot{B}^{\frac{d}{p}+1}_{p,1}} \|w\|_{\dot{\mathbb{B}}^{s+\frac{d}{p}-\frac{d}{2},s}_{p,2}}\Big).
%\end{aligned}
%\end{equation}
%and
\begin{equation}\label{mmmnew}
\begin{aligned}
&\quad \sum\limits_{j\geq J_0-1} 2^{sj}\|[w,\dot{\Delta}_{j}]z\|_{L^2} \leq    C\Big(\|\nabla w\|_{\dot{B}^{\frac{d}{p}}_{p,1}}\|z\|_{\dot{\mathbb{B}}^{s+\frac{d}{p}-\frac{d}{2}-1,s-1}_{p,2}}+\|z\|_{\dot{B}^{\frac{d}{p}}_{p,1}}\|\nabla w\|_{\dot{\mathbb{B}}^{s+\frac{d}{p}-\frac{d}{2}-1,s-1}_{p,2}}\Big).
\end{aligned}
\end{equation}
\end{lemma}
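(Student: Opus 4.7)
The plan is to exploit Bony's paraproduct decomposition $wz=\dot{T}_w z+\dot{T}_z w+\dot{R}[w,z]$, together with the mirror decomposition of $w\dot{\Delta}_j z$, so that the commutator splits as
\begin{align*}
[w,\dot{\Delta}_j]z=[\dot{T}_w,\dot{\Delta}_j]z+\bigl(\dot{T}_{\dot{\Delta}_j z}w-\dot{\Delta}_j(\dot{T}_z w)\bigr)+\bigl(\dot{R}[w,\dot{\Delta}_j z]-\dot{\Delta}_j(\dot{R}[w,z])\bigr).
\end{align*}
The paraproduct commutator $[\dot{T}_w,\dot{\Delta}_j]z=\sum_{|j'-j|\leq 4}[\dot{S}_{j'-1}w,\dot{\Delta}_j]\dot{\Delta}_{j'}z$ carries the standard one-derivative gain via the kernel estimate $\|[\dot{S}_{j'-1}w,\dot{\Delta}_j]\dot{\Delta}_{j'}z\|_{L^q}\lesssim 2^{-j}\|\nabla\dot{S}_{j'-1}w\|_{L^{q_1}}\|\dot{\Delta}_{j'}z\|_{L^{q_2}}$, valid for any H\"older triple $1/q=1/q_1+1/q_2$. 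This is the engine of the whole estimate. The other two pieces carry no commutator structure but are compensated by the narrow Fourier localization (only finitely many $j'$ couple to $j$) and by the positivity of $s$, which ensures summability for the remainder.

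For each target $j\geq J_0-1$, I would split the $j'$-sum into a high-frequency block $\{j'\geq J_0-1\}$ and a low-frequency block $\{j'\leq J_0-2\}$, matching the hybrid structure of the norms. In the high block, I intend to take $(q_1,q_2)=(\infty,2)$ and absorb $\|\nabla\dot{S}_{j'-1}w\|_{L^\infty}\lesssim \|\nabla w\|_{\dot{B}^{d/p}_{p,1}}$, while $\|\dot{\Delta}_{j'}z\|_{L^2}\leq 2^{-j'(s-1)}\|z\|_{\dot{B}^{s-1}_{2,1}}^{h}$ recovers the high-frequency hybrid norm of $z$ upon multiplication by $2^{sj}$. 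In the low block, $\dot{\Delta}_{j'}z=\dot{\Delta}_{j'}z^\ell$, so I switch to the H\"older pair $(q_1,q_2)=(2p/(p-2),p)$: a Bernstein argument combined with the embedding technique used in Lemma~\ref{NonClassicalProLaw1} bounds $\|\nabla w\|_{L^{2p/(p-2)}}$ by the hybrid norm of $\nabla w$ with indices $(s+d/p-d/2-1,s-1)$, while $\|\dot{\Delta}_{j'}z^\ell\|_{L^p}\lesssim 2^{-j'\,d/p}\|z\|_{\dot{B}^{d/p}_{p,1}}$ produces the factor $\|z\|_{\dot{B}^{d/p}_{p,1}}$ appearing on the right of \eqref{mmmnew}. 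The symmetric piece $\dot{T}_{\dot{\Delta}_j z}w-\dot{\Delta}_j(\dot{T}_z w)$ yields, by the analogous splitting with the roles of $w$ and $z$ swapped, the mirror contribution; the remainder piece $\dot{R}[w,\dot{\Delta}_j z]-\dot{\Delta}_j(\dot{R}[w,z])$ is summed in diagonal blocks exactly as in the proof of Lemma~\ref{NonClassicalProLaw1}, with $s>0$ providing the required $\ell^1$-summability.

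The main obstacle I anticipate lies in the careful bookkeeping at the threshold $J_0$: the cross-terms where $j$ and $j'$ sit on opposite sides of $J_0$ incur a Bernstein loss $2^{j'(d/2-d/p)}$ when converting between $L^2$- and $L^p$-norms, and this loss must be absorbed precisely by the low-frequency index shift $d/p-d/2-1$ present in $\dot{\mathbb{B}}^{s+d/p-d/2-1,s-1}_{p,2}$, with constants independent of $J_0$. The range hypothesis on $p$ inherited from the earlier non-standard product law (Lemma~\ref{NonClassicalProLaw1}) is exactly what guarantees simultaneously that $d/2-d/p\geq 0$ (so Bernstein acts in the correct direction) and that $2p/(p-2)\geq p$ (so the chosen H\"older pair is admissible). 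Beyond this threshold delicacy, the summation in $j$ against $2^{sj}$ reduces to a standard $\ell^1$ convolution bound using $|j-j'|\leq 4$, and no additional obstruction should arise.
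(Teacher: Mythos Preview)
Your plan is essentially the paper's: the three-piece Bony splitting you write down is algebraically the same as the paper's five-term decomposition (your second and third pieces together equal $-\dot{\Delta}_j\dot T_z w-\dot{\Delta}_j\dot R[w,z]+({\rm Id}-\dot S_{j-1})w\dot{\Delta}_jz+\sum_{|j-j'|\leq 1}(\dot S_{j-1}-\dot S_{j'-1})w\dot{\Delta}_j\dot{\Delta}_{j'}z$), and the engine---the kernel estimate $\|[\dot S_{j'-1}w,\dot{\Delta}_j]\dot{\Delta}_{j'}z\|_{L^q}\lesssim 2^{-j}\|\nabla\dot S_{j'-1}w\|_{L^{q_1}}\|\dot{\Delta}_{j'}z\|_{L^{q_2}}$---is used in the same way.

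There is, however, a concrete mis-step in your low-frequency block for $[\dot T_w,\dot{\Delta}_j]z$. You claim that the pair $(q_1,q_2)=(2p/(p-2),p)$ routes this block to the \emph{second} term of \eqref{mmmnew} because ``$\|\nabla w\|_{L^{2p/(p-2)}}$ is bounded by the hybrid norm of $\nabla w$ with indices $(s+d/p-d/2-1,s-1)$''. That embedding is false for general $s>0$: the Lebesgue index $2p/(p-2)$ fixes the Besov regularity needed, so no $s$-dependent norm can control it uniformly. Concretely, after your split one is left with $2^{(s-1-d/p)J_0}\sum_{j''\le J_0}2^{j''d(4-p)/(2p)}\|\dot{\Delta}_{j''}\nabla w\|_{L^p}$, and comparing termwise with $\|\nabla w^\ell\|_{\dot B^{s+d/p-d/2-1}_{p,1}}$ requires a geometric sum with ratio $2^{d/p-s+1}$; this diverges as $j''\to-\infty$ whenever $s>d/p+1$, which includes the case $s=d/2+1$, $p>2$ actually used in the paper. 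The paper avoids this entirely by keeping $(q_1,q_2)=(\infty,2)$ for the \emph{whole} paraproduct-commutator piece and, for the finitely many threshold indices $j'\in\{J_0-5,\dots,J_0-2\}$, applying Bernstein only to $z$ via $\|\dot{\Delta}_{j'}z\|_{L^2}\lesssim 2^{j'(d/p-d/2)}\|\dot{\Delta}_{j'}z\|_{L^p}$; this lands cleanly in the \emph{first} term $\|\nabla w\|_{\dot B^{d/p}_{p,1}}\|z\|_{\dot{\mathbb B}^{s+d/p-d/2-1,s-1}_{p,2}}$ with constants independent of $J_0$. The $(2p/(p-2),p)$ trick from Lemma~\ref{NonClassicalProLaw1} is instead what produces the second term, but through the $\dot T_z w$ piece (where the roles of $w$ and $z$ are genuinely swapped), not through the paraproduct commutator.
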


\begin{proof}
%The proof follows similar computations as in \cite{c3}.
%For brevity, we assume that $w$ has an equilibrium $\bar{w}=0$ since $\dot{\Delta}_j$ can commute with any constant.
Recalling Bony's decomposition, we rewrite $[w,\dot{\Delta}_{j}] z$ as follows.
\begin{equation}\nonumber
\begin{aligned}
\relax[w,\dot{\Delta}_{j}] z&=\sum_{|j'-j|\leq 4}[\dot{S}_{j'-1}w,\dot{\Delta}_{j}]  \dot{\Delta}_{j'}z-\dot{\Delta}_{j} \dot{T}_{ z}w-\dot{\Delta}_{j} \dot{R}(w, z)+({\rm Id}-\dot{S}_{j-1}) w  \dot{\Delta}_{j} z+\sum_{|j-j'|\leq1} (\dot{S}_{j-1}w-\dot{S}_{j'-1}w)
\dot{\Delta}_{j}\dot{\Delta}_{j'}z.
\end{aligned}
\end{equation}
First, the classical commutator estimate
(see \cite{bahouri1}[Proposition 2.97]) leads to
\begin{equation}\nonumber
\begin{aligned}
&\sum_{j\geq J_0-1}2^{sj} \sum_{|j'-j|\leq 4}\|[\dot{S}_{j'-1}w,\dot{\Delta}_{j}]  \dot{\Delta}_{j'}z\|_{L^2} &\lesssim \sum_{j\geq J_0-1}2^{sj} \sum_{|j'-j|\leq 4}\|\nabla \dot{S}_{j'-1} w\|_{L^{\infty}} 2^{-j}\|\dot{\Delta}_{j'} z\|_{L^2}\lesssim \|\nabla w\|_{L^{\infty}} \|z\|_{\dot{B}^{s-1}_{2,1}}^{h}.
\end{aligned}
\end{equation}
%where one has used the fact that, due to Bernstein's lemma,
%$$
%\|\nabla w\|_{L^{\infty}}\lesssim \|\nabla w^{\ell}\|_{\dot{B}^{\frac{d}{p}}_{p,1}}+\|w^h\|_{\dot{B}^{\frac{d}{2}}_{2,1}}\lesssim \|w\|_{\dot{\mathbb{B}}^{\frac{d}{p}+1,\frac{d}{2}+1}_{p,2}}.
%$$
Owing to \eqref{Tba} and $2\leq p\leq \min\{4,\frac{d}{d-2}\}$, $\dot{T}_{ z} w$ is analyzed by
\begin{equation}\nonumber
\begin{aligned}
\|\dot{T}_{ z} w\|_{\dot{B}^{s}_{2,1}}&\lesssim \|z\|_{\dot{B}_{p,1}^{\frac{d}{p}}}\Big(\|w\|_{\dot{B}_{2,1}^{s}}^{h}+ \|w^{\ell}\|_{\dot{B}_{p,1}^{s+\frac{d}{p}-\frac{d}{2}}}\Big)\lesssim \|z\|_{\dot{B}^{\frac{d}{p}+1}_{p,1}}\|w\|_{\dot{\mathbb{B}}^{s+\frac{d}{p}-\frac{d}{2},s}_{p,2}}.
\end{aligned}
\end{equation}
With respect to the remainder $\dot{R}(w,\partial_{x_k}z)$, we deduce from $s>0$ and $2\leq p\leq 4$ that
\begin{equation}\nonumber
\begin{aligned}
\|\dot{R}(w,z)\|_{\dot{B}^{s}_{2,1}}&\leq \|w\|_{\dot{B}^{\frac{d}{2}-\frac{d}{p}+1}_{\frac{2p}{p-2},\infty}} \|z\|_{\dot{B}^{s+\frac{d}{p}-\frac{d}{2}-1}_{p,1}}\lesssim \|w\|_{\dot{B}^{\frac{d}{p}+1}_{p,1}}\|z\|_{\dot{B}^{s+\frac{d}{p}-\frac{d}{2}-1}_{p,1}}\lesssim \|w\|_{\dot{B}^{\frac{d}{p}+1}_{p,1}}\|z\|_{\dot{\mathbb{B}}^{s+\frac{d}{p}-\frac{d}{2}-1,s-1}_{p,2}}.
\end{aligned}
\end{equation}
One concludes from Bernstein's and Young's inequalities that
\begin{equation}\nonumber
\begin{aligned}
&\sum_{j\geq J_0-1}2^{sj}  \|({\rm Id}-\dot{S}_{j-1}) w  {\Delta}_{j}z\|_{L^2}\\
%&\quad \lesssim \sum_{j\geq J_0-1}2^{sj}  \sum_{j'\geq j} \|\dot{\Delta}_{j'}w\|_{L^p} \|\dot{\Delta}_{j}z\|_{L^{\frac{2p}{p-2}}}\\
%&\quad \lesssim \sum_{j\geq J_0-1}2^{sj}  \sum_{j'\geq j} \|\dot{\Delta}_{j'}w\|_{L^p} 2^{\frac{d}{p}j}\|\dot{\Delta}_{j}z\|_{L^{2}}\\
&\quad\lesssim \sum_{j\geq J_0-1}   \sum_{j'\geq j} 2^{(\frac{d}{p}+1)j'} \|\dot{\Delta}_{j'}w\|_{L^p} 2^{(\frac{d}{p}+1)(j-j')} 2^{j(s-1)}  \|\dot{\Delta}_{j} z\|_{L^2}\lesssim \|w\|_{\dot{B}^{\frac{d}{p}+1}_{p,1}} \|z\|_{\dot{B}^{s-1}_{2,1}}^h.
\end{aligned}
\end{equation}
Finally, it also holds that
\begin{equation}\nonumber
\begin{aligned}
&\sum_{j\geq J_0-1} 2^{js}\sum_{|j-j'|\leq1} \|(\dot{S}_{j'-1}w-\dot{S}_{j-1}w)\dot{\Delta}_{j}\dot{\Delta}_{j'}z\|_{L^2}\\
%&\quad\lesssim \sum_{j\geq J_0-1} 2^{js}\sum_{|j-j'|\leq1} (\|\dot{\Delta}_{j- 1}w\|_{L^{p}}+\|\dot{\Delta}_{j- 2}w\|_{L^{p}})\|\dot{\Delta}_{j}\dot{\Delta}_{j'}\partial_{x_{k}}z\|_{L^2}\\
&\quad\lesssim \sum_{j\geq J_0-1}  2^{j}(\|\dot{\Delta}_{j- 1}w\|_{L^{\infty}}+\|\dot{\Delta}_{j- 2}w\|_{L^{\infty}}) 2^{j(s-1)} \|\dot{\Delta}_{j}z\|_{L^2}\lesssim \|w\|_{\dot{B}^{1}_{\infty,\infty}}\|z\|_{\dot{B}^{s-1}_{2,1}}^h\lesssim \|w\|_{\dot{B}^{\frac{d}{p}+1}_{p,1}} \|z\|_{\dot{B}^{s-1}_{2,1}}^h.
\end{aligned}
\end{equation}
Collecting the above estimates, %and using $\dot{\mathbb{B}}^{\frac{d}{p}+1,\frac{d}{2}+1}_{p,2}\hookrightarrow \dot{B}^{\frac{d}{2}+1}_{p,1}\hookrightarrow \dot{W}^{1,\infty}$,
we arrive at \eqref{mmmnew}.
\end{proof}

%\section{A property of the [SK] condition}

\section{The compressible Euler system with damping: A motivation} \label{appendixB}
\setcounter{equation}{0}

As a typical example of partially dissipative hyperbolic systems, we study the isentropic compressible Euler equations with damping:
\begin{equation}\label{euler}
\left\{
\begin{aligned}
& \partial_{t}\rho+\dive (\rho u)=0,\\
& \partial_{t}(\rho u)+\dive (\rho u\otimes u)+\nabla P(\rho)=-\rho u,\quad x\in\mathbb{R}^{d},\quad t>0,
\end{aligned}
\right.
\end{equation}
where $\rho=\rho(t,x)$ denotes the fluid density, $u$ stands for the fluid velocity, and $P=P(\rho)$ is the pressure satisfying $P'(\rho)>0$. We consider the Cauchy problem of \eqref{euler} with initial data
\begin{align}
&(\rho, u)(0,x)=(\rho_0,u_0)(x),\quad x\in\mathbb{R}^{d} .\label{eulerd}
\end{align}

The system \eqref{euler} describes the compressible gas flow as it passes through a
porous medium and the medium induces a friction force, proportional to the linear
momentum in the opposite direction. To the best of our knowledge, there are lots of significant results, such as the well-posedness, blow-up, global-in-time existence and large-time behavior of classical solutions, singular limit problems and so on. See \cite{Nishida1,dafermos1,Hsiao,s1,wangyang1,tanwu1} and references therein.
%The damped compressible Euler equations \eqref{euler}  have been studied extensively with many significant results. For the one-dimensional case, Nishida \cite{Nishida1} first proved the global existence of a smooth solution with small data, and there are many investigations on the large-time asymptotics, cf. \cite{HuangPan,dafermos1,Hsiao} and references therein.  For the multi-dimensional case, the  global well-posedness and large-time  behaviors of small-data classical solutions for \eqref{euler} have been studied in the Sobolev spaces $H^{s}$ ($s>\frac{d}{2}+1$) by many significant works \cite{s1,wangyang1}. Then, using the theory of partially dissipative hyperbolic systems, Xu and Kawashima  in \cite{XK1} showed that the system~(\ref{euler}) is globally well-posed for small initial perturbations in the inhomogeneous Besov space $B^{\frac{d}{2}+1}_{2,1}$. Very recently, Crin-Barat and Danchin in \cite{c1,c2,c3} developed a functional framework to consider global dynamics in homogeneous critical Besov spaces of type $L^p$.
As shown by \cite{XK1}, one can see that the system \eqref{euler} satisfies the structural conditions \eqref{H1}-\eqref{blockL}. Consequently, with  suitable modifications, Theorems \ref{theorem0}, \ref{thm1} \ref{thm2}, \ref{thm3} can be adapted to the Cauchy problem \eqref{euler}-\eqref{eulerd}.

This section may be regarded as an independent part, which also serves as the
main motivation of the present paper; nevertheless, we include it here for
completeness. When dealing with the nonlinear problem, the momentum structure
ensures a conservation-law formulation, which in turn ensures that the
large-time behavior is asymptotically equivalent to that of the linearized
system (see \eqref{m1} and Section~\ref{section:decay}). Accordingly, one may
rewrite \eqref{euler} in terms of $(\rho,m)$ with the momentum $m=\rho u$ and consider the linearization around a constant background density
$\bar\rho>0$:
\begin{equation}\label{Eulerlinear}
\left\{
\begin{aligned}
&\partial_{t}a+\dive m=0,\\
&\partial_{t} m+c_*^2\nabla a+m=0
\end{aligned}
\right.
\end{equation}
with $a\coloneqq  \rho-\bar{\rho}$, where $c_*=\sqrt{P'(\bar{\rho})}$ denotes the sound speed. Taking the Fourier transform
of \eqref{Eulerlinear}, we have
\begin{equation*}
\frac{d}{dt}\left(\begin{array}{l}
\widehat{a}\\
\widehat{m}
\end{array}
\right)
=A(\xi)
\left(\begin{array}{l}
\widehat{a}\\
\widehat{m}
\end{array}
\right),\quad\quad A(\xi)\coloneqq  \left(\begin{matrix}
0	&  -\mathrm{i}\xi^{\top}\\
- c_*^2 \mathrm{i}\xi	& -{\rm {I}_d}
\end{matrix}\right).
\end{equation*}
The eigenvalues of $A(\xi)$ are
\begin{equation}\nonumber
\begin{aligned}
\lambda_{1,2}(\xi)=-\frac{1}{2}\pm \frac{1}{2}r(\xi)\quad \text{with}\quad r(\xi)\coloneqq\sqrt{1-4c_*^2|\xi|^2},\quad\quad \lambda_3=\lambda_4=\cdot\cdot\cdot=\lambda_{d+1}=-1.
\end{aligned}
\end{equation}
The spectral analysis for \eqref{Eulerlinear} has been studied by the classical works \cite{s1,tanwu1,wangyang1}. For high frequencies  $|\xi|> \frac{1}{2c_*}$, $\lambda_{1}(\xi)$ and $\lambda_{2}(\xi)$ are conjugate complex, so the $L^2$ framework is sufficient in this regime and the solution $(a,m)$ exhibits an exponential decay. For low frequencies $|\xi|< \frac{1}{2c_*}$, the effects of damping and diffusion coexist, and the general $L^p$-estimates can be expected as eigenvalues are all real.

Moreover, we write $(a,m)^{\top}=G\ast (a_0,m_0)^{\top}$ with $(a_0, m_0)\coloneqq  (\rho_0-\bar{\rho}, \rho_0 u_0)$, where
$G$ denotes Green's function of \eqref{Eulerlinear}. It is easy to verify that $\widehat{G}(\xi,t)$ can be expressed as
\begin{equation}\nonumber
\begin{aligned}
\widehat{G}(\xi,t)=e^{\lambda_{1}(\xi)t}P_1+e^{\lambda_{2}(\xi)t}P_2+e^{\lambda_{3}(\xi)t}P_3,
\end{aligned}
\end{equation}
where the projection operators $P_{i}$ ($i=1,2,3$) are
\begin{equation}\nonumber
\begin{aligned}
P_1=\left(\begin{matrix}
\frac{1}{2}(1+\frac{1}{r(\xi)})	&  -\frac{\mathrm{i}\xi^{\top}}{r(\xi)}\\
-\frac{\mathrm{i}c_*^2\xi}{r(\xi)}	& \frac{1}{2}(1-\frac{1}{r(\xi)}) \frac{\xi \otimes\xi}{|\xi|^2}
\end{matrix}\right),\quad P_2=\left(\begin{matrix}
\frac{1}{2}(1-\frac{1}{r(\xi)})	&  \frac{\mathrm{i}\xi^{\top}}{r(\xi)}\\
\frac{\mathrm{i}c_*^2\xi}{r(\xi)}	& \frac{1}{2}(1+\frac{1}{r(\xi)}) \frac{\xi \otimes\xi}{|\xi|^2}
\end{matrix}\right),\quad P_3=\left(\begin{matrix}
0	&  0\\
0	& {\rm {I}_d}-\frac{\xi \otimes\xi}{|\xi|^2}
\end{matrix}\right).
\end{aligned}
\end{equation}
%such that
%\begin{equation}\nonumber
%\begin{aligned}
%\widehat{G}(\xi,t)\coloneqq  \left(\begin{matrix}
%\frac{\lambda_{1}e^{\lambda_{2}t}-\lambda_{2}e^{\lambda_{1}t}}{\lambda_{1}-\lambda_{2}}	&  -i\bigg(\frac{e^{\lambda_{1}t}-e^{\lambda_{2}t}}{\lambda_{1}-\lambda_{2}}\bigg)\xi^{\top}\\
%-i\bigg(\frac{e^{\lambda_{1}t}-e^{\lambda_{2}t}}{\lambda_{1}-\lambda_{2}}\bigg)c_*^2 \xi	& -e^{-t}{\rm Id}+\bigg( \frac{\lambda_{1}e^{\lambda_{2}t}-\lambda_{2}e^{\lambda_{1}t}}{\lambda_{1}-\lambda_{2}}-e^{-t}\bigg)\frac{\xi \xi^{\top}}{|\xi|^2}
%\end{matrix}\right).
%\end{aligned}
%\end{equation}
Using
$r(\xi)=1+\mathcal{O}(|\xi|^2)$, $\lambda_{1}(\xi)=-c_*^2|\xi|^2+\mathcal{O}(|\xi|^4)$, $\lambda_{2}(\xi)=-1+\mathcal{O}(|\xi|^2)$ and $1-\frac{1}{r(\xi)}=-2c_*^2|\xi|^2+\mathcal{O}(|\xi|^4)$, we have
\begin{equation}\label{a}
\begin{aligned}
\widehat{a}&= e^{\lambda_{1}(\xi)t}\bigg(\frac{1}{2}\Big(1+\frac{1}{r(\xi)} \Big)\widehat{a_0}-\frac{\mathrm{i}\xi\cdot \widehat{m_0}}{r(\xi)}\bigg)+e^{\lambda_{2}(\xi)t} \bigg(\frac{1}{2}\Big(1-\frac{1}{r(\xi)} \Big)\widehat{a_0}+\frac{\mathrm{i}\xi\cdot\widehat{m_0}}{r(\xi)} \bigg)\\
&\sim e^{-c_*^2|\xi|^2t} \widehat{\Psi_0}+\mathcal{O}(1)e^{-t}(|\xi|^2|\widehat{a_0}|+|\xi| |\widehat{m_0}|)
\end{aligned}
\end{equation}
and
\begin{equation}\label{m}
\begin{aligned}
\widehat{m}&= e^{\lambda_{1}(\xi)t}\bigg(-\frac{\mathrm{i}c^2_*\xi}{r(\xi)} \widehat{a_0}+\frac{1}{2}\Big(1-\frac{1}{r(\xi)} \Big)\frac{\xi(\xi\cdot \widehat{m_0})}{|\xi|^2}\bigg)\\
&\quad+e^{\lambda_{2}(\xi)t} \bigg(\frac{\mathrm{i}c^2_*\xi}{r(\xi)} \widehat{a_0}-\frac{1}{2}\Big(1+\frac{1}{r(\xi)} \Big)\frac{\xi(\xi\cdot \widehat{m_0})}{|\xi|^2}\bigg)+e^{-t}\Big(\widehat{m_0}-\frac{\xi(\xi\cdot \widehat{m_0})}{|\xi|^2}\Big)\\
&\sim -e^{-c_*^2|\xi|^2t}\mathrm{i}c^2_* \xi \widehat{\Psi_0}+\mathcal{O}(1)e^{-t}(|\xi||\widehat{a_0}|+|\widehat{m_0}|),
\end{aligned}
\end{equation}
where $\Psi_0$ is given by
\begin{align}
\Psi_0=a_0-\dive m_0.\label{Psi00}
\end{align}
It should be noted that the estimates \eqref{a}-\eqref{m} coincide with the localized $L^p$ estimates in Proposition \ref{LemmaspectrallocalHp}.  Consequently, in the $L^2$ framework, $a$ decays at the same rates as the heat equation, and $m$ decays half faster than $a$.

Inspired by the analysis of Green's function, $\Psi_0$ associated with the part $P_1(\widehat{a_0},\widehat{m_0})$ is a sharp quantity for decay estimates, and the low frequency regularity of $\Psi_0$ will play a key role in obtaining optimal decay estimates. However, the above asymptotic analysis may not be applied directly to the general $L^p$ framework with $p\neq 2$ since Parseval's equality is not available.

Furthermore, we
would like to use the linear Euler equations as a toy model and present the main idea for the sharp decay characterization of partially hyperbolic systems. Indeed, we develop the pure $L^p$ energy argument in the sense of spectral localization, which allows us not only to deduce decay estimates, but also to get asymptotic profiles for large times.
%Precisely, we now %establish the %spectral localized %$L^p$ estimates.
%It should be noted that the spectral localization estimates \eqref{tildePsi} and \eqref{masy} are, respectively, exactly consistent with Green's function estimates \eqref{a} and \eqref{m}.
\begin{lemma}\label{lemc.1}
Let $1<p<\infty$. Assume that $(a^\lambda,m^\lambda)$ is a solution to
\eqref{Eulerlinear} with smooth initial data $(a_0^\lambda,m_0^\lambda)$, whose
Fourier transforms are supported in $\lambda\mathcal{C}$, where $\mathcal{C}$ is
an annulus and $0<\lambda\le\lambda_0$ for some sufficiently small $\lambda_0>0$.
Then $(a^\lambda,m^\lambda)$ satisfies the following decay estimates:
\begin{align}
 \| a^\lambda(t) \|_{L^p}
 &\lesssim e^{-\bar{c}\lambda^2 t}
 \Big(\| a^\lambda_0 \|_{L^p}+\lambda\| m^\lambda_0 \|_{L^p}\Big)
 +e^{-\frac{1}{2}t}
 \Big(\lambda^2\| a^\lambda_0 \|_{L^p}+\lambda\| m^\lambda_0 \|_{L^p}\Big),
 \label{V1asy}\\
 \| m^\lambda(t)\|_{L^p}
 &\lesssim e^{-\bar c\lambda^2 t}\lambda
 \Big(\| a^\lambda_0 \|_{L^p}+\lambda\| m^\lambda_0 \|_{L^p}\Big)
 +e^{-\frac{1}{2}t}
 \Big(\lambda\| a^\lambda_0 \|_{L^p}+\| m^\lambda_0 \|_{L^p}\Big).
 \label{masy}
\end{align}
Moreover, the following refined estimates hold:
\begin{align}
 \| a^\lambda(t)-e^{c_*^2 t\Delta}\Psi_0^\lambda \|_{L^p}
 &\lesssim e^{-\bar c\lambda^2 t}\lambda
 \Big(\| a^\lambda_0 \|_{L^p}+\lambda\| m^\lambda_0 \|_{L^p}\Big)
 +e^{-\frac{1}{2}t}
 \Big(\lambda^2\| a^\lambda_0 \|_{L^p}+\lambda\| m^\lambda_0 \|_{L^p}\Big),
 \label{V1asy1}\\
 \big\| m^\lambda(t)+c_*^2\nabla e^{c_*^2 t\Delta}\Psi_0^\lambda \big\|_{L^p}
 &\lesssim e^{-\bar c\lambda^2 t}\lambda^2
 \Big(\| a^\lambda_0 \|_{L^p}+\lambda\| m^\lambda_0 \|_{L^p}\Big)
 +e^{-\frac{1}{2}t}
 \Big(\lambda\| a^\lambda_0 \|_{L^p}+\| m^\lambda_0 \|_{L^p}\Big).
 \label{masy1}
\end{align}
Here $\bar c>0$ depends only on $c_*$ and $p$, and
$\Psi_0^\lambda:=a_0^\lambda-\dive  m_0^\lambda$.
\end{lemma}

\begin{proof}
Inspired by the spectral analysis, we introduce the effective quantity $\Psi^\lambda\coloneqq  a^\lambda-\dive m^\lambda
$, which solves
\begin{align}
\partial_{t}\Psi^\lambda-c_*^2\Delta \Psi^\lambda=c_*^2 \Delta \dive m^\lambda,\quad \Psi^\lambda|_{t=0}=\Psi^\lambda_0.
\label{y}
\end{align}
%We expect that the term on the right-hand side of \eqref{y} has faster decay rates, and therefore $\Psi$ decay at the same rates of $e^{c_*^2t \Delta}\Psi_0$.
We also recall the damped mode associated with Darcy's law first introduced by \cite{c1}:
$$
z^\lambda \coloneqq c_*^2 \nabla a^\lambda+m^\lambda.
$$
Note that $a^\lambda=({\rm Id}+c_*^2 \Delta)^{-1}(\Psi^\lambda+\dive z^\lambda)$ and $m^\lambda=z^\lambda-c_*^2\nabla ({\rm Id}+c_*^2 \Delta)^{-1}(\Psi^\lambda+\dive z^\lambda)$. In terms of $(\Psi^\lambda, z^\lambda)$, one can perform a diagonalization of the system \eqref{Eulerlinear} as follows:
\begin{equation}\label{Psiz}
\left\{
\begin{aligned}
&\partial_{t}\Psi^\lambda-c_*^2 \Delta\Psi^\lambda=F(\Psi^\lambda,z^\lambda),\, \,\quad  \Psi^\lambda|_{t=0}=\Psi^\lambda_0,\\
&\partial_{t} z^\lambda+z^\lambda=G(\Psi^\lambda,z^\lambda),\quad\quad\quad \, \, \, \, z^\lambda|_{t=0}=z^\lambda_0\coloneqq c_*^2\nabla a^\lambda_0+m^\lambda_0,
\end{aligned}
\right.
\end{equation}
where the linear higher-order terms
$F(\Psi,z)$ and $G(\Psi,z)$ are given by
\begin{align*}
F(\Psi^\lambda,z^\lambda)&=c_*^2\dive\Delta \Big(z-c_*^2\nabla(1+c_*^2 \Delta)^{-1}(\Psi^\lambda+\dive z^\lambda)\Big),\\
G(\Psi^\lambda,z^\lambda)&=-c_*^2\nabla\dive\Big(z-c_*^2\nabla(1+c_*^2 \Delta)^{-1}(\Psi^\lambda+\dive z^\lambda)\Big).
\end{align*}
By employing Lemma \ref{lemmaA2} and the standard $L^p$-energy methods, we obtain
\begin{equation}\label{rrra}
\begin{aligned}
\frac{1}{p}\frac{d}{dt}\|\Psi^\lambda\|^{p}_{L^p}
+c_p^*\lambda^2\|\Psi^\lambda\|^{p}_{L^p}\leq \|F(\Psi^\lambda,z^\lambda)\|_{L^p}
\|\Psi^{\lambda}\|^{p-1}_{L^p},
\end{aligned}
\end{equation}
which implies that
\begin{equation}\label{rrra2}
\begin{aligned}
\frac{d}{dt}\|\Psi^\lambda\|_{\varepsilon,L^p}+c_p^*\lambda^2\|\Psi^\lambda\|_{\varepsilon,L^p}\leq \|F(\Psi^\lambda,z^\lambda)\|_{L^p}+c_p^*\lambda^2\varepsilon,
\end{aligned}
\end{equation}
where, using the notation  $\|\cdot\|_{\varepsilon,\, L^p}\coloneqq  (\|\cdot\|_{L^p}^p+\varepsilon^p)^{1/p}$. Similarly, one has
\begin{equation}\label{rrra3}
\begin{aligned}
\frac{d}{dt}\|\dive z^\lambda\|_{\varepsilon,L^p}+\|\dive z^\lambda\|_{\varepsilon,L^p}\leq
\|\dive G(\Psi^\lambda,z^\lambda)\|_{L^p}+
\varepsilon.
\end{aligned}
\end{equation}
Consequently, it follows from \eqref{rrra2} and \eqref{rrra3} that
\begin{equation}\label{combine}
\begin{aligned}
&\frac{d}{dt}\Big(\|\Psi^\lambda\|_{\varepsilon,L^p}+\|\dive z^\lambda\|_{\varepsilon,L^p}\Big)
+c_p^*\lambda^2\|\Psi^\lambda\|_{\varepsilon,L^p}+\|\dive z^\lambda\|_{\varepsilon,L^p}\\
&\leq \|F(\Psi^\lambda,z^\lambda)\|_{L^p}+\|\dive G(\Psi^\lambda,z^\lambda)\|_{L^p}+M_{\varepsilon},
\end{aligned}
\end{equation}
where $M_{\varepsilon}\coloneqq   c_p^*\lambda^2\varepsilon+\varepsilon$. Note that the operator $({\rm{Id}}+c_*^2\Delta)^{-1}$ is bounded in the subset of $L^p$ where any functions are truncated at low frequencies. As $\lambda\leq \lambda_0\ll1$, one can check that %$\mathcal{A}^*$ is strongly elliptic and satisfies \eqref{symbol} for $\alpha=1$, and $L^*$ is purely damped. In addition, due to Lemma \ref{lemma22}, there exists a generic constant $c>0$ such that for all smooth function $f$ supported in $\lambda \mathcal{C}$,
%$$
%\|e^{-\mathcal{A}^*t} f\|_{L^p}\lesssim e^{-c\lambda^2 t},\quad\quad \|e^{-L^*t}f\|_{L^p}\lesssim e^{-\frac{1}{2}t}\|f\|_{L^p}.
%$$
%Note that Lemma \ref{lemma22} implies the following spectral localization estimate for any $p\in[1,\infty]$:
%\begin{equation}
%\begin{aligned}
%\|e^{c_*^2\Delta  t} %\dot{\Delta}_{j}\Psi_0^{\ell}\|_{L^p}\lesssim e^{-c 2^{2j}t}\| \dot{\Delta}_{j}\Psi_0^{\ell}\|_{L^p}.\label{C7}
%\end{aligned}
%\end{equation}
\begin{align}
&\|F(\Psi^\lambda,z^\lambda)\|_{L^p}\lesssim  \lambda^2 \|\dive z^\lambda\|_{L^p}+\lambda^4 \|\Psi^\lambda\|_{L^p},\label{non-f}\\
&\|\dive G(\Psi^\lambda,z^\lambda)\|_{L^p}\lesssim \lambda^2 \|\dive z^\lambda\|_{L^p}+\lambda^4 \|\Psi^\lambda\|_{L^p} .\label{non-g}
\end{align}
Substituting \eqref{non-f} and \eqref{non-g} into \eqref{combine} and using $\lambda\leq \lambda_0<<1$, we conclude that there exists a constant $c>0$ depending only on $c_p^*$ such that
\begin{equation}\label{combine2}
\begin{aligned}
\frac{d}{dt}\Big(\|\Psi^\lambda\|_{\varepsilon,L^p}+\|\dive z^\lambda\|_{\varepsilon,L^p}\Big)
+\bar{c}\lambda^2\Big(\|\Psi^\lambda\|_{\varepsilon,L^p}+\|\dive z^\lambda\|_{\varepsilon,L^p}\Big)\leq M_{\varepsilon}.
\end{aligned}
\end{equation}
Integrating in time on $[0,t]$, having $\varepsilon$ tends to $0$ and using Grönwall's inequality, we get
\begin{equation}\label{combine3}
\begin{aligned}
\|\Psi^\lambda\|_{L^p}+\|\dive z^\lambda\|_{L^p}\lesssim
e^{-\bar{c}\lambda^2 t}\Big(\|\Psi^\lambda_0\|_{L^p}+\lambda \|z^\lambda_0\|_{L^p}\Big) \quad \mbox{for} \quad t>0.
\end{aligned}
\end{equation}
Furthermore, we rewrite the equation of $z$ as
\begin{equation}
\begin{aligned}
\partial_tz^\lambda +\mathcal{D} z^\lambda=c_*^4\nabla\Delta (1+c_*^2\Delta)^{-1}\Psi^\lambda,\label{znew}
\end{aligned}
\end{equation}
with $\mathcal{D}\coloneqq  \rm {Id}+c_*^2\nabla \dive +c_*^4\nabla(1+c_*^2\Delta)^{-1}\dive $. Since $\lambda$ is small enough, one knows that $\|e^{-t\mathcal{D}}f^\lambda\|_{L^p}\lesssim e^{-\frac{t}{2}}\|f^\lambda\|_{L^p}$ for any distribution $f^\lambda$ supported in $\lambda \mathcal{C}$. Applying Duhamel's principle to \eqref{znew} implies that
(with the aid of \eqref{combine3})
\begin{equation}\label{combine4}
\begin{aligned}
\|z^\lambda\|_{L^p}&\leq
e^{-\frac{t}{2}}\|z^\lambda_0\|_{L^p}+\lambda^3\int^{t}_{0}e^{-\frac{1}{2}(t-\tau)}\|\Psi^\lambda\|_{L^p} \,d\tau\\& \lesssim
e^{-\frac{t}{2}}\|z^\lambda_0\|_{L^p}+
\lambda^3 e^{-\bar{c}\lambda^2 t}\Big(\|\Psi^\lambda_0\|_{L^p}+\lambda \|z^\lambda_0\|_{L^p}\Big),
\end{aligned}
\end{equation}
where we have used \eqref{maomao}.

We now recover the desired estimates of $(a,m)$ from $(\Psi^\lambda,z^\lambda)$. Indeed, recalling $a^\lambda=({\rm Id}+c_*^2 \Delta)^{-1}(\Psi^\lambda+\dive z^\lambda)$, it follows from  \eqref{combine3} and Bernstein's inequality that
\begin{equation*}
\begin{aligned}
 \|a^\lambda \|_{L^p}&\lesssim   \| \Psi^\lambda \|_{L^p}+  \| \dive z^\lambda \|_{L^p}\lesssim e^{-\bar{c}\lambda^2 t}\Big(\|\Psi^\lambda_0\|_{L^p}+\lambda \|z^\lambda_0\|_{L^p}\Big)+e^{-\frac{1}{2}t} \lambda \|z^\lambda_0\|_{L^p}.
\end{aligned}
\end{equation*}
Similarly, observing that $m^\lambda=z^\lambda-c_*^2\nabla ({\rm Id}+c_*^2 \Delta)^{-1}(\Psi^\lambda+\dive z^\lambda)$, one can get
\begin{equation*}
\begin{aligned}
 \| m^\lambda \|_{L^p}&\lesssim \| z^\lambda \|_{L^p}+\lambda \| \Psi^\lambda \|_{L^p}\lesssim  \lambda e^{-\bar{c}\lambda^2 t}\Big(\|\Psi^\lambda_0\|_{L^p}+\lambda \|z^\lambda_0\|_{L^p}\Big)+e^{-\frac{1}{2}t}\|z^\lambda_0\|_{L^p}.
\end{aligned}
\end{equation*}
According to the definitions of $\Psi_0$ and $z_0$, we obtain \eqref{V1asy}-\eqref{masy} for $a$ and $m$.

Furthermore, for \eqref{y}, by employing Duhamel's principle,  we deduce that
\begin{equation*}
\begin{aligned}
\|(\Psi^\lambda-e^{c_*^2t \Delta}\Psi^\lambda_0)(t)\|_{L^p}&\lesssim \lambda^3 \int_0^t e^{-C^*\lambda^2 (t-\tau)} \|m^\lambda\|_{L^p}\,d\tau\\
&\lesssim \lambda^3 \int_0^te^{-C^*\lambda^2 (t-\tau)} e^{-\bar{c}\lambda^2 \tau}\,d\tau \sup_{\tau\in[0,t]}\|e^{\bar{c}\lambda^2\tau}m^\lambda\|_{L^p}\lesssim  \lambda e^{-\bar{c}\lambda^2 t
}\sup_{\tau\in[0,t]}\|e^{\bar{c}\lambda^2\tau}m^\lambda\|_{L^p}
\end{aligned}
\end{equation*}
for some constant $C^*$ satisfying $0<\frac {c}{2}\leq C^*$, where we used \eqref{maomao}.

Together with $a^\lambda=\Psi^\lambda+\dive m^\lambda$ and  \eqref{masy}, we get  \eqref{V1asy1}. Finally, according to the definition of $z^\lambda$, we write  $m^\lambda+c_*^2\nabla e^{c_*^2t \Delta}\Psi^\lambda_0=-c_*^2\nabla(a^\lambda-e^{c_*^2 t \Delta}\Psi^\lambda_0)+z^\lambda$, so \eqref{masy1} follows from \eqref{V1asy1} and \eqref{combine4} directly.
\end{proof}

Those pointwise estimates in Lemma
\eqref{lemc.1} enable us to establish the sharp decay characterization for \eqref{Eulerlinear} in terms of the Besov regularity.
\begin{lemma}
Assume that $(a_0,m_0)$ satisfies $a_0^{\ell}\in \dot{B}^{\sigma_1}_{p,\infty}$,  $m_0^{\ell}\in \dot{B}^{\sigma_1+1}_{p,\infty}$ and $\|(a_0,m_0)\|_{\dot{B}^{s}_{2,1}}^h<\infty$ with $1<p<\infty$ and $\sigma_1, s\in \mathbb{R}$. It holds that
\begin{align}
&\|a(t)\|_{\dot{\mathbb{B}}^{\sigma,s}_{p,2}}\lesssim (1+t)^{-\frac{1}{2}(\sigma-\sigma_1)}, \,\,\quad\quad\quad~~ \sigma>\sigma_1,\label{C19}\\
&\|m(t)\|_{\dot{\mathbb{B}}^{\sigma',s}_{p,2}}\lesssim (1+t)^{-\frac{1}{2}(\sigma'-\sigma_1+1)}, \quad \sigma'>\sigma_1+1
\end{align}
for all $t>0$. Furthermore, if $\Psi_0^{\ell}\in \mathcal{\dot{B}}^{\sigma_1}_{p,\infty}$, then we have
\begin{align}
 (1+t)^{-\frac{1}{2}(\sigma-\sigma_1)}&\lesssim \|a(t)\|_{\dot{\mathbb{B}}^{\sigma,s}_{p,2}}\lesssim (1+t)^{-\frac{1}{2}(\sigma-\sigma_1)}, \,\,\quad\quad\quad~~ \sigma>\sigma_1,\label{C23}\\
 (1+t)^{-\frac{1}{2}(\sigma'-\sigma_1+1)}&\lesssim \|m(t)\|_{\dot{\mathbb{B}}^{\sigma',s}_{p,2}}\lesssim (1+t)^{-\frac{1}{2}(\sigma'-\sigma_1+1)},\quad \sigma'>\sigma_1+1\label{C24}
\end{align}
for $t\geq t_*$ with some time $t_*>0$.
\end{lemma}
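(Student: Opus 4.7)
The strategy parallels Proposition \ref{proplinear}, specialized to the Euler system \eqref{Eulerlinear} via the quantitative spectral estimates collected in Lemma \ref{lemc.1}. The hybrid norm $\|\cdot\|_{\dot{\mathbb{B}}^{\sigma,s}_{p,2}}=\|\cdot^\ell\|_{\dot{B}^{\sigma}_{p,1}}+\|\cdot\|_{\dot{B}^{s}_{2,1}}^h$ is handled on its two pieces separately. For the high-frequency part, I would fix the threshold $J_0$ so that $2^{J_0}\leq \min\{\lambda_0, \frac{1}{2c_*}\}$ and observe that on $\operatorname{Supp}\widehat{(a,m)^h}$ the eigenvalues $\lambda_{1,2}(\xi)=-\tfrac12\pm\tfrac12\sqrt{1-4c_*^2|\xi|^2}$ have real part $-\tfrac12$ and $\lambda_3=\cdots=\lambda_{d+1}=-1$; combined with Parseval this yields $\|(a,m)(t)\|_{\dot{B}^s_{2,1}}^h\lesssim e^{-ct}\|(a_0,m_0)\|_{\dot{B}^s_{2,1}}^h$ for some $c>0$, so the high-frequency contribution is absorbed into $(1+t)^{-k}$ for any $k\geq 0$.

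For the low-frequency upper bounds, I would apply \eqref{V1asy}--\eqref{masy} to each dyadic block $\dot{\Delta}_j(a,m)$ with $\lambda=2^j$, $j\leq J_0$, obtaining
\begin{equation*}
\|\dot{\Delta}_j a\|_{L^p}\lesssim e^{-\frac{c}{2}2^{2j}t}\bigl(\|\dot{\Delta}_j a_0\|_{L^p}+2^j\|\dot{\Delta}_j m_0\|_{L^p}\bigr)+e^{-t/2}\bigl(2^{2j}\|\dot{\Delta}_j a_0\|_{L^p}+2^j\|\dot{\Delta}_j m_0\|_{L^p}\bigr),
\end{equation*}
and a similar inequality for $m$ with an extra overall $\lambda$ on the heat-type piece. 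Multiplying by $2^{j\sigma}$, summing over $j\leq J_0$ and invoking the standard fact $\sup_{t>0}\sum_{j\in\mathbb{Z}} t^{\sigma'/2}2^{j\sigma'}e^{-c2^{2j}t}<\infty$ for $\sigma'>0$ yields $\|a^\ell(t)\|_{\dot{B}^{\sigma}_{p,1}}\lesssim (1+t)^{-(\sigma-\sigma_1)/2}$ for $\sigma>\sigma_1$ and $\|m^\ell(t)\|_{\dot{B}^{\sigma'}_{p,1}}\lesssim (1+t)^{-(\sigma'-\sigma_1+1)/2}$ for $\sigma'>\sigma_1+1$. Combined with the high-frequency bound this proves \eqref{C19} and its $m$-counterpart.

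For the lower bounds, the plan is to compare $(a,m)$ to the profile $(e^{c_*^2t\Delta}\Psi_0,\,-c_*^2\nabla e^{c_*^2t\Delta}\Psi_0)$ using \eqref{V1asy1}--\eqref{masy1}. Applied dyadically and summed as above, these give the faster rates
\begin{equation*}
\|(a-e^{c_*^2t\Delta}\Psi_0)^\ell(t)\|_{\dot{B}^{\sigma}_{p,1}}\lesssim (1+t)^{-\frac{1}{2}(\sigma-\sigma_1)-\frac{1}{2}},\qquad \|(m+c_*^2\nabla e^{c_*^2t\Delta}\Psi_0)^\ell(t)\|_{\dot{B}^{\sigma'}_{p,1}}\lesssim (1+t)^{-\frac{1}{2}(\sigma'-\sigma_1+1)-\frac{1}{2}}.
\end{equation*}
Since $\Psi_0^\ell\in\dot{\mathcal{B}}^{\sigma_1}_{p,\infty}$, Proposition \ref{propgeneral2} applied with $\mathcal{L}=c_*^2\Delta$ (so $\alpha=1$) yields the two-sided bound $\|e^{c_*^2t\Delta}\Psi_0^\ell\|_{\dot{B}^{\sigma}_{p,1}}\sim (1+t)^{-(\sigma-\sigma_1)/2}$ for $\sigma>\sigma_1$. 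For $m$, I would additionally note $\|\nabla e^{c_*^2t\Delta}\Psi_0^\ell\|_{\dot{B}^{\sigma'}_{p,1}}\sim \|e^{c_*^2t\Delta}\Psi_0^\ell\|_{\dot{B}^{\sigma'+1}_{p,1}}\sim (1+t)^{-(\sigma'-\sigma_1+1)/2}$. Triangle inequality then delivers \eqref{C23}--\eqref{C24} for $t\geq t_*$ with $t_*$ chosen so that the error terms are at most half the leading profile.

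The main obstacle is the lower bound half: it requires checking that the profile-error estimates \eqref{V1asy1}--\eqref{masy1} actually beat the leading decay by a factor of $t^{-1/2}$ after summation in frequency (i.e., that the algebraic prefactors $\lambda$ and $\lambda^2$ produced by Lemma \ref{lemc.1} translate into the correct $2^j$ gains compatible with the $\dot{B}^{\sigma}_{p,1}$ summation). The rest is bookkeeping: the dyadic summation and the use of Proposition \ref{propgeneral2} on the heat semigroup follow the same template as in Proposition \ref{proplinear}.
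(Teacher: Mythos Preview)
Your proposal is correct and follows essentially the same route as the paper: exponential decay of the high-frequency piece, dyadic application of Lemma~\ref{lemc.1} for the low-frequency upper bounds via \eqref{opoo}, and for the lower bounds a comparison with the heat profile $e^{c_*^2t\Delta}\Psi_0^\ell$ using the faster error rates from \eqref{V1asy1}--\eqref{masy1} together with Proposition~\ref{propgeneral2}. The ``main obstacle'' you flag---that the extra $\lambda$-factors in \eqref{V1asy1}--\eqref{masy1} translate into an additional $t^{-1/2}$ after dyadic summation---is exactly the bookkeeping the paper carries out in \eqref{C21}--\eqref{C22}, and presents no difficulty.
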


\begin{proof}
It is well-known that the high-frequency norm $\|(a,m)\|_{\dot{B}^{s}_{2,1}}^h$ decays exponentially in time; see, for example \cite{XK2}. What remains is to bound the large time behavior of low frequencies of $(a,m)$ for $j\leq J_0$ (choosing sufficiently small if necessary). Following from the same procedure that leads to \eqref{V1asy} and \eqref{masy}, we have
\begin{align}
 \|\dot{\Delta}_{j} a(t) \|_{L^p}&\lesssim e^{-\bar{c} 2^{2j}t}  \Big(\|  \dot{\Delta}_{j}a_0 \|_{L^p}+ 2^j\| \dot{\Delta}_{j} m_0 \|_{L^p}\Big)+e^{-\frac{1}{2}t}\Big(2^{2j}\| \dot{\Delta}_{j} a_0 \|_{L^p}+ 2^j\| \dot{\Delta}_{j}m_0 \|_{L^p}\Big), \label{tildePsi000} \\
\| \dot{\Delta}_{j}m(t) \|_{L^p}& \lesssim e^{-\bar{c} 2^{2j} t} 2^j\Big(\| \dot{\Delta}_{j} a_0 \|_{L^p}+ 2^j\|\dot{\Delta}_{j} m_0 \|_{L^p}\Big)+e^{-\frac{1}{2}t}\Big(2^j\|\dot{\Delta}_{j} a_0 \|_{L^p}+\| \dot{\Delta}_{j}m_0 \|_{L^p}\Big).\label{masy000}
\end{align}
It follows from \eqref{opoo} and \eqref{tildePsi000} that
\begin{equation}\label{rhodecayasy}
\begin{aligned}
\|a^{\ell}(t)\|_{\dot{B}^{\sigma}_{p,1}}&\lesssim \langle t\rangle^{-\frac{1}{2}(\sigma-\sigma_1)}\Big(\|a_0^{\ell}\|_{\dot{B}^{\sigma_1}_{p,\infty}}+\|m_0^{\ell}\|_{\dot{B}^{\sigma_1+1}_{p,\infty}}\Big)
\end{aligned}
\end{equation}
for any $\sigma>\sigma_1$ and $t>0$.

In view of \eqref{masy000}, for $\sigma>\sigma_1+1$ and $t>0$, we also get
\begin{equation}\label{mdecayasy}
\begin{aligned}
\|m^{\ell}(t)\|_{\dot{B}^{\sigma'}_{p,1}}\lesssim \langle t\rangle^{-\frac{1}{2}(\sigma'-\sigma_1+1)}\Big(\|a_0^{\ell}\|_{\dot{B}^{\sigma_1}_{p,\infty}}+\|m_0^{\ell}\|_{\dot{B}^{\sigma_1+1}_{p,\infty}}\Big).
\end{aligned}
\end{equation}
%The asymptotic stability estimates \eqref{C21}-\eqref{C22} can be obtained by \eqref{V1asy1}-\eqref{masy1} and a similar computation.

In order to derive lower bounds, we additionally assume that the initial quantity $\Psi_0$ satisfies $\Psi_0^{\ell}\in\dot{\mathcal{B}}^{\sigma_1}_{p,\infty}$ with any $1\leq p\leq \infty$ (see \eqref{Bsubset}). It follows from Proposition \ref{propgeneral2} that
\begin{equation}\label{heatdecay}
\begin{aligned}
&\langle t\rangle^{-\frac{1}{2}(\sigma-\sigma_1)}\lesssim \|e^{c_*^2t \Delta}\Psi_0^{\ell}\|_{\dot{B}^{\sigma}_{p,1}}\lesssim  \langle t\rangle^{-\frac{1}{2}(\sigma-\sigma_1)},\quad \sigma>\sigma_1.
\end{aligned}
\end{equation}
On the other hand, having \eqref{V1asy1} and \eqref{masy1}, it is not difficult to deduce that
\begin{align}
\|(a^{\ell}-e^{c_*^2t \Delta}\Psi_0^{\ell})(t)\|_{\dot{B}^{\sigma}_{p,1}}&\lesssim (1+t)^{-\frac{1}{2}(\sigma-\sigma_1+1)},\quad \sigma>\sigma_1 \label{C21},\\
\Big\| m^{\ell}-\Big(-c_*^2\nabla e^{c_*^2t \Delta}\Psi_0^{\ell}\Big)(t)  \Big\|_{\dot{B}^{\sigma}_{p,1}}&\lesssim (1+t)^{-\frac{1}{2}(\sigma'-\sigma_1+2)},\quad \sigma'>\sigma_1+1.\label{C22}
\end{align}
Furthermore, it follows from \eqref{heatdecay}-\eqref{C22} that
\begin{equation}\nonumber
\begin{aligned}
\|a^{\ell}(t)\|_{\dot{B}^{\sigma}_{p,1}}&\geq \|e^{c_*^2t \Delta}\Psi_0^{\ell}\|_{\dot{B}^{\sigma}_{p,1}}-\|(a^{\ell}-e^{c_*^2t \Delta}\Psi_0^{\ell})(t)\|_{\dot{B}^{\sigma}_{p,1}}\gtrsim \langle t\rangle^{-\frac{1}{2}(\sigma-\sigma_1)}
\end{aligned}
\end{equation}
 and
\begin{equation}\nonumber
\begin{aligned}
\|m^{\ell}(t)\|_{\dot{B}^{\sigma'}_{p,1}}&\geq\|\nabla e^{c_*^2\Delta  t} \Psi_0^{\ell}\|_{\dot{B}^{\sigma'}_{p,1}}- \|(m^{\ell}- c_*^2\nabla e^{c_*^2\Delta  t} \Psi_0^{\ell})(t)\|_{\dot{B}^{\sigma'}_{p,1}}\gtrsim \langle t\rangle^{-\frac{1}{2}(\sigma'-\sigma_1+1)},
\end{aligned}
\end{equation}
when $t$ is suitably large. Hence, \eqref{C23} and \eqref{C24} follow directly.
\end{proof}

%\end{appendices}

\vspace{3mm}

\textbf{Acknowledgments}
L.-Y. Shou is supported by the National Natural Science Foundation of China (12301275). J. Xu is partially supported by the National Natural Science Foundation of China (12271250). P. Zhang is partially  supported by National Key R$\&$D Program of China under grant 2021YFA1000800 and by the Natural Science Foundation of China under Grants  No. 12421001, No. 12494542 and No. 12288201.

%\vspace{2mm}

%\textbf{Conflict of interest.} The authors do not have any possible conflicts of interest.

%\vspace{2mm}

%\textbf{Data availability statement.}
% Data sharing is not applicable to this article, as no data sets were generated or analyzed during the current study.

%\bibliographystyle{abbrv}
%\parskip=0pt
%\small
%\bibliography{reference}

%\noindent
%\textbf{Acknowledgments}

%\newpage

\vspace{5ex}

%\vspace{5ex}

(L.-Y. Shou)\par\nopagebreak
\noindent\textsc{School of Mathematical Sciences, Ministry of Education Key Laboratory of NSLSCS, and Key Laboratory of Jiangsu Provincial Universities of FDMTA, Nanjing Normal University, Nanjing 210023, China}

Email address: {\texttt{shoulingyun11@gmail.com}}

\vspace{3ex}

(J. Xu)\par\nopagebreak
\noindent\textsc{School of Mathematics, Nanjing University of Aeronautics and
Astronautics, Nanjing, 211106, China}

Email address: {\texttt{jiangxu\underline{~}79@nuaa.edu.cn}}

\vspace{3ex}

(P. Zhang)\par\nopagebreak
\noindent\textsc{State Key Laboratory of Mathematical Sciences, Academy of Mathematics $\&$ Systems Science, The Chinese Academy of
	Sciences, Beijing 100190, China, and School of Mathematical Sciences,
	University of Chinese Academy of Sciences, Beijing 100049, China}

Email address: {\texttt{zp@amss.ac.cn}}

\end{document}